\documentclass[12pt, oneside]{amsart}

\usepackage[utf8]{inputenc}
\usepackage[T1]{fontenc}

\usepackage{amssymb}
\usepackage{amsthm}
\usepackage{amsmath}
\usepackage[only,Yup]{stmaryrd} 
\usepackage{mathrsfs}    
\usepackage[new]{old-arrows}   

\usepackage[a4paper,left=2cm,top=3cm,right=2cm,bottom=2cm]{geometry}

\usepackage[all,cmtip]{xy}

\usepackage{enumerate}

\usepackage{indentfirst}

\usepackage{hyperref}
\hypersetup{colorlinks=true,allcolors=black}  


\usepackage{pstricks}
\usepackage{graphicx}

\usepackage{multicol}

\hyphenation{Birk-häu-ser Ma-the-ma-ti-sche Zeit-schrift}

\pagestyle{plain}


\DeclareMathOperator{\codim}{codim}

\DeclareMathOperator{\rank}{rank}
\DeclareMathOperator{\symrank}{symrank}

\DeclareMathOperator{\etale}{Et}
\DeclareMathOperator{\vol}{vol}
\DeclareMathOperator{\Ad}{Ad}

\DeclareMathOperator{\id}{Id}
\DeclareMathOperator{\Zero}{Zero}
\DeclareMathOperator{\ric}{Ric}

\DeclareMathOperator*{\spannn}{span}
\DeclareMathOperator{\sym}{S}
\DeclareMathOperator{\poin}{P}
\DeclareMathOperator{\tub}{Tub}
\DeclareMathOperator{\hol}{Hol}
\DeclareMathOperator{\res}{res}

\mathchardef\ordinarycolon\mathcode`\:
\mathcode`\:=\string"8000
\begingroup \catcode`\:=\active
  \gdef:{\mathrel{\mathop\ordinarycolon}}
\endgroup

\begin{document}

\title{Leaf closures of Riemannian foliations: a survey on topological and geometric aspects of Killing foliations}

\author{Marcos M.~Alexandrino}
\address{Instituto de Matemática e Estatística, Universidade de São Paulo, Rua do Matão 1010, 05508-090, São Paulo - SP, Brazil}
\email{malex@ime.usp.br}
\thanks{The first author was supported by grant \#2016/23746-0, São Paulo Research Foundation (FAPESP)}

\author{Francisco C.~Caramello Jr.}
\address{Departamento de Matemática, Universidade Federal de Santa Catarina, R. Eng. Agr. Andrei Cristian Ferreira, 88040-900, Florianópolis - SC, Brazil}
\email{francisco.caramello@ufsc.br}
\thanks{The second author was supported by grant \#2018/14980-0, São Paulo Research Foundation (FAPESP)}

\subjclass[2010]{53C12}

\newenvironment{proofoutline}{\proof[Proof outline]}{\endproof}
\theoremstyle{definition}
\newtheorem{example}{Example}[section]
\newtheorem{definition}[example]{Definition}
\newtheorem{remark}[example]{Remark}
\theoremstyle{plain}
\newtheorem{proposition}[example]{Proposition}
\newtheorem{theorem}[example]{Theorem}
\newtheorem{lemma}[example]{Lemma}
\newtheorem{corollary}[example]{Corollary}
\newtheorem{claim}[example]{Claim}
\newtheorem{conjecture}[example]{Conjecture}
\newtheorem{thmx}{Theorem}
\renewcommand{\thethmx}{\Alph{thmx}} 
\newtheorem{corx}{Corollary}
\renewcommand{\thecorx}{\Alph{corx}} 

\newcommand{\dif}[0]{\mathrm{d}}
\newcommand{\od}[2]{\frac{\dif #1}{\dif #2}}
\newcommand{\pd}[2]{\frac{\partial #1}{\partial #2}}
\newcommand{\dcov}[2]{\frac{\nabla #1}{\dif #2}}
\newcommand{\proin}[2]{\left\langle #1, #2 \right\rangle}
\newcommand{\f}[0]{\mathcal{F}}
\newcommand{\g}[0]{\mathcal{G}}
\newcommand{\metric}{\ensuremath{\mathrm{g}}}

\begin{abstract}
A smooth foliation is Riemannian when its leaves are locally equidistant. The closures of the leaves of a Riemannian foliation on a simply connected manifold, or more generally of a Killing foliation, are described by flows of transverse Killing vector fields. This offers significant technical advantages in the study of this class of foliations, which nonetheless includes other important classes, such as those given by the orbits of isometric Lie group actions. Aiming at a broad audience, in this survey we introduce Killing foliations from the very basics, starting with a brief revision of the main objects appearing in this theory, such as pseudogroups, sheaves, holonomy and basic cohomology. We then review Molino's structural theory for Riemannian foliations and present its transverse counterpart in the theory of complete pseudogroups of isometries, emphasizing the connections between these topics. We also survey some classical results and recent developments in the theory of Killing foliations. Finally, we review some topics in the theory of singular Riemannian foliations and discuss singular Killing foliations, also proposing a new approach to them via holonomy metric pseudogroups and the theory of blow-ups, which possibly opens up a new area of interest.
\end{abstract}

\maketitle
\setcounter{tocdepth}{1}
\tableofcontents

\section{Introduction}

A foliation on a Riemannian manifold is called Riemannian if its leaves are locally equidistant. Alternatively, the leaves of a Riemannian foliation are locally defined by fibers of a Riemannian submersion. These objects, first presented by B.~Reinhart in \cite{reinhart}, form a very relevant class of foliations, whose research has been quite active since their introduction \cite[Appendix D]{tondeur}. As noted by G.~Thorbergsson in his survey \cite{SurveyThorbergsson2010}, in the last two decades the theory of singular Riemannian foliations started to play an important role in the theory of submanifolds and isometric actions. In addition, singular Riemannian foliations appear naturally in all non-compact spaces of non-negative curvature, having played a fundamental role in the proof of the smoothness of the metric projection onto the soul, as noted in the work of B.~Wilking \cite{Wilking}.

There is a rich structural theory for Riemannian foliations, due mainly to P.~Molino, that asserts, among other results, that a complete Riemannian foliation $\f$ admits a locally constant sheaf of Lie algebras of germs of local transverse Killing vector fields $\mathscr{C}_{\f}$ whose action describes the dynamics of $\f$, in the sense that for each leaf $L_x\in\f$ one has
$$T_x\overline{L}_x=\{X_x\ |\ X\in(\mathscr{C}_{\f})_x\}\oplus T_xL_x,$$
where $\overline{L}_x$ denotes the closure of $L_x$. Using this one verifies that the partition $\overline{\f}:=\{\overline{L}\ |\ L\in \f\}$ of $M$ is a singular foliation, meaning that it is a smooth partition into embedded submanifolds of varying dimension.

In this work we are primarily interested in the so-called Killing foliations, that is, those Riemannian foliations which are complete an whose Molino sheaf $\mathscr{C}$ is \emph{globally} contant. In other words, for a Killing foliation $\f$ there exists transverse Killing vector fields $X_1,\dots,X_d$ such that $T\overline{\f}=T\f\oplus\langle X_1,\dots, X_d \rangle$. This class of foliations includes Riemannian foliations on simply-connected manifolds and foliations given by orbits of isometric Lie group actions. This motivates the study of the class of Killing foliations, since it contains important subclasses of Riemannian foliations whilst presents relevant technical advantages, in comparison to general Riemannian foliations.

The main goal of this article is to survey the classical theory of Riemannian and Killing foliations, including Molino's structural theory and the pseudogroup approach to the transverse geometry of these foliations due mostly to A.~Haefliger, and present some recent developments on Killing foliations via a deformation technique. In addition, we present the basics on singular Riemannian foliations and introduce the concept of \emph{singular} Killing foliations. Recent developments regarding leaf closures of singular Riemannian foliations and the solution of Molino's conjecture allow us to conclude that relevant classes of singular Riemannian foliations are Killing, for instance, infinitesimally closed foliations defined on simply connected manifolds.

This article is organized as follows. In Section \ref{section: foliations} we introduce the basics of foliation theory and transverse geometry, including the language of pseudogroups to treat holonomy and the notion of basic cohomology. In Section \ref{section: riemann foliations} we define Riemannian foliations and see some examples and classical results, including the structural theory for pseudogroups of local isometries. For this end, we also briefly review the basics of sheaf theory in this section. Next, we survey Molino's structural theory for Riemannian foliations in Section \ref{subsection: Molino Theory}, establishing some relations of it with the structural theory for pseudogroups of isometries. Section \ref{section: killing foliations} introduces Killing foliations and presents its main examples. This section also brings a deformation technique for Killing foliations that allows one to deform such a foliation into a Riemannian foliation with all leaves closed, whilst some topological and geometric transverse properties are mantained. Sections \ref{section: transverse topology of killing} and \ref{section: transverse geometry of Killing} survey recent applications of these techniques, which allows one to reduce the study of the transverse geometry of these foliations to classical geometry and topology of orbifolds. We then move to the second part of this paper, consisting of singular foliations. In Section \ref{section: singular riemannian foliations} we revisit the concept of singular Riemannian foliation as a natural generalization of the regular case, and introduce some of the technical machinery from this area. After that, Section \ref{section: molino conjecture and its proof} is dedicated to survey the recent results concerning the proof of Molino's conjecture and introduce the analog notion of the Molino sheaf in the singular setting. Finally, in Section \ref{section: singular Killing} we propose the concept of singular Killing foliations and see that this class contains relevant subclasses (such as that of homogeneous Riemannian foliations), which motivates its study. This section also brings a new approach to this class of foliations via pseudogroups of local isometries on metric spaces, which seem to be an useful tool.

\section{Foliations}\label{section: foliations}

Let $M$ be a smooth $n$-dimensional connected manifold. A \textit{regular foliation} of $M$ is a partition $\f$ of $M$ into $p$-dimensional, connected, immersed submanifolds, called \textit{leaves}, such that the module $\mathfrak{X}(\f)$ of smooth vector fields that are tangent to the leaves is transitive on each leaf. This means, more precisely, that for each $L\in\f$ and each $x\in L$ one can find smooth vector fields $X_i$ whose values at $x$ form a basis for $T_xL$. We denote the distribution defined by the tangent spaces of the leaves by $T\f$ and the leaf containing $x$ by $L_x$. The number $q=n-p$ is the \textit{codimension} of $\f$. In Section \ref{section: singular riemannian foliations} we will introduce \textit{singular} foliations, which drop the requirement that all leaves have the same dimension. Until then we will often omit the word ``regular'' when referring to regular foliations.

There are several equivalent definitions for regular foliations (see for instance \cite[Section 1.2]{mrcun}). Here we recall the following one, which will be specially useful. A regular foliation $\f$ is equivalently defined by an open cover $\{U_i\}_{i\in I}$ of $M$, submersions $\pi_i:U_i\to S_i$, with $S_i\subset\mathbb{R}^q$ open, and diffeomorphisms $\gamma_{ij}:\pi_j(U_i\cap U_j)\to\pi_i(U_i\cap U_j)$ satisfying
$$\gamma_{ij}\circ\pi_j|_{U_i\cap U_j}=\pi_i|_{U_i\cap U_j}$$
for all $i,j\in I$. The collection $(U_i,\pi_i,\gamma_{ij})$ is a \textit{Haefliger cocycle} representing $\f$ and each $U_i$ is a \textit{simple open set} for $\f$ (see Figure \ref{haefligercocycle}). We will assume without loss of generality that the fibers $\pi_i^{-1}(\overline{x})$ are connected, in which case they are called \textit{plaques}. Plaques glue together to form immersed submanifolds, the leaves of $\f$.

\begin{figure}
\centering{
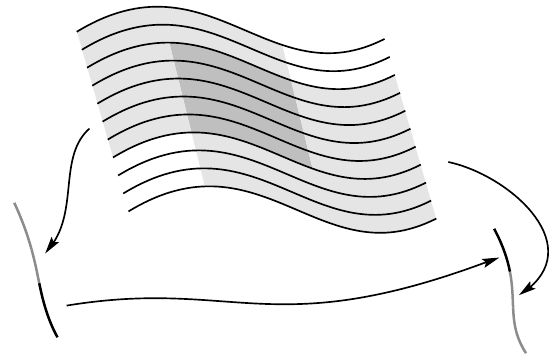}
\caption{A foliation is locally defined by submersions}
\label{haefligercocycle}
\end{figure}

\begin{example}[Pullbacks]\label{exe: pullback foliation}
Let $\f$ be a foliation of $M$ and $f:N\to M$ a smooth map that is transverse to each leaf. Then $f$ defines a foliation $f^*(\f)$ on $N$ as follows. If $(U_i,\pi_i,\gamma_{ij})$ is a cocycle representing $\f$, then $f^*(\f)$ is given by the cocycle $(V_i,\pi_i', \gamma_{ij})$, where $V_i=f^{-1}(U_i)$ and $\pi_i'=\pi_i\circ f|_{V_i}$. Observe that $Tf^*(\f)=\dif f^{-1}(T\f)$ and that $\codim(f^*(\f))=\codim(\f)$.
\end{example}

\begin{example}[Homogeneous foliations]\label{exe: foliated actions}
Lie group actions constitute a main source of foliations. Precisely, recall that when $\mu:G\times M\to M$ is a smooth action, each orbit $Gx$ is the image of an injective immersion $G/G_x\to M$ (see, for instance, \cite[Proposition 3.14]{alex}). Thus, if we suppose that $\dim(G_x)$ is a constant function of $x$, it follows that the connected components of orbits of $G$ decompose $M$ into immersed submanifolds of constant dimension. This decomposition $\f$ is easily seen to be a foliation, because $T_x(Gx)=\dif (\mu_x)_{e}(\mathfrak{g})$, so the fundamental vector fields $V^\#\in\mathfrak{X}(M)$, for $V\in\mathfrak{g}$, induced by the action generate $T\f$, showing that this is an involutive distribution.

A specific example is the following. Consider the flat torus $\mathbb{T}^2=\mathbb{R}^2/\mathbb{Z}^2$. For each $\lambda\in(0,+\infty)$, we have a smooth $\mathbb{R}$-action
$$
\begin{array}{rcl}
\mathbb{R}\times\mathbb{T}^2& \longrightarrow &\mathbb{T}^2\\
(t,[x,y]) & \longmapsto & [x+t,y+\lambda t]
\end{array}$$
with $\dim(\mathbb{R}_{[x,y]})\equiv 0$. The resulting foliation is the \textit{$\lambda$-Kronecker foliation} of the torus, $\f(\lambda)$. Observe that when $\lambda$ is irrational each leaf is dense in $\mathbb{T}^2$, while a rational $\lambda$ yields closed leaves.
\end{example}

When a foliation $\f$ is given by the action of a Lie group we say that $\f$ is \textit{homogeneous}.

\begin{example}[Suspensions]\label{example: suspensions}
Another class of examples of foliations comes from suspensions of homomorphisms, a useful construction originally due to A.~Haefliger \cite{haefliger4}. Let $B$ and $S$ be smooth manifolds, let $h:\pi_1(B,x_0)\to\mathrm{Diff}(S)$ be a group homomorphism and denote by $\rho:\widehat{B}\to B$ the projection of the universal covering space of $B$. On $\widetilde{M}:=\widehat{B}\times S$, the fibers of the second projection $\widetilde{M}\to S$ determine a foliation $\widetilde{\f}$. Define a right action of $\pi_1(B,x_0)$ on $\widetilde{M}$ by setting, for $[\gamma]\in\pi_1(B,x_0)$,
$$(\hat{b},s)\!\cdot\![\gamma]=\left(\hat{b}\!\cdot\![\gamma],h\left([\gamma]^{-1}\right)(s)\right),$$
where $\hat{b}\!\cdot\![\gamma]$ denotes the image of $\hat{b}$ by the deck transformation associated to $[\gamma]$. There is a manifold structure on $M=\widetilde{M}/\pi_1(B,x_0)$ \cite[p. 28]{molino} such that the orbit projection $\pi:\widetilde{M}\to M$ is a covering map and, if $\tau:M\to B$ is given by $\tau(\pi(\tilde{b},t))=\rho(\hat{b})$, then it is the projection of a fiber bundle with total space $M$, base $B$, fiber $S$ and structural group $h(\pi_1(B,x_0))$. The action of $\pi_1(B,x_0)$ preserves the leaves of $\widetilde{\f}$, so projecting through $\pi$ we obtain a foliation $\f$ on $M$ with $\codim(\f)=\dim(S)$, constructed by \textit{suspension of the homomorphism} $h$.

For example, the Kronecker foliation $\f(\lambda)$ (see Example \ref{exe: foliated actions}) can be obtained by suspension of the homomorphism $\pi_1(\mathbb{S}^1,1)\cong\mathbb{Z}\to\mathrm{Diff}(\mathbb{S}^1)$ given by $k\mapsto e^{-2\pi\mathrm{i}\lambda k}$.
\end{example}

As the Kronecker foliation shows, a leaf $L$ of a foliation $\f$ need not to be closed as a subspace of the ambient manifold $M$. We denote the set of leaf closures by $\overline{\f}:=\{\overline{L}\ |\ L\in\f\}$. Understanding $\overline{\f}$ is part of the study of the dynamics of the foliation. In the simple case when $\overline{\f}=\f$, that is, when all the leaves of $\f$ are closed, we say that $\f$ is a \textit{closed} foliation. A submanifold $N\subset M$ is \textit{saturated} if it is a union of leaves or, equivalently, if $N=\pi^{-1}(\pi(N))$, where $\pi:M\to M/\f$ is the projection to the leaf space. We say that $\f$ is \textit{transversely compact} when $M/\overline{\f}$ is compact.

A foliation $(M,\f)$ is \textit{tangentially orientable} if $T\f$ is orientable, and \textit{transversely orientable} if its \textit{normal bundle} $\nu\f:=TM/T\f$ is orientable. In this case, choices of orientations for $T\f$ and $\nu\f$ give, respectively, a \textit{tangential orientation} and a \textit{transverse orientation} for $\f$. It is always possible to choose an orientable finite covering space $\widehat{M}$ of $M$ such that the lifted foliation $\widehat{\f}$ is transversely (and hence also tangentially) orientable \cite[Proposition 3.5.1]{candel}. In terms of a Haefliger cocycle, $\f$ is transversely oriented if and only if there is a cocycle $\{(U_i,\pi_i,\gamma_{ij})\}$ representing $\f$ that satisfies $\det(\dif\gamma_{ij})>0$ as a function on $\pi_j(U_i\cap U_j)$, for all $i,j\in I$.

Let $(M,\f)$ and $(N,\g)$ be foliations. A \textit{foliate morphism} between $(M,\f)$ and $(N,\g)$ is a map $f:M\to N$ that sends leaves of $\f$ into leaves of $\g$. When there is a foliate diffeomorphism $f:M\to N$ (that is, $\f$ is foliate and admits a foliate inverse), the foliations $\f$ and $\g$ are often said to be \textit{congruent}. In particular, we may consider $\f$-foliate diffeomorphisms $f:M\to M$. The infinitesimal counterparts of this notion are the \textit{foliate vector fields} of $\f$, that is, vector fields in the subalgebra
$$\mathfrak{L}(\f)=\{X\in\mathfrak{X}(M)\ |\ [X,\mathfrak{X}(\f)]\subset\mathfrak{X}(\f)\}.$$
These are precisely the fields whose local flows send leaves to leaves. Another characterization is that $X\in\mathfrak{L}(\f)$ if, and only if, for each submersion $\pi:U\to S$ locally defining $\f$ we have that $X|_U$ is $\pi$-related to some vector field $\overline{X}_S\in\mathfrak{X}(S)$ \cite[Section 2.2]{molino}.

The Lie algebra $\mathfrak{L}(\f)$ also has the structure of a module, whose coefficient ring consists of the \textit{basic functions} of $\f$, that is, functions $f\in C^{\infty}(M)$ such that $Xf=0$ for every $X\in\mathfrak{X}(\f)$. We denote this ring by $\Omega^0(\f)$. A smooth function is basic if and only if it is constant on each leaf and also if and only if it factors through each submersion $\pi:U\to S$ locally defining $\f$ to a smooth function on the quotient $S$ \cite[Section 2.1]{molino}.

The quotient of $\mathfrak{L}(\f)$ by the ideal $\mathfrak{X}(\f)$ yields the Lie algebra $\mathfrak{l}(\f)$ of \textit{transverse vector fields}. For $X\in\mathfrak{L}(\f)$ we denote its induced transverse field by $\overline{X}\in\mathfrak{l}(\f)$. Notice that each $\overline{X}$ defines a unique section of $\nu\f$ and that $\mathfrak{l}(\f)$ is also a $\Omega^0(\f)$-module.

\subsection{Holonomy}\label{section: holonomy}

We start this section by recalling the language of pseudogroups. Let $S$ be a smooth manifold. Recall that a \textit{pseudogroup} $\mathscr{H}$ of local diffeomorphisms of $S$ consists of a set of diffeomorphisms $h:U\to V$, where $U$ and $V$ are open sets of $S$, such that

\begin{enumerate}[(i)]
\item $\id_U\in\mathscr{H}$ for any open set $U\subset S$,
\item $h\in\mathscr{H}$ implies $h^{-1}\in\mathscr{H}$,
\item if $h_1:U_1\to V_1$ and $h_2:U_2\to V_2$ are in $\mathscr{H}$, then their composition
$$h_2\circ h_1:h_1^{-1}(V_1\cap U_2)\longrightarrow h_2(V_1\cap U_2)$$
also belongs to $\mathscr{H}$, and
\item if $U\subset S$ is open and $k:U\to V$ is a diffeomorphism such that $U$ admits an open cover $\{U_i\}$ with $k|_{U_i}\in\mathscr{H}$ for all $i$, then $k\in\mathscr{H}$.
\end{enumerate}

The \textit{$\mathscr{H}$-orbit} of $x\in S$ consists of the points $y\in S$ for which there is some $h\in\mathscr{H}$ satisfying $h(x)=y$. The quotient by the corresponding equivalence relation, endowed with the quotient topology, is the \textit{space of orbits} of $\mathscr{H}$, that we denote $S/\mathscr{H}$.

If we have two pseudogroups of local diffeomorphisms $\mathscr{H}$ and $\mathscr{K}$ of $S$ and $T$, respectively, a \textit{smooth equivalence} between $\mathscr{H}$ and $\mathscr{K}$ is a maximal collection $\Phi$ of diffeomorphisms from open sets of $S$ to open sets of $T$ such that $\{\mathrm{Dom}(\varphi)\ |\ \varphi\in\Phi\}$ covers $S$, $\{\mathrm{Im}(\varphi)\ |\ \varphi\in\Phi\}$ covers $T$ and, for all $\varphi,\psi\in\Phi$, $h\in\mathscr{H}$ and $k\in\mathscr{K}$, we have $\psi^{-1}\circ k\circ\varphi\in\mathscr{H}$, $\psi\circ h\circ\varphi^{-1}\in\mathscr{K}$ and $k\circ\varphi\circ h\in\Phi$, whenever these compositions make sense.

The collection of all changes of charts of an atlas $\mathcal{A}$ of a smooth manifold defines a pseudogroup $\mathscr{H}_{\mathcal{A}}$ on the disjoint union of the images of the charts. If $\mathcal{B}$ is a compatible atlas then one has a smooth equivalence $\mathscr{H}_{\mathcal{A}}\cong\mathscr{H}_{\mathcal{B}}$. More generally:

\begin{example}[Orbifolds]\label{exe orbiflds}
An \textit{$n$-dimensional smooth orbifold} is an equivalence class $\mathcal{O}=[(S,\mathscr{H})]$ of pseudogroups of local diffeomorphisms, with $S$ an $n$-dimensional manifold, satisfying that $|\mathcal{O}|:=S/\mathscr{H}$ is Hausdorff and paracompact and each $x\in|\mathcal{O}|$ has a neighborhood $U$ such that $\mathscr{H}|_U$ is generated by a finite collection of diffeomorphisms of $U$. Orbifolds are generalizations of manifolds that appear naturally in many areas of mathematics, for instance as quotients of manifolds by properly discontinuous actions, the so-called \textit{good orbifolds}. We refer to \cite{caramello3}, \cite[Chapter 1]{adem}, \cite[Section 2.4]{mrcun} and \cite{kleiner} to detailed introductions.

Equivalently, an orbifold $\mathcal{O}$ is usually defined, in analogy with the classical definition of manifolds, as a Hausdorff paracompact space $|\mathcal{O}|$ admitting an orbifold atlas. Each chart of this atlas consists of an open subset $\widetilde{U}\subset\mathbb{R}^n$, a finite subgroup $H$ of $\mathrm{Diff}(\widetilde{U})$ and an $H$-invariant map $\phi:\widetilde{U}\to |\mathcal{O}|$ that induces a homeomorphism between $\widetilde{U}/H$ and some open subset $U\subset|\mathcal{O}|$. That is, orbifolds are locally modeled in finite quotients of Euclidean spaces, thus generalizing manifolds by allowing this type of singularity. If we consider $U_\mathcal{A}:=\bigsqcup_{i\in I}\widetilde{U}_i$ and $\phi:=\bigsqcup_{i\in I} \phi_i:U_\mathcal{A}\to |\mathcal{O}|$, a \textit{change of charts} of $\mathcal{A}$ is a diffeomorphism $h:V\to W$, with $V,W\subset U_\mathcal{A}$ open sets, such that $\phi\circ h=\phi|_V$. The collection of all changes of charts of $\mathcal{A}$ generates a pseudogroup $\mathscr{H}_{\mathcal{A}}$ representing $[(S,\mathscr{H})]$.
\end{example}

Now let $(M,\f)$ be a foliation represented by the cocycle $\{(U_i,\pi_i,\gamma_{ij})\}$. The pseudogroup of local diffeomorphisms generated by $\gamma=\{\gamma_{ij}\}$ acting on
$$S_\gamma:=\bigsqcup_i S_i$$
is the \textit{holonomy pseudogroup} of $\f$ associated to $\gamma$, that we denote by $\mathscr{H}_\gamma$. If $\delta$ is another Haefliger cocycle defining $\f$ then $\mathscr{H}_\delta$ is equivalent to $\mathscr{H}_\gamma$, so we can define, up to equivalence, the holonomy pseudogroup of $\f$. We will write $(S_\f,\mathscr{H}_\f)$ to denote both this equivalence class and a specific representative in it, for it seldom leads to confusion. It is clear that $S_\f/\mathscr{H}_\f$ is precisely the $M/\f$ of $\f$ endowed with the quotient topology. Notice also that there is an isomorphism $\mathfrak{l}(\f)\to\mathfrak{X}(S_\f)^{\mathscr{H}_\f}$ sending $\overline{X}\in\mathfrak{l}(\f)$ to the vector field in $\mathfrak{X}(S_\f)^{\mathscr{H}_\f}$ given, on each $S_i$, by $\overline{X}_{S_i}$. In fact, in general, the study of the transverse geometry of $\f$ is the study of the $\mathscr{H}_\f$-invariant geometry of $S_\f$.

\begin{example}[Holonomy of suspensions]\label{example: holonomy of suspensions}
If $(M,\f)$ is given by the suspension of a homomorphism $h:\pi_1(B,x_0)\to\mathrm{Diff}(S)$ (see Example \ref{example: suspensions}) we can choose a cocycle $\{(U_i,\pi_i,\gamma_{ij})\}$ representing $\f$ where each $U_i$ is the domain of a trivialization of $\tau:M\to B$ and $\pi_i:U_i\to S$ is the trivial projection. Then $\mathscr{H}_\f$ is just the pseudogroup generated by $h(\pi_1(B,x_0))<\mathrm{Diff}(S)$, encoding the recurrence of the leaves on $S$.
\end{example}

The notion of fundamental group can be generalized to pseudogroups by considering homotopy classes of $\mathscr{H}$-loops, that is, sequences of continuous paths $c_i:[t_{i-1},t_i]\to S$, for $1\leq i\leq n$, and elements $h_i\in\mathscr{H}$ such that $h_ic_i(t_i)=c_{i+1}(t_i)$, for $1\leq i\leq n-1$, and  $c_1(0)=h_nc_n(1)=x$. We refer to \cite{salem} and \cite[Section 2.2]{caramello3} for details. In particular, for the holonomy pseudogroup $\mathscr{H}_\f$ a foliation $(M,\f)$ this furnishes an invariant $\pi_1(\f,\overline{x})$, the \textit{transverse fundamental group of $\f$}, which captures information of both the topology of $\f$ and the holonomy of the leaves. Its isomorphism class does not depend on the Haefliger cocycle representing $\f$ nor on the base point, when $M$ is connected (in this case we omit it, denoting simply $\pi_1(\f)$).

If $L:=L_x=L_y$, choose a path $c:[0,1]\to L$ joining $x$ to $y$. Fix a cocycle $\{(U_i,\pi_i,\gamma_{ij})\}$ representing $\f$ and a subdivision $0=t_1<\dots<t_{m+1}=1$ such that $s([t_k,t_{k+1}])\subset U_{i_k}$ for some $U_{i_k}$. Then, there is a diffeomorphism
$$\gamma_{i_mi_{(m-1)}}\circ\gamma_{i_{(m-1)}i_{(m-2)}}\circ\dots\circ\gamma_{i_2i_1}=\gamma_{i_mi_1}$$
between small enough neighborhoods of $\overline{x}=\pi_1(x)$ and $\overline{y}=\pi_m(y)$. If we identify $S_\gamma$ with a total transversal $\bigsqcup_i S_i$ for $\f$ containing $x$ and $y$, this becomes the ``sliding along the leaves'' notion from \cite{molino}, Section 1.7 (see Figure \ref{holonomy}). Let us denote the germ of $\gamma_{i_mi_1}$ at $\overline{x}$ by $h_c$. This germ actually depends only on the $\partial[0,1]$-relative homotopy class of $c$ \cite[Proposition 2.3.2]{candel}, hence, if we consider in particular the \textit{holonomy group} of $L$ at $x$, that is, the group
$$\mathrm{Hol}_x(L)=\{h_c\ |\ c:[0,1]\to L\ \mbox{\ is a loop}\},$$
we have a surjective homomorphism $h:\pi_1(L,x)\to\mathrm{Hol}_x(L)$.

\begin{figure}
\centering{
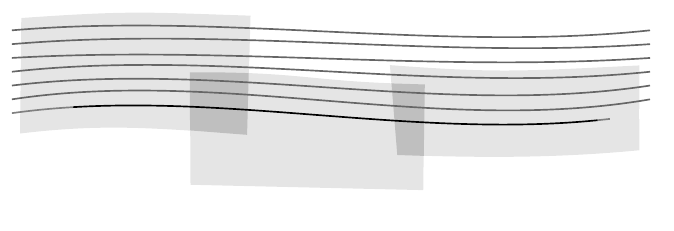}
\caption{Sliding along the leaves}
\label{holonomy}
\end{figure}

As the isomorphism class of $\mathrm{Hol}_x(L)$ does not depend on $x$, we often omit $x$ in this notation. In particular, we can say that $L$ is a \textit{leaf without holonomy} (or a \textit{generic leaf}) when $\mathrm{Hol}(L)=0$. It follows immediately from the surjectivity of $h$ that simply-connected leaves are without holonomy. Also, it can be shown that leaves without holonomy are generic, in the sense that $\{x\in M\ |\ \mathrm{Hol}_x(L)=0\}$ is residual in $M$ \cite[Theorem 2.3.12]{candel}.

Suppose $\mathrm{Hol}_x(L)$ is finite and identify it with a subgroup of $\mathrm{Diff}(S)$, where $S$ is a small local transversal of $\f$ passing through $x$. With this in mind we can state the famous Reeb Stability Theorem as follows (see \cite[Theorem 2.9]{mrcun} or \cite[Theorems 2.4.3 and Theorem 3.1.5]{candel}).

\begin{theorem}[Generalized local Reeb stability]\label{teorem: reeb}
Let $\f$ be a smooth foliation with a compact leaf $L_x$. If $\mathrm{Hol}_x(L)$ is finite then there is a saturated tubular neighborhood $\mathrm{pr}:\mathrm{Tub}(L_x)\to L_x$ restricted to which $\f$ is congruent to the foliation given by the suspension of $h:\pi_1(L,x)\to \mathrm{Hol}_x(L)<\mathrm{Diff}(S)$, where $S=\mathrm{pr}^{-1}(x)$. 
\end{theorem}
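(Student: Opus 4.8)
The plan is to reconstruct $\f$ in a neighborhood of $L:=L_x$ directly as a suspension, by transporting one fixed transversal along the leaves over the universal cover of $L$. First I would fix a transversal $S_0$ of $\f$ through $x$ and note that $G:=\mathrm{Hol}_x(L)$ is realized as a finite group of germs at $x$ of local diffeomorphisms of $S_0$. Since $G$ is finite, Bochner's linearization trick --- average a Riemannian metric on $S_0$ over $G$ and pull back by $\exp_x$ --- lets me replace $S_0$ by a $G$-invariant ball $S\subset\mathbb{R}^q$ centered at $x=0$ on which $G$ acts orthogonally, so that $G<O(q)<\mathrm{Diff}(S)$ and the restriction to $S$ of the holonomy pseudogroup of $\f$ is generated by $G$.

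Next I would set up the transport. Let $\rho\colon\widehat{L}\to L$ be the universal cover with deck group $\Gamma=\pi_1(L,x)$ and fix $\widehat{x}\in\rho^{-1}(x)$. Given $\widehat{y}\in\widehat{L}$, a path $\widehat{c}$ from $\widehat{x}$ to $\widehat{y}$ projects to a path $c$ in $L$ from $x$ to $\rho(\widehat{y})$, and the ``sliding along the leaves'' construction of Subsection~\ref{section: holonomy} yields a germ $h_c$ of diffeomorphism from $(S,x)$ to a transversal at $\rho(\widehat{y})$; since $\widehat{L}$ is simply connected this germ depends only on $\widehat{y}$. Using compactness of $L$ --- a Lebesgue-number argument on a finite cover of $L$ by simple open sets of $\f$ --- together with the fact that all iterated holonomies along $S$ land in the finite group $G$, I would argue that after shrinking $S$ the germ $h_c$ is represented by an honest diffeomorphism defined on all of $S$, depending smoothly on $\widehat{y}$; this produces a smooth map $\Psi\colon\widehat{L}\times S\to M$, $\Psi(\widehat{y},s)=h_c(s)$, which by construction maps each slice $\widehat{L}\times\{s\}$ into the leaf of $\f$ through $s$, restricts to the inclusion $S\hookrightarrow M$ on $\{\widehat{x}\}\times S$, and satisfies $\Psi(\widehat{y},0)=\rho(\widehat{y})$.

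Then I would verify equivariance and descend. Concatenating $\widehat{c}$ with a lift of a loop representing $[\gamma]\in\Gamma$ replaces $h_c$ by $h_c\circ h([\gamma])$, whence $\Psi(\widehat{y}\cdot[\gamma],\,h([\gamma]^{-1})s)=\Psi(\widehat{y},s)$; that is, $\Psi$ is constant on the orbits of the $\Gamma$-action defining the suspension of $h\colon\pi_1(L,x)\to G<\mathrm{Diff}(S)$ as in Example~\ref{example: suspensions}. Hence $\Psi$ factors through a foliate map $\overline{\Psi}\colon(M_h,\f_h)\to(M,\f)$, where $M_h=(\widehat{L}\times S)/\Gamma$ carries the suspension foliation $\f_h$ and the bundle projection $\tau\colon M_h\to\widehat{L}/\Gamma=L$. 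Since $\dif\Psi_{(\widehat{x},0)}$ is an isomorphism $T_{\widehat{x}}\widehat{L}\oplus T_0S\to T_xM$, $\Gamma$-equivariance makes $\overline{\Psi}$ a local diffeomorphism near the canonical section $[\widehat{y},0]$, on which it restricts to a diffeomorphism onto $L$; so after one more shrinking of $S$ it is a diffeomorphism onto an open saturated neighborhood of $L$, which I take to be $\mathrm{Tub}(L_x)$ with $\mathrm{pr}:=\tau\circ\overline{\Psi}^{-1}$. Then $\mathrm{pr}^{-1}(x)=\overline{\Psi}(\tau^{-1}(x))=S$ and $\overline{\Psi}$ is the desired congruence.

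The hard part will be the uniformity claimed in the second step: guaranteeing that one and the same transversal $S$ serves for holonomy transport along \emph{every} path in $L$. This is exactly where both hypotheses are needed --- finiteness of $\mathrm{Hol}_x(L)$ is what permits the linearization, so that no transversal gets ``eaten'' under repeated holonomy (contrast the contracting holonomy germ of a Reeb component, which admits no invariant transversal of positive size), while compactness of $L$ controls the finitely many charts required to reach an arbitrary point. Everything else --- that $\overline{\Psi}$ is foliate, that it is injective near the canonical section, and that $L$ thereby has a fundamental system of saturated neighborhoods --- is then routine.
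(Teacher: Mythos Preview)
Your proposal is correct and is essentially the classical argument. Note, however, that the paper does not supply its own proof of this theorem: it is stated as a standard result with references to \cite[Theorem 2.9]{mrcun} and \cite[Theorems 2.4.3 and 3.1.5]{candel}. Your construction---linearize the finite holonomy via a Bochner averaging, transport a fixed transversal over the universal cover by sliding along the leaves, and descend by $\pi_1$-equivariance to identify a tube with the suspension---is precisely the approach taken in those references. The uniformity step you flag as ``the hard part'' is handled there by observing that the holonomy transport factors through the \emph{holonomy} cover $\widehat{L}/\ker h$, which is a finite (hence compact) cover of $L$; compactness of this cover, rather than of $\widehat{L}$ itself, is what yields a single $S$ valid for all $\widehat{y}$. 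Your sketch is compatible with this, though you might make the passage through the holonomy cover explicit. One small point worth adding: the image of $\overline{\Psi}$ is automatically saturated because each leaf of $\f_h$ is compact and $\overline{\Psi}$ restricts on it to a local diffeomorphism onto an open-and-compact, hence full, subset of the corresponding leaf of $\f$.
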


In particular, for every $y\in \mathrm{Tub}(L)$ the projection $\mathrm{pr}:L_y\to L_x$ is a finitely-sheeted covering map, the number of sheets being the index $|\mathrm{Hol}_x(L_x) : \mathrm{Hol}_y(L_y)|$. This indicates that leaf holonomy plays the same role of the stabilizer in the case of group actions. In fact, using Theorem \ref{teorem: reeb} one proves the following \cite[Theorem 2.15]{mrcun}.

\begin{proposition}[Leaf space of closed foliations with finite holonomy]\label{proposition: quotients of closed foliations are orbifolds}
Suppose $(M,\f)$ is a $q$-codimensional foliation whose every leaf is compact and with finite holonomy. Then $M/\f$ has a canonical $q$-dimensional orbifold structure, relative to which the local group of a leaf in $M/\f$ is its holonomy group.
\end{proposition}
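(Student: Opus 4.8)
The plan is to build, leaf by leaf, an orbifold atlas for $M/\f$ in the sense of Example \ref{exe orbiflds}, using the saturated tubular neighborhoods furnished by generalized local Reeb stability.

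First I would fix $x\in M$ and apply Theorem \ref{teorem: reeb} to the compact leaf $L_x$ (which has finite holonomy by hypothesis): this produces a saturated tubular neighborhood $\mathrm{pr}\colon\mathrm{Tub}(L_x)\to L_x$ on which $\f$ is congruent to the suspension of a homomorphism $h_x\colon\pi_1(L_x,x)\to\mathrm{Hol}_x(L_x)<\mathrm{Diff}(S_x)$, where $S_x:=\mathrm{pr}^{-1}(x)$ is a $q$-dimensional transversal identified with an open subset of $\mathbb{R}^q$ and $\mathrm{Hol}_x(L_x)$ is finite. Averaging a metric over the finite group, I may assume $\mathrm{Hol}_x(L_x)$ acts orthogonally on $S_x$, so that small balls about $x$ are $\mathrm{Hol}_x(L_x)$-invariant and cut out nested \emph{saturated} sub-tubes $\mathrm{Tub}_\varepsilon(L_x)$ with compact closure and $\bigcap_\varepsilon\overline{\mathrm{Tub}_\varepsilon(L_x)}=L_x$. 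By the description of the holonomy pseudogroup of a suspension (Example \ref{example: holonomy of suspensions}), the restriction of $\mathscr{H}_\f$ to $S_x$ is generated by the finite group $h_x(\pi_1(L_x,x))=\mathrm{Hol}_x(L_x)$; hence the tautological map $\phi_x\colon S_x\to M/\f$, sending a point to the leaf through it, descends to a homeomorphism $S_x/\mathrm{Hol}_x(L_x)\to\mathrm{pr}(\mathrm{Tub}(L_x))$ onto an open subset of $M/\f$. This is the candidate orbifold chart $(S_x,\mathrm{Hol}_x(L_x),\phi_x)$, and in particular $\mathscr{H}_\f$ is locally finitely generated.

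Next I would verify that these charts are pairwise compatible and that $M/\f$ satisfies the separation hypotheses of Example \ref{exe orbiflds}. For compatibility: if $\phi_x(S_x)\cap\phi_y(S_y)\neq\emptyset$, then on $\mathrm{Tub}(L_x)\cap\mathrm{Tub}(L_y)$ the transversals $S_x$ and $S_y$ may be regarded as pieces of one total transversal for $\f$, and the ``sliding along the leaves'' germs described before Theorem \ref{teorem: reeb} give diffeomorphisms between open subsets of $S_x$ and $S_y$ commuting with the projections to $M/\f$; these are precisely the changes of charts, and the pseudogroup they generate on $\bigsqcup_x S_x$ is $\mathscr{H}_\f$, whose orbit space is $M/\f$. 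Paracompactness of $M/\f$ follows readily since $M$ is second countable and $M\to M/\f$ is open. Hausdorffness is where the finiteness of the holonomy groups is genuinely used—without it, a foliation with all leaves compact may have non-Hausdorff leaf space—and I would establish it by showing that the equivalence relation $R\subset M\times M$ induced by $\f$ is closed: given $(z_n,w_n)\to(z,w)$ with $w_n\in L_{z_n}$, for $n$ large one has $z_n\in\mathrm{Tub}_\varepsilon(L_z)$, hence $w_n\in L_{z_n}\subset\mathrm{Tub}_\varepsilon(L_z)$ by saturation, so $w\in\overline{\mathrm{Tub}_\varepsilon(L_z)}$ for every $\varepsilon$, i.e. $w\in L_z$; since the quotient map is also open, $M/\f$ is Hausdorff. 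Thus $\bigl[\bigl(\bigsqcup_x S_x,\mathscr{H}_\f\bigr)\bigr]$ is a canonical $q$-dimensional orbifold structure on $M/\f$.

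Finally I would identify the local groups: at $\overline{x}=[L_x]\in M/\f$ the fibre $\phi_x^{-1}(\overline{x})$ is the $\mathrm{Hol}_x(L_x)$-orbit of the centre $x\in S_x$, and since every holonomy germ of a loop based at $x$ fixes $x$, the isotropy of the $\mathrm{Hol}_x(L_x)$-action at $x$ is all of $\mathrm{Hol}_x(L_x)$, which is by definition the local group of $\overline{x}$. The main obstacle is the Hausdorff property: the chart construction and the computation of the local groups are immediate from Reeb stability and the suspension picture, whereas separating distinct leaves globally is exactly what finiteness of holonomy provides, through the existence of nested compact saturated tubes.
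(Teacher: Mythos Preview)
Your proposal is correct and follows exactly the approach the paper indicates: the paper does not give its own proof of this proposition but simply remarks that ``using Theorem~\ref{teorem: reeb} one proves the following'' and cites \cite[Theorem 2.15]{mrcun}. You have fleshed out precisely this Reeb-stability argument, including the Hausdorffness step and the identification of the local groups, so there is nothing to add.
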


When $\f$ is as in Proposition \ref{proposition: quotients of closed foliations are orbifolds} it is convenient to adopt a specific notation for the leaf space, so that we are promptly reminded that it is being considered as an orbifold: we will denote it by $M/\!/\f$ in this case. So to recap, $M/\f$ will denote the (topological) leaf space, with the quotient topology, and $M/\!/\f$ will denote it endowed with its canonical orbifold structure. The holonomy pseudogroup $(S_\f,\mathscr{H}_\f)$ is then a representative of $M/\!/\f$, viewed as an equivalence class of pseudogroups (see Example \ref{exe orbiflds}). Notice that we have
$$M/\f\cong |M/\!/\f| \cong S_\f/\mathscr{H}_\f.$$
Moreover, in this case $\pi_1(\f)$ coincides with the orbifold fundamental group $\pi_1^{\mathrm{orb}}(M/\!/\f)$, as defined by Thurston \cite{thurston}.

\subsection{Basic Cohomology}\label{section: basic cohomology}

Let $(M,\f)$ be a smooth foliation. A covariant tensor field $\xi$ on $M$ is \textit{$\f$-basic} if $\xi(X_1,\dots,X_i)=0$, whenever some $X_i\in\mathfrak{X}(\f)$, and $\mathcal{L}_X\xi=0$ for all $X\in\mathfrak{X}(\f)$. In particular, we say that a differential form $\omega\in\Omega^i(M)$ is \textit{basic} when it is basic as a tensor field. By Cartan's formula, $\omega$ is basic if, and only if, $i_X\omega=0$ and $i_X(d\omega)=0$ for all $X\in\mathfrak{X}(\f)$. These are the differential forms that project to differential forms in the local quotients $S$ and are invariant by the holonomy pseudogroup of $\f$ \cite[Proposition 2.3]{molino}. We denote the $\Omega^0(\f)$-module of basic $i$-forms of $\f$ by $\Omega^i(\f)$. Then
$$\Omega(\f):=\bigoplus_{i=0}^q\Omega^i(\f)$$
is the $\wedge$-graded \textit{algebra of basic forms} of $\f$.

By definition, $\Omega(\f)$ is closed under the exterior derivative, so we can consider the complex
$$\dots \stackrel{d}{\longrightarrow} \Omega^{i-1}(\f) \stackrel{d}{\longrightarrow} \Omega^i(\f) \stackrel{d}{\longrightarrow} \Omega^{i+1}(\f) \stackrel{d}{\longrightarrow} \cdots .$$
The cohomology groups of this complex are the \textit{basic cohomology groups} of $\f$, that we denote by $H^i(\f)$. A foliate map $f:(M,\f)\to(N,\g)$ pulls basic forms on $N$ back to basic forms on $M$ and hence induces a linear map $f^*:H^i(\g)\to H^i(\f)$.

When the dimensions $\dim(H^i(\f))$ are all finite (see Example \ref{example: infinite dimensional basic cohomology}), we define the \textit{basic Euler characteristic} of $\f$ as the alternate sum
$$\chi(\f)=\sum_i(-1)^i\dim(H^i(\f)).$$
In analogy with the manifold case, we say that $b^i(\f):=\dim(H^i(\f))$ are the \textit{basic Betti numbers} of $\f$. When $\f$ is the trivial foliation by points we recover the classical Euler characteristic and Betti numbers of $M$.

Since we have an identification between $\f$-basic forms and $\mathscr{H}_\f$-invariant forms on $S_\f$ and an identification between differential forms on an orbifold $\mathcal{O}$ and $\mathscr{H}_{\mathcal{O}}$-invariant forms on $U_{\mathcal{O}}$, Proposition \ref{proposition: quotients of closed foliations are orbifolds} gives us the following.

\begin{proposition}\label{prop: basic cohomology of closed foliations}
Let $(M,\f)$ be a foliation such that every leaf is compact and with finite holonomy. Then the projection $\pi:M\to M/\!/\f$ induces an isomorphism of differential complexes $\pi^*:\Omega(M/\!/\f)\to\Omega(\f)$. In particular, $H(\f)\cong H_{\mathrm{dR}}(M/\!/\f)$.
\end{proposition}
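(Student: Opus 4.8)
\emph{Proof proposal.} The plan is to realize $\pi^*$ as the composition of two identifications that are essentially already recalled in the text: the correspondence between $\f$-basic forms and $\mathscr{H}_\f$-invariant forms on $S_\f$ (\cite[Proposition 2.3]{molino}), and — once Proposition \ref{proposition: quotients of closed foliations are orbifolds} tells us that $(S_\f,\mathscr{H}_\f)$ is a representative of the orbifold $M/\!/\f$ — the fact that $\Omega(M/\!/\f)$ is \emph{by definition} the complex of $\mathscr{H}_\f$-invariant forms on $S_\f$. The only additional content is to check that the resulting identification $\Omega(M/\!/\f)\cong\Omega(\f)$ is genuinely the map induced by $\pi$, and that it is a morphism of differential complexes.

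First I would fix a Haefliger cocycle $\{(U_i,\pi_i,\gamma_{ij})\}$ representing $\f$ whose holonomy pseudogroup $(S_\f,\mathscr{H}_\f)$, with $S_\f=\bigsqcup_i S_i$, is the chosen representative of $M/\!/\f$; thus each $\pi_i\colon U_i\to S_i$ composed with the orbit map $S_i\to M/\f$ is an orbifold chart, and $\pi|_{U_i}$ factors through $\pi_i$. Given $\omega\in\Omega^k(M/\!/\f)$, i.e.\ a family $\{\omega_i\in\Omega^k(S_i)\}$ with $\gamma_{ij}^*\omega_i=\omega_j$ on the relevant domains, the form $\pi^*\omega$ is $\pi_i^*\omega_i$ on each $U_i$; these glue because $\gamma_{ij}\circ\pi_j=\pi_i$, and the glued form is basic, being $\pi_i$-horizontal (hence killed by $i_X$ for $X\in\mathfrak{X}(\f)$) and $\pi_i$-invariant (hence $\mathcal{L}_X$-invariant), which by Cartan's formula is exactly the definition of a basic form. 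So $\pi^*$ maps $\Omega(M/\!/\f)$ into $\Omega(\f)$.

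Next I would check bijectivity. Injectivity is immediate since each $\pi_i$ is a surjective submersion, so $\pi_i^*\omega_i=0$ forces $\omega_i=0$. For surjectivity, take $\beta\in\Omega^k(\f)$; restricting to a simple open set $U_i$ and using that the plaques (the fibers of $\pi_i$) are connected, the conditions $i_X\beta=0$ and $\mathcal{L}_X\beta=0$ for $\pi_i$-vertical $X$ imply $\beta|_{U_i}=\pi_i^*\omega_i$ for a unique $\omega_i\in\Omega^k(S_i)$, by the standard descent along a submersion with connected fibers. The identity $\pi_i^*\omega_i=\pi_j^*\omega_j$ on $U_i\cap U_j$, together with $\gamma_{ij}\circ\pi_j=\pi_i$ and surjectivity of $\pi_j$, gives $\gamma_{ij}^*\omega_i=\omega_j$, so $\{\omega_i\}$ is an $\mathscr{H}_\f$-invariant family, i.e.\ an element $\omega\in\Omega^k(M/\!/\f)$ with $\pi^*\omega=\beta$. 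Finally $\pi^*$ commutes with $d$, since pullback always does and the differential on $\Omega(M/\!/\f)$ is computed chart-by-chart while that on $\Omega(\f)$ is the restriction of the ambient $d$. Hence $\pi^*$ is an isomorphism of differential complexes, and passing to cohomology yields $H(\f)\cong H_{\mathrm{dR}}(M/\!/\f)$.

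I do not expect a serious obstacle here: the substance is carried by Proposition \ref{proposition: quotients of closed foliations are orbifolds} and by the cited local description of basic forms. The one point that demands care is purely organizational — one must take the two identifications (``basic form $\leftrightarrow$ invariant family $\{\omega_i\}$'' and ``orbifold form $\leftrightarrow$ invariant form on a pseudogroup representative'') with respect to the \emph{same} cocycle and representative, so that their composite is actually induced by $\pi$; this is arranged by choosing compatible data at the outset, as above.
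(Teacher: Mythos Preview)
Your proposal is correct and follows exactly the approach the paper takes: the paper's ``proof'' is the single sentence preceding the proposition, observing that the identification of $\f$-basic forms with $\mathscr{H}_\f$-invariant forms on $S_\f$ and the identification of orbifold forms with $\mathscr{H}_{\mathcal{O}}$-invariant forms on $U_{\mathcal{O}}$ combine, via Proposition~\ref{proposition: quotients of closed foliations are orbifolds}, to yield the result. You have simply spelled out the details of these identifications and checked that their composite is indeed induced by $\pi$ and commutes with $d$.
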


\subsection{Foliations of Orbifolds}\label{section: riemannian foliations}

Let $\mathcal{O}$ be an orbifold with atlas $\mathcal{A}=\{(\widetilde{U}_i,H_i,\phi_i)\}$ and associated pseudogroup $(U_\mathcal{A},\mathscr{H}_\mathcal{A})$ (see Example \ref{exe orbiflds}). Following \cite[Section 3.2]{haefliger2}, we define a smooth foliation $\f$ of $\mathcal{O}$ as a smooth foliation of $U_\mathcal{A}$ which is invariant by $\mathscr{H}_\mathcal{A}$. The atlas can be chosen so that on each $\widetilde{U}_i$ the foliation is given by a surjective submersion with connected fibers onto a manifold $S_i$. The holonomy pseudogroup of $\f$, therefore, will be generated by the local diffeomorphisms of the disjoint union $\bigsqcup_{i\in I} S_i$ that are projections of elements of $\mathscr{H}_\mathcal{A}$. All notions defined so far for foliations on manifolds therefore extend to foliations on orbifolds.

\section{Riemannian foliations}\label{section: riemann foliations}

Let $\f$ be a smooth foliation of $M$. A \textit{transverse metric} for $\f$ is a symmetric, positive, $\f$-basic $(2,0)$-tensor field $\mathrm{g}^T$ on $M$. In this case $(M,\f,\mathrm{g}^T)$ is called a \textit{Riemannian foliation}. A Riemannian metric (in the usual sense) $\mathrm{g}$ on $M$ is called \textit{bundle-like} for $\f$ if for any open set $U$ and any $Y,Z\in\mathfrak{L}(\f|_U)$ perpendicular to the leaves we have $\mathrm{g}(Y,Z)\in\Omega^0(\f|_U)$. In this case, setting
$$\mathrm{g}^T(X,Y):=\mathrm{g}(X^\bot,Y^\bot)$$
defines a transverse metric for $\f$, where we write $X=X^\top+X^\bot$ with respect to the decomposition $TM=T\f\oplus T\f^\perp$. Conversely, given $\mathrm{g}^T$ one can always choose a bundle-like metric on $M$ that induces it \cite[Proposition 3.3]{molino}. With a bundle-like metric chosen, we will identify the bundles $\nu\f\equiv T\f^\perp$.

\begin{example}
If a foliation $\f$ on $M$ is given by the action of a Lie group $G$ (i.e, such that all orbits have the same dimension, see Example \ref{exe: foliated actions}) and $\mathrm{g}$ is a Riemannian metric on $M$ such that $G$ acts by isometries, then $\mathrm{g}$ is bundle-like for $\f$ \cite[Remark 2.7(8)]{mrcun}. In other words, a foliation induced by an isometric action is Riemannian.
\end{example}

\begin{example}[Gromoll--Grove {\cite[Theorem 5.4]{gromoll2}}]\label{example: 1foliations of the sphere}
The $1$-dimensional Riemannian foliations of the euclidean sphere $\mathbb{S}^n$ where classified by D.~Gromoll and K.~Grove. They exist only if $n$ is odd, say $n=2k+1$, and are all homogeneous, given (up to isometric congruence) by $\mathbb{R}$-actions of the type
$$t\cdot(z_0,\dots,z_k)=(e^{2\pi\mathrm{i}\lambda_0t}z_0,\dots,e^{2\pi\mathrm{i}\lambda_kt}z_k),$$
where $\lambda_i\in(0,1]$ and $z_i\in\mathbb{S}^n\subset\mathbb{C}^{k+1}$. We will call these foliations \textit{generalized Hopf fibrations}, since we get the usual Hopf fibration when $\lambda_i=1$ for each $i$. In particular, such an action correspond to a closed Riemannian $1$-foliation $\f$ of $\mathbb{S}^n$ precisely when all $\lambda_i$ are rational, say $\lambda_i=p_i/q_i$. Notice that in this case we can equivalently assume that $\lambda_i\in\mathbb{N}$, by changing the parameter $t$ to $\mathrm{lcm}(q_1,\dots,q_k)t$, hence $\mathbb{S}^n/\!/\f$ is a weighted projective space $\mathbb{CP}^{k}[\lambda_0,\dots,\lambda_k]$.

Let us visualize these foliations in the case of the $3$-dimensional sphere, that is, for $k=1$. Consider the action of $\mathbb{T}^2=\mathbb{S}^1\times\mathbb{S}^1$ on $\mathbb{S}^3$ by $(t_0,t_1)\cdot(z_0,z_1)=(t_0z_0,t_1z_1)$. This action has two singular orbits, $\mathbb{T}^2(1,0)$ and $\mathbb{T}^2(0,1)$, that are diffeomorphic to $\mathbb{S}^1$. The other orbits are tori and coincide with the distance tubes of the two singular orbits. The $1$-dimensional Riemannian foliations of $\mathbb{S}^3$, up to congruence, can be identified with the $1$-dimensional Lie subalgebras of $\mathbb{R}^2\cong\mathrm{lie}(\mathbb{T}^2)$ via the induced action of the corresponding $1$-parameter subgroup. They restrict to Kronecker foliations on each regular $\mathbb{T}^2$-orbit (see Figure \ref{hopf}).

\begin{figure}
\centering{
\resizebox{0.4\textwidth}{!}{
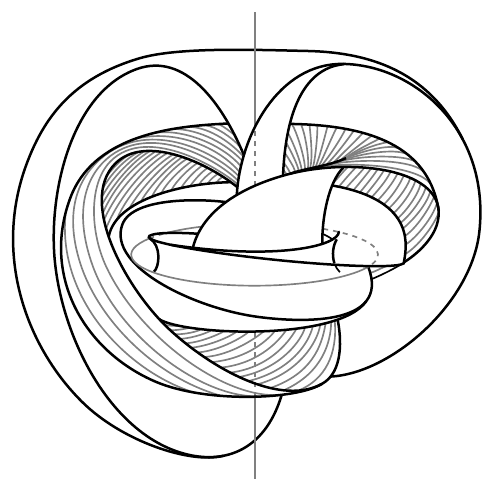}}
\caption{The $1$-dimensional foliations of $\mathbb{S}^3$ (via stereographic projection)}
\label{hopf}
\end{figure}
\end{example}

\begin{example}\label{example: Riemannian suspensions}
Let $(S,\mathrm{g})$ be a Riemannian manifold. A foliation $\f$ defined by the suspension of a homomorphism (see Examples \ref{example: suspensions} and \ref{example: holonomy of suspensions}) $h:\pi_1(B,x_0)\to\mathrm{Iso}(S)$ is naturally a Riemannian foliation \cite[Section 3.7]{molino}.
\end{example}

\begin{example}
By the description via Haefliger cocycles, the pullback of a Riemannian foliation is obviously a Riemannian foliation (see Example \ref{exe: pullback foliation}).
\end{example}

We now associate a canonical transverse connection for a Riemannian foliation $(M,\f,\mathrm{g}^T)$. Choose a bundle-like metric for $(\f,\mathrm{g}^T)$ and denote its Levi-Civita connection by $\nabla$. Via the identification $\nu\f=T\f^\perp$, we define a connection $\nabla^B$ on $\nu\f$ by
\begin{equation}\label{eq: conn basic}\nabla^B_XY:=\begin{cases} [X,Y]^\perp & \mbox{if}\ X\in\Gamma(T\f)=\mathfrak{X}(\f),\\
(\nabla_XY)^\perp & \mbox{if}\ X\in\Gamma(T\f^\perp).\end{cases}\end{equation}
This connection on $\nu\f$ does not depend on the choice of the bundle-like metric, being completely determined by $\mathrm{g}^T$. It is in fact the unique $\mathrm{g}^T$-metric and torsion-free connection on $\nu\f$ \cite[Theorem 5.9]{tondeur}, so in analogy with the classical case of Riemannian manifolds we call it the \textit{basic Levi-Civita connection} of $\f$. The partial connection on $\nu\f$ defined only for $X\in \Gamma(T\f)$ by $[X,Y]^\perp$ is called the \textit{Bott connection on $\nu\f$}. The connection $\nabla^B$ induces a covariant derivative on $\mathfrak{l}(\f)$, which in terms of a submersion $\pi:U\to S$ locally defining $\f$ corresponds to the effect of the Levi-Civita connection $\nabla^S$ of $(S,\pi_*(\mathrm{g}^T))$, that is, $\pi_*(\nabla^B_X\overline{Y})=\nabla^S_{\pi_*X}\pi_*\overline{Y}$, for $\overline{Y}\in\mathfrak{l}(\f|_U)$ and $X\in TU$. The following characterization of bundle-like metrics is related to this property:

\begin{proposition}[{\cite{reinhart}}]\label{prop: Reinhart characterization}
A Riemannian metric $\mathrm{g}$ is bundle-like for $(M,\f)$ if and only if a geodesic that is perpendicular to a leaf at one point remains perpendicular to all the leaves it intersects. Moreover, geodesic segments perpendicular to the leaves project to geodesic segments in the local quotients $S$.
\end{proposition}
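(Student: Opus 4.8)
The plan is to reduce everything to a local statement and then identify being bundle-like with a Riemannian-submersion condition. Since the bundle-like property is local, it suffices to work over a simple open set $U$ with defining submersion $\pi\colon U\to S$. A vector field $Y\in\mathfrak{L}(\f|_U)$ with $Y\perp T\f$ is exactly one that is $\pi$-related to some $\overline Y\in\mathfrak{X}(S)$, and such fields span $T\f^\perp$ at every point; hence $\mathrm{g}|_U$ is bundle-like precisely when the symmetric form $\overline{\mathrm{g}}$ on $S$ determined by $\overline{\mathrm{g}}(\overline Y,\overline Z)\circ\pi=\mathrm{g}(Y,Z)$ is well defined, i.e. precisely when $\pi\colon(U,\mathrm{g})\to(S,\overline{\mathrm{g}})$ is a Riemannian submersion. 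Granting this, the forward implication together with the ``moreover'' part are the classical O'Neill facts: for a Riemannian submersion, a geodesic whose velocity is horizontal at one instant is horizontal at every instant and projects to a geodesic of the base. To obtain the \emph{global} statement from this, given a geodesic $\gamma$ perpendicular to some leaf at $\gamma(t_0)$, one checks that $J:=\{t\in I : \dot\gamma(t)\perp T_{\gamma(t)}\f\}$ is nonempty, closed (the $T\f$-component of $\dot\gamma$ is continuous), and open (by the local submersion picture around any of its points), hence all of $I$; and on each simple open set met by $\gamma$ the projection $\pi\circ\gamma$ is a geodesic.

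For the converse the key ingredient is an identity coming from the Koszul formula. For $X\in\mathfrak{X}(\f|_U)$ and horizontal $Y,Z\in\mathfrak{L}(\f|_U)$, using that $[X,Y],[X,Z]\in\mathfrak{X}(\f)$ and that $\mathrm{g}$ pairs $T\f$ with $T\f^\perp$ trivially, one gets
$$X\,\mathrm{g}(Y,Z)=-\mathrm{g}\bigl(X,(\nabla_YZ+\nabla_ZY)^\top\bigr),$$
where $(\cdot)^\top$ denotes the component tangent to $\f$. Moreover $B(Y,Z):=(\nabla_YZ+\nabla_ZY)^\top$ is $C^\infty(U)$-bilinear on horizontal fields, since the Leibniz terms produced by rescaling are themselves horizontal and so die under $(\cdot)^\top$; thus $B$ is a genuine symmetric bundle map $T\f^\perp\times T\f^\perp\to T\f$. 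Consequently $\mathrm{g}|_U$ is bundle-like $\iff$ $X\,\mathrm{g}(Y,Z)=0$ for all such $X,Y,Z$ $\iff$ $B\equiv 0$ $\iff$ $B(v,v)=0$ for every horizontal $v$ (polarization, $B$ being symmetric).

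It remains to see that the geodesic hypothesis forces $B\equiv 0$. Suppose not: pick $p\in M$ and a horizontal $v\in T_p\f^\perp$ with $B_p(v,v)\neq 0$, extend $v$ to a horizontal field $\widetilde v$ near $p$, and let $\gamma$ be the geodesic with $\dot\gamma(0)=v$. For a vertical field $V$ near $p$, differentiating $\mathrm{g}(\dot\gamma,V)$ at $t=0$ (the geodesic equation kills one term) and then differentiating the identity $\mathrm{g}(\widetilde v,V)\equiv0$ along $\widetilde v$ yields $\frac{d}{dt}\big|_{0}\mathrm{g}(\dot\gamma,V)=-\mathrm{g}\bigl((\nabla_{\widetilde v}\widetilde v)^\top,V\bigr)_p=-\tfrac12\mathrm{g}\bigl(B_p(v,v),V_p\bigr)$, where we used that $(\nabla_{\widetilde v}\widetilde v)^\top_p=\tfrac12B_p(v,v)$ depends only on $v$ by tensoriality of $B$. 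Choosing $V$ with $V_p=B_p(v,v)$ makes this derivative strictly negative, so $\dot\gamma(t)$ acquires a nonzero $T\f$-component for small $t>0$; that is, $\gamma$ is perpendicular to $L_p$ at $t=0$ but not perpendicular to $L_{\gamma(t)}$, contradicting the hypothesis. Hence $B\equiv 0$ and $\mathrm{g}$ is bundle-like.

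The main obstacle is purely bookkeeping: one must track the $T\f$- and $T\f^\perp$-components carefully through the Koszul manipulations to see that the non-tensorial terms cancel (so that $B$ is a tensor and $(\nabla_{\widetilde v}\widetilde v)^\top$ is pointwise in $v$), and one must either quote O'Neill's lemmas cleanly for the forward direction or reprove ``$B\equiv0$ implies a horizontal geodesic stays horizontal'' by the same differentiation-plus-uniqueness argument used above. Everything else is routine.
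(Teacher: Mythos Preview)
The paper does not actually prove this proposition; it is stated as a classical result with a citation to Reinhart's original paper. Your argument is therefore not being compared against a proof in the text, but it is correct and is essentially the standard Reinhart--O'Neill argument: the local identification of the bundle-like condition with $\pi$ being a Riemannian submersion, the Koszul computation giving $X\,\mathrm{g}(Y,Z)=-\mathrm{g}(X,B(Y,Z))$, the tensoriality of $B$, and the contradiction via a horizontal geodesic acquiring a vertical component are all sound. The only cosmetic point is that the open--closed connectedness argument for $J$ in the forward direction is superfluous once you have the local O'Neill statement (horizontal at one instant implies horizontal for all time in the submersion chart, hence on the maximal interval by covering with charts), but it does no harm.
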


It follows from this result that the leaves of a Riemannian foliation are locally equidistant. Contrarily to the classical case of Riemannian metrics on manifolds, on the other hand, not every smooth foliation admits a transverse metric so that it becomes a Riemannian foliation. This will become more apparent when we study Molino's structural theorem in Section \ref{subsection: Molino Theory}, but we can already conclude that from the fact that the basic cohomology of Riemannian foliations on compact manifolds have finite dimension (Theorem \ref{theorem: dim basic cohomology is finite} below), which is not true for smooth foliations in general:

\begin{example}[{\cite{ghys2}}]\label{example: infinite dimensional basic cohomology}
Consider
$$A=\begin{pmatrix}
1 & 0 \\
1 & 1\end{pmatrix}\in\mathrm{SL}_2(\mathbb{Z})$$
and its induced diffeomorphism $\overline{A}:\mathbb{R}^2/\mathbb{Z}^2\cong\mathbb{T}^2\to\mathbb{T}^2$. Then we have an homomorphism $\mathbb{Z}\cong\pi_1(\mathbb{S}^1)\to\mathrm{Diff}(\mathbb{T}^2)$ given by $n\mapsto\overline{A}^n$. Ghys shows in \cite{ghys2} that the $1$-dimensional foliation $\f$ given on the torus bundle $\mathbb{T}^3_A\to\mathbb{S}^1$ by the suspension of this homomorphism has $\dim(H^2(\f))=\infty$ (see Examples \ref{example: suspensions} and \ref{example: holonomy of suspensions}). In fact, it can be verified by direct calculations that an $\overline{A}$-invariant $1$-form on $\mathbb{T}^2$ (corresponding to a basic $1$-form) is of the type $f(x)dx$, thus closed, while an $\overline{A}$-invariant $2$-form is of the type $g(x)dx\wedge dy$. Therefore $H^2(\f)$ must be infinite-dimensional.
\end{example}

\begin{theorem}[Alaoui--Sergiescu--Hector {\cite[Théorème 0]{kacimi2}}]\label{theorem: dim basic cohomology is finite}
Let $\f$ be a Riemannian foliation of a compact manifold $M$. Then $\dim(H^i(\f))<\infty$.
\end{theorem}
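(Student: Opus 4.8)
The plan is to reduce the finiteness of basic cohomology to a spectral-sequence or Mayer--Vietoris argument that exploits the structural theory of Riemannian foliations together with an appropriate Hodge-theoretic statement on the transverse side. The most robust route is the one via a good covering of $M$ adapted to $\f$, i.e. a finite cover $\{U_1,\dots,U_N\}$ by simple open sets such that all nonempty finite intersections $U_{i_0}\cap\dots\cap U_{i_k}$ are again simple and have the homotopy type (transversely) of a point; such a cover exists because $M$ is compact and one can use metrically convex balls for a bundle-like metric, invoking Proposition \ref{prop: Reinhart characterization} so that the transverse geometry of each intersection is that of a convex set in the local quotient. On each such piece the basic cohomology is that of a point (it is the de Rham cohomology of a contractible local quotient $S$, by the identification of basic forms with $\mathscr{H}_\f$-invariant forms on $S_\f$), so it is finite-dimensional.

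First I would set up the Mayer--Vietoris / \v{C}ech--de Rham double complex $\Omega^p(\f|_{U_{i_0}\cap\dots\cap U_{i_q}})$ built from this cover, with the exterior derivative $d$ in one direction and the \v{C}ech differential $\delta$ in the other; the content here is a basic-cohomology partition of unity, which is available because basic functions subordinate to the cover can be produced by averaging ordinary ones transversally (or one uses that $\Omega^0(\f)$-modules of basic forms admit partitions of unity relative to a saturated cover). The associated spectral sequence has $E_1$-page given by $\bigoplus H^\bullet(\f|_{\text{intersection}})$, which is finite-dimensional because each intersection contributes only $H^0$, and the cover is finite. Since $E_1$ is finite-dimensional, so is $E_\infty$, hence so is the total cohomology $H^\bullet(\f)$.

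The main obstacle, and the point where one must be careful, is establishing the two foundational facts that make the double complex work: (a) the \textbf{existence of a saturated good cover} for which the transverse pieces are cohomologically trivial — this is where Riemannianness is essential (a generic foliation has no such cover, consistently with Example \ref{example: infinite dimensional basic cohomology}), and it uses that perpendicular geodesics project to geodesics in the local quotients, so metric balls behave well transversally; and (b) the \textbf{existence of basic partitions of unity} subordinate to a saturated cover, needed so that the \v{C}ech differential has the usual algebraic-homotopy splitting. Granting (a) and (b), the remainder is the standard homological argument. An alternative to (b) — and arguably the cleaner packaging — is to pass to the holonomy pseudogroup $(S_\f,\mathscr{H}_\f)$ and run the argument for $\mathscr{H}_\f$-invariant cohomology of $S_\f$, where one can instead invoke transverse Hodge theory: the transverse Laplacian associated to the transverse metric $\mathrm{g}^T$ (built from $\nabla^B$ of Equation \eqref{eq: conn basic}) is transversely elliptic, and on a compact manifold its kernel on basic forms is finite-dimensional, which directly gives $\dim H^i(\f)<\infty$ by a basic Hodge decomposition. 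I expect the analytic subtlety — that the relevant operator, though not elliptic on $M$, is elliptic along a complement to $T\f$ and that elliptic regularity still yields finite-dimensional harmonic spaces on the compact manifold — to be the genuine technical heart of the proof.
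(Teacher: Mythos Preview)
The paper does not actually prove this theorem; it only states it and cites the original reference \cite{kacimi2}. So there is no ``paper's own proof'' to compare against, and your proposal must be assessed on its own merits.

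Your first route---a \v{C}ech--de Rham argument with a saturated good cover and basic partitions of unity---has a genuine obstruction that you flag but do not overcome. The support of any $\f$-basic function is a saturated set (since basic functions are constant along leaves). Hence a basic partition of unity subordinate to a cover $\{U_i\}$ forces each $U_i$ to contain the saturation of the support, so effectively the cover must be by saturated open sets. But if $\f$ has a dense leaf (as in the Kronecker example), every nonempty saturated open set is all of $M$, and no nontrivial saturated cover exists, let alone a good one. Conversely, a cover by simple (non-saturated) open sets admits no basic partition of unity. Your suggestion of ``averaging ordinary ones transversally'' presumes a transverse averaging operator, which is essentially what one is trying to build and does not exist without additional structure. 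So the Mayer--Vietoris approach, as stated, cannot close.

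Your second route---a basic Hodge decomposition via a transversely elliptic Laplacian---is a correct strategy and is the one carried out in later literature (e.g.\ Kamber--Tondeur, Park--Richardson; see the discussion around $\Delta_B$ and \cite{tondeur} in the paper). The analytic subtlety you identify is real: $\Delta_B$ is not elliptic on $M$, and one must work with the basic complex as a transversely elliptic complex and prove the relevant Fredholm property. The original argument of \cite{kacimi2}, incidentally, proceeds differently: it uses Molino's structural theory (the lift to the transverse frame bundle $M^\Yup$, where $\f^\Yup$ is transversely parallelizable) to reduce the question to the transversely parallelizable case, and then handles that case via a spectral sequence associated to the basic fibration. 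Either the Hodge-theoretic or the Molino-theoretic route is viable; your Mayer--Vietoris route is not.
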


As remarked in \cite[Proposition 3.11]{goertsches}, the hypothesis that $M$ is compact can be relaxed to $\f$ being transversely compact, provided that $\f$ is a complete Riemannian foliation, in the following sense.

\begin{definition}[Complete Riemannian foliation] A Riemannian foliation $\f$ of a manifold $M$ is \textit{complete} if $M$ is a complete Riemannian manifold with respect to some bundle-like metric for $\f$.
\end{definition}

It follows that $\chi(\f)$ is always defined for transversely compact (i.e., such that $M/\f$ is compact) complete Riemannian foliations. We mention the following transverse analogue of the Bonnet--Myers Theorem due to J.~Hebda:

\begin{theorem}[{\cite[Theorem 1]{hebda}}]\label{theorem: hebda}
Let $\f$ be a complete Riemannian foliation satisfying $\ric_\f\geq c>0$. Then $\f$ is transversely compact and $H^1(\f)\cong 0$.
\end{theorem}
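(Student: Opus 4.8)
The plan is to transplant Hebda's argument, which lives on the local quotients, into the transverse setting via the basic Levi--Civita connection. Fix a bundle-like metric $\metric$ for $\f$ with respect to which $M$ is complete, and let $\nabla^B$ be the basic Levi--Civita connection on $\nu\f\equiv T\f^\perp$; by the discussion preceding Proposition \ref{prop: Reinhart characterization}, under a locally defining submersion $\pi:U\to S$ this connection pushes forward to the Levi--Civita connection $\nabla^S$ of the quotient metric $\pi_*(\metric^T)$, and geodesics perpendicular to the leaves project to geodesics of $S$. The transverse Ricci curvature $\ric_\f$ is, by definition, the $\f$-basic symmetric $2$-tensor that corresponds fiberwise to $\mathrm{Ric}^S$ under these submersions, so the hypothesis $\ric_\f\geq c>0$ says exactly that each local quotient $(S,\pi_*(\metric^T))$ satisfies $\mathrm{Ric}^S\geq c>0$.

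First I would establish transverse compactness. By completeness of $\metric$, geodesics perpendicular to the leaves are defined for all time; by Proposition \ref{prop: Reinhart characterization} such a geodesic stays perpendicular to every leaf it meets and projects, in each simple open set, to a geodesic of the quotient. One then shows that the length of any minimizing horizontal geodesic segment is bounded by the Bonnet--Myers constant $\pi/\sqrt{c}$: otherwise, concatenating the local quotient pictures, one produces in some local quotient $S$ a geodesic segment of length $>\pi/\sqrt{c}$ with no conjugate point, contradicting the classical Bonnet--Myers index argument applied to $\nabla^S$. Since the distance function on $M/\overline{\f}$ is realized by horizontal minimizing geodesics (this is where completeness of the bundle-like metric is used, to run a Hopf--Rinow-type argument on the leaf-closure space), $M/\overline{\f}$ has diameter at most $\pi/\sqrt{c}$; being a complete locally compact length space of bounded diameter, it is compact. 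Hence $\f$ is transversely compact, and in particular $\dim H^i(\f)<\infty$ for all $i$ by Theorem \ref{theorem: dim basic cohomology is finite} (in the form recalled after it for transversely compact complete Riemannian foliations).

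Next I would kill $H^1(\f)$. A class in $H^1(\f)$ is represented by a basic $1$-form $\omega$ which, under the identification of basic forms with $\mathscr{H}_\f$-invariant forms on $S_\f$, is a closed $\mathscr{H}_\f$-invariant $1$-form on the local quotients. Using the transverse Hodge theory available for transversely compact complete Riemannian foliations, I would replace $\omega$ by its transversely harmonic representative, i.e. one with $\Delta_\f\omega=0$ where $\Delta_\f$ is the basic Laplacian; transverse harmonicity is a local condition on the quotients, so $\omega$ restricts to a harmonic $1$-form on each $(S,\pi_*(\metric^T))$. Applying the Bochner formula on each local quotient and using $\mathrm{Ric}^S\geq c>0$ gives $\nabla^S\omega=0$ and $\mathrm{Ric}^S(\omega^\sharp,\cdot)=0$ there, forcing $\omega\equiv 0$ on each quotient, hence $\omega=0$ globally. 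Therefore $H^1(\f)\cong 0$.

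The main obstacle is not the Bochner step, which is essentially classical once phrased on the quotients, but rather the first part: making rigorous that distances in $M/\overline{\f}$ are realized by horizontal minimizing geodesics of a complete bundle-like metric, so that Bonnet--Myers on the quotients globalizes to a diameter bound on $M/\overline{\f}$. This requires the metric-space structure of the leaf-closure space and a version of the Hopf--Rinow theorem in that setting; it is exactly the point where the completeness hypothesis, as opposed to compactness of $M$, does the work, and where one must be careful that closures of leaves, not just leaves, are the relevant fibers. I would either invoke the structure of $M/\overline{\f}$ as a metric space coming from Molino theory (to be developed in Section \ref{subsection: Molino Theory}) or argue directly with horizontal lifts of quotient geodesics across overlapping simple open sets.
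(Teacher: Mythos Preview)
The paper does not give a proof of this theorem; it is quoted as a black-box result of Hebda. The only indication the survey gives about the method comes later, in Section~\ref{section: transverse geometry of Killing}, where it remarks that ``Hebda's Theorem~\ref{theorem: hebda} is obtained essentially by the study of focal points of leaves over horizontal geodesics.'' So there is nothing to match line by line; what can be compared is your outline against that one-sentence description of Hebda's strategy.

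Your diameter bound via a transverse Bonnet--Myers/index argument along horizontal geodesics is exactly in the spirit of the focal-point approach the survey alludes to, and your own caveat about realizing distances in $M/\overline{\f}$ by horizontal minimizing geodesics is the genuine technical point. Where your proposal diverges, and where there is a real gap, is the $H^1(\f)\cong 0$ step. You invoke a basic Hodge decomposition to produce a basic harmonic representative and then run Bochner on the local quotients. But the basic Hodge theory you appeal to is stated in this survey (and in the references it cites) for $M$ \emph{compact} and oriented; you only have $\f$ transversely compact and $M$ complete, and there is no circular shortcut since you have not shown $M$ is compact (indeed it need not be). Without a harmonic representative the local Bochner argument has nothing to bite on: a closed $\mathscr{H}_\f$-invariant $1$-form on the local quotients is not locally harmonic in general. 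Hebda's route avoids this entirely by staying with the second-variation/focal-point analysis (the same mechanism that gives the diameter bound) and deducing the vanishing of $H^1(\f)$ from a covering/``transverse $\pi_1$'' argument in the style of the classical Myers theorem, rather than from Hodge theory. If you want to keep a Bochner-type proof, you would first need to justify a basic Hodge decomposition under the weaker hypothesis of transverse compactness, which is a nontrivial analytic statement not supplied by the survey.
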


Basic cohomology of Riemannian foliations can be studied via the basic Laplacian $\Delta_B$. Let $\f$ be a transversely oriented Riemannian foliation of a compact oriented manifold $M$ endowed with a bundle-like metric $\mathrm{g}$. Consider the scalar product $\proin{\cdot}{\cdot}_B$ in $\Omega^i(\f)$ given by the restriction of the usual scalar product in $\Omega^i(M)$ (see, e.g., \cite[Section 2 of Chapter 7]{petersen}). The \textit{basic laplacian} is the operator $\Delta_B:\Omega^i(\f)\to\Omega^i(\f)$ given by $\Delta_B=d\delta+\delta d$, where $\delta$ is the formal adjoint of $d$ with respect to $\proin{\cdot}{\cdot}_B$. We denote by $\mathcal{H}^i(\f)$ the space of basic \textit{harmonic} $i$-forms, that is, basic $i$-forms $\alpha$ satisfying $\Delta_B\alpha=0$. For a thorough introduction to this objects, we refer to \cite[Chapter 7]{tondeur}.

There is a basic version of Hodge's decomposition theorem for $\Delta_B$ that gives an orthogonal decomposition (see \cite[Theorem 7.22]{tondeur})
$$\Omega^i(\f)\cong \mathrm{Im}(d)\oplus\mathrm{Im}(\delta)\oplus\mathcal{H}^i(\f)$$
and so provides an isomorphism (see also \cite[Theorem 7.51]{tondeur})
$$H^i(\f)\cong\mathcal{H}^i(\f).$$
This leads to duality theorems for the basic cohomology. Poincaré duality in its expected form, however, is only available for the so-called taut foliations: a foliation $\f$ of $M$ is \textit{taut} it there exists a Riemannian metric on $M$ with respect to which every leaf of $\f$ is a minimal submanifold.

\begin{theorem}[{\cite{kamber2}}, {\cite{kacimi3}} and {\cite{sergiescu}}]
Let $\f$ be a transversely oriented Riemannian foliation of codimension $q$ of a compact manifold $M$. Then $\f$ is taut if and only if $H^i(\f)\cong H^{q-i}(\f)$.
\end{theorem}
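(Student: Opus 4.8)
The plan is to relate the basic cohomology $H^\bullet(\f)$ of $\f$ to a suitable \emph{twisted} basic cohomology, and then to translate the tautness of $\f$ into an exactness statement. First I would fix a bundle-like metric $\mathrm{g}$ for $\f$; by Dom\'inguez's theorem on the existence of \emph{tense} metrics one may take $\mathrm{g}$ so that the mean curvature $1$-form $\kappa$ of the leaves is $\f$-basic, say $\kappa=\kappa_b\in\Omega^1(\f)$. Let $\chi\in\Omega^p(M)$ be the associated characteristic form, i.e.\ the leafwise volume form extended by $0$ on $T\f^\perp$. Rummler's formula then reads $(\dif\chi)_{(1)}=-\kappa_b\wedge\chi$, where the subscript denotes the component of transverse degree $1$, and the remaining part $\varphi_0$ of $\dif\chi$ has transverse degree $\geq 2$. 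In particular $\kappa_b$ is closed, so it determines the \emph{\'Alvarez class} $[\kappa_b]\in H^1(\f)$, which a theorem of \'Alvarez L\'opez shows to be independent of the bundle-like metric; moreover $d_{\kappa_b}:=\dif-\kappa_b\wedge(\cdot)$ satisfies $d_{\kappa_b}^2=0$ on $\Omega^\bullet(\f)$, and I write $H^\bullet_{\kappa_b}(\f)$ for the cohomology of this twisted complex.

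The first main step is a \emph{twisted Poincar\'e duality} $H^i(\f)\cong H^{q-i}_{\kappa_b}(\f)$. Consider the pairing $([\alpha],[\beta])\mapsto\int_M\alpha\wedge\beta\wedge\chi$ for $\alpha\in\Omega^i(\f)$ with $\dif\alpha=0$ and $\beta\in\Omega^{q-i}(\f)$ with $d_{\kappa_b}\beta=0$. For $\gamma\in\Omega^{i-1}(\f)$, integrating $\dif(\gamma\wedge\beta\wedge\chi)$ over the closed manifold $M$ and using $\dif\beta=\kappa_b\wedge\beta$, Rummler's formula (the term with $\varphi_0$ dies, since $\gamma\wedge\beta$ has transverse degree $q-1$ and $\varphi_0$ has transverse degree $\geq 2$), and the Koszul sign rule, one finds $\int_M\dif\gamma\wedge\beta\wedge\chi=0$, so the pairing descends to $H^i(\f)\times H^{q-i}_{\kappa_b}(\f)$. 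To get nondegeneracy I would run the basic Hodge theory recalled above in $\kappa_b$-twisted form: the basic Hodge star $\bar\star$ attached to $\mathrm{g}^T$ and the transverse orientation satisfies, up to sign, $\delta=\bar\star\,d_{\kappa_b}\,\bar\star$, so $\bar\star$ carries basic harmonic $i$-forms isomorphically onto the harmonic $(q-i)$-forms of the twisted Laplacian $\Delta_{\kappa_b}=d_{\kappa_b}\delta_{\kappa_b}+\delta_{\kappa_b}d_{\kappa_b}$, and on harmonic representatives the pairing is positive definite, since $\int_M\alpha\wedge\bar\star\alpha\wedge\chi=\|\alpha\|_B^2$. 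Combining this with the basic Hodge decomposition, its twisted analogue, and the finiteness of the $b^i(\f)$ from Theorem \ref{theorem: dim basic cohomology is finite}, the pairing is perfect and $H^i(\f)\cong H^{q-i}_{\kappa_b}(\f)$.

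It then remains to compare $H^\bullet(\f)$ with $H^\bullet_{\kappa_b}(\f)$ and to bring in tautness. If $[\kappa_b]=0$, i.e.\ $\kappa_b=\dif f$ for some basic $f$, then $\alpha\mapsto e^f\alpha$ is an isomorphism of complexes $(\Omega^\bullet(\f),\dif)\to(\Omega^\bullet(\f),d_{\kappa_b})$, because $d_{\kappa_b}(e^f\alpha)=e^f\dif\alpha$; hence $H^\bullet(\f)\cong H^\bullet_{\kappa_b}(\f)$ and the previous step gives $H^i(\f)\cong H^{q-i}(\f)$ for all $i$. Moreover $e^f\chi$ is then a foliated volume form with $(\dif(e^f\chi))_{(1)}=e^f(\dif f\wedge\chi)-e^f(\kappa_b\wedge\chi)=0$, so by the Rummler--Sullivan criterion $\f$ carries a Riemannian metric with all leaves minimal, i.e.\ $\f$ is taut; conversely, a taut foliation admits a bundle-like metric with minimal leaves (for which $\kappa\equiv0$), and since $[\kappa_b]$ is metric-independent this forces $[\kappa_b]=0$. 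Thus tautness is equivalent to $[\kappa_b]=0$. Finally, if $H^i(\f)\cong H^{q-i}(\f)$ for all $i$, then for $i=0$, using that $M$ is connected so $H^0(\f)\cong\mathbb{R}$, we get $H^q(\f)\neq0$; the twisted duality yields $H^0_{\kappa_b}(\f)\neq0$, so there is a nonzero basic $f$ with $\dif f=f\kappa_b$, and since $\kappa_b$ is locally of the form $\dif F$ with $F$ basic (basic Poincar\'e lemma), locally $f=ce^F$, whence $\{f=0\}$ is open and therefore empty; so $\kappa_b=\dif(\ln|f|)$ is exact, $[\kappa_b]=0$, and $\f$ is taut. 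This closes the equivalence.

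The hard part will be the nondegeneracy asserted in the second paragraph, that is, a careful development of $\kappa_b$-twisted basic Hodge theory: one must show that $\Delta_{\kappa_b}$ is transversely elliptic with finite-dimensional kernel admitting a Hodge-type decomposition, and that $\bar\star$ genuinely intertwines $\delta$ with $d_{\kappa_b}$ --- precisely where the correction term $\kappa_b$ enters, via Rummler's formula upon integrating by parts against $\chi$. The implication ``$[\kappa_b]=0\Rightarrow\f$ taut'' through Rummler--Sullivan is also non-trivial, since it requires producing an honest metric with minimal leaves from the existence of a suitable characteristic form; everything else is sign bookkeeping, together with Theorem \ref{theorem: dim basic cohomology is finite} to upgrade nondegeneracy to an isomorphism.
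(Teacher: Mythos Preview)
The paper does not prove this theorem: it is stated as a known result with references to Kamber--Tondeur, El~Kacimi--Hector, and Sergiescu, and the surrounding text only adds that tautness is also characterized by the vanishing of the mean curvature class (\'Alvarez L\'opez). There is therefore no proof in the paper to compare against.

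That said, your outline is the standard route through this circle of ideas and is essentially correct. The key ingredients --- Dom\'inguez's tense metric, the \'Alvarez class $[\kappa_b]\in H^1(\f)$ and its metric-independence, the twisted basic complex $(\Omega(\f),d_{\kappa_b})$, the twisted Poincar\'e duality $H^i(\f)\cong H^{q-i}_{\kappa_b}(\f)$ via basic Hodge theory, and the equivalence of tautness with $[\kappa_b]=0$ via Rummler--Sullivan --- are exactly those assembled in the cited papers (with the converse direction, that Poincar\'e duality forces tautness, going through $H^q(\f)\neq 0$ as you do; this is Masa's criterion, which your argument with $H^0_{\kappa_b}(\f)\neq 0$ recovers). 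Your identification of the analytic crux is also right: the nondegeneracy step rests on a twisted basic Hodge decomposition, and the intertwining $\delta=\pm\bar\star\,d_{\kappa_b}\,\bar\star$ is precisely where $\kappa_b$ enters. One small caution: in the converse you should state explicitly that you are assuming the duality for \emph{all} $i$ (or at least for $i=0$), since the theorem as phrased could be read as asserting the equivalence for a single fixed $i$; the argument you give needs $H^q(\f)\cong H^0(\f)\cong\mathbb{R}$.
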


Tautness is also characterized in \cite[Theorem 6.4]{lopez} by the vanishing of a degree $1$ cohomology class, the mean curvature class of $\f$. In particular, if $\ric_\f\geq c>0$, one concludes from Theorem \ref{theorem: hebda} that $\f$ is taut. We also mention the following characterization for tautness by Rummler.

\begin{proposition}[{\cite{rummler}}]
A $p$-dimensional orientable smooth foliation $\f$ of $M$ is taut if, and only if, there exists $\theta\in\Omega^p(M)$ which is non-singular along the leaves and satisfies
$$d\theta(v_1,\dots,v_{p+1})=0$$
whenever $p$ of the $p+1$ vectors $v_i$ are tangent to $\f$.
\end{proposition}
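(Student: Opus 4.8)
The plan is to use the \emph{characteristic form} of a metric with $T\f$ orthogonal to its complement in order to recast the statement as a computation of mean curvature, and to treat the converse through the dual description of tautness in terms of foliation currents. For the forward implication, suppose $\f$ is taut and fix a Riemannian metric $\metric$ on $M$ making every leaf minimal; fix the leafwise orientation and let $\chi_\f\in\Omega^p(M)$ be the associated characteristic form, i.e. the $p$-form determined by $i_X\chi_\f=0$ whenever $X\perp T\f$ and by $\chi_\f|_{T_x\f}$ being the Riemannian volume form of $L_x$. Then $\chi_\f$ is by construction non-singular along the leaves, so only the condition on $d\chi_\f$ needs checking. First I would observe that a $(p+1)$-form $\alpha$ vanishes whenever $p$ of its $p+1$ arguments are tangent to $\f$ if and only if its component of bidegree $(p,1)$ for the splitting $TM=T\f\oplus T\f^\perp$ vanishes: if $p$ of the arguments span $T_x\f$ and the last is transverse, then among the bidegrees $(a,p+1-a)$ with $a\le p$ only $a=p$ can contribute. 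Then I would evaluate $(d\chi_\f)(E_1,\dots,E_p,V)$, for a local oriented orthonormal frame $E_1,\dots,E_p$ of $T\f$ and $V\perp T\f$, by Cartan's formula: all summands drop except those involving the brackets $[E_i,V]$ (using $i_V\chi_\f=0$ and the involutivity of $T\f$), and the remaining sum collapses to $\pm\sum_i\metric(\nabla_{E_i}E_i,V)=\pm\metric(H_x,V)$, where $H_x$ is the mean-curvature vector of $L_x$ at $x$. Since $H\equiv 0$, the $(p,1)$-component of $d\chi_\f$ vanishes and $\theta:=\chi_\f$ does the job.

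For the converse, suppose $\theta$ as in the statement exists; after replacing $\theta$ by $-\theta$ if needed it is positive on positively oriented tangent $p$-planes, and $\mu:=\theta|_{T\f}$ is a leafwise volume form. The naive attempt of simply picking a metric with $T\f$ orthogonal to a complement $\nu$ and with leafwise volume $\mu$ fails: its characteristic form is only the $(p,0)$-part of $\theta$, and Rummler's hypothesis then controls only the $(p,1)$-part of $d\theta$, which, via the structure equation $d\chi_\f=-\kappa_\f\wedge\chi_\f+\varphi_0$ (with $\kappa_\f$ the mean-curvature one-form and $\varphi_0$ of transverse degree $\ge 2$), merely relates $\kappa_\f$ to the mixed part of $\theta$ instead of making it vanish. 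Instead I would invoke Sullivan's homological characterization of tautness: $\f$ is taut if and only if no nonzero \emph{foliation cycle} — a closed positive current carried by $T\f$ — is the boundary of a $(p+1)$-current $D$ carried by $(p+1)$-planes each of which contains a tangent $p$-plane. Were $\partial D$ such a nonzero cycle, then on the one hand $\langle\partial D,\theta\rangle>0$, since $\theta$ is positive on positively oriented tangent $p$-planes and $\partial D$ is a nonzero positive tangential current, while on the other hand $\langle\partial D,\theta\rangle=\langle D,d\theta\rangle=0$, since $d\theta$ annihilates exactly the $(p+1)$-planes carrying $D$; this contradiction forces $\f$ to be taut. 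Dually, $\theta$ is precisely a Hahn--Banach functional separating the cone of foliation cycles from that space of boundaries, so Rummler's form and Sullivan's criterion are two presentations of the same fact.

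The main obstacle is this converse, and specifically the appeal to Sullivan's structure-current theorem: as the failure of the naive attempt shows, a minimal-making metric cannot be produced from $\theta$ by algebraic manipulation alone, and the clean equivalence rests on the functional-analytic duality between foliation cycles and Rummler-type forms. A self-contained treatment would either reprove Sullivan's theorem, or — in Rummler's original setting, where all leaves are compact — replace it by the elementary remark that his condition is equivalent to $(\mathcal{L}_Y\theta)|_{T\f}=\big(d(i_Y\theta)\big)|_{T\f}$ for every foliate vector field $Y$, so that $\int_L\mathcal{L}_Y(\theta|_{T\f})=0$ by Stokes; this makes the total leafwise volume transverse-holonomy invariant, and a metric with minimal leaves is then produced by Rummler's original argument.
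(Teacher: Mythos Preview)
The paper does not prove this proposition; it merely cites Rummler's original article, so there is no in-paper argument to compare against. Your forward direction is correct and is exactly Rummler's computation: the characteristic form of a bundle-like metric with minimal leaves satisfies the stated condition because its $(p,1)$-derivative is $-\kappa\wedge\chi_\f$ with $\kappa$ the mean-curvature one-form.

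Your converse, however, has a genuine gap. The version of Sullivan's criterion you invoke --- that $\f$ is taut iff no nonzero foliation cycle bounds a current \emph{carried by $(p+1)$-planes containing a tangent $p$-plane} --- is not Sullivan's theorem. Sullivan's statement is that tautness is equivalent to no nonzero foliation cycle being the boundary of an \emph{arbitrary} $(p+1)$-current $D$; there is no restriction on the planes carrying $D$. With the correct statement your pairing argument breaks: you get $\langle\partial D,\theta\rangle=\langle D,d\theta\rangle$, but $d\theta$ need not vanish on the support of a general $D$, so no contradiction follows. The dual Hahn--Banach formulation likewise yields a \emph{closed} form positive on tangent planes, whereas Rummler's $\theta$ is only $(p,1)$-closed, so the two are not immediately interchangeable.

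The fix is that the ``naive'' direct construction you discarded actually works, provided one chooses the complement $\nu$ correctly. Given $\theta$, set $\nu_x:=\{v\in T_xM: (\iota_v\theta)|_{\Lambda^{p-1}T_x\f}=0\}$; non-singularity of $\theta$ along the leaves makes this a smooth complement to $T\f$, and by construction $\theta^{(p-1,1)}=0$ relative to this splitting. A short bidegree check (using only integrability of $T\f$) shows that $(d\,\theta^{(p-k,k)})^{(p,1)}=0$ for every $k\geq 2$, so $(d\chi_\f)^{(p,1)}=(d\theta)^{(p,1)}=0$ for the characteristic form $\chi_\f=\theta^{(p,0)}$ of any metric with $T\f\perp\nu$ and leafwise volume $\theta|_{T\f}$. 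Rummler's formula then gives $\kappa=0$, and the leaves are minimal for this metric. Your closing remark about compact leaves is correct but unnecessary: the direct construction above handles the general case without Sullivan.
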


Although tangent orientability appears in Rummler's criterion, tautness is a transverse property: it depends only on the holonomy pseudogroup $\mathscr{H}_\f$ (see \cite[Theorem 4.1]{haefliger}). We refer to \cite[Section 10.5]{candel} for more on taut foliations.

\subsection{Complete Pseudogroups of Local Isometries}

As we mentioned earlier, the transverse information of a Riemannian foliation corresponds to the holonomy-invariant information on a total transversal, so, in considering transverse geometry, one can focus on the later. In this section we survey this point of view, pioneered by A.~Haefliger, focusing mainly on the study the closures of the orbits of a complete  pseudogroup of isometries. The main references are \cite{haefliger6}, \cite{salem1} and \cite{salem}.

It follows directly from the definition of a Riemannian foliation $(M,\f,\mathrm{g}^T)$ that the transverse metric $\mathrm{g}^T$ projects to Riemannian metrics on the local quotients $S_i$ of a Haefliger cocycle $\{(U_i,\pi_i,\gamma_{ij})\}$ defining $\f$ (see \cite[Section 3.2]{molino}, and also \cite[Remark 2.7(2)]{mrcun}). Therefore the holonomy pseudogroup $\mathscr{H}_\f$ becomes a pseudogroup of local isometries of $S_\f$. Moreover, by choosing a bundle-like metric on $M$, the submersions $\pi_i$ become Riemannian submersions.

\begin{definition}[Complete pseudogroups]
We say that a pseudogroup of local isometries $(\mathscr{H},S)$ is \textit{complete} when, given $x,y\in S$, there exists neighborhoods $U\ni x$ and $V\ni y$ such that every germ of an element of $\mathscr{H}$ with source in $U$ and target in $V$ is the germ of an element of $\mathscr{H}$ defined on the whole of $U$.
\end{definition}

This property is invariant by differentiable equivalences \cite[p. 278]{salem}. It is also independent of the concept of completeness in the sense of Riemannian manifolds, as the following example shows.

\begin{example}[{\cite[Example 2.8]{salem}}]
Suppose $\mathscr{H}$ is a pseudogroup of local diffeomorphisms of $S$ whose equivalence class represents an orbifold $\mathcal{O}$. One can always choose a Riemannian metric on $\mathcal{O}$, which corresponds to an $\mathscr{H}$-invariant Riemannian metric on $S$. Then $\mathcal{O}$ is not necessarily complete as a Riemannian orbifold, but we claim that $\mathscr{H}$ is a complete pseudogroup of local isometries. In fact, for every point $x\in S$ one can find an $\mathscr{H}$-invariant neighborhood $U\ni x$. Hence, if $x,y\in S$ are in the same $\mathscr{H}$-orbit then every germ of an element of $\mathscr{H}$ with source and target in $U$ is the germ of an element defined on the whole of $U$ (here we take $V=U\ni y$ to be the neighborhood of $y$ used in the definition of completeness). On the other hand, if $x$ and $y$ are in different orbits, then since $S/\mathscr{H}$ is Hausdorff one can separate the orbits $\mathscr{H}x$ and $\mathscr{H}y$ by two disjoint open neighborhoods $U\supset\mathscr{H}x$ and $V\supset\mathscr{H}y$. Therefore there are no germs of elements of $\mathscr{H}$ with source in $U$ and target in $V$.
\end{example}

The example below establishes the connection between complete Riemannian foliations and complete pseudogroups of local isometries. A proof can be seen in \cite[p. 281]{salem}. It follows essentially from Proposition \ref{prop: Reinhart characterization}.

\begin{example}[{\cite[Example 1.2.1]{haefliger6}}]
The holonomy pseudogroup of a complete Riemannian foliation is a complete pseudogroup of local isometries.
\end{example}

Let $\mathscr{H}$ be a complete pseudogroup of local isometries. Its closure $\overline{\mathscr{H}}$ is defined as the pseudogroup on $S$ whose elements are locally the limits, in the $C^1$ topology, of elements of $\mathscr{H}$.

\begin{proposition}[{\cite[Proposition 3.1]{haefliger6}}]\label{prop: closure of pseudogroup}
The closure $\overline{\mathscr{H}}$ of a complete pseudogroup of local isometries $\mathscr{H}$ is a complete pseudogroup of local isometries, unique up to equivalence. Moreover, $S/\overline{\mathscr{H}}$ is Hausdorff and, for any $x\in S$,
$$\overline{\mathscr{H}x}=\overline{\mathscr{H}}x.$$
\end{proposition}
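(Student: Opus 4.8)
The plan is to reduce everything to the local picture and then use an Ascoli–Arzelà argument. First I would verify that $\overline{\mathscr{H}}$, as defined by taking local $C^1$-limits of elements of $\mathscr{H}$, is indeed a pseudogroup: the identities belong to it trivially, and stability under inversion and composition follows because these operations are continuous in the $C^1$ topology on the relevant source/target domains, while the sheaf-type axiom (iv) is built into the definition via the word ``locally''. That each element of $\overline{\mathscr{H}}$ is a local isometry is immediate, since a $C^1$-limit of isometries between fixed-dimensional Riemannian manifolds is again an isometry (the metric-preservation condition $\varphi^*\mathrm{g}=\mathrm{g}$ is closed under $C^1$-convergence, and such a map is automatically smooth by the Myers–Steenrod-type regularity for isometries). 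So far no completeness is needed.

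Next I would establish completeness of $\overline{\mathscr{H}}$. Fix $x,y\in S$. Using completeness of $\mathscr{H}$, choose neighborhoods $U\ni x$, $V\ni y$ witnessing it for the pair $(x,y)$; I expect one can shrink $U$ and $V$ to relatively compact geodesically convex balls and still retain the completeness property (possibly after also invoking completeness at nearby pairs and a compactness argument on $\overline{U}\times\overline{V}$). The point is that any germ at a point of $U$ of an element of $\overline{\mathscr{H}}$ landing in $V$ is a $C^1$-limit of germs of elements of $\mathscr{H}$; each of those extends to all of $U$ by completeness of $\mathscr{H}$, and these extensions, being isometries into a fixed relatively compact target region, form an equicontinuous family with bounded first derivatives, so by Ascoli–Arzelà a subsequence converges in $C^1$ on $U$ to an isometry $U\to S$, which by construction lies in $\overline{\mathscr{H}}$ and extends the original germ. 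This is the step I expect to be the main obstacle: making the extraction of a convergent subsequence genuinely produce an element of the pseudogroup defined on the \emph{whole} of $U$ (not merely on compact subsets), and checking that the limit really extends the prescribed germ rather than some conjugate of it — one has to track base points carefully and use that $S$ is Hausdorff and the target stays in a fixed compact set.

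For the Hausdorffness of $S/\overline{\mathscr{H}}$, I would argue by contradiction: if two $\overline{\mathscr{H}}$-orbits cannot be separated, pick $x_n\to x$ and $y_n\to y$ with $x_n$, $y_n$ in the same $\overline{\mathscr{H}}$-orbit; using completeness of $\overline{\mathscr{H}}$ (just proved) together with the Ascoli–Arzelà extraction, one produces an element of $\overline{\mathscr{H}}$ carrying a neighborhood of $x$ to a neighborhood of $y$, so $x$ and $y$ lie in the same orbit, contradicting the assumption that the orbits are distinct — here one uses that $\overline{\mathscr{H}}$-orbits are closed, which is essentially the last assertion. Finally, for the orbit-closure identity $\overline{\mathscr{H}x}=\overline{\mathscr{H}}x$: the inclusion $\overline{\mathscr{H}}x\subset\overline{\mathscr{H}x}$ is formal, since if $h\in\overline{\mathscr{H}}$ is a $C^1$-limit of $h_n\in\mathscr{H}$ near $x$, then $h(x)=\lim h_n(x)\in\overline{\mathscr{H}x}$; conversely, given $z\in\overline{\mathscr{H}x}$, write $z=\lim h_n(x)$ with $h_n\in\mathscr{H}$, apply completeness of $\mathscr{H}$ to arrange that (after passing to a subsequence) all $h_n$ are defined on a fixed neighborhood $U$ of $x$ with images in a fixed compact neighborhood of $z$, then extract a $C^1$-convergent subsequence by Ascoli–Arzelà whose limit $h\in\overline{\mathscr{H}}$ satisfies $h(x)=z$, giving $z\in\overline{\mathscr{H}}x$. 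Uniqueness up to equivalence of $\overline{\mathscr{H}}$ follows because the construction is manifestly compatible with differentiable equivalences, the completeness property being invariant under them.
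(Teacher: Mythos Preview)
The paper does not supply its own proof of this proposition: it is quoted directly from Haefliger \cite[Proposition 3.1]{haefliger6} and used as a black box, so there is nothing in the paper to compare your argument against line by line.

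That said, your outline is essentially the standard proof one finds in Haefliger's original paper and in Salem's appendix \cite{salem}: verify the pseudogroup axioms for $\overline{\mathscr{H}}$ by continuity of composition and inversion in $C^1$, then use completeness of $\mathscr{H}$ together with an Ascoli--Arzel\`a extraction (isometries on a relatively compact domain into a relatively compact target form an equicontinuous family with uniformly bounded derivatives) to promote germs of $\overline{\mathscr{H}}$ to maps defined on a full neighborhood, and finally deduce both $\overline{\mathscr{H}x}=\overline{\mathscr{H}}x$ and Hausdorffness of the orbit space from this same extraction. The one place where you should be a bit more careful is exactly the one you flag: to ensure the Ascoli--Arzel\`a limit is genuinely defined on the whole open set $U$ (and not merely on compacta), one fixes from the outset a smaller relatively compact $U'\Subset U$ with $\overline{U'}$ compact and works there; the resulting limit isometry on $U'$ is then by definition an element of $\overline{\mathscr{H}}$. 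With that adjustment your sketch is correct and matches the approach in the cited references.
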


We say that $\mathscr{H}$ is \textit{closed} when $\mathscr{H}=\overline{\mathscr{H}}$.

\begin{example}[{\cite[Example at p. 279]{salem}}]\label{example: Salem closure pseudogroup generated by lie group}
Let $G<\mathrm{Iso}(M)$, for a Riemannian manifold $M$. If $\mathscr{H}$ is the pseudogroup generated by the restriction of elements of $G$ to open sets, then $\overline{\mathscr{H}}$ is the pseudogroup generated by the closure $\overline{G}<\mathrm{Iso}(M)$, in the compact-open topology.
\end{example}

\subsection{A brief interlude on sheaves}

Before we continue it will be convenient to recall the notion of sheaves, which are tools for working with locally defined data on topological spaces. A \textit{presheaf} $\mathscr{P}$ on a topological space $(X,\tau)$ consists of an assignment of a set $\mathscr{P}(U)$, to each $U\in\tau$, and a \textit{restriction map} $\res^U_V:\mathscr{P}(U)\to\mathscr{P}(V)$, to each $U,V\in\tau$ with $V\subset U$, such that $\res^U_U$ is always the identity map and $\res^V_W\circ \res^U_V=\res^U_W$ whenever $W\subset V\subset U$. An element $s\in\mathscr{P}(U)$ is a \textit{section} over $U$.

One often is interested in local data (the sets $\mathscr{P}(U)$) that have additional structure, such as algebraic operations. In this case one requires that the restriction maps preserve the additional structure. This leads to the definition of presheaves of groups, rings and so on. For example, if each $U$ is assigned to a (real) Lie algebra $\mathscr{P}(U)$ and each $\res^U_V$ is a Lie algebra homomorphism, then $\mathscr{P}$ is a \textit{presheaf of Lie algebras}.

\begin{example}
Let $M$ be a smooth manifold. The assignment $U\mapsto C^\infty(U)$, of an open set $U$ to the ring of smooth functions $f:U\to\mathbb{R}$, together with the usual restriction of functions is a presheaf of rings $\mathscr{C}^\infty_M$.  
\end{example}

Given a presheaf $\mathscr{P}$ on $(X,\tau)$ and $x\in X$, let $\mathcal{U}_x$ be the collection of open sets that contain $x$. For $U_1,U_2\in\mathcal{U}_x$, declare $s_1\in\mathscr{P}(U_1)$ and $s_2\in\mathscr{P}(U_2)$ to be equivalent if there exists $V\in\mathcal{U}_x$ such that $V\subset U_1\cap U_2$ and $\res^{U_1}_V(s_1)=\res^{U_2}_V(s_2)$. The equivalence class of $s\in\mathscr{P}(U)$ is the \textit{germ} of $s$ at $x$, denoted by $\res^U_x(s)$ or simply by $[s]_x$. The set $\mathscr{P}_x$ of germs at $x$ is called the \textit{stalk} of $\mathscr{P}$ at $x$. Notice that the stalks of a presheaf of structured sets (say groups or Lie algebras) inherit that structure in a natural way.

A \textit{sheaf} on a topological space on $(X,\tau)$ is a presheaf $\mathscr{S}$ on $(X,\tau)$ such that, for any $U\in\tau$ and any open covering $\{U_i\}_{i\in I}$ of $U$,
\begin{enumerate}[(i)]
\item if $s,t\in\mathscr{S}(U)$ satisfy $\res^U_{U_i}(s)=\res^U_{U_i}(t)$ for every $i$, then $s=t$, and
\item \label{gluing property} if $s_i\in\mathscr{S}(U_i)$ satisfy $\res^{U_{i_1}}_{U_{i_1}\cap U_{i_2}}(s_{i_1})=\res^{U_{i_2}}_{U_{i_1}\cap U_{i_2}}(s_{i_2})$ for every $i_1,i_2\in I$, then there exists $s\in\mathscr{S}(U)$ with $\res^U_{U_i}(s)=s_i$.
\end{enumerate}

A sheaf of groups (or rings, Lie algebras etc.) is just a presheaf of groups (or rings, Lie algebras etc.) that is a sheaf in the above sense.

\begin{example}
It is not difficult to check that, for a smooth manifold $M$, the presheaf $\mathscr{C}^\infty_M$ is a sheaf of rings. Now let $\pi:E\to M$ be a smooth vector bundle. The presheaf that assigns to each open set $U$ the space of smooth local sections of $E$ over $U$, with the usual restriction maps, is a sheaf of $\mathscr{C}^\infty_M$-modules.
\end{example}

Sections of a sheaf $\mathscr{S}$ can be realized as (usual) sections of its étalé space. In fact, given a presheaf $\mathscr{P}$ on $(X,\tau)$, its \textit{étalé space} is the space
$$\etale(\mathscr{P}):=\bigsqcup_{x\in X}\mathscr{P}_x$$
endowed with the (in general non-Hausdorff) topology whose basis is given by the sets of the form $V_{U,s}=\{[s]_x\ |\ x\in U\}$, for $U\in\tau$ and $s\in\mathscr{P}(U)$. There is a canonical projection
$$\pi_\mathscr{P}:\etale(\mathscr{P})\ni[s]_x\mapsto x\in X,$$
which is a local homeomorphism, by construction. The presheaf $\Gamma(\etale(\mathscr{P}))$ of local sections of $\pi_\mathscr{P}$ (that is, continuous maps $s:U\to\etale(\mathscr{P})$ with $\pi_\mathscr{P}\circ s=\mathrm{id}_U$) is a sheaf, called the \textit{sheafification} of $\mathscr{P}$, or the \textit{sheaf of germs of sections of} $\mathscr{P}$. When $\mathscr{P}$ is already a sheaf, it is isomorphic to $\Gamma(\etale(\mathscr{P}))$.

\begin{example}[Constant sheaves]
A sheaf $\mathscr{S}$ on $X$ is \textit{constant} when all its stalks are equal to the same set $Z$. In this case we can identify $\etale(\mathscr{P})\cong X\times Z$, where $Z$ is given the discrete topology, and $\pi_\mathscr{P}:X\times Z\to X$ is just the projection on the first factor. Under this identification a section $s\in\mathscr{S}(U)$ is identified with a locally constant map $U\to Z$. More generally, a sheaf $\mathscr{S}$ is called \textit{locally constant} when every $x\in X$ admits an open neighborhood $U\ni x$ where $\mathscr{S}|_U$ is constant. In this case the identification $\etale(\mathscr{S}|_U)\cong U\times Z$ is a \textit{local trivialization} of $\mathscr{S}$.

It is instructive to compare this notion with the case of a presheaf $\mathscr{P}$ on $X$ that satisfies $\mathscr{P}(U)=Z$ for all $U$. In this case $\mathscr{P}$ is usually not a sheaf, since the ``gluing property'' \eqref{gluing property} does not hold in general (unless $\tau$ has the peculiar feature that all open sets are connected, or $\#Z\leq1$). In fact, the constant sheaf $\mathscr{S}$ with stalk $Z$ is the sheafification of $\mathscr{P}$.
\end{example}

Sheaves can be ``transported'' through continuous maps, as follows. Suppose $f:X\to Y$ is continuous and $\mathscr{R}$ is a sheaf on $X$. Then we define the \textit{direct image of $\mathscr{R}$ by $f$} as the sheaf $f_*\mathscr{R}$ on $Y$ given by $f_*\mathscr{R}(U)=\mathscr{R}(f^{-1}(U))$ (which is in fact a sheaf on $Y$). The restriction maps $f_*\res$ of $f_*\mathscr{R}$ satisfy $f_*\res^U_V=\res^{f^{-1}(U)}_{f^{-1}(V)}$.

On the other hand, if we have a sheaf $\mathscr{S}$ on $Y$ then we can also obtain a sheaf $f^{-1}\mathscr{S}$ on $X$, called the \textit{inverse image of $\mathscr{S}$ by $f$}, which consists of the sheaf of germs of sections of the étalé space $f^{-1}\etale{\mathscr{S}}:=\{(x,[s]_y)\in X\times \etale(\mathscr{S})\ |\ f(x)=y\}$ over $X$. It is instructive to try to understand $f^{-1}\mathscr{S}(U)$ in terms of the values of $\mathscr{S}$ on open sets of $Y$, although this is a little involved since $f(U)$ is not necessarily an open set. To circumvent this, we need to generalize the notion of germ, as follows. For any subset $A\subset Y$, let $U_1$ and $U_2$ be open neighborhoods of $A$. We will say that two sections $s_1\in\mathscr{S}(U_1)$ and $s_2\in\mathscr{S}(U_2)$ are equivalent if there exists a neighborhood $W\subset U_1\cap U_2$ of $A$ such that $\res^{U_1}_W(s_1)=\res^{U_2}_W(s_2)$. We denote the set of equivalence classes by $\mathscr{S}_A$, and an equivalence class by $[s]_A$. Notice that if $B\subset A$ we can define a restriction $\res^A_B[s]_A$, since any open neighborhood of $A$ will also be an open neighborhood of $B$. With this concept we can now consider the presheaf $f^{-1}_{\mathrm{pre}}\mathscr{S}$ on $X$ given by $f^{-1}_{\mathrm{pre}}\mathscr{S}(U)=\mathscr{S}_{f(U)}$. It can fail to be a sheaf (even when $\mathscr{S}$ is a sheaf) but $f^{-1}\mathscr{S}$ is isomorphic to its sheafification $\Gamma(\etale(f^{-1}_{\mathrm{pre}}\mathscr{S}))$. The restriction maps $f^{-1}\res$ of $f^{-1}\mathscr{S}$ satisfy $f^{-1}\res^U_V=\res^{f(U)}_{f(V)}$.

\subsection{Infinitesimal sheaf of a complete pseudogroup}\label{section: infinitesimal sheaf}

There is a structural theorem for complete pseudogroups of local isometries, due to E.~Salem \cite{salem1}, which describes the closures of the orbits of such a pseudogroup as orbits of a sheaf of Lie algebras on it. This result can also be seen as a generalization of Myers--Steenrod Theorem for closed, complete pseudogroups of local isometries.

Let $\mathscr{H}$ be a pseudogroup of local isometries of $S$. We consider the sheaf which to each open set $U\subset S$ associates the space $\mathfrak{iso}_\mathscr{H}(U)$ of vector fields $X$ on $U$ with the property that for all $x\in U$ there exists a neighborhood $V_x\ni x$ and $\varepsilon>0$ such that $\exp(tX)$ is defined on $V_x$, when $|t|<\varepsilon$, and $\exp(tX)\in\mathscr{H}$. 

\begin{definition}[Sheaf of infinitesimal transformations]
With the notation above, the sheaf $\mathfrak{iso}_\mathscr{H}$ is called the \textit{sheaf of infinitesimal transformations} of $\mathscr{H}$.
\end{definition}

Notice that $\mathfrak{iso}_\mathscr{H}$ plays an analog role, in the context of pseudogroups, as the Lie algebra of Killing vector fields induced by an isometric Lie group action on a manifold (cf. Example \ref{example: infinitesimal sheaf of lie group}). For a complete pseudogroup of local isometries $\mathscr{H}$ the infinitesimal the sheaf $\mathfrak{iso}_\mathscr{H}$ is a locally constant sheaf of Lie algebras of germs of Killing vector fields on $S$ (see \cite[Proposition]{salem1}).

\begin{example}[{\cite[Example at p. 188]{salem1}}]\label{example: infinitesimal sheaf of lie group}
If $\mathscr{H}$ is generated by a closed subgroup $G$ of isometries of a Riemannian manifold $M$, then $\mathfrak{iso}_\mathscr{H}$ is the sheaf whose sections are the restrictions of the fundamental Killing vector fields of the action, that is, elements in the Lie algebra $\mathfrak{iso}_G$ which is the image of the map $\mathfrak{g}\ni X\mapsto X^\#\in\mathfrak{X}(M)$, where
$$X^\#(x)=\left.\od{}{t}\exp(tX)x\right|_{t=0}.$$
Notice that the sheaf $\mathfrak{iso}_\mathscr{H}$ is isomorphic to the constant sheaf with stalk $\mathfrak{g}^{-1}$ on $M$.
\end{example}

In view of the facts above, the following definition is natural. A complete pseudogroup of local isometries $\mathscr{H}$ is a \textit{Lie pseudogroup} when any element of $\mathscr{H}$ that is close enough to the identity is of the form $\exp(X)$, for a local section $X$ of $\mathfrak{iso}_\mathscr{H}$ close to $0$.

\begin{theorem}[Myers--Steenrod theorem for complete pseudogroups {\cite[Théorème]{salem1}}]\label{theorem: salem}
Every complete pseudogroup of local isometries $\mathscr{H}$ which is closed in the $C^1$ topology is a Lie pseudogroup.
\end{theorem}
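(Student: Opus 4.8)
The plan is to adapt the classical proof of the Myers--Steenrod theorem to the pseudogroup setting, working locally and then globalizing. The statement asserts that a complete, $C^1$-closed pseudogroup of local isometries $\mathscr{H}$ is a Lie pseudogroup, i.e., every element of $\mathscr{H}$ sufficiently $C^1$-close to the identity is $\exp(X)$ for a small local section $X$ of $\mathfrak{iso}_\mathscr{H}$. Since we already know (from the discussion preceding the statement, following Salem) that $\mathfrak{iso}_\mathscr{H}$ is a \emph{locally constant sheaf of Lie algebras of germs of Killing fields}, the stalk $(\mathfrak{iso}_\mathscr{H})_x$ is a finite-dimensional Lie algebra $\mathfrak{g}_x$; fixing a point $x_0\in S$ and a connected neighborhood where $\mathfrak{iso}_\mathscr{H}$ is constant, we get a fixed finite-dimensional $\mathfrak{g}$ acting by genuine Killing fields on that neighborhood. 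The heart of the matter is to show that $\mathscr{H}$ near the identity is ``no bigger'' than what this $\mathfrak{g}$ produces.

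First I would set up the analytic framework: restrict attention to a relatively compact geodesically convex ball $B = B(x_0,r)\subset S$ on which $\mathfrak{iso}_\mathscr{H}$ is constant with value $\mathfrak{g}$, and consider the set $\mathscr{H}_B$ of germs (or elements) $h\in\mathscr{H}$ with $x_0,h(x_0)\in B$ that are $C^1$-close to $\mathrm{id}$. The key classical input is that an isometry between Riemannian manifolds is determined by its $1$-jet at a point (value plus differential), and that the set of all such $1$-jets of local isometries of $B$ is a closed submanifold of the orthonormal frame bundle $O(B)$ — this is the standard Myers--Steenrod argument, applied to the fixed Riemannian manifold $B$ rather than to a group. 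Because $\mathscr{H}$ is $C^1$-closed, the $1$-jets of elements of $\mathscr{H}$ near the identity form a \emph{closed} subset $\mathcal{P}$ of this frame-bundle submanifold; completeness of $\mathscr{H}$ is what guarantees that such a germ close to the identity actually extends to an honest element defined on a full fixed neighborhood, so that ``germ'' and ``element'' coincide here. One then argues that $\mathcal{P}$ is a (local) Lie subgroupoid: composition and inversion of $C^1$-limits of elements of $\mathscr{H}$ stay in $\overline{\mathscr{H}}=\mathscr{H}$, closedness plus the frame-bundle picture upgrades this to a smooth structure, and the tangent space to $\mathcal{P}$ at the identity $1$-jet, computed as a space of Killing fields on $B$, is by definition a subalgebra of $\mathfrak{iso}_\mathscr{H}(B)$.

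Next I would run the exponential/flow argument: given $h\in\mathscr{H}$ with $1$-jet in $\mathcal{P}$ close to the identity, use that $\mathcal{P}$ is a Lie groupoid of isometries of $B$ to write its $1$-jet as the time-$1$ value of a flow of a Killing field $X$ whose $1$-jet at $x_0$ lies in $T_{\mathrm{id}}\mathcal{P}$; since a Killing field is determined by its $1$-jet at a point and $\exp(X)$ and $h$ then have the same $1$-jet at $x_0$, rigidity of isometries forces $h = \exp(X)$ on a neighborhood of $x_0$. It remains to check $X$ is a section of $\mathfrak{iso}_\mathscr{H}$, i.e., that $\exp(tX)\in\mathscr{H}$ for small $t$: this follows because the one-parameter family of $1$-jets $\exp(tX)$ traces a path in $\mathcal{P}$ through the identity, each $\exp(tX)$ is therefore a $C^1$-limit of elements of $\mathscr{H}$, and $C^1$-closedness gives $\exp(tX)\in\mathscr{H}$. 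Finally one globalizes: the bound on how close to the identity suffices is uniform on compact sets by local constancy of $\mathfrak{iso}_\mathscr{H}$ and a standard covering/partition argument, and the sheaf gluing axiom (axiom (iv) of a pseudogroup, together with the sheaf property of $\mathfrak{iso}_\mathscr{H}$) lets local exponential representations be patched, so any $h\in\mathscr{H}$ globally $C^1$-close to $\mathrm{id}$ is $\exp(X)$ for a global small section $X$.

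\textbf{The main obstacle} I expect is the groupoid/frame-bundle step: proving that the $C^1$-closure of the $1$-jets of $\mathscr{H}$ near the identity is genuinely a submanifold of $O(B)$ whose identity component tangent space consists \emph{exactly} of (jets of) sections of $\mathfrak{iso}_\mathscr{H}$, and not something larger. This requires combining the closedness hypothesis with the completeness hypothesis carefully — completeness is precisely what prevents the orbit closures from shrinking and what lets germ-level statements become statements about maps on fixed neighborhoods — and it is essentially where all the real content of Salem's theorem lives; the rest (rigidity of isometries, the exponential argument, and the sheaf-theoretic globalization) is comparatively routine once this local Lie-groupoid structure is in hand. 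One must also be mildly careful that the non-Hausdorff phenomena of étalé spaces do not interfere, but since we work over a connected ball where the sheaf is constant this is not a real issue.
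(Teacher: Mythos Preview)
The paper is a survey and does not supply its own proof of this theorem: it is stated with a citation to Salem's original paper and then used as a black box (the very next sentence begins ``As a corollary\dots''). There is therefore no in-paper argument to compare your proposal against.

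For what it is worth, your outline is broadly in the spirit of Salem's actual proof: lift to the orthonormal frame bundle so that local isometries are encoded by their $1$-jets, use completeness to pass from germs to maps on fixed domains, and use $C^1$-closedness together with a closed-subgroup-type argument to obtain a Lie structure whose infinitesimal part is $\mathfrak{iso}_\mathscr{H}$. The step you flag as the main obstacle---showing that the set of $1$-jets is a genuine submanifold with tangent algebra exactly $\mathfrak{iso}_\mathscr{H}$---is indeed where the substance lies, and your sketch does not really discharge it; but since the present paper offers no proof either, there is nothing further to compare.
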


As a corollary, it follows that the orbits of the closure $\overline{\mathscr{H}}$ are closed submanifolds of $S$, since they are given by the orbits of its sheaf of infinitesimal transformations. Combining this with Proposition \ref{prop: closure of pseudogroup} we have the following.

\begin{corollary}[Structural theorem for complete pseudogroups]
Let $\mathscr{H}$ be a complete pseudogroup of local isometries of a manifold $S$. Then there is a locally constant sheaf $\mathscr{C}_\mathscr{H}:=\mathfrak{iso}_{\overline{\mathscr{H}}}$ of Lie algebras of germs of local Killing vector fields on $S$ whose orbits describe the closures of the orbits of $\mathscr{H}$.
\end{corollary}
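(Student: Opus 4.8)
The proof is an assembly of the results already established for the closure $\overline{\mathscr{H}}$; no genuinely new argument is needed, the substantive input being Salem's Myers--Steenrod theorem for pseudogroups (Theorem \ref{theorem: salem}). The plan is as follows.

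First I would pass to the closure. Since $\mathscr{H}$ is complete, Proposition \ref{prop: closure of pseudogroup} guarantees that $\overline{\mathscr{H}}$ is again a complete pseudogroup of local isometries of $S$, that it is by construction closed in the $C^1$ topology, and that $\overline{\mathscr{H}x}=\overline{\mathscr{H}}x$ for every $x\in S$. Thus it suffices to produce the sheaf $\mathscr{C}_\mathscr{H}$ with the stated structure and to show that its orbits are exactly the orbits of $\overline{\mathscr{H}}$.

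Next, set $\mathscr{C}_\mathscr{H}:=\mathfrak{iso}_{\overline{\mathscr{H}}}$. Because $\overline{\mathscr{H}}$ is a complete pseudogroup of local isometries, the discussion following the definition of the sheaf of infinitesimal transformations (see \cite[Proposition]{salem1}) shows that $\mathscr{C}_\mathscr{H}$ is a locally constant sheaf of Lie algebras of germs of local Killing vector fields on $S$, which yields all the structural properties claimed in the statement. Applying Theorem \ref{theorem: salem} to the closed complete pseudogroup $\overline{\mathscr{H}}$ shows moreover that $\overline{\mathscr{H}}$ is a Lie pseudogroup, so that near the identity it is described entirely by the time-one flows of small local sections of $\mathscr{C}_\mathscr{H}$; in particular, as recorded in the paragraph preceding the statement, the $\overline{\mathscr{H}}$-orbits are closed submanifolds of $S$ and coincide with the orbits of $\mathscr{C}_\mathscr{H}$ (the integral submanifolds of the distribution generated pointwise by the germs of sections of $\mathscr{C}_\mathscr{H}$). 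Concretely, the inclusion of the $\mathscr{C}_\mathscr{H}$-orbit of $x$ into $\overline{\mathscr{H}}x$ is immediate from the definition of $\mathfrak{iso}_{\overline{\mathscr{H}}}$, while the reverse inclusion follows because, by the Lie pseudogroup property, the two orbits are submanifolds of the same dimension at every point of $\overline{\mathscr{H}}x$ and hence, being connected with one contained in the other, agree globally. Combining this orbit identification with the equality $\overline{\mathscr{H}}x=\overline{\mathscr{H}x}$ from the first step gives the assertion.

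The only delicate point is the orbit identification, and its difficulty is entirely absorbed into Theorem \ref{theorem: salem}: once one knows that a closed complete pseudogroup of local isometries is a Lie pseudogroup --- the nontrivial transverse analogue of the fact that the isometry group of a Riemannian manifold is a Lie group --- the matching of $\overline{\mathscr{H}}$-orbits with $\mathscr{C}_\mathscr{H}$-orbits, and the propagation of the local coincidence along each connected orbit using the local constancy of $\mathscr{C}_\mathscr{H}$, is routine. Completeness of $\mathscr{H}$ itself is used only insofar as it is needed to form $\overline{\mathscr{H}}$ and to know that the latter is closed and complete, via Proposition \ref{prop: closure of pseudogroup}.
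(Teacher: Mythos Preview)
Your proposal is correct and follows essentially the same approach as the paper: the corollary is stated there as an immediate consequence of combining Proposition \ref{prop: closure of pseudogroup} (to pass to $\overline{\mathscr{H}}$ and identify $\overline{\mathscr{H}x}=\overline{\mathscr{H}}x$) with Theorem \ref{theorem: salem} (to conclude that $\overline{\mathscr{H}}$ is a Lie pseudogroup whose orbits are those of $\mathfrak{iso}_{\overline{\mathscr{H}}}$), together with the cited fact from \cite{salem1} that the infinitesimal sheaf of a complete pseudogroup of local isometries is locally constant. Your write-up simply spells out these steps in a bit more detail than the paper, which treats the corollary as self-evident from the preceding discussion.
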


We will call $\mathscr{C}_\mathscr{H}$ the \textit{Molino sheaf of $\mathscr{H}$}. Since it is locally constant, if $S/\mathscr{H}$ is connected all stalks of $\mathscr{C}_\mathscr{H}$ are isomorphic to a Lie algebra $\mathfrak{g}^{-1}$.

\begin{definition}[Structural Lie algebra]
The Lie algebra $\mathfrak{g}$ will be called the \textit{structural Lie algebra} of $\mathscr{H}$.
\end{definition}

We are specially interested in the case of the holonomy pseudogroup of a complete Riemannian foliation, for which we can use Theorem \ref{theorem: salem} to similarly describe the closures of the leaves as orbits of a sheaf, since we have $M/\overline{\f}=S/\overline{\mathscr{H}}$ by Proposition \ref{prop: closure of pseudogroup}. To make this more precise, we need the following definition.

\begin{definition}[Transverse Killing vector field]
A field $X\in\mathfrak{X}(M)$ is a \textit{foliate Killing vector field} if $\mathcal{L}_X\mathrm{g}^T=0$. These fields form a Lie subalgebra of $\mathfrak{L}(\f)$ and there is, thus, a corresponding Lie algebra of \textit{transverse Killing vector fields}, that we will denote by $\mathfrak{iso}(\f,\mathrm{g}^T)$. We will omit the transverse metric when it is clear from the context, writing just $\mathfrak{iso}(\f)$. In a similar way we define \textit{local foliate/transverse Killing vector fields} on an open set $U$ and denote the corresponding algebra by $\mathfrak{iso}(\f|_U)$ 
\end{definition}

In terms of the holonomy pseudogroup $\mathscr{H}_\f$, the vector fields in $\mathfrak{iso}(\f)$ are precisely those that project to $\mathscr{H}_\f$-invariant Killing vector fields on $S_\f$. Local Killing vector fields are more flexible: if $\pi:U\to S$ is a submersion locally defining $\f$, the elements of $\mathfrak{iso}(\f|_U)$ are the transverse fields that project to Killing vector fields on $S$ (not necessarily $\mathscr{H}_\f$-invariant). The inverse images $\pi_i^{-1}(\mathscr{C}_{\mathscr{H}_\f})$ of the Molino sheaf of $\mathscr{H}_\f$ hence patch together on $M$ to form a sheaf $\mathscr{C}_\f$ of Lie algebras of germs of local transverse Killing fields (see \cite[\S 3.4]{salem}, also \cite[Remark at p. 711]{haefliger2}).

\begin{definition}[Molino sheaf]\label{def: molino sheaf}
The sheaf $\mathscr{C}_\f$ is called the \textit{Molino sheaf of $\f$}.
\end{definition}

For $\overline{X}\in\mathfrak{iso}(\f|_U)$, we define the orbit $\overline{X}\cdot x$ of $x\in M$ as the saturation of the orbit of $x$ under the flow of a representative $X\in\mathfrak{L}(\f)$ (notice that this is well defined, i.e., it is independent of the choice of the representative, since different representatives differ by a vector field in $\mathfrak{X}(\f)$). We define the orbits of $\mathscr{C}_\f$ similarly: the orbit of $x$ consists of all leaves that can be reached by continuous paths starting at $x$ and contained in orbits of sections of the sheaf. We see that the closures of the leaves of a complete Riemannian foliation $\f$ are the orbits of $\mathscr{C}_\f$ (see Figure \ref{molinosheaf}). In Section \ref{subsection: Molino Theory} we will revisit this result from a completely different approach, obtaining Molino's original definition of $\mathscr{C}_\f$.

\begin{figure}
\centering{
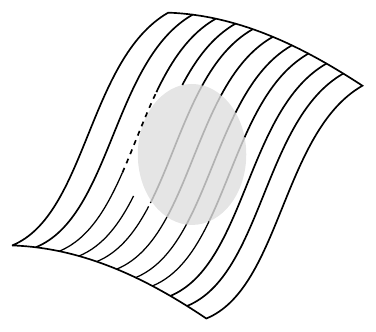}
\caption{The orbits of the Molino sheaf are the closures of the leaves}
\label{molinosheaf}
\end{figure}

\begin{example}[Molino sheaf of suspensions {\cite[Exemple III.1]{molino3}}]\label{example: molino sheaf of suspensions}
Let $S$ be a complete Riemannian manifold and let $\f$ be the Riemannian foliation of $M=\widehat{B}\times_{\pi_1(B)}S$ defined by the suspension of $h:\pi_1(B)\to\mathrm{Iso}(S)$ (see Examples \ref{example: suspensions} and \ref{example: Riemannian suspensions}). Denote $G=\overline{h(\pi_1(B))}$, let $\mathscr{H}$ be the pseudogroup of local isometries of $S$ generated by $G$ and consider its sheaf $\mathfrak{iso}_\mathscr{H}$ of infinitesimal transformations, which is the constant sheaf given by the restrictions of the fields in the Lie algebra $\mathfrak{iso}_G$ (recall Example \ref{example: infinitesimal sheaf of lie group}). Then the inverse image of $\mathfrak{iso}_\mathscr{H}$ via the projection $\widetilde{M}\to S$ is a constant sheaf $\widetilde{\mathfrak{iso}}_\mathscr{H}$ on $\widetilde{M}$ whose sections are restrictions of the transverse fields in the pullback $\widetilde{\mathfrak{iso}}_G < \mathfrak{l}(\widetilde{\f})$ of $\mathfrak{iso}_G$. The Molino sheaf $\mathscr{C}_\f$ coincides with the direct image, by $\pi:\widetilde{M}\to M$, of $\widetilde{\mathfrak{iso}}_\mathscr{H}$.
\end{example}

\section{Molino Theory}\label{subsection: Molino Theory}

Molino theory consists of a structural theory for Riemannian foliations developed by P.~Molino and others in the decade of 1980. In this section we summarize it, following mostly the brief presentations in \cite[Section 4.1]{goertsches} and \cite[Section 3.2]{toben}. A thorough introduction can be found in \cite{molino}. Roughly speaking, the fundamental underlying idea is that one can ``uncoil'' the holonomy of a Riemannian foliation by considering its action on the transverse frames. One obtains this way a simpler foliation, with trivial holonomy, which is intimately related to the original foliation.

More precisely, let $\pi^\Yup:M^\Yup\to M$ be the principal $\mathrm{O}(q)$-bundle of $\f$-transverse orthonormal frames\footnote{When $\f$ is transversely orientable, $M^\Yup$ consists of two $\mathrm{SO}(q)$-invariant connected components that correspond to the possible orientations. In this case we will assume that one component was chosen and, by abuse of notation, denote it also by $M^\Yup$. Everything stated in this section then will carry over to this case by changing $\mathrm{O}(q)$ to $\mathrm{SO}(q)$.}, which we call the \textit{Molino bundle} of $\f$. We lift $\f$ to a foliation $\f^\Yup$ of $M^\Yup$ as follows. The flow of a foliate vector field $X$ acts by foliate diffeomorphisms on $M$ and thus induces a flow $\nu\f\to\nu\f$ of bundle automorphisms. If $X$ is a foliate Killing vector field this flow further preserves $\mathrm{g}^T$, hence maps transverse orthonormal frames to transverse orthonormal frames, naturally inducing an $\mathrm{O}(q)$-equivariant flow on $M^\Yup$. The associated fundamental vector field $X^\Yup\in\mathfrak{X}(M^\Yup)$ of this flow is the \textit{natural lifting} of $X$. Notice that, in particular, every $X\in\mathfrak{X}(\f)$ is automatically a foliate Killing vector field and can be lifted. The image of $\mathfrak{X}(\f)$ under natural liftings spans an involutive bundle $T\f^\Yup$, whose integral foliation is the \textit{lifted foliation} $\f^\Yup$ we wanted.

Alternatively, $\f^\Yup$ can be described as follows: if $x=\pi^\Yup(x^\Yup)$ and $y=\pi^\Yup(y^\Yup)$ then $y^\Yup$ belongs to the leaf $L_{x^{\Yup}}\in\f^\Yup$ if and only if the orthonormal frame $y^\Yup$ of $\nu_y\f$ is the parallel transport of the frame $x^{\Yup}$, with respect to the Bott connection on $\nu\f$ (or $\nabla^B$ instead, recall the definition in \eqref{eq: conn basic}), along some smooth path in $L_x$ from $x$ to $y$.

\begin{figure}
\centering{
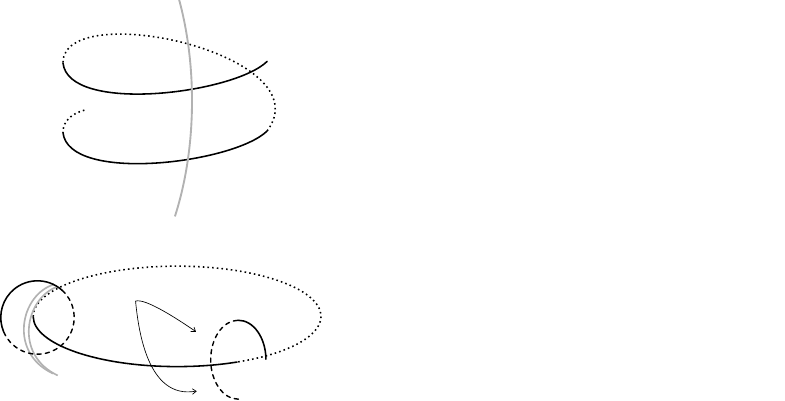}
\caption{The lifted foliation}
\label{liftedfoliation}
\end{figure}

The partial connection on $M^\Yup$ given by natural liftings can be extended to a unique $\f^\Yup$-basic torsion-free principal connection \cite[Lemma 3.3]{molino}, whose associated $\mathrm{O}(q)$-invariant connection $1$-form we denote by $\omega_{\f}\in\Omega(M^\Yup,\mathfrak{so}(q))$. The lifted foliation $\f^\Yup$ is, again, given by the horizontal liftings of the leaves of $\f$ with respect to the $\mathrm{O}(q)$-invariant horizontal distribution $\mathcal{H}=\ker(\omega_{\f})$. Let us provide a local description of $\omega_\f$. A locally defining submersion $\rho:U\to S$ for $\f$ induces a submersion $\rho^{\Yup}:U^\Yup\to S^\Yup$ whose fibers describe the restriction of $\f^{\Yup}$ to $U^{\Yup}=(\pi^{\Yup})^{-1}(U)$ (here $\pi^\Yup_S:S^\Yup\to S$ is the $O(q)$ principal bundle of orthonormal frames of $(S, \rho_*\mathrm{g}^T)$). Note that these maps commute, i.e., $\rho\circ\pi^\Yup=\pi^\Yup_S\circ\rho^\Yup$. If $\omega_S$ is the linear Riemannian connection induced on $S^\Yup$, then $\omega_{\f}=(\rho^\Yup)^*\omega_S$ (see Figure \ref{liftedfoliation}, keeping in mind that $U^{\Yup}$ should be $4$-dimensional there).

The Molino bundle also comes equipped with the \textit{tautological form} $\theta_{\f}:\nu\f^\Yup\to\mathbb{R}^q$ defined by $\theta_{\f}(X_{x^\Yup})=(x^\Yup)^{-1}(\dif\pi^\Yup(X_{x^\Yup}))$, where $x^\Yup$ is an orthonormal basis of $\nu_x\f$, understood as an isomorphism $x^\Yup:\mathbb{R}^q\to\nu_x\f$, and $X_{x^\Yup}\in\nu_{x^\Yup}\f^\Yup$. The tautological form $\theta_{\f}$ is $\f^\Yup$-basic \cite[Lemma 2.1(i)]{molino}, therefore (regarding $\omega_{\f}$ as a map $\nu\f^\Yup\to\mathfrak{so}(q)$) we get an $\f^\Yup$-basic, $\mathrm{O}(q)$-equivariant map $\omega_{\f}\oplus\theta_{\f}:\nu\f^\Yup\to\mathfrak{so}(q)\oplus\mathbb{R}^q$ which restricts to an isomorphism at each fiber $\nu_{x^\Yup}\f^\Yup$.

This allows us to define a natural transverse metric for the lifted foliation, as follows. The pullback of the sum of an arbitrary (which is unique up to scalar $\lambda$) bi-invariant scalar product on $\mathfrak{so}(q)$ with the standard scalar product on $\mathbb{R}^q$ by $\omega_{\f}\oplus\theta_{\f}$ yields an $\mathrm{O}(q)$-invariant transverse metric $(\mathrm{g}^T)^\Yup$ for $\f^\Yup$, which is hence a Riemannian foliation. We can fix $\lambda$ by requiring that the fibers of $\pi^\Yup$ satisfy $\vol((\pi^\Yup)^{-1}(x))=1$.

The advantage of lifting $\f$ to $\f^\Yup$ is that the latter admits a complete global transverse parallelism, that is, $\nu\f^\Yup$ is parallelizable by fields in $\mathfrak{l}(\f^\Yup)$ \cite[p. 82 and p. 148]{molino}. In fact, via $\omega_{\f}\oplus\theta_{\f}$, to choose such a transverse parallelism amounts to choosing bases for $\mathfrak{so}(q)$ and $\mathbb{R}^q$. If we assume that $\f$ is complete, then those fields admit complete representatives\footnote{Compare this with the definition of complete Riemannian foliations of Molino \cite[Remark on p. 88]{molino}.} in $\mathfrak{L}(\f^\Yup)$, since $M$ is complete and they have constant length with respect to $(\mathrm{g}^T)^\Yup$ \cite[Section 4.1]{goertsches}. From the theory of transversely parallelizable foliations it then follows that the partition $\overline{\f^\Yup}$ of $M^\Yup$ is a \textit{simple foliation}, that is, $W:=M^\Yup/\overline{\f^\Yup}$ is a manifold and $\overline{\f^\Yup}$ is given by the fibers of a locally trivial fibration $b:M^\Yup\to W$ \cite[Proposition 4.1']{molino}, the \textit{basic fibration}. Since $\f^\Yup$ is $\mathrm{O}(q)$-invariant, by continuity so is $\overline{\f^\Yup}$, hence the action of $\mathrm{O}(q)$ on $M^\Yup$ descends to an action on $W$ such that $b$ is now $\mathrm{O}(q)$-equivariant. A leaf closure $\overline{L}\in\overline{\f}$ is the image by $\pi^\Yup$ of a leaf closure of $\f^\Yup$, which implies that each leaf closure is an embedded submanifold of $M$ \cite[Lemma 5.1]{molino}\footnote{Molino's results are usually stated for a compact $M$, but completeness of $\f$ is sufficient (see \cite[Section 4.1]{goertsches} and \cite[Section 3.2]{toben}.}. Moreover, the leaf closures in $\overline{\f^\Yup}$ projecting by $b$ to the same $\mathrm{O}(q)$-orbit in $W$ all project over the same leaf closure in $\overline{\f}$. This induces an identification $M/\overline{\f}\equiv W/\mathrm{O}(q)$ and gives a commutative diagram (see Figure \ref{molinobundle}, again keeping in mind that $M^\Yup$ is $4$-dimensional)
$$\xymatrix{
(M^\Yup,\f^\Yup,\mathrm{O}(q)) \ar[r]^-{b} \ar[d]^{\pi^\Yup} &  (W,\mathrm{O}(q))\ar[d]\\
(M,\f) \ar[r] & M/\overline{\f}\equiv W/\mathrm{O}(q).}$$

\begin{figure}
\centering{
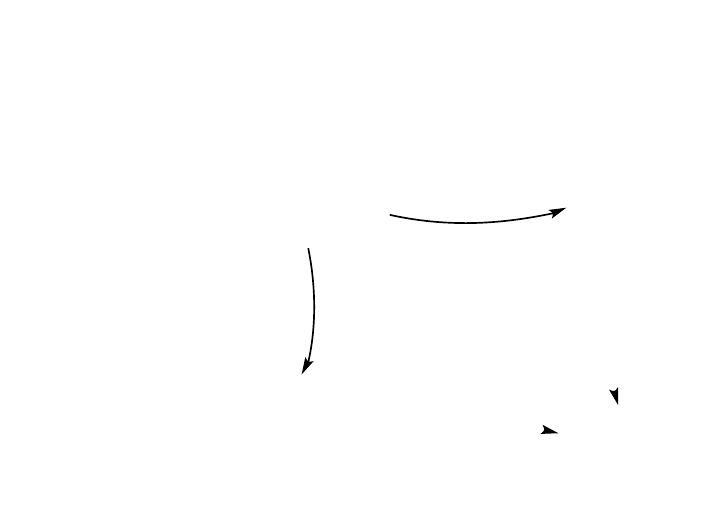}
\caption{The Molino construction}
\label{molinobundle}
\end{figure}

We now study the restriction of $\f$ to a leaf closure through this construction. Fix $L^\Yup\in\f^\Yup$, denote $J=\overline{L^\Yup}$, consider the foliation $(J,\f^\Yup|_J)$ and define $\mathfrak{g}:=\mathfrak{l}(\f^\Yup|_J)$. The restriction of $\f^\Yup$ to the closure of a different leaf is isomorphic to $(J,\f^\Yup|_J)$, so $\mathfrak{g}$ is an algebraic invariant of $\f$.

\begin{definition}[Structural algebra]
The Lie algebra $\mathfrak{g}$ is the \textit{structural algebra of $\f$}. We will always denote $\dim(\mathfrak{g})$ by $d$.
\end{definition}

The foliation $\f^\Yup|_J$ is a complete $\mathfrak{g}$-Lie foliation in the terminology of E.~Fedida \cite{fedida}, that is, it admits a complete transverse parallelism $\{\overline{Z_1},\dots,\overline{Z_d}\}$ such that the Lie algebra it spans is $\mathfrak{g}$. Equivalently, for a real Lie algebra $\mathfrak{g}$, a complete $\mathfrak{g}$-Lie foliation $\f$ on a manifold $J$ is given by an $\mathfrak{g}$-valued $1$-form $\alpha\in\Omega^1(J,\mathfrak{g})$ such that $\alpha_x:T_xJ\to\mathfrak{g}$ is surjective for each $x\in J$ and $d\alpha+\frac{1}{2}[\alpha,\alpha]=0$. For example, in the previous case $J=\overline{L^\Yup}$ the $1$-form $\alpha$ is given by
$$\alpha_x(X_x)=\xi_1\overline{Z_1}+\dots+\xi_d\overline{Z_d},$$
where $X_x=\xi_1Z_1+\dots+\xi_dZ_d+X_x^\top$ is the unique expression of $X_x\in T_xJ$ with $X_x^\top\in T_xL^\Yup$.

Fedida's work establishes, by a classical argument of C.~Ehresmann, that complete $\mathfrak{g}$-Lie foliations are \textit{developable}, that is, they lift to simple foliations on some covering space (see \cite[Theorem 4.1]{molino}). In fact, let $G$ be the unique simply connected Lie group with Lie algebra $\mathfrak{g}$ and consider $J\times G$ with projections $\mathrm{pr}_1$ and $\mathrm{pr}_2$ on the first and second factors, respectively. Let $\mathfrak{L}_\alpha:=\{X\in\mathfrak{L}(\f)\ |\ \alpha(X)\ \mbox{is constant}\}$ be the subalgebra of foliate vector fields whose corresponding transverse fields are in $\mathfrak{g}$. Via the identifications $T(J\times G)\cong TJ\oplus TG$ and $\mathfrak{g}\cong T_eG$, define the lift of $X\in\mathfrak{L}_\alpha$ by
$$\widetilde{X}=(X,\alpha(X)),$$
which is a $G$-invariant vector field on $J\times G$, with respect to the natural left action of $G$. This lifting is $\mathbb{R}$-linear and commutes with the Lie brackets, so the lift of $\mathfrak{L}_\alpha$ is a Lie algebra of left invariant vector fields which defines a left invariant integrable distribution $\Delta$ of rank $\dim(J)$ on $J\times G$. Let $\widetilde{J}$ be a leaf of the corresponding foliation.

\begin{theorem}[Fedida's theorem {\cite[Theorem 4.1]{molino}}]\label{theorem: fedida}
With the notation established above, $\mathrm{pr}_1:\widetilde{J}\to J$ is a covering map and $\mathrm{pr}_2:\widetilde{J}\to G$ is a locally trivial fibration. Moreover, the foliation $\mathrm{pr}_1^*(\f)$ on $\widetilde{J}$ agrees with the simple foliation defined by the fibers of $\mathrm{pr}_2$.
\end{theorem}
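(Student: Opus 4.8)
The plan is to carry out the classical argument of Ehresmann, viewing $\widetilde{J}$ as an immersed submanifold of $J\times G$ with $T\widetilde{J}=\Delta$. First I would record two infinitesimal facts. Since $\dif\mathrm{pr}_1$ sends $\widetilde{X}_{(x,g)}=(X_x,\alpha(X))$ to $X_x$, it maps $\Delta_{(x,g)}$ onto the span of $\{X_x\ |\ X\in\mathfrak{L}_\alpha\}$; this span is all of $T_xJ$ because $\mathfrak{X}(\f)\subset\mathfrak{L}_\alpha$ spans $\ker\alpha_x=T_xL$, while the foliate representatives $Z_i\in\mathfrak{L}_\alpha$ of $\overline{Z_1},\dots,\overline{Z_d}$ have $\alpha$-values forming a basis of $\mathfrak{g}$. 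As $\dim\Delta=\dim J$, this shows $\dif\mathrm{pr}_1|_\Delta$ is an isomorphism everywhere, so $\mathrm{pr}_1|_{\widetilde{J}}$ is a local diffeomorphism. Dually, $\dif\mathrm{pr}_2$ sends $\widetilde{X}_{(x,g)}$ to the value at $g$ of the left-invariant field determined by $\alpha(X)\in\mathfrak{g}$; since the $\alpha(X)$, $X\in\mathfrak{L}_\alpha$, exhaust $\mathfrak{g}$, the map $\mathrm{pr}_2|_{\widetilde{J}}$ is a submersion, and its vertical distribution at $(x,g)$ is $\{\widetilde{X}\in\Delta\ |\ \alpha(X)=0\}=\{(X_x,0)\ |\ X\in\mathfrak{X}(\f)\}$. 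Finally, the Maurer--Cartan equation $\dif\alpha+\tfrac{1}{2}[\alpha,\alpha]=0$ makes $X\mapsto\alpha(X)$ a Lie algebra morphism $\mathfrak{L}_\alpha\to\mathfrak{g}$, hence $X\mapsto\widetilde{X}$ a morphism, confirming that $\Delta$ is involutive of rank $\dim J$; it also identifies the vertical distribution of $\mathrm{pr}_2|_{\widetilde{J}}$ with the tangent distribution of $\mathrm{pr}_1^*(\f)|_{\widetilde{J}}$.

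Next I would invoke completeness. As $\f$ is a complete $\mathfrak{g}$-Lie foliation, the parallelism $\{\overline{Z_1},\dots,\overline{Z_d}\}$ lifts to complete foliate representatives $Z_i\in\mathfrak{L}_\alpha\subset\mathfrak{L}(\f)$, so the lifts $\widetilde{Z_i}=(Z_i,\alpha(Z_i))$ have a complete first component (by hypothesis) and a complete second component (a left-invariant field on $G$); being tangent to $\Delta$, they restrict to complete fields on $\widetilde{J}$ that are $\mathrm{pr}_1$-related to the $Z_i$ and $\mathrm{pr}_2$-related to left-invariant fields on $G$. I would then use these complete lifts exactly as in the proof that a submersion carrying a complete Ehresmann connection is a locally trivial fibration. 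For $\mathrm{pr}_1|_{\widetilde{J}}$ the ``connection'' is the full tangent space, so the conclusion is that $\mathrm{pr}_1|_{\widetilde{J}}$ is a covering map: over a simple open set $U$ for $\f$ the Maurer--Cartan equation exhibits $\widetilde{J}\cap(U\times G)$ as a disjoint union of graphs over $U$, and completeness of the $\widetilde{Z_i}$ lets one continue the resulting local inverses of $\mathrm{pr}_1$ along any path, giving the path-lifting property of an \'etale map over a manifold base. For $\mathrm{pr}_2|_{\widetilde{J}}$, it is a surjective submersion (its image is open and invariant under the one-parameter subgroups generated by the $\alpha(Z_i)$, hence is all of the connected group $G$) admitting the complete connection spanned by the $\widetilde{Z_i}$, hence a locally trivial fibration; since its vertical foliation was identified with $\mathrm{pr}_1^*(\f)|_{\widetilde{J}}$, this last foliation is the simple foliation given by the fibers of $\mathrm{pr}_2$.

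The main obstacle is the completeness step: without it $\mathrm{pr}_1$ stays merely \'etale and $\mathrm{pr}_2$ merely a submersion, and the substance of the theorem is precisely that completeness of the transverse parallelism forces the global product-like structure, via the two applications of Ehresmann's fibration argument (path lifting for $\mathrm{pr}_1$, flow-box trivialization for $\mathrm{pr}_2$). The surjectivity statements for $\dif\mathrm{pr}_i|_\Delta$, the Maurer--Cartan computations, and the local graph description over simple open sets are routine and I would not dwell on them.
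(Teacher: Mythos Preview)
The paper does not give its own proof of this theorem: it is stated with a citation to Molino's book, and the only indication of method is the remark that ``Fedida's work establishes, by a classical argument of C.~Ehresmann, that complete $\mathfrak{g}$-Lie foliations are developable.'' Your proposal carries out precisely that classical Ehresmann argument---showing $\mathrm{pr}_1|_{\widetilde J}$ is \'etale and $\mathrm{pr}_2|_{\widetilde J}$ is a submersion by the infinitesimal computations, identifying the vertical of $\mathrm{pr}_2$ with $T\mathrm{pr}_1^*(\f)$ via $\ker\alpha$, and then using completeness of the foliate representatives of the transverse parallelism to apply Ehresmann's fibration theorem to $\mathrm{pr}_2$ and path-lifting to $\mathrm{pr}_1$---so it is both correct and exactly the approach the paper points to.
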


We see that a complete $\mathfrak{g}$-Lie foliation admits a Haefliger cocycle $(U_i,\pi_i:U_i\to G,\gamma_{ij})$ such that the transitions $\gamma_{ij}$ are restrictions of left translations on $G$. Moreover, since $\mathrm{pr}_2$ is $G$-equivariant, the holonomy pseudogroup of $\f$ is equivalent to the pseudogroup generated by the induced action of the group $\Gamma$ of deck transformations of $\mathrm{pr}_1$ on $G$.

Let us now return to a complete Riemannian foliation $(M,\f,\mathrm{g}^T)$. Consider on $M^\Yup$ the sheaf of Lie algebras $\mathscr{C}_{\f^\Yup}$ that, to an open set $U^\Yup\subset M^\Yup$, associates the Lie algebra $\mathscr{C}_{\f^\Yup}(U^\Yup)$ of the transverse fields in $U^\Yup$ that commute with all the global fields in $\mathfrak{l}(\f^\Yup)$. The orbits of $\mathscr{C}_{\f^\Yup}$ are the closures of the leaves of $\f^\Yup$ \cite[Theorem 4.3']{molino} and all stalks of $\mathscr{C}_{\f^\Yup}$ are isomorphic to the Lie algebra $\mathfrak{g}^{-1}$ opposed to the structural algebra $\mathfrak{g}$ of $\f$ \cite[Proposition 4.4]{molino}. Each field in $\mathscr{C}_{\f^\Yup}(U^\Yup)$ is the natural lift of a local $\f$-transverse Killing vector field on $\pi^\Yup(U^\Yup)$ \cite[Proposition 3.4]{molino}, which in turn is the lift of a section of the sheaf of infinitesimal transformations of $\mathscr{H}_\f$. So we conclude that the direct image $\pi^\Yup_*(\mathscr{C}_{\f^\Yup})$ coincides with the Molino sheaf $\mathscr{C}_\f$ (recall Definition \ref{def: molino sheaf}). In fact, this is how $\mathscr{C}_\f$ was originally defined by Molino\footnote{In Molino's terminology $\mathscr{C}_\f$ is called the \textit{commuting sheaf} \cite{molino}, also sometimes referred to as the \textit{central transverse sheaf} \cite{molino2}.}.

The stalks of $\mathscr{C}_{\f}$ and $\mathscr{C}_{\mathscr{H}_\f}$ are isomorphic, so the structural algebra of $\f$ coincides with the structural algebra of $\mathscr{H}_\f$. As we already stated, the main motivation for the study of $\mathscr{C}_{\f}$ is that its orbits describe the closures of the leaves of $\f$. In other words, this means that
$$\{X_x\ |\ X\in(\mathscr{C}_{\f})_x\}\oplus T_xL_x=T_x\overline{L}_x,$$
that is, for a small open set $U$, fixing a basis $\overline{X_1},\dots,\overline{X_d}$ for $\mathscr{C}_{\f}(U)$ we have $T\overline{L}|_U=TL|_U\oplus\spannn\{ X_1,\dots, X_d \}$ for any $L\in\f$, where $X_1,\dots, X_d\in\mathfrak{L}(\f)$ are representatives for that basis.

Let us summarize the properties seen in this section in the following theorem, known as Molino's structural theorem.

\begin{theorem}[Molino's structural theorem]\label{Theo: molino structural}
Let $\f$ be a complete Riemannian foliation of codimension $q$ of $M$. Then:
\begin{enumerate}[(i)]
\item The lifted foliation $\f^\Yup$ on the transverse frame bundle $M^\Yup$ is transversely parallelizable, hence $\overline{\f^\Yup}$ is s simple foliation, given by the fibers of the basic fibration $b:M^\Yup\to W$.
\item The restriction of $\f^\Yup|_J$ to a leaf closure $J=\overline{L^\Yup}$ is a complete $\mathfrak{g}$-Lie foliation.
\item The closures of the leaves of $\f$ are embedded submanifolds and coincide with the projections of the closures of the leaves of $\f^\Yup$.
\item The quotient $M/\overline{\f}$ can be identified with the orbit space $W/\mathrm{O}(q)$ of the $\mathrm{O}(q)$-action on $W$ induced by its natural action on $M^\Yup$.
\item There is a locally constant sheaf $\mathscr{C}_\f$ of Lie algebras of germs of transverse Killing vector fields whose stalks are $\mathfrak{g}^{-1}$ and whose orbits are the closures of the leaves of $\f$.
\end{enumerate}
\end{theorem}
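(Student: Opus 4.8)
The plan is to assemble, into the five assertions, the machinery developed throughout this section, the central object being the Molino bundle $\pi^\Yup:M^\Yup\to M$ of $\f$-transverse orthonormal frames and the lifted foliation $\f^\Yup$. First I would make sure $\f^\Yup$ is well defined: the flow of a foliate Killing field preserves $\mathrm{g}^T$, hence maps transverse orthonormal frames to transverse orthonormal frames, so its natural lift is a well-defined $\mathrm{O}(q)$-equivariant foliate vector field on $M^\Yup$; since every $X\in\mathfrak{X}(\f)$ is automatically foliate Killing, the span of the natural lifts of $\mathfrak{X}(\f)$ is an involutive distribution $T\f^\Yup$, whose integral foliation is $\f^\Yup$.

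For (i) the point is to produce a \emph{complete} transverse parallelism of $\f^\Yup$. Using the $\f^\Yup$-basic torsion-free principal connection $\omega_\f$ extending the partial connection given by natural liftings, together with the tautological form $\theta_\f$, one gets an $\f^\Yup$-basic, $\mathrm{O}(q)$-equivariant map $\omega_\f\oplus\theta_\f:\nu\f^\Yup\to\mathfrak{so}(q)\oplus\mathbb{R}^q$ restricting to an isomorphism on each fiber; transporting a basis of $\mathfrak{so}(q)\oplus\mathbb{R}^q$ through it yields a transverse parallelism $\{\overline{Z_i}\}$ of $\f^\Yup$, and pulling back a fixed inner product makes $\f^\Yup$ a Riemannian foliation with the $\overline{Z_i}$ of constant length. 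Here completeness of $\f$ enters: fixing a complete bundle-like metric on $M$, the $\overline{Z_i}$ have complete representatives in $\mathfrak{L}(\f^\Yup)$, so $\f^\Yup$ is a complete transversely parallelizable foliation, and the fibration theorem for such foliations (\cite[Proposition 4.1']{molino}, valid under completeness by \cite[Section 4.1]{goertsches}) gives that $W:=M^\Yup/\overline{\f^\Yup}$ is a manifold and $b:M^\Yup\to W$ is a locally trivial fibration with fibers the leaf closures of $\f^\Yup$. This structure theory of complete transversely parallelizable foliations is the main technical input, and the step I expect to be the real obstacle. Part (ii) is then immediate: on $J=\overline{L^\Yup}$ the parallelism restricts to a complete transverse parallelism of $\f^\Yup|_J$ spanning $\mathfrak{g}=\mathfrak{l}(\f^\Yup|_J)$, so $\f^\Yup|_J$ is a complete $\mathfrak{g}$-Lie foliation, and Fedida's Theorem \ref{theorem: fedida} describes it in terms of the simply connected group $G$ with Lie algebra $\mathfrak{g}$.

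For (iii) and (iv) I would exploit $\mathrm{O}(q)$-equivariance. Since $\f^\Yup$ is $\mathrm{O}(q)$-invariant, so is $\overline{\f^\Yup}$ by continuity; hence $\mathrm{O}(q)$ acts on $W$ and $b$ is equivariant. A leaf closure $\overline{L}\in\overline{\f}$ equals $\pi^\Yup(\overline{L^\Yup})$ for any lift $L^\Yup$, so it is the image of a $b$-fiber under the proper equivariant submersion $\pi^\Yup$; a direct argument (or \cite[Lemma 5.1]{molino}) then shows $\overline{L}$ is an embedded submanifold of $M$, which is (iii). Two $b$-fibers lying over the same $\mathrm{O}(q)$-orbit in $W$ have $\pi^\Yup$-images with the same closure in $M$, and conversely, so one gets the identification $M/\overline{\f}\equiv W/\mathrm{O}(q)$ together with the commutative square, proving (iv).

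Finally, for (v) I would transport the commuting sheaf $\mathscr{C}_{\f^\Yup}$ on $M^\Yup$ — assigning to $U^\Yup$ the transverse fields commuting with all of $\mathfrak{l}(\f^\Yup)$ — down to $M$. The structure theory of transversely parallelizable foliations gives that its orbits are the closures of the leaves of $\f^\Yup$ and its stalks are $\mathfrak{g}^{-1}$ (\cite[Theorem 4.3', Proposition 4.4]{molino}), and each of its local sections is the natural lift of a local $\f$-transverse Killing vector field on $\pi^\Yup(U^\Yup)$ (\cite[Proposition 3.4]{molino}); hence the direct image $\pi^\Yup_*\mathscr{C}_{\f^\Yup}$ is a well-defined locally constant sheaf of Lie algebras of germs of transverse Killing vector fields with stalk $\mathfrak{g}^{-1}$, namely $\mathscr{C}_\f$, and since $\pi^\Yup$ carries $\overline{\f^\Yup}$-leaf closures onto $\overline{\f}$-leaf closures, the orbits of $\mathscr{C}_\f$ are exactly the closures of the leaves of $\f$. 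The only delicate bookkeeping beyond the transversely parallelizable structure theory is checking that the inverse images $\pi_i^{-1}(\mathscr{C}_{\mathscr{H}_\f})$ genuinely patch to a sheaf on all of $M$, which follows from the uniqueness of the objects involved.
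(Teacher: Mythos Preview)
Your proposal is correct and follows essentially the same approach as the paper: the theorem is stated there as a summary of the preceding development in Section~\ref{subsection: Molino Theory}, and you have accurately reconstructed that development---the transverse parallelism via $\omega_\f\oplus\theta_\f$, completeness yielding the basic fibration (citing \cite[Proposition 4.1']{molino}), the $\mathrm{O}(q)$-equivariance for (iii)--(iv), and the commuting sheaf $\mathscr{C}_{\f^\Yup}$ pushed forward for (v), with the same references to \cite{molino}. The only addition worth noting is that the paper explicitly identifies $\pi^\Yup_*\mathscr{C}_{\f^\Yup}$ with the sheaf $\mathscr{C}_\f$ previously constructed from the pseudogroup side (Definition~\ref{def: molino sheaf}), which is the point behind your final remark about patching.
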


\section{Killing foliations}\label{section: killing foliations}

From now on we will be mostly interested in the subclass of complete Riemannian foliations consisting of those foliations $\f$ for which $\mathscr{C}_{\f}$ is globally constant. Such foliations are called \textit{Killing foliations}, following the terminology of W.~Mozgawa in \cite{mozgawa}. In other words, if $\f$ is a Killing foliation then there exists global fields $\overline{X_1},\dots,\overline{X_d}\in\mathscr{C}_\f(M)<\mathfrak{iso}(\f)$ (global sections of $\mathscr{C}_\f$) such that
$$T\overline{\f}=T\f\oplus\spannn\{ X_1,\dots, X_d \}.$$
In particular, notice that any closed Riemannian foliation is Killing.

Let us understand what the definition means from the point of view of the holonomy pseudogroup $\mathscr{H}_\f$. Since $\mathscr{C}_{\f}$ is obtained from the gluing of the pullbacks of the Molino sheaf of $\mathscr{C}_{\mathscr{H}_\f}$ of $\mathscr{H}_\f$ by the local submersions defining $\f$, for $\mathscr{C}_{\f}$ to be constant $\mathscr{C}_{\mathscr{H}_\f}$ has to admit a global trivialization which is invariant by holonomy. That is, the global sections given by the trivialization have to be $\mathscr{H}_\f$-invariant so that they lift to global $\f$-transverse fields on $M$. Let us see this in detail in the case of a pseudogroup generated by the action of a Lie group $G$ (recall Example \ref{example: infinitesimal sheaf of lie group}).

\begin{proposition}\label{prop-equivalenciaAbeliano}
Let $\mathscr{H}$ be the pseudogroup of local isometries generated by a connected subgroup $G$ of isometries of a Riemannian manifold $M$. Then $\mathscr{H}$ is a Killing pseudogroup if, and only if, $G$ is Abelian.
\end{proposition}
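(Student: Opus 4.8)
The plan is to push everything onto the closure $\overline{\mathscr{H}}$, for which the Molino sheaf is completely explicit. First I would note that $\mathscr{H}$ is automatically complete: by the gluing axiom every element of $\mathscr{H}$ agrees, near each point of its domain, with the restriction of a globally defined element of $G$, so any germ of an element of $\mathscr{H}$ extends to an arbitrarily large open set; hence the Molino sheaf $\mathscr{C}_\mathscr{H}=\mathfrak{iso}_{\overline{\mathscr{H}}}$ is defined. By Example~\ref{example: Salem closure pseudogroup generated by lie group}, $\overline{\mathscr{H}}$ is the pseudogroup generated by the closed connected Lie subgroup $\overline{G}<\mathrm{Iso}(M)$; write $\overline{\mathfrak{g}}=\mathrm{Lie}(\overline{G})$. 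By Example~\ref{example: infinitesimal sheaf of lie group} applied to $\overline{G}$, the sections of $\mathscr{C}_\mathscr{H}$ over an open set are exactly the restrictions of the fundamental Killing fields $X^\#$, $X\in\overline{\mathfrak{g}}$; since the $\mathrm{Iso}(M)$-action is effective, $X\mapsto X^\#$ is injective, and $\mathscr{C}_\mathscr{H}$ is isomorphic, via $X^\#\leftrightarrow X$, to the constant sheaf with stalk $\overline{\mathfrak{g}}$. Finally, I will use that $\mathscr{H}$ is a Killing pseudogroup precisely when $\mathscr{C}_\mathscr{H}$ admits a global frame consisting of $\mathscr{H}$-invariant sections.

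The next step is to set up the dictionary between invariance and the adjoint action. A vector field on $M$ is invariant under $\mathscr{H}$ if and only if it is invariant under $G$ (again because germs of elements of $\mathscr{H}$ are germs of globally defined elements of $G$), and, by density of $G$ in $\overline{G}$ together with continuity, $G$-invariance of $X^\#$ is equivalent to $\overline{G}$-invariance. Moreover $g_*(X^\#)=(\mathrm{Ad}(g)X)^\#$ for $g\in\overline{G}$, so by injectivity of $X\mapsto X^\#$ the field $X^\#$ is $\overline{G}$-invariant if and only if $\mathrm{Ad}(g)X=X$ for all $g\in\overline{G}$, which, $\overline{G}$ being connected, holds if and only if $X$ lies in the centre $Z(\overline{\mathfrak{g}})$.

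With these preparations the equivalence is immediate. If $G$ is Abelian, then so is $\overline{G}$ — the commutator map $\overline{G}\times\overline{G}\to\overline{G}$ is continuous and trivial on the dense set $G\times G$ — hence $\overline{\mathfrak{g}}=Z(\overline{\mathfrak{g}})$; choosing a basis $X_1,\dots,X_k$ of $\overline{\mathfrak{g}}$, the fields $X_1^\#,\dots,X_k^\#$ form a global frame of $\mathscr{C}_\mathscr{H}$ and each is $\overline{G}$-invariant, hence $\mathscr{H}$-invariant, so $\mathscr{H}$ is Killing. Conversely, if $\mathscr{H}$ is Killing, choose an $\mathscr{H}$-invariant global frame $Y_1,\dots,Y_k$ of $\mathscr{C}_\mathscr{H}$ and write $Y_i=X_i^\#$ with $X_i\in\overline{\mathfrak{g}}$; since $\mathscr{C}_\mathscr{H}$ is the constant sheaf with stalk $\overline{\mathfrak{g}}$ under $X^\#\leftrightarrow X$, the $X_i$ form a basis of $\overline{\mathfrak{g}}$, and by the dictionary above each $X_i\in Z(\overline{\mathfrak{g}})$; as the $X_i$ span $\overline{\mathfrak{g}}$, this gives $\overline{\mathfrak{g}}=Z(\overline{\mathfrak{g}})$, i.e.\ $\overline{\mathfrak{g}}$ is Abelian, so $\overline{G}$ and its subgroup $G$ are Abelian.

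The main obstacle is definitional bookkeeping rather than a genuine difficulty: making precise, via Examples~\ref{example: Salem closure pseudogroup generated by lie group} and \ref{example: infinitesimal sheaf of lie group}, that the sections of $\mathscr{C}_\mathscr{H}$ are exactly the fundamental fields of $\overline{G}$ and that ``$\mathscr{H}$-invariant'' unwinds to ``$\overline{G}$-invariant'', together with the essential use of effectiveness of the isometry action (so that $X\mapsto X^\#$ is injective) and of density of $G$ in $\overline{G}$ (to transport conjugation-invariance statements from $G$ to the whole of $\overline{G}$). The remaining ingredient is the standard fact that a connected Lie group is Abelian precisely when its Lie algebra is.
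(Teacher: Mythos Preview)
Your proof is correct and follows essentially the same route as the paper's: both identify $\mathscr{C}_\mathscr{H}$ with the fundamental Killing fields of $\overline{G}$ via Examples~\ref{example: Salem closure pseudogroup generated by lie group} and~\ref{example: infinitesimal sheaf of lie group}, use the identity $g_*X^\#=(\mathrm{Ad}_gX)^\#$, and invoke effectiveness of the isometry action to pass from $(\mathrm{Ad}_gX)^\#=X^\#$ to $\mathrm{Ad}_gX=X$. The only cosmetic difference is that you phrase the conclusion via the centre $Z(\overline{\mathfrak{g}})$ while the paper argues pointwise through $\ker(\dif\mu_x)=T_eG_x$ and $\bigcap_x T_eG_x=\{0\}$; these are the same effectiveness argument in different packaging.
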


\begin{proof}
First recall from Examples \ref{example: Salem closure pseudogroup generated by lie group} and \ref{example: infinitesimal sheaf of lie group} that the sections of $\mathscr{C}_{\mathscr{H}}=\mathfrak{iso}_{\overline{\mathscr{H}}}$ are the restrictions of the fundamental fields of the action of the closure $\overline{G}<\mathrm{Iso}(S)$. Hence $\mathscr{C}_{\mathscr{H}}$ admits an invariant global trivialization if, and only if, $\dif g_xX^{\#}_x=X^{\#}_{gx}$, for all $x\in M$, $g\in \overline{G}$ and $X\in\overline{\mathfrak{g}}$.

Using that $\exp(t\Ad_gX)=g\exp(tX)g^{-1}$ one verifies that in general
\begin{equation}
\label{eq1C-prop-equivalenciaAbeliano}
\dif g_x X^{\#}_x=(\Ad_gX)^{\#}_{gx}.
\end{equation}
Recall also that a connected Lie group is Abelian if, and only if, its adjoint representation is trivial (see, e.g., \cite[Section 1.3]{alex}). Therefore, if $G$ (hence $\overline{G}$) is Abelian, it follows from equation \eqref{eq1C-prop-equivalenciaAbeliano} that $\dif g_x X^{\#}_x=X^{\#}_{gx}$, hence $\mathscr{H}$ is Killing.

Conversely, assume $\mathscr{H}$ is Killing. Then combining equation \eqref{eq1C-prop-equivalenciaAbeliano} with the hypothesis $\dif g_x X^{\#}_x=X^{\#}_{gx}$ we obtain $\dif g_x X^\#_x = X^\#_{gx} = (\Ad_g X)^\#_{gx}=\dif g_x(\Ad_gX)^\#_x$, hence, as $\dif g_x$ is an isomorphism,
\begin{equation}
\label{eq2C-prop-equivalenciaAbeliano}
X^\#_x=(\Ad_gX)^\#_x
\end{equation}
for all $x\in M$, $g\in \overline{G}$ and $X\in\overline{\mathfrak{g}}$. Recall that $X^\#_x=\dif \mu_x X$, where $\mu_x$ is the orbit map $G\ni g\mapsto gx\in M$. Therefore we have from \eqref{eq2C-prop-equivalenciaAbeliano} that $\dif\mu_xX=\dif\mu_x\Ad_gX$, hence $X-Ad_gX\in\ker(\dif\mu_x)=T_eG_x$, for all $x\in M$. It then follows that $X=\Ad_gX$, since $\bigcap_{x\in M}T_eG_x=\{0\}$ because the $\overline{G}$-action is effective. Therefore the adjoint representation is trivial and $\overline{G}$ (hence $G$) is Abelian.
\end{proof}

The existence of an $\mathscr{H}_\f$ invariant trivialization for $\mathscr{C}_{\mathscr{H}_\f}$ can be expressed more elegantly as follows. There is a natural action of a complete pseudogroup of local isometries $\mathscr{H}$ on its Molino sheaf $\mathscr{C}_{\mathscr{H}}=\mathfrak{iso}_{\overline{\mathscr{H}}}$, the action of $h\in\mathscr{H}$ on a local section $X\in\mathscr{C}_{\mathscr{H}}$ being given by
$$h\cdot X=\left.\od{}{t} h\circ\exp(tX)\circ h^{-1}\right|_{t=0}.$$
We say that $\mathscr{H}$ is a \textit{Killing pseudogroup} if $\mathscr{C}_{\mathscr{H}}$ admits a global trivialization which is invariant by the action of $\mathscr{H}$. Then we have, by our previous discussion, that $\f$ is a Killing foliation if, and only if, $\mathscr{H}_\f$ is a Killing pseudogroup. This provides a criterion for a Riemannian foliation given by a suspension to be Killing.

\begin{example}[Killing foliations given by suspension]\label{example: killing foliations given by suspensions}
Let $\f$ be the Riemannian foliation of $M=\widehat{B}\times_{\pi_1(B)}S$ defined by the suspension of $h:\pi_1(B)\to\mathrm{Iso}(S)$, where $S$ is complete (recall Examples \ref{example: suspensions} and \ref{example: Riemannian suspensions}). Denote $G=\overline{h(\pi_1(B))}<\mathrm{Iso}(S)$ and let $\mathscr{H}$ be the pseudogroup generated by $G$ on $S$. As we saw in Example \ref{example: molino sheaf of suspensions}, the Molino sheaf $\mathscr{C}_{\f}$ is the image by $\pi:\widetilde{M}\to M$ of the constant sheaf $\widetilde{\mathfrak{iso}}_\mathscr{H}$, which in turn is the inverse image of $\mathfrak{iso}_\mathscr{H}$. Therefore $\mathscr{C}_{\f}$ is globally constant if, and only if, the constant sheaf $\widetilde{\mathfrak{iso}}_\mathscr{H}$ is $\pi_1(B)$-invariant, which in turn happens if, and only if, $\mathfrak{iso}_\mathscr{H}$ is $\mathscr{H}$-invariant, i.e., $\mathscr{H}$ is a Killing pseudogroup. Thus, by Proposition \ref{prop-equivalenciaAbeliano}, in order for $\f$ to be Killing it is sufficient that $G$ be connected and Abelian.
\end{example}

As Example \ref{example: killing foliations given by suspensions} suggests, the structural algebra of any Killing foliation is necessarily Abelian. This can be seen via $\mathscr{C}_{\mathscr{H}_\f}$ by generalizing the arguments in Proposition \ref{prop-equivalenciaAbeliano} (e.g. using \cite[Chapter 2]{agrachev}) or, more quickly, by using the fact that $\mathscr{C}_\f$ is the direct image of the sheaf $\mathscr{C}_{\f^\Yup}$ of the lifted foliation $\f^\Yup$. Then one sees that a complete Riemannian foliation is a Killing foliation if and only if $\mathscr{C}_{\f^\Yup}$ constant, and in this case, by definition, $\mathscr{C}_{\f^\Yup}(M^\Yup)$ is the center of $\mathfrak{l}(\f^\Yup)$. Hence $\mathscr{C}_{\f}(M)$ is central in $\mathfrak{l}(\f)$ (but not necessarily its full center). The structural algebra of $\f$ is thus is Abelian, because $\mathfrak{g}^{-1}\cong(\mathscr{C}_{\f})_x\cong\mathscr{C}_{\f}(M)$ for any $x\in M$. For this reason, when $\f$ is Killing we will often denote its structural algebra by $\mathfrak{a}$.

\begin{example}[Riemannian foliations on simply-connected manifolds]
A complete Riemannian foliation $\f$ of a simply-connected manifold is automatically a Killing foliation \cite[Proposition 5.5]{molino}, since in this case $\mathscr{C}_{\f}$ cannot have holonomy. In fact, for $\f$ to be Killing it is sufficient that $\pi_1(\mathscr{H}_\f)$ be trivial (i.e., that $\mathscr{H}_\f$ be simply connected). The fundamental group of a pseudogroup is a generalization of the usual notion of fundamental group, defined in terms of $\mathscr{H}$-homotopy classes of $\mathscr{H}$-loops in $S$., i.e., finite collections of paths on $S$ whose endpoints are glued by elements of $\mathscr{H}$ (details can be seen in \cite[Sections 1.11 and Remark 3.8]{salem}). For the case of a foliation there is a surjective homomorphism $\pi_1(M)\to\pi_1(\mathscr{H}_\f)$, hence the condition on $\pi_1(\mathscr{H}_\f)$ for $\f$ to be Killing is weaker than that of $M$ being simply connected.
\end{example}

\begin{example}[Isometric homogeneous foliations {\cite[Lemme III]{molino3}}]\label{example: homogeneous foliations are killing}
Homogeneous Riemannian foliations provide another important class of examples. In fact, if $\f$ is a Riemannian foliation of a compact manifold $M$ given by the foliated action of $H<\mathrm{Iso}(M)$, then $\f$ is a Killing foliation because its Molino sheaf $\mathscr{C}_{\f}(M)$ consists of the transverse Killing vector fields induced by the action of $\overline{H}<\mathrm{Iso}(M)$, hence is constant. Notice the contrast with Proposition \ref{prop-equivalenciaAbeliano}: here $H$ is not necessarily Abelian, since we are not interested in the pseudogroup of local isometries generated by $H$, but rather the holonomy pseudogroup of $\f$. We already saw specific examples in this class of Killing foliations: the $\lambda$-Kronecker foliations (see Example \ref{exe: foliated actions}) and the Riemannian $1$-foliations of the round sphere (see Example \ref{example: 1foliations of the sphere}).
\end{example}

One can construct examples of Killing foliations which are not homogeneous and whose ambient manifolds are not simply connected by using suspensions. For example, take $S$ to be an inner product vector space and $B$ a negatively curved compact Riemannian manifold whose fundamental group has a nontrivial Abelian subgroup $\langle \gamma\rangle$ (which is infinite cyclic, by Preissman's theorem). Define $h$ on the generators by mapping $\gamma$ to an irrational rotation and any other generator to the identity. The foliation defined by suspension of $h$ is then a Killing foliation, by Example \ref{example: killing foliations given by suspensions}. It is non-homogeneous, since it has the zero section $L_0\cong B$ as one of its leaves, which is a non-homogeneous manifold (since $\mathrm{Iso}(B)$ is finite), and the total space $M$ is not simply connected, since it deformation retracts to $L_0$.

Finally, we cite the following example of a non-homogeneous Killing foliation on a non-simply connected manifold which is moreover not constructed by the suspension method.

\begin{example}[{\cite[p. 287]{mozgawa}}]\label{example: a killing foliation non-homogeneous and not simply connected}
Consider $T=\mathbb{T}^2\times\mathbb{T}^2$. For $A\in\mathrm{SL}_2(\mathbb{Z})$, if $v$ is an eigenvector of $A$, the foliation given by lines in $\mathbb{R}^2$ that are parallel to $v$ projects to a Kronecker foliation $\f_v$ of $\mathbb{T}^2$ (see Example \ref{exe: foliated actions}). We choose $A$ so that $\f_v$ is not closed, e.g. by requiring that $\mathrm{tr}(A)>2$. Seeing this torus $\mathbb{T}^2$ as the second factor of $T$, the product foliation of the trivial foliation $\{\mathbb{T}^2\}$ on the first factor with $\f_v$ gives us a codimension $1$ foliation $\f_T$ of $T$ with dense leaves. Consider the diffeomorphism $\Phi_A:=\mathrm{id}\times\overline{A}:T\to T$, where $\overline{A}:\mathbb{T}^2\to\mathbb{T}^2$ is the diffeomorphism determined by $A$. The suspension of the homomorphism $\pi_1(\mathbb{S}^1)\to \mathrm{Diff}(T)$ given by $n\mapsto \Phi_A^n$ furnishes us a fiber bundle $\tau:M\to \mathbb{S}^1$ with fiber $T$ and structural group $\langle\Phi_A\rangle$. Here we are not interested in the foliation given by this suspension, but rather the foliation $\f$ induced fiberwise on $M$ by $\f_T$, which is well defined since $\f_T$ is invariant by $\Phi_A$. One sees immediately that $\f$ is Riemannian and its leaf closures are the fibers of $\tau$.

Since $\Phi_A$ acts trivially on $\mathfrak{l}(\f_T)$, one sees that $\f_T$ is transversely parallelizable and it follows that $\mathscr{C}_\f$ is globaly trivial, that is $\f$ is a Killing foliation. Notice, however, that $M$ is not simply connected, by construction. It only remains to verify that $\f$ is also not homogeneous. In fact, if this were the case, $\f$ would be given by the orbits of a connected Lie subgroup $H<\mathrm{Iso}(M,\mathrm{g})$, with respect to some Riemannian metric $\mathrm{g}$ on $M$. Then $\overline{\f}$, given by the fibers of $\tau$, would coincide with the orbits of $\overline{H}$, hence one could conclude that $\tau$ is associated to a principal $\overline{H}$-bundle $E\to\mathbb{S}^1$. Since $\overline{H}$ is connected, $E$ should be trivial, hence also $M\to \mathbb{S}^1$ would be trivial. But this does not happen by construction: the map $\Phi_A^*:H(T)\to H(T)$ induced by the generator $\Phi_A$ of its structural group on the homology of the fibers is non-trivial, hence $M\to \mathbb{S}^1$ is not topologically trivial.
\end{example}

In \cite{mozgawa} W.~Mozgawa establishes some implications of Molino's structural theorems in the case of Killing foliations:

\begin{theorem}[Mozgawa's Theorem {\cite[Théorème]{mozgawa}}]\label{theorem: Mozgawa}
Let $\f$ be a $q$-codimensional Killing foliation of a compact manifold $M$ of dimension $n=p+q$. If $r+p=\min_{\overline{L}\in\overline{\f}}\dim(\overline{L})$ then:
\begin{enumerate}[(i)]
\item There exists $r$ commuting transverse Killing vector fields $\overline{X_1},\dots,\overline{X_r}\in\mathfrak{iso}(\f)$ which are everywhere linearly independent, and
\item The orbits of the Lie algebra $\spannn(\overline{X_1},\dots,\overline{X_r})$ define a Riemannian foliation $\f'$ of $M$ of codimension $q-r$ which has at least one closed leaf and satisfies $\overline{\f'}=\overline{\f}$.
\end{enumerate}
\end{theorem}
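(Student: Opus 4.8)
\emph{Plan.} The plan is to deduce the theorem from Molino's structural theorem (Theorem~\ref{Theo: molino structural}), reducing everything to a linear-algebra selection inside the structural algebra of $\f$. Fix a basis $\overline{X_1},\dots,\overline{X_d}$ of $\mathscr{C}_\f(M)\subset\mathfrak{iso}(\f)$, so that $\mathfrak{a}:=\spannn(\overline{X_1},\dots,\overline{X_d})$ is an Abelian algebra of transverse Killing fields faithfully realized in $\mathfrak{l}(\f)$, with $T\overline{\f}=T\f\oplus\spannn\{X_1,\dots,X_d\}$. For $y\in M$ set $\mathfrak{a}_y:=\{X\in\mathfrak{a}\mid X_y\in T_yL_y\}$. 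Since each $X\in\mathfrak{a}$ is foliate, its transverse zero set $\{w\mid X_w\in T_wL_w\}$ is closed and is a union of leaves of $\f$, hence contains the closure of any leaf it meets; consequently $\mathfrak{a}_y$ depends only on the leaf closure $\overline{L}_y$, and $\dim\mathfrak{a}_y=d-(\dim\overline{L}_y-p)\le d-r$, with equality precisely on the leaf closures of minimal dimension $p+r$.

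The first step is to produce the $r$ fields of part~(i). Here I would use that the set $\{\mathfrak{a}_y\mid y\in M\}$ is finite. Indeed, the closure $\overline{\mathscr{H}_\f}$ of the holonomy pseudogroup has constant Molino sheaf $\mathfrak{a}$ and is a Lie pseudogroup (Theorem~\ref{theorem: salem}), so locally it is generated by the flows of the sections of $\mathfrak{a}$; the closure of the local group these flows generate on a transversal is a torus $\mathbb{T}^d$, the leaf closures of $\f$ are locally the orbits of this structural torus, and a torus acting on a manifold has only finitely many isotropy subgroups — by compactness of $M$ there are thus only finitely many infinitesimal isotropy algebras $\mathfrak{a}_y$. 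Each is a subspace of $\mathfrak{a}$ of dimension $\le d-r$, so in the Grassmannian $\mathrm{Gr}_r(\mathfrak{a})$ of $r$-planes the set of those meeting some $\mathfrak{a}_y$ nontrivially is a finite union of proper closed subvarieties. Choosing $V$ in the nonempty open complement and taking $\overline{X_1},\dots,\overline{X_r}$ to be a basis of $V$ gives $r$ pairwise commuting transverse Killing fields (elements of the Abelian algebra $\mathfrak{a}$) with $(X_1)_y,\dots,(X_r)_y$ linearly independent modulo $T_yL_y$ for every $y$; this is exactly assertion~(i).

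For part~(ii), put $E:=T\f\oplus\spannn\{X_1,\dots,X_r\}$, a distribution of rank $p+r$ by~(i). It is involutive: $[X_i,\mathfrak{X}(\f)]\subset\mathfrak{X}(\f)$ because the $X_i$ are foliate, and $[X_i,X_j]=0$ because $\mathfrak{a}$ is Abelian. Let $\f'$ be the resulting foliation, of codimension $q-r$; its holonomy pseudogroup is obtained from $\mathscr{H}_\f$ by further quotienting the local transversals of $\f$ by the local flows of the transverse Killing fields $\overline{X_i}$, hence is a pseudogroup of local isometries and $\f'$ is Riemannian. Because $X_i\in\mathscr{C}_\f(M)$, the orbits of $\spannn(\overline{X_1},\dots,\overline{X_r})$ lie inside the orbits of $\mathscr{C}_\f$, which by Theorem~\ref{Theo: molino structural}(v) are the leaf closures of $\f$; thus $L_x\subset L'_x\subset\overline{L}_x$ for each $x$, whence $\overline{L'_x}=\overline{L}_x$, i.e.\ $\overline{\f'}=\overline{\f}$. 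Finally, let $\overline{L}_0$ be a leaf closure of minimal dimension $p+r$, a compact embedded submanifold of $M$ by Theorem~\ref{Theo: molino structural}(iii); since $T\f\subset T\overline{L}_0$ and the $X_i$ are sections of $\mathscr{C}_\f$, the distribution $E$ is tangent to $\overline{L}_0$, so $\f'|_{\overline{L}_0}$ is a codimension-$0$ foliation of the connected manifold $\overline{L}_0$. This forces $\overline{L}_0$ to be a single leaf of $\f'$ — a compact, hence closed, leaf, as required.

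The step I expect to be the main obstacle is the finiteness of $\{\mathfrak{a}_y\mid y\in M\}$ used in the second paragraph: turning ``locally, leaf closures are orbits of a structural torus'' into a rigorous statement requires the structural theory of complete pseudogroups of local isometries, and one must either patch the local tori or, equivalently, argue directly (e.g.\ via the infinitesimal models of the singular Riemannian foliation $\overline{\f}$ on the compact $M$) that only finitely many subalgebras $\mathfrak{a}_y\subset\mathfrak{a}$ occur. Granting this, the remaining ingredients — involutivity of $E$, the Riemannian character of $\f'$, the identity $\overline{\f'}=\overline{\f}$, and the extraction of a closed leaf from a minimal leaf closure — are routine.
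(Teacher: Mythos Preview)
The paper is a survey and does not supply its own proof of Mozgawa's theorem; it merely states the result with a reference to \cite{mozgawa}. So there is no ``paper's proof'' to compare against, and your proposal has to be judged on its own merits.

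Your argument for part~(ii) is essentially complete and correct: once the $r$ fields of~(i) are in hand, the involutivity of $E=T\f\oplus\spannn\{X_1,\dots,X_r\}$, the Riemannian character of $\f'$ (the $\overline{X_i}$ being transverse Killing means $\mathrm{g}^T$ descends), the identity $\overline{\f'}=\overline{\f}$ via $L_x\subset L'_x\subset\overline{L}_x$, and the extraction of a closed leaf from a minimal-dimensional $\overline{L}_0$ are all routine, as you say.

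The genuine issue is exactly the one you flag: the finiteness of $\{\mathfrak{a}_y\mid y\in M\}$. This is not a cosmetic gap. Without some rigidity, a compact family of $(d-r)$-planes in $\mathfrak{a}$ can sweep out every $r$-plane in $\mathrm{Gr}_r(\mathfrak{a})$ --- think of the kernel of $(\theta,(a,b))\mapsto a\cos\theta+b\sin\theta$ over $\mathbb{S}^1$, which hits every line in $\mathbb{R}^2$ --- so closedness of $\bigcup_y\mathfrak{a}_y$ (which does follow from compactness of $M$) is not enough by itself. Your two suggested fixes are both viable. The cleanest route, using only material already in the survey, is via the Haefliger--Salem correspondence (Theorem~\ref{theorem: Haefliger-Salem 3.4}): the transverse structure of $\f$ is that of the orbit foliation of a dense $H<\mathbb{T}^N$ on a compact orbifold $\mathcal{O}$, the subalgebras $\mathfrak{a}_y$ correspond to the infinitesimal isotropies of the $\mathbb{T}^N$-action modulo $\mathfrak{h}$, and a torus action on a compact orbifold has only finitely many isotropy subgroups by the slice theorem. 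Note that Haefliger--Salem appears after Mozgawa's theorem in the survey's exposition, but its proof is logically independent of it, so there is no circularity. Alternatively, one can argue on the Molino bundle: on $M^\Yup$ the lifted fields $\overline{X_i^\Yup}$ are everywhere independent modulo $T\f^\Yup$ (since $\overline{\f^\Yup}$ is a regular fibration), so the failure of independence downstairs is governed entirely by the $\mathrm{O}(q)$-action on $W=M^\Yup/\overline{\f^\Yup}$, which again has finitely many orbit types; but translating this into a statement about the $\mathfrak{a}_y$ takes some bookkeeping. Either way, once finiteness is secured your Grassmannian argument goes through.
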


In particular, it follows easily from this theorem that if $\chi(M)\neq 0$, then every Killing foliation $\f$ on $M$ has at least one closed leaf: in that case, by the Hopf index theorem any vector field (hence any $\overline{X}\in\mathfrak{l}(\f)$) must vanish at some point, where it is thus not linearly independent.  More recently it was shown in \cite{caramello} that a much stronger conclusion holds when $M$ is compact: if $\chi(M)\neq 0$ then every leaf of $\f$ is closed (see Theorem \ref{theorem: closed leaf + transverse symmetry implies charM vanishes}).

\subsection{Transverse structure of Killing foliations}

The transverse structure of a Killing foliation coincides with that of an (Abelian) homogeneous foliation on an orbifold, as established by A.~Haefliger and E.~Salem in \cite{haefliger2}. More precisely, by comparing the local models of the transverse structure of a Killing foliation on a neighborhood of a leaf closure and the local model of an orbit of a torus action on an orbifold, the authors obtain the following.

\begin{theorem}[Haefliger--Salem Theorem {\cite[Theorem 3.4]{haefliger2}}]\label{theorem: Haefliger-Salem 3.4}
There are canonical correspondences between:
\begin{enumerate}[(i)]
\item The set $A_1$ of equivalence classes of Killing foliations $\f$ with compact leaf closures on a manifold $M$, two foliations being equivalent when their holonomy pseudogroups are equivalent,
\item\label{HS item 2} The set $A_2$ of equivalence classes of Killing pseudogroups $\mathscr{H}$ such that $\mathscr{H}$ restricted to a generic orbit closure is equivalent to the pseudogroup generated by a rank $N$ subgroup $\Gamma$ of translations of $\mathbb{R}^d$,
\item the set $A_3$ of equivalence classes of quadruples $(\mathcal{O},\mathbb{T}^N,H,\mu)$, where $\mathcal{O}$ is an orbifold, $\mu:\mathbb{T}^N\times\mathcal{O}\to \mathcal{O}$ is an effective action and $H<\mathbb{T}^N$ is a dense, contractible subgroup whose action is locally free, two quadruples $(\mathcal{O},\mathbb{T}^N,H,\mu)$ and $(\mathcal{O}',\mathbb{T}^{N'},H',\mu')$ being equivalent if there is an isomorphism between $\mathbb{T}^N$ and $\mathbb{T}^{N'}$ (sending $H$ to $H'$) and a diffeomorphism of $\mathcal{O}$ onto $\mathcal{O}'$ that conjugates $\mu$ and $\mu'$.
\end{enumerate}
Moreover, for a foliation $(M,\f)$ whose class is in $A_1$, there is a smooth map $\Upsilon:M\to\mathcal{O}$, for $\mathcal{O}$ a corresponding orbifold whose class is in $A_3$, such that $\f=\Upsilon^*(\f_H)$, where $\f_H$ is the foliation of $\mathcal{O}$ given by the orbits of $H$.
\end{theorem}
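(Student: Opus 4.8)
The plan is to prove the theorem by establishing the two correspondences $A_1\leftrightarrow A_2$ and $A_1\leftrightarrow A_3$ separately, throughout translating statements about $(M,\f)$ into statements about the holonomy pseudogroup $(S_\f,\mathscr{H}_\f)$ and invoking the Molino--Salem machinery (Theorems \ref{theorem: salem}, \ref{theorem: fedida} and \ref{Theo: molino structural}). The correspondence $A_1\leftrightarrow A_2$ is essentially formal: by definition $\mathscr{H}_\f$ of a Killing foliation is a Killing pseudogroup, and conversely every complete pseudogroup of local isometries is equivalent to the holonomy pseudogroup of some Riemannian foliation (e.g.\ via a suspension-type construction, cf.\ Example \ref{example: killing foliations given by suspensions}), so $\f\mapsto\mathscr{H}_\f$ is a bijection of $A_1$ onto the equivalence classes of Killing pseudogroups with compact orbit closures. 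The additional clause in item \eqref{HS item 2} is then exactly Fedida's description of the transverse structure of $\f$ along a generic leaf closure $\overline{L}$: there $\f^\Yup|_{\overline{L^\Yup}}$ is a complete $\mathfrak{a}$-Lie foliation with $\mathfrak{a}$ \emph{Abelian}, so its developing map exhibits its holonomy pseudogroup as the one generated by the deck group $\Gamma\cong\mathbb{Z}^N$ acting by translations on $\mathfrak{a}\cong\mathbb{R}^d$, and descending through $\pi^\Yup$ this is the holonomy pseudogroup of $\f|_{\overline{L}}$.

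For the direction $A_3\to A_1$ one takes a $\mathbb{T}^N$-invariant Riemannian metric on $\mathcal{O}$ and lets $\f_H$ be the foliation by $H$-orbits. Since $H$ acts locally freely, $\f_H$ is a genuine (orbifold) Riemannian foliation; since $H$ is dense, its leaf closures are the $\mathbb{T}^N$-orbits and hence compact; and since $H$ is Abelian and contractible, the transverse Killing fields induced by the $\mathbb{T}^N$-action trivialize $\mathscr{C}_{\f_H}$, so $\f_H$ is Killing (compare Proposition \ref{prop-equivalenciaAbeliano} and Example \ref{example: homogeneous foliations are killing}). This produces a class in $A_1$ through $\mathscr{H}_{\f_H}$, and for an arbitrary $(M,\f)$ whose class lies in $A_1$ one recovers the realization map $\Upsilon$ by patching: a smooth equivalence between $\mathscr{H}_\f$ and $\mathscr{H}_{\f_H}$ identifies the local quotients $S_i$ of a Haefliger cocycle for $\f$ with open sets of orbifold charts of $\mathcal{O}$, and composing the defining submersions $\pi_i\colon U_i\to S_i$ with these identifications gives local maps $U_i\to\mathcal{O}$ that glue (because the cocycle transitions correspond to changes of charts of $\mathcal{O}$) to a global $\Upsilon\colon M\to\mathcal{O}$ with $\f=\Upsilon^*(\f_H)$.

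The substantive direction is $A_1\to A_3$: building $\mathcal{O}$ and its torus action from $(M,\f)$. I would do this by gluing local models. Near a leaf closure $\overline{L}$, Molino's local theory (Theorem \ref{Theo: molino structural}) presents a saturated neighborhood as a foliated bundle over $\overline{L}$ with disk fibre $D$ and a finite structure group $K=K_{\overline{L}}$, the local holonomy; the fields $\overline{X_1},\dots,\overline{X_d}$ trivializing $\mathscr{C}_\f$ generate over this neighborhood a local action of $\mathfrak{a}\cong\mathbb{R}^d$ whose orbit closures are the pieces of leaf closures. Combining this with the generic datum $\Gamma\subset\mathfrak{a}$, forming $\mathbb{T}^N=\mathbb{R}^N/\mathbb{Z}^N$ with a surjection $\mathbb{Z}^N\to\Gamma$ and $H$ the image of $\ker(\mathbb{R}^N\to\mathbb{R}^d)$ (which is dense and contractible), one obtains a local orbifold chart of the form $(\mathbb{T}^N\times D)/K$ carrying a $\mathbb{T}^N$-action and an $H$-foliation, together with a local submersion from the neighborhood in $M$ onto it that pulls $\f_H$ back to $\f$. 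These charts are compatible because the transition data of $\f$ act on transversals through germs commuting with the $\mathfrak{a}$-action, hence through $\mathbb{T}^N$-equivariant changes of charts, so they assemble into $(\mathcal{O},\mathbb{T}^N,H,\mu)$ and $\Upsilon$; one then checks $\f=\Upsilon^*(\f_H)$ locally and that the two constructions are mutually inverse on equivalence classes, the bulk of the (routine but lengthy) work being to verify independence of all choices --- bundle-like metric, cocycle, trivialization of $\mathscr{C}_\f$.

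The main obstacle I expect is the \emph{global} coherence of the torus. The algebra $\mathfrak{a}$ acts only locally on $S_\f$, and a priori the lattice $\Gamma$ of ``closing-up periods'' could vary between leaf closures; the key point is that the global constancy of the Molino sheaf $\mathscr{C}_\f$ --- the defining property of a Killing foliation --- forces a single $\Gamma\subset\mathfrak{a}$ and hence a single $\mathbb{T}^N$ to govern all local models, which is a monodromy argument along $\mathscr{H}_\f$-paths controlled by $\pi_1(\mathscr{H}_\f)$. The second delicate point is arranging the non-principal orbits of the $\mathbb{T}^N$-action on $\mathcal{O}$, where the isotropy is a finite group extended by a subtorus, to correspond exactly to the non-generic leaf closures of $\f$ and their holonomy: this precise matching of local models is the technical heart of Haefliger--Salem's argument, and everything else is the comparatively mechanical gluing and bookkeeping described above.
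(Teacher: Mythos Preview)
Your outline of $A_1\leftrightarrow A_2$ and of $A_3\to A_1$ is in line with the paper's discussion, and your identification of the torus via the lattice $\Gamma$ of periods is close to the paper's formula $\mathbb{T}^N\cong(\Gamma\otimes\mathbb{R})/(\Gamma\otimes\mathbb{Z})$ (though your description of $H$ as ``the image of $\ker(\mathbb{R}^N\to\mathbb{R}^d)$'' looks inverted: $H$ should be the dense immersed copy of $\mathbb{R}^d\cong\mathfrak{a}$ inside $\mathbb{T}^N$, not the kernel direction).

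The genuine gap is in your construction of $\Upsilon$. You propose to compose the local submersions $\pi_i:U_i\to S_i$ with identifications $S_i\hookrightarrow\mathcal{O}$ coming from the pseudogroup equivalence and then claim these glue because ``the cocycle transitions correspond to changes of charts of $\mathcal{O}$''. But they do not: the transitions $\gamma_{ij}$ correspond, under the equivalence $\mathscr{H}_\f\cong\mathscr{H}_{\f_H}$, to elements of the \emph{holonomy pseudogroup} of $\f_H$, not to changes of orbifold charts of $\mathcal{O}$. Concretely, on $U_i\cap U_j$ the two candidate maps to $\mathcal{O}$ differ by an element of $\mathscr{H}_{\f_H}$, which moves points along $H$-orbits in $\mathcal{O}$ and is not the identity. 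So the local maps do not patch to a global $\Upsilon:M\to\mathcal{O}$ by this naive procedure; the obstruction is precisely a \v{C}ech cocycle with values in $\mathscr{H}_{\f_H}$ that has no reason to be a coboundary.

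The paper handles this differently, and explicitly flags the point as nontrivial: the existence of $\Upsilon$ is obtained from Haefliger's theory of \emph{classifying spaces of pseudogroups}. The classifying space $B\mathscr{H}_\f$ carries a foliation $B\f$ with $\mathscr{H}_{B\f}\cong\mathscr{H}_\f$ and with contractible holonomy covers of leaves, and there is always a classifying map $\Upsilon:M\to B\mathscr{H}_\f$, transverse to $B\f$, with $\f=\Upsilon^*(B\f)$. The substance of $A_1\to A_3$ is then that $(\mathcal{O},\f_H)$ is a canonical model for $(B\mathscr{H}_\f,B\f)$. So the missing ingredient in your plan is this classifying-space machinery (or an equivalent argument explaining why the holonomy cocycle above can be trivialized), without which the global map $\Upsilon$ simply does not come for free from the pseudogroup equivalence.
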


The correspondences $A_1\to A_2$ and $A_3\to A_2$ are just $[\f]\mapsto [\mathscr{H}_\f]$ and $[\f_H]\mapsto[\mathscr{H}_{\f_H}]$, respectively. Notice that $A_1$ indeed maps to $A_2$: the restriction of $\f$ to a generic leaf closure is a complete $\mathfrak{a}$-Lie foliation, since $\mathscr{C}_{\f}(M)$ restricts to a complete transverse parallelism for it, so it follows from Theorem \ref{theorem: fedida} that $\mathscr{H}$ restricted to a generic orbit closure is generated by subgroup $\Gamma$ of translations of $\mathbb{R}^d$. The isomorphism
$$\mathbb{T}^N\cong \frac{\Gamma\otimes\mathbb{R}}{\Gamma\otimes\mathbb{Z}},$$
where $\Gamma$ is the corresponding subgroup of translations of $\mathbb{R}^d$ in $A_2$ helps clarifying the relation between $\mathbb{T}^N$ and $\f$. Note, in particular, that $N\geq d$.

The existence of $\Upsilon$ follows non-trivially from the theory of classifying spaces of pseudogroups, developed by Haefliger in \cite{haefliger3}: the classifying space of $\mathscr{H}_\f$ is a space $B\mathscr{H}_\f$ with a foliation $B\f$ such that the holonomy covering of each leaf is contractible and $\mathscr{H}_\f\cong\mathscr{H}_{B\f}$. As in the classical case of classifying spaces in homotopy theory, there is a map $\Upsilon:M\to B\mathscr{H}_\f$, whose homotopy class is unique up to homotopy along the leaves, which is transverse to $B\f$ and such that $\f=\Upsilon^*(B\f)$. The point is that $A_1\to A_3$ associates the class of $\f$ to a canonical representative $(\mathcal{O},\f_H)$ of $[(B\mathscr{H}_\f,B\f)]$.

\begin{example}\label{example: Hae-Sal transverse structure of gen Hopf fibration}
In the simple case of an irrational generalized Hopf fibration $\f$ of $\mathbb{S}^3$ (see Eaxample \ref{example: 1foliations of the sphere}), the construction of $(\mathcal{O},\mathbb{T}^N,H,\mu)$ is trivial: $\mathcal{O}=\mathbb{S}^3$ with the action of $\mathbb{T}^N=\mathbb{T}^2$ by restriction of the multiplication on $\mathbb{C}^2$, and $H$ is the subgroup determined by the $\mathbb{R}$-action that defines the foliation. To also illustrate item \eqref{HS item 2} of Theorem \ref{theorem: Haefliger-Salem 3.4}, recall that the restriction of $\f$ to the closure of a generic leaf is an irrational Kronecker foliation $\f(\lambda)$ (see Example \ref{exe: foliated actions}). Notice that $\f(\lambda)$ is a Lie $\mathbb{R}$-foliation, so in view of Theorem \ref{theorem: fedida}, $\mathscr{H}_{\f(\lambda)}$ is equivalent to the pseudogroup generated by the group $\Gamma$ of translations of $\mathbb{R}$ induced, via projection along the lifted foliation $\widetilde{\f}(\lambda)$ of the universal covering $\mathbb{R}^2$, by the action of $\pi_1(\mathbb{T}^2)$ by deck transformations. Notice that in fact we have $\rank(\Gamma)=2=\rank(\pi_1(\mathbb{T}^2))$, since the generators of $\pi_1(\mathbb{T}^2)$ project to rationally independent translations $a_1$ and $a_2$ of $\mathbb{R}$ (see Figure \ref{liftedkronecker}).

\begin{figure}
\centering{
\begingroup%
  \makeatletter%
  \providecommand\color[2][]{%
    \errmessage{(Inkscape) Color is used for the text in Inkscape, but the package 'color.sty' is not loaded}%
    \renewcommand\color[2][]{}%
  }%
  \providecommand\transparent[1]{%
    \errmessage{(Inkscape) Transparency is used (non-zero) for the text in Inkscape, but the package 'transparent.sty' is not loaded}%
    \renewcommand\transparent[1]{}%
  }%
  \providecommand\rotatebox[2]{#2}%
  \newcommand*\fsize{\dimexpr\f@size pt\relax}%
  \newcommand*\lineheight[1]{\fontsize{\fsize}{#1\fsize}\selectfont}%
  \ifx\svgwidth\undefined%
    \setlength{\unitlength}{147.71465271bp}%
    \ifx\svgscale\undefined%
      \relax%
    \else%
      \setlength{\unitlength}{\unitlength * \real{\svgscale}}%
    \fi%
  \else%
    \setlength{\unitlength}{\svgwidth}%
  \fi%
  \global\let\svgwidth\undefined%
  \global\let\svgscale\undefined%
  \makeatother%
  \begin{picture}(1,0.82384617)%
    \lineheight{1}%
    \setlength\tabcolsep{0pt}%
    \put(24.67564251,22.91297618){\color[rgb]{0,0,0}\makebox(0,0)[lt]{\lineheight{0}\smash{\begin{tabular}[t]{l} \end{tabular}}}}%
    \put(23.91794237,23.1856771){\color[rgb]{0,0,0}\makebox(0,0)[lt]{\begin{minipage}{0\unitlength}\raggedright \end{minipage}}}%
    \put(14.68611501,5.64812669){\color[rgb]{0,0,0}\makebox(0,0)[lt]{\begin{minipage}{0\unitlength}\raggedright \end{minipage}}}%
    \put(0,0){\includegraphics[width=\unitlength,page=1]{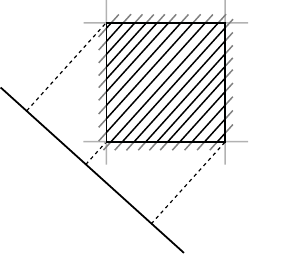}}%
    \put(0.23901517,0.22307876){\color[rgb]{0,0,0}\makebox(0,0)[lt]{\lineheight{0}\smash{\begin{tabular}[t]{l}$0$\end{tabular}}}}%
    \put(0.03956393,0.40256857){\color[rgb]{0,0,0}\makebox(0,0)[lt]{\lineheight{0}\smash{\begin{tabular}[t]{l}$a_2$\end{tabular}}}}%
    \put(0.44690802,0.03448799){\color[rgb]{0,0,0}\makebox(0,0)[lt]{\lineheight{0}\smash{\begin{tabular}[t]{l}$a_1$\end{tabular}}}}%
    \put(0.78524688,0.60113441){\color[rgb]{0,0,0}\makebox(0,0)[lt]{\smash{\begin{tabular}[t]{l}$\widetilde{\f}(\lambda)$\end{tabular}}}}%
  \end{picture}%
\endgroup%
}
\caption{Generators of $\Gamma$}
\label{liftedkronecker}
\end{figure}

More generally, for a Killing foliation with compact leaf closures $\f$, if $L\in\f$ is a generic leaf, the authors establish in \cite[Theorem 1.4]{haefliger2} that $\dim(\overline{L})-N\geq 0$, with equality holding if and only if $L$ is contractible, and in this case $\mathcal{O}$ is a manifold, $\dim(M)=\dim(\mathcal{O})$, and $\Upsilon$ is a homotopy equivalence.
\end{example}

\subsection{Deformations of Killing foliations}\label{section: deformations}

Two smooth foliations $\f_0$ and $\f_1$ of $M$ are $C^\infty$-\textit{homotopic} if there is a smooth foliation $\f$ of $M\times [0,1]$ of the same dimension such that $M\times\{t\}$ is saturated by leaves of $\f$, for each $t\in[0,1]$, and
$$\f_i=\f|_{M\times\{i\}},$$
for $i=0,1$. Here we will simply say that $\f_t$ is a \textit{deformation} of $\f_0$ into $\f_1$.

For a Riemannian foliation $\f$ on a simply connected, compact manifold $M$, E.~Ghys showed in \cite[Théorème 3.3]{ghys} that is possible to deform $\f$ into a closed foliation $\g$, in such a way that the deformation respects $\overline{\f}$, that is, it occurs within the closures of the leaves of $\f$. As remarked by the authors in \cite{haefliger2}, Theorem \ref{theorem: Haefliger-Salem 3.4} generalizes this result: for a Killing foliation $\f$ on a compact manifold $M$, consider a corresponding orbifold $(\mathcal{O},\mathbb{T}^N,H,\mu)$ and the map $\Upsilon:M\to\mathcal{O}$ such that $\f=\Upsilon^*(\f_H)$. Let $\mathfrak{h}$ be the Lie algebra of $H$ and slightly perturbate it into a Lie subalgebra $\mathfrak{k}<\mathrm{Lie}(\mathbb{T}^N)\cong\mathbb{R}^N$, with $\dim(\mathfrak{k})=\dim(\mathfrak{h})$, such that its corresponding Lie subgroup $K<\mathbb{T}^N$ is closed. If $\mathfrak{k}$ is close enough to $\mathfrak{h}$ (as points in the Grassmannian $\mathrm{Gr}^{\dim\mathfrak{h}}(\mathrm{Lie}(\mathbb{T}^N))$), it is possible to choose a smooth path $\mathfrak{h}(t)$ connecting $\mathfrak{h}$ to $\mathfrak{k}$ such that for each $t$ the action $\mu|_{H(t)}$ of the corresponding Lie subgroup $H(t)$ is locally free and the induced foliation $\f_{H(t)}$ remains transverse to $\Upsilon$. Then $\f_t:=\Upsilon^*(\f_{H(t)})$ defines a deformation of $\f=\f_0$ into $\g=\f_1$. It is possible to prove that $\mathscr{H}_{\f_t}$ is equivalent to $\mathscr{H}_{\f_{H(t)}}$ for each $t$. Moreover, since $K$ is closed, $\g$ is a closed foliation and, by construction, the deformation respects $\overline{\f}$.

The ``transverse homogeneous'' nature of this deformation allows one to preserve some geometric properties of $\f$ in $\g$. This was investigated in \cite{caramello}:

\begin{theorem}[{\cite[Theorem B]{caramello}}]\label{theorem: deformation}
Let $(\f,\mathrm{g}^T)$ be a Killing foliation of a compact manifold $M$. Then there is a deformation $\f_t$ of $\f$ respecting $\overline{\f}$, called a \emph{regular deformation}, into a closed foliation $\g$ which can be chosen arbitrarily close to $\f$, such that
\begin{enumerate}[(i)]
\item \label{injection of tensor algebra} for each $t$ there is an injection $\iota:\mathcal{T}(\f)\to\mathcal{T}(\f_t)$ that smoothly deforms transverse geometric structures given by $\f$-basic tensors, such as the metric $\mathrm{g}^T$, into respective transverse geometric structures for $\f_t$,
\item \label{toric action on the quotient} the quotient orbifold $M/\!/\g$ admits an effective isometric action of a torus $\mathbb{T}^d$, with respect to the metric induced from $\iota\mathrm{g}^T$, such that $M/\overline{\f}\cong(M/\g)/\mathbb{T}^d$, where $d=\dim\mathfrak{a}$.
\item \label{symmetries and basic tensors} $\mathcal{T}(\f)$ is isomorphic to the algebra $\mathcal{T}(M/\!/\g)^{\mathbb{T}^d}$ of $\mathbb{T}^d$-invariant tensor fields on $M/\!/\g$, the isomorphism being given by $\pi_*\circ\iota$, where $\pi_*:\mathcal{T}(\g)\to\mathcal{T}(M/\!/\g)$ is the pushforward by the canonical projection.
\end{enumerate}
In particular, if $\g$ is chosen sufficiently close to $\f$, upper and lower bounds on transverse sectional and Ricci curvature of $\f$ are maintained.
\end{theorem}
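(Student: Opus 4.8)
The strategy is to make rigorous the deformation of the acting subgroup sketched above inside the Haefliger--Salem transverse model, and then to establish (i)--(iii) by transporting tensor structures through that model. Since $M$ is compact, the leaf closures of $\f$ are compact, so $[(M,\f)]$ lies in the set $A_1$ of Theorem~\ref{theorem: Haefliger-Salem 3.4}: fix a corresponding quadruple $(\mathcal{O},\mathbb{T}^N,H,\mu)$ and a smooth map $\Upsilon:M\to\mathcal{O}$, transverse to $\f_H$, with $\f=\Upsilon^*(\f_H)$, and write $\mathfrak{h}=\mathrm{Lie}(H)$, a $d$-plane in $\mathrm{Lie}(\mathbb{T}^N)\cong\mathbb{R}^N$ with $H$ dense in $\mathbb{T}^N$ (here $d=\dim\mathfrak{a}$). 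Rational $d$-planes being dense in $\mathrm{Gr}^{d}(\mathbb{R}^N)$, I would choose one, $\mathfrak{k}$, as close to $\mathfrak{h}$ as wanted, with associated subgroup $K<\mathbb{T}^N$ a $d$-torus, together with a smooth path $t\mapsto\mathfrak{h}(t)$ from $\mathfrak{h}$ to $\mathfrak{k}$ remaining in a prescribed neighborhood $\mathcal{V}$ of $\mathfrak{h}$. The conditions that $\mu|_{H(t)}$ (with $H(t):=\exp\mathfrak{h}(t)$) be locally free and that $\Upsilon$ be transverse to $\f_{H(t)}$ are open and, by compactness of $M$ (hence of $\Upsilon(M)\subset\mathcal{O}$), hold along the whole path once $\mathcal{V}$ is small; since $\mathfrak{h}(t)$ varies smoothly, $\{\f_{H(t)}\}$ is a smooth foliation of $\mathcal{O}\times[0,1]$, so $\f_t:=\Upsilon^*(\f_{H(t)})$ is a smooth foliation of $M\times[0,1]$ of the same dimension as $\f$. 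This is the required $C^\infty$-deformation of $\f=\f_0$ into $\g:=\f_1=\Upsilon^*(\f_K)$; the foliation $\g$ is closed because $K$ is a torus, so $\f_K$ is a closed foliation of $\mathcal{O}$; it respects $\overline{\f}$ because each $H(t)$-orbit lies in an $\overline{H}=\mathbb{T}^N$-orbit, so each leaf of $\f_t$ lies in a leaf of $\overline{\f}$; and $\g$ is $C^\infty$-close to $\f$ by taking $\mathfrak{k}$ close to $\mathfrak{h}$.

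For (i), since $\Upsilon^*$ identifies $\mathscr{H}_{\f_t}$ with $\mathscr{H}_{\f_{H(t)}}$, the algebra $\mathcal{T}(\f_t)$ is identified with the $\f_{H(t)}$-basic tensor fields on $\mathcal{O}$, and $\mathcal{T}(\f)$ with the $\f_H$-basic ones; moreover, as $H$ is dense, every $\f_H$-basic tensor is $\mathbb{T}^N$-invariant. The linear isomorphisms $\mathfrak{h}\to\mathfrak{h}(t)$, varying smoothly and equal to the identity at $t=0$, act through the differential of $\mu$ as bundle endomorphisms of the tangent distribution $T\f_{\mathbb{T}^N}$ of the foliation $\f_{\mathbb{T}^N}$ of $\mathcal{O}$ by $\mathbb{T}^N$-orbits, carrying $T\f_H$ onto $T\f_{H(t)}$; complemented by the identity on a fixed $\mathbb{T}^N$-invariant complement of $T\f_{\mathbb{T}^N}$ in $T\mathcal{O}$ (the orthogonal complement of an averaged invariant metric), they give automorphisms $\Phi_t$ of $T\mathcal{O}$. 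Then for an $\f_H$-basic (hence $\mathbb{T}^N$-invariant, $T\f_H$-annihilating) tensor $\xi$, the pullback $(\Phi_t^{-1})^*\xi$ annihilates $T\f_{H(t)}$ and remains $\mathbb{T}^N$-invariant, hence is $\f_{H(t)}$-basic; pulling back by $\Upsilon$ defines an injection $\iota:\mathcal{T}(\f)\to\mathcal{T}(\f_t)$, smooth in $t$, with $\iota\mathrm{g}^T=\mathrm{g}^T_t$ a transverse metric for $\f_t$ and $\mathrm{g}^T_0=\mathrm{g}^T$. I expect this step to be the main obstacle: one must check that $\Phi_t$ is well defined \emph{as a bundle map on the singular strata of the $\mathbb{T}^N$-action} --- there $T\f_{\mathbb{T}^N}$ is a quotient of $\mathrm{Lie}(\mathbb{T}^N)$ with varying kernel, and this is where local freeness of $\mu|_{H(t)}$ is used --- that it genuinely sends basic tensors to basic tensors, and that $\iota$ is a morphism of tensor algebras; this is precisely where the transversely homogeneous rigidity of Killing foliations is exploited.

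For (ii) I would use that $\mathscr{C}_\f$ is globally constant and pick a trivializing frame $\overline{X_1},\dots,\overline{X_d}\in\mathscr{C}_\f(M)$ of commuting transverse Killing fields. Their images $\iota\overline{X_1},\dots,\iota\overline{X_d}$ are commuting transverse Killing fields for $(\g,\mathrm{g}^T_1)$, of bounded length on the compact $M$, hence complete; since $\g$ is closed, $M/\!/\g$ is an orbifold (Proposition~\ref{proposition: quotients of closed foliations are orbifolds}, the finite holonomy being inherited from the locally free $\mu|_K$), and the $\pi_*(\iota\overline{X_i})$ are commuting complete Killing fields of the metric induced by $\iota\mathrm{g}^T$ on $M/\!/\g$. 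As the isometry group of a compact orbifold is compact, the flow they generate closes up to an effective isometric torus action; its dimension is exactly $d=\dim\mathfrak{a}$ (using that the deformation was arranged so that the leaf-closure holonomy becomes closed of rank $d$, and that a generic leaf closure of $\f$ has codimension $q-d$ in $M$), and its orbits are the saturations of $\g$-leaves along the $X_i$-directions, which are exactly the leaf closures of $\f$; hence $(M/\g)/\mathbb{T}^d\cong M/\overline{\f}$. Item (iii) then follows by checking that $\pi_*\circ\iota$ takes values in $\mathcal{T}(M/\!/\g)^{\mathbb{T}^d}$ (clear, the outputs of $\iota$ being $\mathbb{T}^N$-invariant), is injective (a composite of injections), is onto (pull a $\mathbb{T}^d$-invariant tensor back to a $\g$-basic, $\mathbb{T}^d$-invariant tensor on $M$ and run the construction of $\iota$ in reverse, using invertibility of $\Phi_1$), and intertwines the tensor operations.

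Finally, the curvature statement is formal: transverse sectional curvature and $\ric_\f$ depend continuously --- in the $C^2$ sense on compact subsets --- on the transverse metric, and $\mathrm{g}^T_t\to\mathrm{g}^T$ as $\mathfrak{k}\to\mathfrak{h}$ by (i); hence any strict lower or upper bound on $\ric_\f$ or on the transverse sectional curvature of $\f$ is preserved by $\g$ for $\g$ sufficiently close to $\f$.
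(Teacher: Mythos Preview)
Your construction of the deformation $\f_t=\Upsilon^*(\f_{H(t)})$ via a path $\mathfrak{h}(t)$ in the Grassmannian of $\mathrm{Lie}(\mathbb{T}^N)$ is exactly the argument the paper sketches just before stating the theorem (which it cites from \cite{caramello} without reproducing a proof), and your treatment of the curvature clause by $C^2$-continuity is the intended one.

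The one place where your route differs from the viewpoint the paper adopts elsewhere is item~(ii). You produce the $\mathbb{T}^d$-action by deforming the global Molino sections $\overline{X_1},\dots,\overline{X_d}\in\mathscr{C}_\f(M)$ through $\iota$ and then integrating their pushforwards on $M/\!/\g$. The paper's companion discussion (see Proposition~\ref{prop: induced transverse a action on G}) instead obtains it directly from the transverse model: since $\mathscr{H}_{\f_t}\cong\mathscr{H}_{\f_{H(t)}}$, the quotient $\mathbb{T}^N/K$ acts on $\mathcal{O}/\!/\f_K\cong M/\!/\g$, and one \emph{identifies} $\mathfrak{a}$ with a fixed complement of each $\mathfrak{h}(t)$ in $\mathfrak{t}=\mathrm{Lie}(\mathbb{T}^N)$, so that $\mathbb{T}^N/K$ is immediately a torus of dimension $N-\dim\mathfrak{k}=N-\dim\mathfrak{h}=d$. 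This makes the dimension count and the identification $(M/\g)/\mathbb{T}^d\cong \mathcal{O}/\mathbb{T}^N\cong M/\overline{\f}$ transparent, and it sidesteps the step you flag as delicate---checking that your $\Phi_t$ is well defined on the singular strata of the $\mathbb{T}^N$-action and that the integrated torus has rank exactly $d$. Your approach is not wrong, but the quotient-torus description is both cleaner and what is actually used downstream (e.g.\ in the invariance of $H_\mathfrak{a}$ under deformations).
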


One can then use the Riemannian geometry and topology of $M/\!/\g$ to study $\f$. In the next sections we will summarize some applications of this technique.

\section{Transverse topology of Killing foliations}\label{section: transverse topology of killing}

In this section we will survey some recent results concerning the transverse algebraic topology of Riemannian and Killing foliations. We begin with results on the basic Euler characteristic of a Riemannian foliation $\f$ of a compact manifold $M$. Recall from Section \ref{section: basic cohomology} that
$$\chi(\f)=\sum_i(-1)^i\dim(H^i(\f))$$
is always well defined for such an $\f$ (see Theorem \ref{theorem: dim basic cohomology is finite}) and generalizes the usual Euler characteristic in the sense that $\chi(\f)=\chi(M)$ when $\f$ is the trivial foliation by points. In this particular case the classical Hopf index theorem states that for a vector field $X\in\mathfrak{X}(M)$ with isolated zeros one has $\chi(M)=\sum_{p}\mathrm{ind}_{p}(X)$, where the sum ranges over the set $\Zero(X)$ of zeros of $X$. This theorem was generalized to Riemannian foliations in \cite[Theorem 3.18]{belfi}. To state it precisely we will need some definitions. Endow $M$ with a bundle-like metric and fix a foliate vector field $X\in\mathfrak{L}(\f)$. A leaf closure $J=\overline{L}$ is \textit{critical for $X$} if $\overline{X}=0$ over $J$ (which by continuity happens if, and only if, $X$ is tangent to $L$ at all its points). We say that $X$ is \textit{$\f$-nondegenerate} when its \textit{linear part} $X_{\mathrm{Lin}}:\nu_x J\to\nu_x J$, given by $v\mapsto [V,X]^{\perp}_x$ (where $V\in\mathfrak{X}(M)$ is any extension of $v$), is an isomorphism for every point $x$ of each critical leaf closure. In this case the leaf closures are isolated (hence finite) and we define the \textit{index of $X$ at $J$} by $\mathrm{ind}_{J}(X)=\mathrm{sgn}(\det(X_{\mathrm{Lin}}))$. It coincides with the classical index of a vector field when $\f$ is the trivial foliation by points.

One could then expect that the transverse version of the Hopf index theorem would simply state that $\chi(\f)=\sum_{J}\mathrm{ind}_{J}(X)$ for an $\f$-nondegenerate $X$, but this is not the case. Since the transverse analog of a classical critical point is a leaf closure, some information from its topology must also be taken into account. This information is encoded in $\chi(J,\f,\mathrm{Or}_{J}(X))$, the alternate sum of the cohomology groups of the complex of $\f|_J$-basic forms with values in the orientation line bundle of $X$ at $J$ (for more details, see \cite[Section 3]{belfi}). We can now state:

\begin{theorem}[Basic Hopf index theorem {\cite[Theorem 3.18]{belfi}}]\label{theorem: basic hopf index} Let $\f$ a Riemannian foliation of a compact manifold $M$. If $X\in\mathfrak{L}(\f)$ is $\f$-nondegenerate, then
$$\chi(\f)=\sum_{J}\mathrm{ind}_{J}(X)\chi(J,\f,\mathrm{Or}_{J}(X)),$$
where the sum ranges over all critical leaf closures $J$ of $\f$.
\end{theorem}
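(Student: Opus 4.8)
The plan is to localize the basic Euler characteristic onto the critical leaf closures of $X$ by means of the flow of $X$, mimicking the flow-theoretic proof of the classical Poincar\'e--Hopf theorem. First I would collect the structural ingredients. Since $X\in\mathfrak{L}(\f)$ is foliate, its (local) flow $\phi_t$ consists of $\f$-foliate diffeomorphisms, hence acts on the basic de Rham complex $\Omega(\f)$ and induces automorphisms $\phi_t^{\ast}$ of $H^{\ast}(\f)$; for $t$ near $0$ the family $\phi_{st}$, $s\in[0,1]$, is a foliate homotopy from $\mathrm{id}_M$ to $\phi_t$, so $\phi_t^{\ast}=\mathrm{id}$ on $H^{\ast}(\f)$. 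The transverse fixed-point locus is identified as follows: the zero set of the induced transverse field $\overline{X}\in\mathfrak{l}(\f)\cong\mathfrak{X}(S_\f)^{\mathscr{H}_\f}$ is a closed $\mathscr{H}_\f$-invariant subset of a total transversal, hence corresponds to a closed saturated subset of $M$, i.e.\ a union of leaf closures; by $\f$-nondegeneracy this union consists of finitely many leaf closures $J$, each an embedded compact submanifold by Molino's structural theorem (Theorem \ref{Theo: molino structural}) carrying the Riemannian foliation $\f|_J$, so that $\chi(J,\f,\mathrm{Or}_J(X))$ is defined and finite by Theorem \ref{theorem: dim basic cohomology is finite} applied to $\f|_J$.

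Next I would localize $\chi(\f)$ onto these leaf closures via a transverse Lefschetz fixed-point formula. Fixing a bundle-like metric and using the basic Hodge isomorphism $H^{i}(\f)\cong\mathcal{H}^{i}(\f)$ for the basic Laplacian $\Delta_B$, one forms the basic Lefschetz number $L(\phi_{-t})=\sum_i(-1)^i\,\mathrm{tr}\bigl(\phi_{-t}^{\ast}|_{\mathcal{H}^{i}(\f)}\bigr)$ and expresses it as a McKean--Singer supertrace $\mathrm{Str}(\phi_{-t}^{\ast}e^{-s\Delta_B})$, which is independent of the heat parameter $s$ because $\phi_{-t}^{\ast}$ is a chain map. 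Letting $s\to0$, the supertrace concentrates on the transverse fixed-point locus, that is, on the critical leaf closures, the excision being organized by a Mayer--Vietoris sequence in basic cohomology for saturated open sets (a saturated tubular neighborhood of the critical locus and its complement). Since $\phi_{-t}^{\ast}=\mathrm{id}$ on $H^{\ast}(\f)$ for small $t$, the left-hand side equals $\chi(\f)$, and the right-hand side becomes a sum, over the critical leaf closures $J$, of purely local terms.

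It remains to compute the local term at each critical $J$. Along $J$ the flow $\phi_{-t}$ preserves $J$ (it moves points along the leaves contained in $J$) and, after linearizing the transverse flow along $J$, acts on the normal bundle $\nu J$ of $J$ in $M$ by $e^{-tX_{\mathrm{Lin}}}$, where $X_{\mathrm{Lin}}$ is the linear part; $\f$-nondegeneracy makes $\mathrm{id}-e^{-tX_{\mathrm{Lin}}}$ invertible on $\nu J$ for small $t\neq0$. A clean-intersection Lefschetz computation along $J$ --- with the basic complex of $\f|_J$ playing the role of the de Rham complex of a classical fixed submanifold, and the line bundle $\mathrm{Or}_J(X)$ recording the orientation data of the eigenspace decomposition of $X_{\mathrm{Lin}}$ --- yields the local term $\mathrm{sgn}\bigl(\det(\mathrm{id}-e^{-tX_{\mathrm{Lin}}})\bigr)\,\chi(J,\f,\mathrm{Or}_J(X))$; and since $\det(\mathrm{id}-e^{-tX_{\mathrm{Lin}}})=t^{\,\codim J}\det(X_{\mathrm{Lin}})+O(t^{\,\codim J+1})$, this sign is $\mathrm{sgn}(\det X_{\mathrm{Lin}})=\mathrm{ind}_J(X)$ for small $t>0$. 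Summing over $J$ gives the asserted formula. The main obstacle is the localization of the previous paragraph: basic cohomology lacks Poincar\'e duality and is not the cohomology of a naturally soft resolution, so one must argue carefully with the transversely elliptic operator $\Delta_B$ and its heat kernel, and set up the Mayer--Vietoris sequence for saturated open sets; a close second is the precise identification of the twisted invariant $\chi(J,\f,\mathrm{Or}_J(X))$ as the correct clean-intersection contribution, including the orientation-line-bundle bookkeeping.
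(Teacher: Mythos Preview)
The paper does not supply a proof of this theorem: it is a survey, and Theorem~\ref{theorem: basic hopf index} is simply quoted from \cite[Theorem~3.18]{belfi} and then used as a black box (for instance, in the proof of Theorem~\ref{theorem: basic euler char localizes to closed leaves}). So there is no ``paper's own proof'' to compare against.

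That said, your outline is in the spirit of the original Belfi--Park--Richardson argument: they, too, work with the basic Laplacian on $\Omega(\f)$, a basic heat operator, and a Lefschetz/McKean--Singer supertrace argument localizing to the critical leaf closures; the twisted Euler characteristic $\chi(J,\f,\mathrm{Or}_J(X))$ arises exactly as the clean-intersection contribution you describe. Two cautions about your sketch. First, the basic heat kernel used in \cite{belfi} is not the ordinary heat kernel restricted to basic forms; it is built from a transversally elliptic operator, and its small-time asymptotics and localization are more delicate than in the classical case---this is where the real work lies, and your phrase ``letting $s\to 0$, the supertrace concentrates on the transverse fixed-point locus'' hides a nontrivial analytic result specific to the foliated setting. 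Second, the Mayer--Vietoris step you invoke is not how \cite{belfi} organizes the localization; they work directly with the asymptotics of the basic heat kernel. Your overall strategy is sound, but a genuine proof would need to either reproduce their basic-heat-kernel machinery or find a softer cohomological substitute, and the latter is not obviously available given the pathologies of basic cohomology you yourself flag.
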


By constructing an appropriate $X\in\mathfrak{L}(\f)$, one can use Theorem \ref{theorem: basic hopf index} to show that $\chi(\f)$ localizes to the strata of closed leaves:

\begin{theorem}[{\cite[Theorem D]{caramello}}]\label{theorem: basic euler char localizes to closed leaves} If $\f$ is a Killing foliation of a compact manifold $M$, then
$$\chi(\f)=\chi(\Sigma^{\dim\f}/\f).$$
In particular, if $\f$ has no closed leaves, then $\chi(\f)=0$.
\end{theorem}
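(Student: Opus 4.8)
\begin{proofoutline}
The plan is to apply the basic Hopf index theorem (Theorem~\ref{theorem: basic hopf index}) to a vector field assembled from the structural algebra together with a basic Morse field on the stratum of closed leaves, as foreshadowed before the statement. Write $\Sigma:=\Sigma^{\dim\f}$ for the lowest stratum: it is closed, smooth and saturated, and $\f$ restricts on it to a closed Riemannian foliation $\f|_\Sigma$ of a compact manifold, so $\chi(\Sigma^{\dim\f}/\f)=\chi(\f|_\Sigma)$ is defined. Since $M$ is compact, $\f$ is complete, and being Killing it has global transverse Killing fields $\overline{X_1},\dots,\overline{X_d}\in\mathscr{C}_\f(M)$, pairwise commuting and with $T\overline{\f}=T\f\oplus\spannn\{X_1,\dots,X_d\}$; for $a\in\mathbb{R}^d$ put $X^a:=\sum_i a_iX_i\in\mathfrak{L}(\f)$. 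First I would identify the critical leaf closures of $X^a$: since the $\overline{X_i}$ span $T_x\overline{L_x}/T_x\f$ at every point and commute, a leaf closure $J$ is critical (i.e.\ $\overline{X^a}|_J\equiv 0$) exactly when $a$ lies in a subspace $K_J\subseteq\mathbb{R}^d$, with $K_J=\mathbb{R}^d$ iff $J\subset\Sigma$ and $K_J$ proper otherwise; the finiteness of the stratification (compactness) and local constancy of $\mathscr{C}_\f$ make $\bigcup_{J\not\subset\Sigma}K_J$ a finite union of proper subspaces, so for generic $a$ one has $\Zero(\overline{X^a})=\Sigma$ exactly. A point I would use repeatedly: since $\overline{X^a}$ vanishes on all of the saturated $\Sigma$, its transverse linearization $X^a_{\mathrm{Lin}}(x)$ kills $T_x\Sigma/T_x\f$ and descends to a \emph{skew-symmetric} endomorphism of the normal space $\nu_x(\Sigma\subset M)$; the linearized local model of a Killing foliation near a closed leaf (in which a torus---the identity component of the closure of the leaf holonomy---acts on the normal slice) shows that $\Sigma$ is the fixed-point locus of that torus, hence $\codim(\Sigma\subset M)$ is even and, for generic $a$, the skew block is nonsingular with positive determinant.

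Next I would choose an $\f|_\Sigma$-nondegenerate foliate field $Y_0\in\mathfrak{L}(\f|_\Sigma)$ (for instance the transverse gradient of a basic Morse function of the closed Riemannian foliation $\f|_\Sigma$), extend it to a foliate field $Y\in\mathfrak{L}(\f)$ supported in a tubular neighborhood of $\Sigma$, and set $X:=X^a+\varepsilon Y$ for small $\varepsilon>0$. Away from $\Sigma$ one has $|\overline{X^a}|\geq c>0$ by compactness, so $\overline X$ has no zeros there; near $\Sigma$, the linearization of the section $\overline X$ is block-triangular for $0\to\nu(\f|_\Sigma)\to\nu\f\to\nu(\Sigma\subset M)\to 0$, with the nonsingular block $\varepsilon\,(Y_0)_{\mathrm{Lin}}$ along $\Sigma$ at each critical leaf closure of $Y_0$ and the nonsingular skew block of $X^a$ transverse to $\Sigma$; by the implicit function theorem the critical leaf closures of $X$ are exactly those of $Y_0$, all isolated, and $X$ is $\f$-nondegenerate. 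For each such $J$ one reads off $\ind_J(X)=\ind_J(Y_0)\cdot\mathrm{sgn}\det(\text{skew block})=\ind_J(Y_0)$, and $\mathrm{Or}_J(X)\cong\mathrm{Or}_J(Y_0)$ because the skew transverse block is orientation-preserving; since the $\f|_J$-basic and $(\f|_\Sigma)|_J$-basic complexes coincide, $\chi(J,\f,\mathrm{Or}_J(X))=\chi(J,\f|_\Sigma,\mathrm{Or}_J(Y_0))$.

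Applying Theorem~\ref{theorem: basic hopf index} both to $(M,\f,X)$ and to $(\Sigma,\f|_\Sigma,Y_0)$ and comparing the two sums termwise then yields
\begin{align*}
\chi(\f)&=\sum_J\ind_J(X)\,\chi(J,\f,\mathrm{Or}_J(X))\\
&=\sum_J\ind_J(Y_0)\,\chi(J,\f|_\Sigma,\mathrm{Or}_J(Y_0))=\chi(\f|_\Sigma)=\chi(\Sigma^{\dim\f}/\f).
\end{align*}
When $\f$ has no closed leaves, $\Sigma=\varnothing$ and $X=X^a$ has no critical leaf closures, so $\chi(\f)=0$; when $\f$ is already closed, $\Sigma=M$ and the identity is the definition of $\chi(\f)$.

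I expect the main obstacle to be the gluing in the second paragraph: ruling out spurious critical leaf closures of $X^a+\varepsilon Y$ in the collar around $\Sigma$, and verifying that the index and the orientation twist split exactly as claimed---both of which hinge on the even-codimensionality of $\Sigma$ and thus on the detailed local model of a Killing foliation near a closed leaf, where the abelianity of the structural algebra is essential (a nonabelian holonomy closure could produce an odd-dimensional normal slice, in which case the skew block is always singular). An alternative route avoiding all of this is to run the regular deformation of Theorem~\ref{theorem: deformation}: it gives $\Omega(\f)\cong\Omega(M/\!/\g)^{\mathbb{T}^d}$, hence $\chi(\f)=\chi(\g)=\chi(M/\!/\g)$ by Proposition~\ref{prop: basic cohomology of closed foliations}, and then the Lefschetz fixed-point formula for the $\mathbb{T}^d$-action on the orbifold $M/\!/\g$ gives $\chi(M/\!/\g)=\chi\big((M/\!/\g)^{\mathbb{T}^d}\big)=\chi(\Sigma^{\dim\f}/\f)$, the $\mathbb{T}^d$-fixed suborbifold being precisely the space of closed leaves of $\f$.
\end{proofoutline}
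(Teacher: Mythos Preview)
Your approach is correct and matches the paper's: the paper states that the result follows from the basic Hopf index theorem (Theorem~\ref{theorem: basic hopf index}) applied to an appropriately constructed foliate field, exactly as you outline. The only difference is in packaging: the paper first proves the more general localization $\chi(\f)=\chi(\f|_{\Zero(\overline{X})})$ for \emph{any} $\overline{X}\in\mathfrak{iso}(\f)$ (their \cite[Theorem 7.1]{caramello}), and then iterates over the generators $\overline{X_1},\dots,\overline{X_d}$ of $\mathscr{C}_\f(M)$ to land on $\Sigma^{\dim\f}=\bigcap_i\Zero(\overline{X_i})$, whereas you take a generic combination $X^a$ to reach $\Sigma$ in one step; the underlying analysis (even codimension of the zero set, skew-symmetric normal block with positive determinant, perturbation by a nondegenerate field along the zero locus) is the same in both versions. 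Your alternative deformation route is also valid and is in fact how the paper subsequently establishes that $\chi(\f_t)$ is constant along regular deformations, though note that in the paper's logical order that argument \emph{uses} the present theorem rather than reproving it.
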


In fact, in \cite[Theorem 7.1]{caramello} the authors prove something stronger: if $\overline{X}\in\mathfrak{iso}(\f)$, then $\chi(\f)=\chi(\f|_{\Zero(\overline{X})})$. This is in analogy to the classical localization of the Euler characteristic of a Riemannian manifold to the zero set of a Killing vector field (see, e.g. \cite[Theorem 40]{petersen}) or, alternatively, to the fixed point set of a torus action.

Combining Theorems \ref{theorem: basic euler char localizes to closed leaves} and \ref{theorem: deformation}, if $\g$ is a closed foliation approximating $\f$, then we have that
$$\chi(\g)=\chi(M/\g)=\chi\left((M/\g)^{\mathbb{T}^d}\right)=\chi(\Sigma^{\dim(\f)}/\f) = \chi(\f).$$
In fact, this holds for any $t$, so it proves the following.

\begin{theorem}[{\cite[Theorem 7.4]{caramello}}]\label{theorem: basic euler char is preserved by deformations}
Let $\f$ be a Killing foliation of a compact manifold $M$ and let $\f_t$ be a regular deformation. Then $\chi(\f_t)$ is constant in $t$.
\end{theorem}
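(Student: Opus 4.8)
The plan is to reduce the statement to Theorem~\ref{theorem: basic euler char localizes to closed leaves} applied separately to each $\f_t$, and then to observe that the right-hand side of that identity does not depend on $t$. First I would record that every member $\f_t$ of a regular deformation is again a Killing foliation of the compact manifold $M$: by construction $\f_t=\Upsilon^*(\f_{H(t)})$ is the pullback along the $\f_{H(t)}$-transverse map $\Upsilon\colon M\to\mathcal{O}$ of the foliation given by the orbits of a connected, locally free subgroup $H(t)<\mathbb{T}^N$ on the orbifold $\mathcal{O}$; such $\f_{H(t)}$ is an Abelian homogeneous Riemannian foliation (cf.\ Example~\ref{example: homogeneous foliations are killing}), hence Killing, and its pullback by $\Upsilon$ again lies in the class $A_1$ of Theorem~\ref{theorem: Haefliger-Salem 3.4}, its transverse metric being the one furnished by Theorem~\ref{theorem: deformation}(\ref{injection of tensor algebra}). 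In particular $\dim H^i(\f_t)<\infty$ by Theorem~\ref{theorem: dim basic cohomology is finite}, so $\chi(\f_t)$ is defined for each $t$, and Theorem~\ref{theorem: basic euler char localizes to closed leaves} gives
$$\chi(\f_t)=\chi\bigl(\Sigma^{\dim\f_t}/\f_t\bigr),$$
where $\Sigma^{\dim\f_t}\subset M$ denotes the union of the closed leaves of $\f_t$.

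The heart of the argument is that $\bigl(\Sigma^{\dim\f_t},\f_t|_{\Sigma^{\dim\f_t}}\bigr)$ is, as foliated space, independent of $t$. A regular deformation respects $\overline{\f}$, so $\overline{\f_t}=\overline{\f}$: the partition of $M$ into leaf closures is the same for every $\f_t$, and since a $C^\infty$-homotopy of foliations preserves leaf dimension we also have $\dim\f_t=\dim\f=:p$. Hence a leaf $L$ of $\f_t$ is closed if and only if the leaf closure $\overline{L}\in\overline{\f}$ containing it has dimension $p$; and if $\dim\overline{L}>p$ then every leaf of $\f_t$ in $\overline{L}$ is dense there, because its closure — computed in $\overline{\f_t}=\overline{\f}$ — is all of $\overline{L}$, so no such $\overline{L}$ contains a closed leaf of $\f_t$. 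Therefore the closed leaves of $\f_t$ are exactly the $p$-dimensional members of $\overline{\f}$, for every $t$; denote their union by $\Sigma:=\Sigma^{p}$. On $\Sigma$ each $\f_t$ restricts to the foliation whose leaves are precisely these $p$-dimensional (compact, connected, embedded) leaf closures — a connected $p$-manifold that is a disjoint union of open $\f_t$-leaves must be a single leaf — so $\f_t|_\Sigma=\f|_\Sigma$. Consequently $\chi(\Sigma^{\dim\f_t}/\f_t)=\chi(\Sigma^{p}/\f)$, which by a second application of Theorem~\ref{theorem: basic euler char localizes to closed leaves} equals $\chi(\f)$; hence $\chi(\f_t)=\chi(\f)$ for all $t$. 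At the endpoint $t=1$, where $\g=\f_1$ is closed, this recovers the chain $\chi(\g)=\chi(M/\g)=\chi\bigl((M/\g)^{\mathbb{T}^d}\bigr)=\chi(\Sigma^{p}/\f)=\chi(\f)$ displayed before the theorem, the middle equalities then being torus localization of the (orbifold) Euler characteristic on $M/\!/\g$ together with Proposition~\ref{prop: basic cohomology of closed foliations}.

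I expect the only genuinely delicate point to be the identification, for intermediate $t$, of the closed-leaf stratum of $\f_t$ and of the foliation induced on it as literally the same data as for $\f$: this rests on a regular deformation respecting $\overline{\f}$ and preserving leaf dimension, so that a leaf closure of dimension $>p$ contains no closed leaf of $\f_t$ while a leaf closure of dimension $p$ is a single closed leaf for every $\f_t$. Everything else is a direct invocation of Theorems~\ref{theorem: basic euler char localizes to closed leaves} and~\ref{theorem: deformation}, so once that identification is in place the statement follows at once.
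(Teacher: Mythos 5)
Your proposal is correct and follows essentially the same route as the paper, which proves the theorem by applying Theorem~\ref{theorem: basic euler char localizes to closed leaves} to each $\f_t$ and observing that the right-hand side $\chi(\Sigma^{\dim\f}/\f)$ is independent of $t$ because a regular deformation respects $\overline{\f}$. You merely make explicit the details the paper compresses into ``this holds for any $t$'' (that each $\f_t$ is Killing, that the closed-leaf stratum and the induced foliation on it are literally the same for all $t$), and these details are correct.
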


In particular, for the closed foliation $\g=\f_1$, Theorem \ref{theorem: basic euler char is preserved by deformations} reduces questions about $\chi(\f)$ to questions about $\chi(M/\!/\g)$, which also coincides with $\chi(M/\g)$ (in the sense of singular homology, see \cite[Theorem 3]{satake}). An interesting application is the following.

\begin{theorem}[{\cite[Theorem 9.1]{caramello}}]\label{theorem: closed leaf + transverse symmetry implies charM vanishes}
Let $\f$ be a Killing foliation of a compact manifold $M$. If $\chi(M)\neq 0$ then $\f$ is closed.
\end{theorem}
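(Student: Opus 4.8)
The plan is to prove the contrapositive: assuming $\f$ is not closed, I show that $\chi(M)=0$.

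First, we may assume $\codim\f$ is even, for otherwise, after passing to a finite covering that makes $\f$ transversely orientable, the rational Euler class of $\nu\f$ vanishes (an oriented bundle of odd rank has $2e=0$), hence so does $e(TM)=e(T\f)\smile e(\nu\f)$, and $\chi(M)=0$. Now $\f$ is not closed precisely when its structural algebra $\mathfrak a$ is nontrivial, i.e.\ $d:=\dim\mathfrak a\geq 1$, since $T_x\overline{L_x}=T_xL_x\oplus\{X_x\mid X\in(\mathscr C_\f)_x\}$ for every leaf. Fix global sections $\overline{X_1},\dots,\overline{X_d}$ spanning $\mathscr C_\f(M)\subseteq\mathfrak{iso}(\f)$ and set $\Sigma:=\bigcap_{i=1}^{d}\Zero(\overline{X_i})$, the union of the closed leaves of $\f$ (the stratum $\Sigma^{\dim\f}$ of Theorem~\ref{theorem: basic euler char localizes to closed leaves}). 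If $\Sigma=\emptyset$, then $\min_{\overline L\in\overline\f}\dim\overline L>\dim\f$, so Mozgawa's Theorem~\ref{theorem: Mozgawa}\,(i) provides an everywhere linearly independent transverse Killing field $\overline X\in\mathfrak{iso}(\f)$, that is, a nowhere-vanishing section of $\nu\f$; any representative $X\in\mathfrak L(\f)$ is then nowhere tangent to $\f$, hence nowhere zero, and $\chi(M)=0$ by Poincaré--Hopf. So assume henceforth $\Sigma\neq\emptyset$.

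Apply the regular deformation of Theorem~\ref{theorem: deformation} to get a closed foliation $\g$, arbitrarily close to $\f$, with $M/\!/\g$ carrying an effective isometric $\mathbb T^d$-action such that $(M/\!/\g)^{\mathbb T^d}\cong\Sigma/\f$ and $M/\overline\f\cong(M/\!/\g)/\mathbb T^d$. By the deformation invariance of the basic Euler characteristic (Theorem~\ref{theorem: basic euler char is preserved by deformations}) and the identities displayed before the statement, $\chi(\f)=\chi(\g)=\chi(M/\!/\g)=\chi\bigl((M/\!/\g)^{\mathbb T^d}\bigr)=\chi(\Sigma/\f)$. The point is to relate this to $\chi(M)$. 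I would establish that, for the closed Riemannian foliation $\g$ on $M$, one has $\chi(M)=\chi(F)\cdot\chi(M/\!/\g)$, where $F$ is a generic leaf of $\g$: this follows by integrating the transverse Euler form of $\g$ along the compact leaves, the singular leaves being finite covers of $F$ (generalized local Reeb stability, Theorem~\ref{teorem: reeb}) and contributing accordingly, using crucially that $\g$ descends from the transversely homogeneous data of Theorem~\ref{theorem: Haefliger-Salem 3.4}, so that no ``exotic'' singular leaves occur. Granting this, $\chi(M)=\chi(F)\cdot\chi(\f)$.

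Finally, $d\geq 1$ forces $\chi(F)\cdot\chi(\f)=0$. A generic leaf closure $J$ of $\f$ is a compact manifold carrying a dense-leaved Lie $\mathfrak a$-foliation with $\dim\mathfrak a=d\geq 1$; this already makes $J$ admit a nowhere-zero vector field and constrains its topology (e.g.\ via the fibration $J\to\mathbb T^d$ produced by $\g|_J$) enough to force $\chi(F)=0$ for the fibre $F$; in the remaining case some component of $\Sigma$ inherits a non-closed restriction of $\f$, whence $\chi(\f)=\chi(\Sigma/\f)=0$ by Theorem~\ref{theorem: basic euler char localizes to closed leaves}. Either way $\chi(M)=0$, which proves the contrapositive. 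The step I expect to be the main obstacle is exactly the multiplicativity $\chi(M)=\chi(F)\cdot\chi(M/\!/\g)$: it fails for general closed Riemannian foliations, so the transversely homogeneous origin of $\g$ must be used in an essential way, and one must then extract the vanishing of one factor from the mere nontriviality of the structural algebra.
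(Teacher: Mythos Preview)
Your final step---deducing $\chi(F)\cdot\chi(\f)=0$ from $d\geq 1$ alone---does not go through. The fibration $F\to J\to\mathbb{T}^d$ you invoke yields only $\chi(J)=\chi(F)\cdot\chi(\mathbb{T}^d)=0$, not $\chi(F)=0$; there is no reason a generic leaf of a single closed deformation $\g$ should have vanishing Euler characteristic. And your ``remaining case'' is vacuous: $\Sigma$ is by definition the union of \emph{closed} leaves, so $\f|_\Sigma$ is closed and $\chi(\Sigma/\f)$ can perfectly well be nonzero. With a single deformation $\g$ and the product formula $\chi(M)=\chi(F)\chi(\f)$ there is simply no mechanism forcing either factor to vanish.

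The paper's proof fixes exactly this gap by using a \emph{sequence} of regular deformations $\g_i\to\f$ rather than a single one. One first observes (arguing by contradiction, assuming $\chi(M)\neq 0$) that Mozgawa's theorem furnishes a closed leaf $L\in\f$; since regular deformations respect $\overline{\f}$, this $L$ is a leaf of every $\g_i$. Reeb stability relates the generic leaf $F_i$ of $\g_i$ to $L$ by $\chi(F_i)=h_i\,\chi(L)$, where $h_i=|\mathrm{Hol}_{\g_i}(L)|$. The multiplicativity you correctly flagged---which, incidentally, is a general result of Haefliger on classifying spaces of holonomy pseudogroups and requires no transversely homogeneous hypothesis---together with $\chi(\g_i)=\chi(\f)$ gives
\[
\chi(M)=h_i\,\chi(L)\,\chi(\f)\qquad\text{for every }i.
\]
The punchline is that $h_i\to\infty$ as $\g_i\to\f$ (since $\f$ is non-closed), which is incompatible with the left side being a fixed nonzero integer. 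This growth of holonomy order along the sequence is the missing idea. Your parity reduction on $\codim\f$ is then unnecessary.
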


\begin{proofoutline}
We use the results in \cite{haefliger3}, where Haefliger studies the classifying space $B\mathscr{H}_\f$. It follows from his work that, similarly to the case of fiber bundles, $\chi(M)$ has the product property
\begin{equation}\label{eq: product property 1}\chi(M)=\chi(L)\chi(\g),\end{equation}
for $L\in\g$ a generic leaf (see \cite[Corollaire 3.1.5]{haefliger3}). Now assume $\f$ is non-closed and, by Theorem \ref{theorem: Mozgawa}, fix a closed leaf $L$. By regular deformations, choose a sequence $\g_i$ of closed foliations approaching $\f$. Then $L\in\g_i$ for each $i$, since the deformations preserve $\overline{\f}$. In particular, using Theorem \ref{teorem: reeb} we can rewrite equation \eqref{eq: product property 1} as
\begin{equation}\chi(M)=h_i(L)\chi(L)\chi(\f),\label{eq: fibration property basic euler char 2}\end{equation}
where $h_i(L)=|\mathrm{Hol}_{\g_i}(L)|<\infty$. But since $\f$ is non-closed and $\g_i\to\f$, one verifies that $h(\g_i)\to\infty$ (see \cite[Lemma 4.3]{caramello}) which violates equation \eqref{eq: fibration property basic euler char 2}.
\end{proofoutline}

Theorem \ref{theorem: closed leaf + transverse symmetry implies charM vanishes} is in fact a slight improvement of \cite[Theorem 9.1]{caramello} (which is stated for a simply-connected $M$), but the proof is essentially the same. By lifting to the universal covering, one has the following corollary for Riemannian foliations:

\begin{corollary}[{\cite[Theorem F]{caramello}}]\label{teo: topological obstruction}
Any Riemannian foliation of a compact manifold $M$ with $|\pi_1(M)|<\infty$ and $\chi(M)\neq0$ is closed.
\end{corollary}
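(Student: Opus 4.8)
The plan is to reduce the statement to Theorem~\ref{theorem: closed leaf + transverse symmetry implies charM vanishes} by lifting to the universal covering. Let $p\colon\widehat{M}\to M$ be the universal covering space; since $|\pi_1(M)|<\infty$ this is a finite covering, of degree $d:=|\pi_1(M)|$, so $\widehat{M}$ is a compact, simply-connected manifold. A covering map is a local diffeomorphism, hence transverse to every leaf of $\f$, so the pullback $\widehat{\f}:=p^*(\f)$ is defined (see Example~\ref{exe: pullback foliation}) and is a Riemannian foliation; as $\widehat{M}$ is compact it is automatically complete. Being a complete Riemannian foliation on a simply-connected manifold, $\widehat{\f}$ is a Killing foliation, as recalled above (cf.\ \cite[Proposition 5.5]{molino}).

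Next I would observe that $\chi(\widehat{M})=d\cdot\chi(M)\neq 0$, by multiplicativity of the Euler characteristic under finite coverings. Thus Theorem~\ref{theorem: closed leaf + transverse symmetry implies charM vanishes} applies to $(\widehat{M},\widehat{\f})$ and yields that $\widehat{\f}$ is closed, i.e.\ every leaf of $\widehat{\f}$ is a closed subset of $\widehat{M}$.

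Finally I would push this down along $p$. Fix $L\in\f$. Since $\widehat{\f}=p^{*}(\f)$, the preimage $p^{-1}(L)$ is $\widehat{\f}$-saturated and, as $p$ has finite fibers, it is a union of finitely many leaves $\widehat{L}_1,\dots,\widehat{L}_k\in\widehat{\f}$, each mapped onto $L$ by $p$. By the previous paragraph each $\widehat{L}_i$ is closed in $\widehat{M}$, and a finite covering map is proper, hence closed; therefore $L=p(\widehat{L}_1)$ is closed in $M$. As $L$ was arbitrary, $\f$ is closed.

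The argument is essentially routine once Theorem~\ref{theorem: closed leaf + transverse symmetry implies charM vanishes} is available: the only points needing a little care are that the pullback of a complete Riemannian foliation along a finite covering is again complete Riemannian (immediate from compactness of $\widehat{M}$, or from pulling back a bundle-like metric) and that closedness of the leaves descends through $p$, which is exactly the statement that a finite covering is a closed map. No genuinely difficult step is expected here; the entire content sits in Theorem~\ref{theorem: closed leaf + transverse symmetry implies charM vanishes}, which is invoked as a black box.
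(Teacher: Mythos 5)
Your proposal is correct and follows exactly the route the paper indicates (the paper merely says ``by lifting to the universal covering'' and leaves the details implicit): pass to the finite universal cover, note the pullback is a complete Killing foliation with nonzero Euler characteristic, apply Theorem~\ref{theorem: closed leaf + transverse symmetry implies charM vanishes}, and push closedness of leaves back down via the properness of the finite covering. All the steps you fill in (multiplicativity of $\chi$, completeness from compactness, and the descent of closedness) are sound.
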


Although $\chi(\f)$ is preserved throughout regular deformations, the basic Betti numbers are not, in general, as the following example by H.~Nozawa shows.

\begin{example}\label{exe: nozawa}
Consider $M=\mathbb{S}^3\times\mathbb{S}^1$ with the $\mathbb{T}^2=\mathbb{S}^1\times\mathbb{S}^1$-action given by
$$((s_1,s_2),((z_1,z_2),z))\longmapsto ((s_1z_1,s_1z_2), s_2z),$$
and let $\f$ be the Killing foliation of $M$ by the orbits of a dense $1$-parameter subgroup of $\mathbb{T}^2$. As we saw in Example \ref{example: Hae-Sal transverse structure of gen Hopf fibration}, the construction of the corresponding orbifold $(\mathcal{O}_\f,\mathbb{T}^N, H)$ is trivial: $\mathcal{O}_\f=M$ and $H$ is the $1$-parameter subgroup defining $\f$. It is clear that $\f$ can be deformed to both the foliations $\g_1$ and $\g_2$, defined by the actions of $\mathbb{S}^1\times \{1\}$ and $\{1\}\times \mathbb{S}^1$, respectively. But we have $H(\g_1)=H(M/\!/\g^1)=H(\mathbb{S}^2\times \mathbb{S}^1)$ and $H(\g_2)=H(M/\!/\g_2)=H(\mathbb{S}^3)$. That is, $b_i(\g_1)\neq b_i(\g_2)$ for $i=1,2$.
\end{example}

We conclude from Example \ref{exe: nozawa} that the basic cohomology groups $H(\f)$ are not preserved by deformations. In the next section we will see, however, that there is a cohomological invariant, namely, basic \emph{equivariant} cohomology, that is preserved. This will, in particular, provide sufficient conditions for the basic Betti numbers to be preserved as well.

\subsection{Equivariant basic cohomology}\label{section: equivariant basic cohomology}

When a group $G$ acts on a space $M$, there is a cohomology theory that captures information on both the topological space $M$ and the action of $G$ on it. It is called equivariant cohomology, and defined as the singular cohomology of the Borel construction:
$$H_G(M,R):=H\left(\frac{EG\times M}{G},R\right),$$
where $EG$ is a contractible space on which $G$ acts freely (e.g., the total space of the universal $G$-bundle $EG\to BG$). The motivation for this is that the diagonal action of $G$ on $EG\times M$ is free, so the quotient is a well-behaved space (in contrast to $M/G$). A remarkable feature of equivariant cohomology, with no counterpart in classic cohomology, is that the non-torsion part of the module structure of $H_{\mathbb{T}}(M)$, for a torus space $M$, can be recovered from the fixed point set $M^{\mathbb{T}}$. This is known as Borel localization. We will see a transverse counterpart of this result below. We refer to \cite{goertsches3} and \cite{meinrenken} for more detailed introductions to the classical equivariant cohomology theory, and to \cite{allday} and \cite{guillemin} for thorough treatments of this topic.

It turns out that when $G$ is compact and connected and $M$ is a $G$-manifold, there is another way to compute $H_G(M,\mathbb{R})$. It is due to H.~Cartan (actually before Borel's definition of $H_G(M,R)$), who defined a cohomology $H_{\mathfrak{g}}(M)$ in terms of the de Rham complex $\Omega(M)$ and the Lie algebra $\mathfrak{g}$. The fact that $H_G(M,\mathbb{R})\cong H_{\mathfrak{g}}(M)$ is considered as the equivariant analog of the classical de Rham theorem (see, e.g., \cite[Theorem 2.5.1]{guillemin}). We are interested in Cartan's model for equivariant cohomology because its algebraic nature makes it readily generalizable to our transverse setting.

Recall that a differential $\mathfrak{g}^\star$-algebra is a $\mathbb{Z}$-graded-commutative differential algebra $(A,d)$ endowed, for each $X\in\mathfrak{g}$, with derivations $\mathcal{L}_X$ and $\iota_X$, of degree $0$ and $-1$, respectively,  satisfying
$$\iota_X^2=0,\ \ \ \ [\mathcal{L}_X,\mathcal{L}_Y]=\mathcal{L}_{[X,Y]},\ \ \ \ [\mathcal{L}_X,\iota_Y]=\iota_{[X,Y]}\ \ \ \ \mbox{and}\ \ \ \ \mathcal{L}_X=d \iota_X+\iota_X d.$$
If $A$ and $B$ are $\mathfrak{g}^\star$-algebras, an algebra morphism $f:A\to B$ is a morphism of $\mathfrak{g}^\star$-algebras if it commutes with $d$, $\mathcal{L}_X$ and $\iota_X$.

\begin{example}\label{exe: induced infinitesimal action}
An infinitesimal action of a Lie algebra $\mathfrak{g}$ on an orbifold $\mathcal{O}$ is a Lie algebra homomorphism $\mu:\mathfrak{g}\to \mathfrak{X}(\mathcal{O})$. A differential $\mathfrak{g}^\star$-algebra structure on $(\Omega(\mathcal{O}),d)$ is then given by the usual Lie derivative $\mathcal{L}_X=\mathcal{L}_{\mu(X)}$ and interior product $\iota_X:=\iota_{\mu(X)}$.

In particular, if a Lie group $G$ acts smoothly on $\mathcal{O}$ (on the left), there is an induced infinitesimal action of its Lie algebra given by $\mathfrak{g}\ni X\mapsto -X^\#\in\mathfrak{X}(\mathcal{O})$. Recall that $X\mapsto X^\#$ is a Lie algebra anti-homomorphism, that is why the minus sign is needed.
\end{example}

Consider also the coadjoint action of a Lie algebra $\mathfrak{g}$ on its dual algebra $\mathfrak{g}^\vee$ given, for $X,Y\in\mathfrak{g}$ and $\phi\in\mathfrak{g}^\vee$, by $(\mathrm{ad}^\vee_{X}\phi)(Y)=\phi(-[X,Y])$. It extends naturally to the symmetric algebra $\sym(\mathfrak{g}^\vee)$ over $\mathfrak{g}^\vee$. The space
$$C_{\mathfrak{g}}(A):=(\sym(\mathfrak{g}^\vee)\otimes A)^{\mathfrak{g}}$$
of those elements on $\sym(\mathfrak{g}^\vee)\otimes A$ which are $\mathfrak{g}$-invariant, with respect to the coadjoint action and the derivation $\mathcal{L}$ on the first and second factors, respectively, is the \textit{Cartan complex of $A$}. Notice that an element $\omega\in C_{\mathfrak{g}}(A)$ can be identified with a polynomial map $\omega:\mathfrak{g}\to A$. Under this identification, $\mathfrak{g}$-invariancy of $\omega$ as an element of $C_{\mathfrak{g}}(A)$ becomes $\mathfrak{g}$-equivariancy of $\omega$ as a polynomial map $\mathfrak{g}\to A$:
$$\omega(\mathrm{ad}_XY)=\mathcal{L}_X\omega(Y).$$
Notice that in the case of an Abelian Lie algebra $\mathfrak{g}$, for which the coadjoint action is trivial, an element of $C_{\mathfrak{g}}(A)$ is hence nothing but a polynomial map $\mathfrak{g}\to A^{\mathfrak{g}}$.

The \textit{equivariant differential} $d_\mathfrak{g}$ of the Cartan complex is defined as
$$(d_\mathfrak{g}\omega)(X)=d(\omega(X))-\iota_X(\omega(X)).$$
In order for it to be a derivation of degree $1$, the grading on $C_{\mathfrak{g}}(A)$ is defined by
$$C_{\mathfrak{g}}^n(A)=\bigoplus_{2k+l=n}(\sym_k(\mathfrak{g}^\vee)\otimes A^l)^{\mathfrak{g}}.$$
The \textit{Cartan model for the equivariant cohomology of $A$} is
$$H_{\mathfrak{g}}(A):=H(C_{\mathfrak{g}}(A),d_\mathfrak{g}).$$
A morphism $f:A\to B$ of $\mathfrak{g}^\star$-algebras induces $f^*:H_{\mathfrak{g}}(A)\to H_{\mathfrak{g}}(B)$, by $f^*\omega(X)=f^*(\omega(X))$. The ring $H_{\mathfrak{g}}(A)$ becomes a $\sym(\mathfrak{g}^\vee)^{\mathfrak{g}}$-algebra with module multiplication induced by $\sym(\mathfrak{g}^\vee)^{\mathfrak{g}}\ni f\mapsto f\otimes 1\in C_{\mathfrak{g}}(A)$.

\begin{example}
In the case of a $G$-orbifold $\mathcal{O}$, the $\sym(\mathfrak{g}^\vee)^{\mathfrak{g}}$-module structure $\sym(\mathfrak{g}^\vee)^{\mathfrak{g}}\to C_{\mathfrak{g}}(\mathcal{O})$ coincides with the cohomology map induced by the constant map $\mathcal{O}\to\{*\}$.
\end{example}

A $\mathfrak{g}^\star$-algebra $A$ is said to be \textit{equivariantly formal} if $\sym(\mathfrak{g}^\vee)^\mathfrak{g}\otimes H(A)$ as $\sym(\mathfrak{g}^\vee)^\mathfrak{g}$-modules. Equivalently, $A$ is equivariantly formal when $H_\mathfrak{a}(A)$ is a free $\sym(\mathfrak{a}^\vee)$-module. There are several relevant classes of equivariantly formal algebras. For instance, for a manifold $M$ with a torus action, and $A=\Omega(M)$ with the induced $\mathfrak{t}^\star$-structure, $A$ is equivariantly formal when $H^{\mathrm{odd}}(M)=0$, or when $M$ is symplectic and the torus action is Hamiltonian.

We now go back to the case of a foliation $\f$ on an orbifold $\mathcal{O}$. A \textit{transverse infinitesimal action} of a Lie algebra $\mathfrak{g}$ on $\f$ is a Lie algebra homomorphism
$$\mu:\mathfrak{g}\longrightarrow \mathfrak{l}(\f).$$
It induces a $\mathfrak{g}^\star$-algebra structure on $\Omega(\f)$, with $d$ being the usual exterior derivative and the derivations $\mathcal{L}_X$ and $\iota_X$ defined as $\mathcal{L}_X\omega:=\mathcal{L}_{\widetilde{X}}\omega$ and $\iota_X\omega:=\iota_{\widetilde{X}}\omega$ (see \cite[Proposition 3.12]{goertsches}). We can therefore define the \textit{$\mathfrak{g}$-equivariant basic cohomology of $\f$} as the $\mathfrak{g}$-equivariant cohomology of $\Omega(\f)$, which we will denote
$$H_\mathfrak{g}(\f):=H_\mathfrak{g}(\Omega(\f))=H(C_\mathfrak{g}(\Omega(\f),d_\mathfrak{g})).$$

Now consider a Killing foliation $\f$ on $M$. In this case we have a natural transverse infinitesimal action of its structural algebra $\mathfrak{a}$, given by the isomorphism $\mathfrak{a}\cong\mathscr{C}_\f(M)$. Notice that the fixed point set $M^\mathfrak{a}=\{x\in M\ |\ \mathfrak{a}_x=\mathfrak{a}\}$ is precisely the union of the closed leaves of $\f$, since $\mathfrak{a}\f=\overline{\f}$. These two facts, that the infinitesimal $\mathfrak{a}$-action is canonical and that $\mathfrak{a}\f=\overline{\f}$, makes the study of $H_\mathfrak{a}(\f)$ very relevant.

The equivariant basic cohomology $H_\mathfrak{a}(\f)$ was first introduced in \cite{goertsches}, where the authors show that, in analogy to classical equivariant cohomology, it satisfies a Borel-type localization. Before we state their result it will be useful to recall the notion of $R$-module localization from commutative algebra. Given an $R$-module $A$ and a multiplicative subset $S\subset R$ we define the \textit{localization of $A$ at $S$} by $S^{-1}A=(A\times S)/\sim$, where $(a,s)\sim (a',s')$ if there is $r\in S$ such that $r(s'a-sa')=0$. We can think of an equivalence class $(a,s)$ as fraction $a/s$. Notice $S^{-1}A$ is an $S^{-1}R$-module with the usual operation rules for fractions. Additionally, map of $R$-modules $\varphi:A\to B$ induces a map of $S^{-1}R$-modules $S^{-1}\varphi:S^{-1}A\to S^{-1}B$ by $a/s\mapsto \varphi(a)/s$.

\begin{theorem}[Borel localization {\cite[Theorem 5.2]{goertsches}}]\label{theorem: Borel loc}
Let $\f$ be a transversely compact Killing foliation. Then the inclusion $i:M^{\mathfrak{a}}\to M$ induces an isomorphism
$$S^{-1}i^*:S^{-1}H_\mathfrak{a}(\f)\longrightarrow S^{-1}H_\mathfrak{a}(\f|_{M^{\mathfrak{a}}}),$$
where $S=\sym(\mathfrak{a}^\vee)\setminus {0}$.
\end{theorem}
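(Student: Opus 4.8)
The plan is to mimic the classical Borel localization argument, transplanting it to the setting of $\mathfrak{a}^\star$-algebras of basic forms via the Cartan model, and to reduce everything to what happens on small saturated pieces around leaf closures. The key point is that $S^{-1}$ is an exact functor on $\sym(\mathfrak{a}^\vee)$-modules, so it commutes with taking cohomology; hence it suffices to show that the mapping cone of $i^*\colon C_\mathfrak{a}(\Omega(\f))\to C_\mathfrak{a}(\Omega(\f|_{M^\mathfrak{a}}))$ has cohomology that is $S$-torsion, i.e. annihilated by some nonzero $f\in\sym(\mathfrak{a}^\vee)$. Equivalently, writing $M^\mathfrak{a}=\bigcup_j Z_j$ as the union of the (compact, since $\f$ is transversely compact) closed leaves and $N=M\setminus M^\mathfrak{a}$, one wants $H_\mathfrak{a}(\f|_U)$ to be $S$-torsion for appropriate saturated opens $U\subset N$, and then to patch these by a Mayer--Vietoris argument for equivariant basic cohomology.

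First I would set up the local model. Around any leaf closure $J=\overline{L}$ of $\f$ that is \emph{not} a closed leaf, Molino's structural theorem (Theorem~\ref{Theo: molino structural}) together with the Haefliger--Salem description (Theorem~\ref{theorem: Haefliger-Salem 3.4}) gives a saturated tubular neighborhood on which $\f$ is, transversally, modeled on the orbits of a torus $\mathbb{T}^N$ acting on a slice, with the structural algebra $\mathfrak{a}$ acting through a subtorus with no fixed points on that slice (precisely because $J$ is not a closed leaf, some $X\in\mathfrak{a}$ has $X_x\neq 0$ there). On such a piece the transverse infinitesimal $\mathfrak{a}$-action has \emph{no} zeros, and the standard argument shows $H_\mathfrak{a}$ vanishes: choosing a basic $1$-form $\theta$ with $\iota_{X}\theta=1$ for a suitably chosen $X\in\mathfrak{a}$ with dual coordinate $u\in\sym^1(\mathfrak{a}^\vee)$ (obtained by averaging $\iota_X(\text{bundle-like metric dual})$ over holonomy, using transverse compactness/properness), the element $\theta$ together with $u$ furnishes a contracting homotopy for $d_\mathfrak{a}-u$, which yields that multiplication by $u$ is nilpotent on $H_\mathfrak{a}(\f|_U)$; in particular $H_\mathfrak{a}(\f|_U)$ is $S$-torsion. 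This is the transverse analogue of the elementary fact that $H_{\mathbb{T}}$ of a free $\mathbb{T}$-space is $S$-torsion.

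Next I would globalize. Cover $M$ by finitely many saturated opens: tubular neighborhoods $U_j$ of the closed leaves $Z_j$, restricted to which $\f$ is modeled (via Reeb stability, Theorem~\ref{teorem: reeb}, and Molino theory) on an $\mathfrak{a}$-action linearized on a slice with $Z_j$ as fixed set, plus saturated opens $U_\alpha\subset N$ of the type above. Equivariant basic cohomology admits a Mayer--Vietoris sequence with respect to saturated opens (this follows from the Mayer--Vietoris sequence for basic cohomology, which in turn comes from the short exact sequence of basic-form complexes associated to a saturated open cover, tensored with $\sym(\mathfrak{a}^\vee)$ and passing to invariants), and $S^{-1}$ applied to it stays exact. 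Over each tube $U_j$, the inclusion $Z_j\hookrightarrow U_j$ induces an isomorphism after inverting $S$: this is a local statement, reducible by the linearized model to the fibered case, where it is the classical Atiyah--Bott--Berline--Vergne localization for a linear torus representation, which says the normal directions contribute only $S$-torsion. Over the $U_\alpha\subset N$, both $H_\mathfrak{a}(\f|_{U_\alpha})$ and $H_\mathfrak{a}(\f|_{U_\alpha\cap M^\mathfrak{a}})=H_\mathfrak{a}(\emptyset)=0$ are $S$-torsion, so $S^{-1}i^*$ is trivially an isomorphism there. An induction on the size of the cover, using the five lemma in each Mayer--Vietoris step after applying the exact functor $S^{-1}$, then yields that $S^{-1}i^*\colon S^{-1}H_\mathfrak{a}(\f)\to S^{-1}H_\mathfrak{a}(\f|_{M^\mathfrak{a}})$ is an isomorphism, which is the claim.

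The main obstacle I anticipate is the local vanishing lemma on a saturated neighborhood of a \emph{non}-closed leaf closure: producing the equivariant $1$-form $\theta$ with $\iota_X\theta$ a nonzero constant requires that the chosen $X\in\mathfrak{a}$ be nowhere tangent to the foliation on that neighborhood and that $\theta$ be genuinely basic (holonomy-invariant), not merely locally defined. Handling this cleanly uses the complete pseudogroup $\overline{\mathscr{H}_\f}$ and an averaging over the (proper, because $\f$ is transversely compact) holonomy, together with the fact that on a generic leaf closure $\f$ restricts to a complete $\mathfrak{a}$-Lie foliation so $\mathfrak{a}$ acts with nowhere-vanishing orbits; extending this to nearby, possibly non-generic, leaf closures is where one must be careful, invoking lower semicontinuity of $\dim\overline{L}$ and the structure of the basic fibration $b\colon M^\Yup\to W$. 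Everything else is routine homological algebra once the local pictures and the Mayer--Vietoris machinery are in place.
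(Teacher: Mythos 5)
Your proposal is essentially correct, but note that the survey does not prove this statement at all: it is quoted verbatim from Goertsches--T\"oben \cite[Theorem 5.2]{goertsches}, and your argument is in fact a faithful reconstruction of the strategy used there (local vanishing of $H_\mathfrak{a}$ on saturated opens where some $X\in\mathfrak{a}$ is nowhere tangent to $\f$, followed by a Mayer--Vietoris patching over a finite $\overline{\f}$-saturated cover, using exactness of $S^{-1}$). Two of the difficulties you flag at the end are actually non-issues, and seeing why makes the proof cleaner. First, the existence at every $x\in M\setminus M^{\mathfrak{a}}$ of some $X\in\mathfrak{a}$ with $\overline{X}_x\neq 0$ requires no appeal to Haefliger--Salem or to the basic fibration: it is immediate from the defining property $T_x\overline{L}_x=T_xL_x\oplus\{X_x\ |\ X\in\mathfrak{a}\}$ of the Molino sheaf of a Killing foliation, since $\overline{L}_x\neq L_x$ off $M^{\mathfrak{a}}$, and it propagates to a saturated neighborhood by continuity. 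Second, no averaging over holonomy is needed to produce the basic $1$-form $\theta$: for $X\in\mathfrak{a}=\mathscr{C}_\f(M)$ the form $\iota_X\mathrm{g}^T$ is automatically basic (because $\mathrm{g}^T$ is basic and $[Y,X]\in\mathfrak{X}(\f)$ for $Y\in\mathfrak{X}(\f)$) and automatically $\mathfrak{a}$-invariant (because $\mathfrak{a}$ consists of commuting transverse Killing fields), so $\theta=\iota_X\mathrm{g}^T/\mathrm{g}^T(X,X)$ works directly on the open set where $\mathrm{g}^T(X,X)>0$; this is precisely the technical payoff of the Killing hypothesis, i.e.\ of $\mathscr{C}_\f$ being globally constant. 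With these simplifications the only remaining care is in the tube around $M^{\mathfrak{a}}$, where one needs a foliate retraction onto $M^{\mathfrak{a}}$ (or an equivariant basic Thom/homotopy-invariance statement) rather than the full ABBV machinery you invoke; this is handled in the reference and your outline would go through.
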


This result was recently generalized to transverse actions of Abelian Lie algebras on transversely compact Riemannian foliations in \cite{lin}. It follows from Theorem \ref{theorem: Borel loc} that the kernel of $i^*:H_\mathfrak{a}(\f)\to H_\mathfrak{a}(\f|_{M^{\mathfrak{a}}})$ is the torsion submodule $\mathrm{Tor}(H_\mathfrak{a}(\f))$ of $H_\mathfrak{a}(\f)$, that is, the submodule consisting of those classes $[\omega]$ for which there is $p\in S$ with $p[\omega]= 0$. Since $M^{\mathfrak{a}}$ is the union of the closed leaves of $\f$, this gives algebraic conditions for the existence of closed leaves:

\begin{corollary}[{\cite[Corollary 5.4]{goertsches}}]
Let $\f$ be a transversely compact Killing foliation. The following are equivalent:
\begin{enumerate}[(i)]
\item $\f$ has a closed leaf, i.e., $M^{\mathfrak{a}}\neq\emptyset$.
\item The map $\sym(\mathfrak{a}^\vee)\to H_\mathfrak{a}(\f)$ that defines the $\sym(\mathfrak{a}^\vee)$-module structure is injective.
\item $H_\mathfrak{a}(\f)\neq \mathrm{Tor}(H_\mathfrak{a}(\f))$.
\end{enumerate}
\end{corollary}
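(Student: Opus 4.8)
The plan is to deduce all three equivalences from the Borel localization theorem (Theorem~\ref{theorem: Borel loc}) together with an explicit description of $H_\mathfrak{a}(\f|_{M^{\mathfrak a}})$. Throughout I would write $\phi:\sym(\mathfrak{a}^\vee)\to H_\mathfrak{a}(\f)$ for the module-structure map $f\mapsto[f\otimes 1]$, and use that $\sym(\mathfrak{a}^\vee)$ is a polynomial ring, hence an integral domain, so that $S=\sym(\mathfrak{a}^\vee)\setminus\{0\}$ is a multiplicative set and the $S$-torsion submodule is the ordinary torsion submodule. The key preliminary observation is that over $M^{\mathfrak a}$ the transverse $\mathfrak{a}$-action is trivial: by definition $\mathfrak{a}_x=\mathfrak{a}$ for $x\in M^{\mathfrak a}$ means every transverse field in $\mathscr{C}_\f(M)\cong\mathfrak{a}$ vanishes over $M^{\mathfrak a}$, so the derivations $\iota_X$ and $\mathcal{L}_X$ on $\Omega(\f|_{M^{\mathfrak a}})$ vanish. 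Hence $C_\mathfrak{a}(\Omega(\f|_{M^{\mathfrak a}}))=\sym(\mathfrak{a}^\vee)\otimes\Omega(\f|_{M^{\mathfrak a}})$ with $d_\mathfrak{a}=1\otimes d$, and a Künneth argument (applied in each polynomial degree) gives $H_\mathfrak{a}(\f|_{M^{\mathfrak a}})\cong\sym(\mathfrak{a}^\vee)\otimes H(\f|_{M^{\mathfrak a}})$ as $\sym(\mathfrak{a}^\vee)$-modules, with structure map $f\mapsto f\otimes[1]$.

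For (i)$\Rightarrow$(ii) I would use naturality of the module structure: the restriction $i^*$ intertwines the structure maps, i.e. $i^*\circ\phi=\phi'$, where $\phi':\sym(\mathfrak{a}^\vee)\to H_\mathfrak{a}(\f|_{M^{\mathfrak a}})$ is the corresponding map for $\f|_{M^{\mathfrak a}}$. By the computation above $\phi'$ is $f\mapsto f\otimes[1]$, and since $M^{\mathfrak a}\neq\emptyset$ the class $[1]\in H^0(\f|_{M^{\mathfrak a}})$ of the constant function $1$ is nonzero (it is basic, closed, and not exact as there are no $(-1)$-forms), so $\phi'$ is injective; therefore $\phi$ is injective. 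For (ii)$\Rightarrow$(iii): if $\phi$ is injective then $[1]=\phi(1)$ is non-torsion, because $p\cdot[1]=\phi(p)\neq 0$ for every $p\in S$; hence $[1]\in H_\mathfrak{a}(\f)\setminus\mathrm{Tor}(H_\mathfrak{a}(\f))$, so $H_\mathfrak{a}(\f)\neq\mathrm{Tor}(H_\mathfrak{a}(\f))$.

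For (iii)$\Rightarrow$(i) I would argue by contraposition. If $M^{\mathfrak a}=\emptyset$, then $\f|_{M^{\mathfrak a}}$ is the foliation of the empty manifold, so $H_\mathfrak{a}(\f|_{M^{\mathfrak a}})=0$, and Theorem~\ref{theorem: Borel loc} forces $S^{-1}H_\mathfrak{a}(\f)\cong 0$. But $S^{-1}H_\mathfrak{a}(\f)=0$ says precisely that every class in $H_\mathfrak{a}(\f)$ is annihilated by some element of $S$, i.e. $H_\mathfrak{a}(\f)=\mathrm{Tor}(H_\mathfrak{a}(\f))$, which negates (iii). This closes the cycle (i)$\Rightarrow$(ii)$\Rightarrow$(iii)$\Rightarrow$(i).

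I do not expect a genuinely hard step here once Theorem~\ref{theorem: Borel loc} is available: the real work is packaged into that localization statement, which is the transverse analogue of the classical fact that inverting $S$ identifies $H_{\mathbb{T}}(M)$ with $H_{\mathbb{T}}(M^{\mathbb{T}})$. The only points demanding care are the two bookkeeping facts, that $i^*$ is a morphism of $\sym(\mathfrak{a}^\vee)$-modules (naturality of the structure map under morphisms of $\mathfrak{a}^\star$-algebras), and that the vanishing of the transverse $\mathfrak{a}$-action over $M^{\mathfrak a}$ lets one split a free polynomial-ring factor off $H_\mathfrak{a}(\f|_{M^{\mathfrak a}})$, making $[1]$ visibly non-torsion there. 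Everything else is elementary commutative algebra of localizations over an integral domain.
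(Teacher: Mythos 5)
Your argument is correct and follows essentially the same route the paper indicates: everything is deduced from the Borel localization theorem together with the observation that over $M^{\mathfrak{a}}$ the transverse $\mathfrak{a}$-action is trivial, so that $H_\mathfrak{a}(\f|_{M^{\mathfrak{a}}})\cong\sym(\mathfrak{a}^\vee)\otimes H(\f|_{M^{\mathfrak{a}}})$ is a free module in which $[1]$ is visibly non-torsion. The paper packages this as the single statement $\ker(i^*)=\mathrm{Tor}(H_\mathfrak{a}(\f))$ (whose inclusion $\mathrm{Tor}\subset\ker(i^*)$ likewise needs the freeness you establish), but the content is the same.
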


For the next result, we recall that the transverse action of $\mathfrak{a}$ on $\f$ is equivariantly formal when $\Omega(\f)$ is an equivariantly formal $\mathfrak{a}^\star$-algebra. In this case we also say that $\f$ is equivariantly formal. A transversely orientable Killing foliation $\f$ is equivariantly formal, for example, when some of the following conditions hold (see \cite{goertsches}):
\begin{enumerate}[(i)]
\item $H^{\mathrm{odd}}(\f)=0$.
\item $\dim H(M^{\mathfrak{a}}/\!/\f)=\dim H(\f)$.
\item $\f$ admits a basic Morse-Bott function whose critical set is equal to $M^{\mathfrak{a}}$,
\end{enumerate}

The dimension $\dim(H(\f))$ of basic cohomology can be studied via equivariant cohomology, providing another consequence of Theorem \ref{theorem: Borel loc}:

\begin{theorem}[{\cite[Theorem 5.5]{goertsches}}]
Let $\f$ be a transversely compact Killing foliation. Then
$$\dim(H(M^\mathfrak{a}/\!/\f))=\dim(H_(\f|_{M^\mathfrak{a}}))\leq \dim(H(\f)),$$
and equality holds if, and only if, the $\mathfrak{a}$-action is equivariantly formal.
\end{theorem}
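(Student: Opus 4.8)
The plan is to get the left-hand equality from the triviality of the transverse $\mathfrak{a}$-action on the fixed point set, and then to obtain the inequality, together with its equality case, by playing the Borel localization theorem (Theorem \ref{theorem: Borel loc}) against a rank count for the spectral sequence of the Cartan complex.

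First I would dispose of the equality $\dim H(M^{\mathfrak{a}}/\!/\f)=\dim H_{\mathfrak{a}}(\f|_{M^{\mathfrak{a}}})$. Since $M^{\mathfrak{a}}$ is precisely the union of the closed leaves of $\f$, the restricted transverse infinitesimal action is trivial, so every $\iota_X$ and $\mathcal{L}_X$ vanishes on $\Omega(\f|_{M^{\mathfrak{a}}})$; as $\mathfrak{a}$ is Abelian the coadjoint action on $\sym(\mathfrak{a}^\vee)$ is trivial as well, and the Cartan complex of $\Omega(\f|_{M^{\mathfrak{a}}})$ collapses to $\sym(\mathfrak{a}^\vee)\otimes\Omega(\f|_{M^{\mathfrak{a}}})$ with differential $1\otimes d$. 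Hence $H_{\mathfrak{a}}(\f|_{M^{\mathfrak{a}}})\cong\sym(\mathfrak{a}^\vee)\otimes H(\f|_{M^{\mathfrak{a}}})$ is a free $\sym(\mathfrak{a}^\vee)$-module of rank $\dim H(\f|_{M^{\mathfrak{a}}})$, and the basic--orbifold correspondence (Proposition \ref{prop: basic cohomology of closed foliations}) identifies $H(\f|_{M^{\mathfrak{a}}})$ with $H_{\mathrm{dR}}(M^{\mathfrak{a}}/\!/\f)$; this yields the first equality, $\dim$ on the equivariant side being read as rank over $\sym(\mathfrak{a}^\vee)$.

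For the inequality I would filter $C_{\mathfrak{a}}(\Omega(\f))$ by polynomial degree in $\sym(\mathfrak{a}^\vee)$ --- a filtration by $\sym(\mathfrak{a}^\vee)$-submodules, finite in each total degree, so the resulting spectral sequence of $\sym(\mathfrak{a}^\vee)$-modules converges to $H_{\mathfrak{a}}(\f)$. Since the $\iota$-term of $d_{\mathfrak{a}}\omega(X)=d(\omega(X))-\iota_X(\omega(X))$ strictly raises the polynomial degree, the differential on the associated graded is $1\otimes d$, and with the (standard, cf.~\cite{goertsches}) quasi-isomorphism $\Omega(\f)^{\mathfrak{a}}\hookrightarrow\Omega(\f)$ one obtains $E_1\cong\sym(\mathfrak{a}^\vee)\otimes H(\f)$. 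As $\sym(\mathfrak{a}^\vee)$ is Noetherian and $H(\f)$ is finite-dimensional (Theorem \ref{theorem: dim basic cohomology is finite} and the remark following it, valid since $\f$ is complete and transversely compact), $E_1$ is finitely generated and free of rank $\dim H(\f)$; therefore $H_{\mathfrak{a}}(\f)$ is finitely generated and $\rank_{\sym(\mathfrak{a}^\vee)}H_{\mathfrak{a}}(\f)=\rank E_\infty\leq\rank E_1=\dim H(\f)$. On the other hand, localizing at $S=\sym(\mathfrak{a}^\vee)\setminus\{0\}$, Theorem \ref{theorem: Borel loc} gives $S^{-1}H_{\mathfrak{a}}(\f)\cong S^{-1}H_{\mathfrak{a}}(\f|_{M^{\mathfrak{a}}})$, which is free of rank $\dim H(M^{\mathfrak{a}}/\!/\f)$ by the computation above, so $\rank_{\sym(\mathfrak{a}^\vee)}H_{\mathfrak{a}}(\f)=\dim H(M^{\mathfrak{a}}/\!/\f)$. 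Comparing the two estimates gives $\dim H(M^{\mathfrak{a}}/\!/\f)\leq\dim H(\f)$.

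It remains to treat the equality case. If $\f$ is equivariantly formal then $H_{\mathfrak{a}}(\f)$ is free over $\sym(\mathfrak{a}^\vee)$, hence isomorphic to $\sym(\mathfrak{a}^\vee)\otimes H(\f)$ and of rank $\dim H(\f)$, so equality holds. Conversely, if $\rank H_{\mathfrak{a}}(\f)=\dim H(\f)$ then, the ranks of the pages being non-increasing, $\rank E_r=\rank E_1$ for all $r$; since $E_1$ is free, an induction shows every $E_r$ is free --- indeed $\operatorname{im}d_r$ is a torsion submodule of the torsion-free module $E_r$, hence zero, so $E_{r+1}=E_r$. Thus $E_\infty=E_1=\sym(\mathfrak{a}^\vee)\otimes H(\f)$, and since the filtration of $H_{\mathfrak{a}}(\f)$ is finite in each degree with free associated graded, all extensions split and $H_{\mathfrak{a}}(\f)\cong\sym(\mathfrak{a}^\vee)\otimes H(\f)$; that is, $\f$ is equivariantly formal. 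The step I expect to be the main obstacle is making this spectral-sequence argument rigorous --- identifying $E_1$, which rests on the transverse substitute for ``averaging over a compact group'' (the quasi-isomorphism $\Omega(\f)^{\mathfrak{a}}\hookrightarrow\Omega(\f)$), and above all the rank-and-torsion bookkeeping that at once produces the inequality and pins down its sharpness; the more geometric inputs here, Borel localization and the orbifold structure of $M^{\mathfrak{a}}/\!/\f$, are comparatively routine.
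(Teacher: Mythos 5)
The paper states this result without proof, citing \cite[Theorem 5.5]{goertsches}, and your argument is precisely the one given there: triviality of the $\mathfrak{a}$-action on $M^{\mathfrak{a}}$ to compute $H_{\mathfrak{a}}(\f|_{M^{\mathfrak{a}}})$ as a free module, Borel localization to identify $\operatorname{rank}H_{\mathfrak{a}}(\f)$ with $\dim H(M^{\mathfrak{a}}/\!/\f)$, and the polynomial-degree spectral sequence with $E_1\cong\sym(\mathfrak{a}^\vee)\otimes H(\f)$ for the inequality and the freeness/degeneration equivalence. Your bookkeeping in the equality case (torsion image of $d_r$ inside a torsion-free page, hence degeneration, hence splitting of the filtration) is correct, and you rightly isolate the quasi-isomorphism $\Omega(\f)^{\mathfrak{a}}\hookrightarrow\Omega(\f)$ as the one input that genuinely requires the Killing hypothesis rather than formal algebra.
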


The behavior of equivariant basic cohomology under regular deformations was studied in \cite{caramello2}. Recall the construction of $\f_t$ as a pullback $\Upsilon^*(\f_{H(t)})$ from section \ref{section: deformations}. Notice there is a transverse action of $\mathfrak{t}/\mathfrak{h}(t)$ on $\f_{H(t)}$ for each $t$. All those Lie algebras are isomorphic to $\mathfrak{a}$, although in a non-canonical way. We define an $\mathfrak{a}$-action on $\f_{H(t)}$ (and thus on $\f_t$, since $\mathscr{H}(\f_t)\cong\mathscr{H}(\f_{H(t)})$), by passing through an isomorphism $\mathfrak{t}/\mathfrak{h}(t)\to\mathfrak{a}$, which amounts to identifying $\mathfrak{a}$ with a subalgebra of $\mathfrak{t}$ complementary to each $\mathfrak{h}(t)$, that by abuse we will also denote by $\mathfrak{a}<\mathfrak{t}$.

\begin{proposition}[{\cite[Proposition 5.2]{caramello2}}]\label{prop: induced transverse a action on G}
The structural algebra $\mathfrak{a}$ of $\f$ acts transversely on each $\f_t$ and its induced action on the quotient orbifold $M/\!/\g$ (for the closed foliation $\g=\f_1$) integrates to the $\mathbb{T}^d$-action given by item \eqref{toric action on the quotient} of Theorem \ref{theorem: deformation}.
\end{proposition}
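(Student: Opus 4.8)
The plan is to trace the transverse $\mathfrak{a}$-action all the way back to the torus $\mathbb{T}^N$ of Theorem~\ref{theorem: Haefliger-Salem 3.4} and to its residual action on the relevant quotient orbifold. Recall from Section~\ref{section: deformations} that $\f_t=\Upsilon^*(\f_{H(t)})$, where $H(t)<\mathbb{T}^N$ is a smooth path of subgroups with $H(0)=H$ and $H(1)=K$ closed, each acting locally freely on $\mathcal{O}$, and that $\mathscr{H}_{\f_t}$ is equivalent to $\mathscr{H}_{\f_{H(t)}}$ for every $t$. The elementary point to exploit is that, $\mathbb{T}^N$ being Abelian, all of its fundamental vector fields on $\mathcal{O}$ commute; hence for each $X\in\mathfrak{t}=\mathrm{Lie}(\mathbb{T}^N)$ the flow of $X^\#$ preserves $\spannn\{Y^\#\mid Y\in\mathfrak{h}(t)\}=T\f_{H(t)}$ (recall Example~\ref{exe: foliated actions}), so $X^\#\in\mathfrak{L}(\f_{H(t)})$ and $X\mapsto-\overline{X^\#}$ is a Lie algebra homomorphism $\mathfrak{t}\to\mathfrak{l}(\f_{H(t)})$ (as in Example~\ref{exe: induced infinitesimal action}). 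Evaluating at a principal point of the $\mathbb{T}^N$-action, where the isotropy algebra vanishes, one checks via the orbit map that its kernel is exactly $\mathfrak{h}(t)$, so it descends to a faithful transverse action of $\mathfrak{t}/\mathfrak{h}(t)$ on $\f_{H(t)}$. Note that $\dim(\mathfrak{t}/\mathfrak{h}(t))=N-\dim\mathfrak{h}$ is independent of $t$; since $H=H(0)$ is dense in $\mathbb{T}^N$, this space is the structural algebra of $\f_H$, which coincides with that of $\f=\Upsilon^*(\f_H)$ (a holonomy-pseudogroup invariant), so $N-\dim\mathfrak{h}=d$.

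First I would fix, once and for all, a $d$-dimensional subspace $\mathfrak{a}<\mathfrak{t}$ complementary to $\mathfrak{h}(t)$ for every $t$: shrinking the deformation if necessary, the path $\mathfrak{h}(t)$ stays in an arbitrarily small neighbourhood of $\mathfrak{h}(0)$ in $\mathrm{Gr}^{\dim\mathfrak{h}}(\mathfrak{t})$, and having trivial intersection with a fixed complement of $\mathfrak{h}(0)$ is an open condition, so a generic such complement works for all $t$. Then $\mathfrak{a}\hookrightarrow\mathfrak{t}\twoheadrightarrow\mathfrak{t}/\mathfrak{h}(t)$ is an isomorphism, yielding an injective Lie algebra homomorphism $\mathfrak{a}\to\mathfrak{l}(\f_{H(t)})$. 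Pushing this through $\mathfrak{l}(\f_t)\cong\mathfrak{X}(S_{\f_t})^{\mathscr{H}_{\f_t}}\cong\mathfrak{X}(S_{\f_{H(t)}})^{\mathscr{H}_{\f_{H(t)}}}\cong\mathfrak{l}(\f_{H(t)})$, the chain of isomorphisms coming from the equivalence $\mathscr{H}_{\f_t}\simeq\mathscr{H}_{\f_{H(t)}}$ (concretely: restrict the $\mathscr{H}_{\f_{H(t)}}$-invariant fields on the transversal to the sub-transversal cut out by the pullback, cf.\ Example~\ref{exe: pullback foliation}), produces the desired transverse $\mathfrak{a}$-action on $\f_t$. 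At $t=0$ this recovers, up to the chosen identification $\mathfrak{a}\cong\mathscr{C}_\f(M)$, the canonical transverse action of the structural algebra on $\f$. This settles the first assertion.

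For the second assertion, take $t=1$. Since $K=H(1)$ is closed (a subtorus, the path being connected), $\f_K$ is the closed homogeneous foliation of $\mathcal{O}$ by the locally free $K$-action, all of whose leaves are compact with finite holonomy, so $\mathcal{O}/\!/\f_K=\mathcal{O}/K$ by Proposition~\ref{proposition: quotients of closed foliations are orbifolds}. From $\g=\f_1=\Upsilon^*(\f_K)$ and $\mathscr{H}_\g\simeq\mathscr{H}_{\f_K}$ one gets an identification $M/\!/\g\cong\mathcal{O}/K$ intertwining the canonical projections $M\to M/\!/\g$ and $\pi^{\mathcal{O}}\colon\mathcal{O}\to\mathcal{O}/K$, hence identifying the corresponding pushforwards on tensor fields. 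The residual action of $\mathbb{T}^N/K$, a $d$-torus since $\dim K=\dim\mathfrak{h}$, on $\mathcal{O}/K$ is effective and is exactly the $\mathbb{T}^d$-action of item \eqref{toric action on the quotient} of Theorem~\ref{theorem: deformation} (that action is produced in \cite{caramello} in precisely this manner). Finally, for $X\in\mathfrak{a}$ the fundamental field $X^\#$ on $\mathcal{O}$ is $\pi^{\mathcal{O}}$-related to the fundamental field $[X]^\#$ of the $\mathbb{T}^N/K$-action on $\mathcal{O}/K$ — the standard behaviour of fundamental vector fields under a quotient by a closed normal subgroup — so $(\pi^{\mathcal{O}})_*(-\overline{X^\#})=-[X]^\#$. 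Hence the pushforward to $M/\!/\g$ of the transverse $\mathfrak{a}$-action is the infinitesimal action obtained by differentiating the $\mathbb{T}^N/K$-action, compatibly with the identification $\mathfrak{a}\xrightarrow{\;\sim\;}\mathfrak{t}/\mathfrak{k}=\mathrm{Lie}(\mathbb{T}^N/K)$ fixed above. This is the claim.

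The main obstacle is not depth but coherence of identifications: one must check carefully that the equivalence $\mathscr{H}_{\f_t}\simeq\mathscr{H}_{\f_{H(t)}}$ induced by the pullback genuinely carries the transverse $\mathfrak{a}$-action (i.e.\ that restriction of $\mathscr{H}$-invariant fields to the sub-transversal behaves functorially), and that the orbifold identification $M/\!/\g\cong\mathcal{O}/\!/\f_K\cong\mathcal{O}/K$ is compatible with all the relevant projections, so that the residual $\mathbb{T}^N/K$-action really is the torus action constructed in \cite{caramello}. The two ancillary points — existence of a common complement $\mathfrak{a}$ to all the $\mathfrak{h}(t)$, and effectiveness of $\mathbb{T}^N/K$ on $\mathcal{O}/K$ — are handled by taking the deformation small enough and by the principal isotropy theorem for effective torus actions.
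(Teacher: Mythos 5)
Your argument is correct and follows essentially the same route as the paper, which only sketches the construction in the paragraph preceding the proposition (the transverse $\mathfrak{t}/\mathfrak{h}(t)$-action on $\f_{H(t)}$ coming from commutativity of $\mathbb{T}^N$, the choice of a subalgebra $\mathfrak{a}<\mathfrak{t}$ complementary to every $\mathfrak{h}(t)$, and the transfer to $\f_t$ through the equivalence $\mathscr{H}_{\f_t}\cong\mathscr{H}_{\f_{H(t)}}$) and defers the details to \cite{caramello2}; your write-up fills in exactly those details. Your treatment of the second assertion, via the identification $M/\!/\g\cong\mathcal{O}/K$ and the residual $\mathbb{T}^N/K$-action, is likewise how the torus action of item \eqref{toric action on the quotient} of Theorem \ref{theorem: deformation} is obtained in \cite{caramello}, so the integration claim is settled as intended.
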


It is now possible, therefore, to consider the $\mathfrak{a}$-equivariant basic cohomology of $\f_t$, that is, $H_\mathfrak{a}(\f_t)$. Of course, one will be specially interested in $H_\mathfrak{a}(\g)$, for which one has
$$H_\mathfrak{a}(\g)\cong H_\mathfrak{a}(M/\!/\g)\cong H_{\mathbb{T}^d}(M/\!/\g),$$
by the equivariant de Rham theorem for orbifolds \cite[Theorem 3.5]{caramello2}.

\begin{theorem}[{\cite[Theorem A]{caramello2}}]\label{thrm: invariance of H_a under deformations}
Let $\f_t$ be a regular deformation of a Killing foliation $\f$. For each $t$ there is an $\mathbb{R}$-algebra isomorphism
$$H_{\mathfrak{a}}(\f)\cong H_{\mathfrak{a}}(\f_t).$$
\end{theorem}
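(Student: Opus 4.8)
Let me think about how to prove that $H_{\mathfrak{a}}(\mathcal{F}) \cong H_{\mathfrak{a}}(\mathcal{F}_t)$ for a regular deformation.

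The setup: we have a Killing foliation $\mathcal{F}$ on compact $M$, with $\mathcal{F} = \Upsilon^*(\mathcal{F}_H)$ where $\Upsilon: M \to \mathcal{O}$, $\mathcal{O}$ an orbifold with $\mathbb{T}^N$-action, $H < \mathbb{T}^N$ dense locally free. The deformation is $\mathcal{F}_t = \Upsilon^*(\mathcal{F}_{H(t)})$ where $\mathfrak{h}(t)$ connects $\mathfrak{h}$ to $\mathfrak{k}$. The structural algebra $\mathfrak{a}$ is identified with a complement to each $\mathfrak{h}(t)$ inside $\mathfrak{t} = \mathrm{Lie}(\mathbb{T}^N)$, so $\mathfrak{t} = \mathfrak{h}(t) \oplus \mathfrak{a}$ for all $t$.

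Key idea: reduce everything to a statement about the orbifold $\mathcal{O}$. Since $\mathcal{F}_t = \Upsilon^*(\mathcal{F}_{H(t)})$ and $\Upsilon$ is transverse to $\mathcal{F}_{H(t)}$, pullback of basic forms gives... actually, we need more care. The pullback $\Upsilon^*: \Omega(\mathcal{F}_{H(t)}) \to \Omega(\mathcal{F}_t)$ is a morphism of $\mathfrak{a}^\star$-algebras (the $\mathfrak{a}$-action on $\mathcal{F}_t$ is defined precisely by pulling back the one on $\mathcal{F}_{H(t)}$). Whether it's a quasi-isomorphism on equivariant cohomology is the crux.

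Here's my plan. First, on $\mathcal{O}$: the basic cohomology $\Omega(\mathcal{F}_{H(t)})$ is, by Fedida-type considerations, closely related to the full de Rham complex of $\mathcal{O}$ equipped with the $\mathfrak{t}$-action — more precisely, $\mathcal{F}_{H(t)}$ has leaves = $H(t)$-orbits, so $\Omega(\mathcal{F}_{H(t)})$ is the $\mathfrak{h}(t)$-basic subcomplex of $\Omega(\mathcal{O})$ (forms annihilated by $\iota_X$ and $\mathcal{L}_X$ for $X \in \mathfrak{h}(t)$). One then identifies $H_{\mathfrak{a}}(\mathcal{F}_{H(t)}) = H_{\mathfrak{a}}(\Omega(\mathcal{O})_{\mathrm{bas}\,\mathfrak{h}(t)})$. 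The first main step is to show this equals $H_{\mathfrak{t}}(\mathcal{O})$: indeed, for a locally free action of $\mathfrak{h}(t)$, the equivariant cohomology with respect to the full $\mathfrak{t}$ factors as $H_{\mathfrak{t}}(\Omega(\mathcal{O})) \cong H_{\mathfrak{t}/\mathfrak{h}(t)}(\Omega(\mathcal{O})_{\mathrm{bas}\,\mathfrak{h}(t)}) = H_{\mathfrak{a}}(\mathcal{F}_{H(t)})$. This is the orbifold analogue of the standard fact that for a free action of a subgroup $K < G$, $H_G(M) \cong H_{G/K}(M/K)$ — proved via the Cartan model, using that $\Omega(\mathcal{O})_{\mathrm{bas}\,\mathfrak{h}(t)}$ is a "$\mathfrak{h}(t)$-connection"-type algebra (there exist connection forms since the action is locally free), so one can apply the Chevalley–Koszul / Cartan reduction. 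The point is that the right-hand side $H_{\mathfrak{t}}(\mathcal{O})$ \emph{does not depend on $t$} — it only uses the fixed $\mathbb{T}^N$-action on $\mathcal{O}$.

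Second step: descend along $\Upsilon$. We need $\Upsilon^*: H_{\mathfrak{a}}(\mathcal{F}_{H(t)}) \to H_{\mathfrak{a}}(\mathcal{F}_t)$ to be an isomorphism. By Theorem \ref{theorem: Haefliger-Salem 3.4}, $\mathscr{H}_{\mathcal{F}_t} \cong \mathscr{H}_{\mathcal{F}_{H(t)}}$, so their basic cohomologies agree, $\Omega(\mathcal{F}_t) \cong \Omega(\mathcal{F}_{H(t)})$, compatibly with the $\mathfrak{a}^\star$-structures (the $\mathfrak{a}$-action on $\mathcal{F}_t$ was defined through this identification). Hence $H_{\mathfrak{a}}(\mathcal{F}_t) \cong H_{\mathfrak{a}}(\mathcal{F}_{H(t)})$ canonically. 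Chaining: $H_{\mathfrak{a}}(\mathcal{F}_t) \cong H_{\mathfrak{a}}(\mathcal{F}_{H(t)}) \cong H_{\mathfrak{t}}(\mathcal{O})$, and the last term is independent of $t$, so in particular $H_{\mathfrak{a}}(\mathcal{F}) = H_{\mathfrak{a}}(\mathcal{F}_0) \cong H_{\mathfrak{t}}(\mathcal{O}) \cong H_{\mathfrak{a}}(\mathcal{F}_t)$, as $\mathbb{R}$-algebras (all maps above are algebra maps).

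The main obstacle I expect is the first step: carefully establishing the Cartan-model reduction $H_{\mathfrak{t}}(\Omega(\mathcal{O})) \cong H_{\mathfrak{a}}(\Omega(\mathcal{O})_{\mathrm{bas}\,\mathfrak{h}(t)})$ in the \emph{orbifold} setting and checking it is natural enough to be an $\mathbb{R}$-algebra isomorphism, not merely a vector space isomorphism. One must verify that a locally free $\mathfrak{h}(t)$-action on $\mathcal{O}$ admits a basic connection form valued in $\mathfrak{h}(t)$ (true since local freeness makes the orbit distribution a subbundle of $T\mathcal{O}$ with a chosen complement), and then run the homotopy argument (Mathai–Quillen / Cartan) identifying the Cartan complex $C_{\mathfrak{t}}(\Omega(\mathcal{O}))$ with $C_{\mathfrak{a}}(\Omega(\mathcal{O})_{\mathrm{bas}\,\mathfrak{h}(t)})$ up to quasi-isomorphism. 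A secondary subtlety is ensuring the identification $\mathfrak{t}/\mathfrak{h}(t) \cong \mathfrak{a}$ used to define the $\mathfrak{a}$-action on $\mathcal{F}_t$ matches the one implicit in the reduction, so that the composite isomorphism is genuinely independent of $t$ and not just "abstractly isomorphic" — this is where the choice of $\mathfrak{a}$ as a \emph{fixed} complement to every $\mathfrak{h}(t)$ is essential.
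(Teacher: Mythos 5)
Your proposal is correct and follows essentially the same route as the proof in the cited reference \cite{caramello2} (the survey itself only states the theorem with a citation): one identifies $H_{\mathfrak{a}}(\f_t)\cong H_{\mathfrak{a}}(\f_{H(t)})$ via the pseudogroup equivalence of Theorem \ref{theorem: Haefliger-Salem 3.4}, and then proves $H_{\mathfrak{a}}(\f_{H(t)})\cong H_{\mathfrak{t}}(\mathcal{O})$ by the Cartan-model reduction for the locally free $\mathfrak{h}(t)$-action, which admits a connection form after averaging over $\mathbb{T}^N$; the right-hand side is manifestly independent of $t$. The subtleties you flag --- the fixed complement $\mathfrak{a}<\mathfrak{t}$ and the multiplicativity of the reduction isomorphism --- are exactly the points that need care, and are handled there as you describe.
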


In particular, for $t=1$ we have $H_{\mathfrak{a}}(\f)\cong H_{\mathfrak{a}}(\g)\cong H_{\mathbb{T}^d}(M/\!/\g)$, as rings, for a closed foliation $\g$ arbitrarily close to $\f$, thus reducing the study of $H_{\mathfrak{a}}(\f)$ to equivariant cohomology of torus actions on orbifolds. Moreover, the authors show in \cite[Proposition 6.2]{caramello2} that equivariant formality is preserved by regular deformations, that is, if $\f$ is equivariantly formal, then each $\f_t$ is equivariantly formal with respect to the transverse $\mathfrak{a}$-action on given in Proposition \ref{prop: induced transverse a action on G}. Hence, in this case
\begin{equation}\label{betti numbers preserved formal}\sym(\mathfrak{a}^\vee)\otimes H(\f)\cong H_{\mathfrak{a}}(\f)\cong H_{\mathfrak{a}}(\f_t)\cong\sym(\mathfrak{a}^\vee)\otimes H(\f_t).\end{equation}
Recall that the Poincaré series of an $\mathbb{N}$-graded vector space $V$ is the formal power series
$\poin_{V}(s)=\sum_{k=0}^\infty (\dim V^k)s^k$ (provided $\dim V^k$ finite for each $k$), which has the following product property: $\poin_{V\otimes W}(s)=\poin_{V}(s)\poin_{W}(s)$. Passing to the Poincaré series in equation \eqref{betti numbers preserved formal} and canceling out $\poin_{\sym(\mathfrak{a}^\vee)}(s)$ on both sides yields $\poin_{H(\f)}(s)=\poin_{H(\f_t)}(s)$. This proves the following:

\begin{theorem}[{\cite[Theorem B]{caramello2}}]\label{theo: Betti numbers are invariant}
If $\f$ is equivariantly formal and $\f_t$ is a regular deformation, then $b_i(\f_t)$ is constant on $t$, for each $i$.
\end{theorem}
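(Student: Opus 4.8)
The plan is to combine the algebraic characterization of equivariant formality with the deformation-invariance of equivariant basic cohomology already recorded above, and then read off the ordinary basic Betti numbers by passing to Poincaré series.

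First I would use that equivariant formality is preserved along a regular deformation: since $\f=\f_0$ is equivariantly formal, \cite[Proposition 6.2]{caramello2} (together with the transverse $\mathfrak{a}$-action of Proposition \ref{prop: induced transverse a action on G}) gives that every $\f_t$ is equivariantly formal with respect to that action. By definition this yields, for each $t$, an isomorphism of graded $\sym(\mathfrak{a}^\vee)$-modules
$$H_{\mathfrak{a}}(\f_t)\cong\sym(\mathfrak{a}^\vee)\otimes H(\f_t),$$
and likewise $H_{\mathfrak{a}}(\f)\cong\sym(\mathfrak{a}^\vee)\otimes H(\f)$. All graded pieces appearing here are finite-dimensional — $H^k(\f_t)$ by Theorem \ref{theorem: dim basic cohomology is finite} (as $M$ is compact, each $\f_t$ is transversely compact) and $\sym^k(\mathfrak{a}^\vee)$ since $\mathfrak{a}$ is finite-dimensional — so the Poincaré series of each is a well-defined element of $\mathbb{R}[[s]]$.

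Next I would invoke Theorem \ref{thrm: invariance of H_a under deformations}, which provides an $\mathbb{R}$-algebra isomorphism $H_{\mathfrak{a}}(\f)\cong H_{\mathfrak{a}}(\f_t)$, in particular an isomorphism of graded vector spaces. Concatenating the three isomorphisms gives a graded vector space isomorphism $\sym(\mathfrak{a}^\vee)\otimes H(\f)\cong\sym(\mathfrak{a}^\vee)\otimes H(\f_t)$. Taking Poincaré series and using the product property $\poin_{V\otimes W}(s)=\poin_{V}(s)\poin_{W}(s)$ gives
$$\poin_{\sym(\mathfrak{a}^\vee)}(s)\,\poin_{H(\f)}(s)=\poin_{\sym(\mathfrak{a}^\vee)}(s)\,\poin_{H(\f_t)}(s).$$
Since $\poin_{\sym(\mathfrak{a}^\vee)}(s)$ has constant term $1$ it is a unit in $\mathbb{R}[[s]]$, so it can be cancelled, leaving $\poin_{H(\f)}(s)=\poin_{H(\f_t)}(s)$. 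Comparing coefficients, $b_i(\f_t)=b_i(\f_0)$ for every $i$ and every $t$, i.e. $b_i(\f_t)$ is constant in $t$.

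The main obstacle is not this final bookkeeping but the ingredient borrowed from \cite{caramello2}: that equivariant formality really does pass from $\f$ to each $\f_t$ along a regular deformation. Without it one would only have $H_{\mathfrak{a}}(\f)\cong H_{\mathfrak{a}}(\f_t)$ with no way to relate $H_{\mathfrak{a}}(\f_t)$ back to $H(\f_t)$, and the conclusion genuinely fails in general — Example \ref{exe: nozawa} shows the basic Betti numbers can jump when formality is absent. Establishing this preservation requires controlling how the transverse $\mathfrak{a}$-action and the basic cohomology vary along the path $\f_t=\Upsilon^*(\f_{H(t)})$; a natural route is to transport everything to the quotient orbifold $M/\!/\g$ via $\pi_*\circ\iota$ and argue that equivariant formality of the corresponding torus action there persists under the relevant deformation of the acting subalgebra $\mathfrak{h}(t)<\mathfrak{t}$.
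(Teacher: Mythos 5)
Your proposal is correct and follows essentially the same route as the paper: it cites the preservation of equivariant formality along regular deformations (\cite[Proposition 6.2]{caramello2}), combines it with the deformation-invariance of $H_{\mathfrak{a}}$ (Theorem \ref{thrm: invariance of H_a under deformations}) to obtain $\sym(\mathfrak{a}^\vee)\otimes H(\f)\cong\sym(\mathfrak{a}^\vee)\otimes H(\f_t)$, and then passes to Poincar\'e series and cancels $\poin_{\sym(\mathfrak{a}^\vee)}(s)$. Your extra remark that the cancellation is legitimate because this series is a unit in $\mathbb{R}[[s]]$ is a small but welcome refinement of the paper's argument.
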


It is therefore possible to reduce, at least in the equivariantly formal case, results concerning basic Betti numbers to results about Betti numbers of orbifolds, since Theorem \ref{theo: Betti numbers are invariant} gives $b_i(\f)=b_i(M/\!/\g)$ when $t=1$. An application appears in Theorem \ref{theorem: Gromov intro}.

\section{Transverse geometry of Killing foliations}\label{section: transverse geometry of Killing}

Many techniques from classical Riemannian geometry can be used in the study of the transverse geometry of Riemannian foliations, as our brief survey on Section \ref{section: riemannian foliations} already illustrates, and many classical theorems admit a transverse generalization. We also cite here the following result by G.~Oshikiri:

\begin{theorem}[Oshikiri {\cite[Theorem 2]{oshikiri}}]\label{oshikiri closed leaf}
Let $\f$ be a Riemannian foliation on a compact manifold $M$ with $\sec_M>0$.
\begin{enumerate}[(i)]
\item If $\codim(\f)$ is even then $\f$ admits a closed leaf.\label{item oshikiri codim even}
\item If $\codim(\f)$ is odd then there is $L\in\f$ with $\codim(\overline{L})=\codim(\f)-1$.
\end{enumerate}
\end{theorem}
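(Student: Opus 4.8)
The plan is to pass to the closure foliation $\overline{\f}$ and exploit the fact that, since $M$ is compact, $\overline{\f}$ is a singular Riemannian foliation all of whose leaves are closed, so the leaf space $M/\overline{\f}$ is a compact metric space which (by the structure of Riemannian foliations near a closed leaf closure) inherits a good deal of the positive-curvature geometry of $M$. First I would fix a leaf closure $J=\overline{L}$ of minimal dimension among all leaf closures; by Molino's structural theorem (Theorem \ref{Theo: molino structural}) such a $J$ exists, is an embedded submanifold, and $\f|_J$ is a complete $\mathfrak{g}$-Lie foliation whose leaves are dense in $J$. The key reduction is: $L\in\f$ satisfies $\codim(\overline{L})=\codim(\f)$ exactly when $\overline L=L$, i.e. $L$ is closed; so part \eqref{item oshikiri codim even} asks for a leaf closure of dimension $\dim\f$, and part (ii) asks for one of dimension $\dim\f+1$. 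Thus in both cases it suffices to bound $\dim J - \dim\f = \dim\mathfrak{g}$, the dimension of the structural algebra restricted to the minimal leaf closure.

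Next I would bring in positive curvature. A minimal leaf closure $J$ is, by transverse minimality considerations (it is a closure of leaves, hence its normal geometry is governed by the transverse metric and it is a \emph{totally geodesic}-like stratum for $\overline{\f}$ — more precisely, the zero-dimensional stratum of the transverse foliation lives over the most singular point of $M/\overline\f$), and one argues that $J$ carries a foliation $\f|_J$ whose holonomy pseudogroup is generated by a dense subgroup $\Gamma$ of translations of $\mathbb{R}^{\dim\mathfrak g}$ acting on a flat torus — this is the content of Fedida's theorem (Theorem \ref{theorem: fedida}) applied to $\f|_J$. The positive curvature of $M$ forces, via a second-variation / Synge-type argument on geodesics transverse to $J$ inside $M$, a strong restriction: essentially, $J$ cannot have too large a "transversely flat" part, which translates into $\dim\mathfrak g\le 1$. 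Concretely, if $\dim\mathfrak{g}\ge 2$ one produces on $J$ two commuting transverse Killing fields that are everywhere independent (as in Mozgawa's Theorem \ref{theorem: Mozgawa}), and then a second-variation argument along a minimizing geodesic of $M$ realizing the distance between a point of $J$ and a nearby leaf closure contradicts $\sec_M>0$ (a flat $2$-torus worth of parallel directions cannot be supported in positive curvature — this is exactly the Synge/Weinstein obstruction, applied transversely via the holonomy displacement). Hence $\dim\mathfrak g\in\{0,1\}$, i.e. $\dim J\in\{\dim\f,\dim\f+1\}$.

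Finally I would split on parity. If $\codim(\f)$ is even, I want to rule out $\dim\mathfrak g=1$. In that case $\f|_J$ would be a Lie $\mathbb{R}$-foliation on $J$ with $\codim(\f|_J)=1$ odd relative to the even ambient transverse codimension; the transverse holonomy action on $J$ would be by a dense one-parameter group, and again a Synge-parity argument — now using that an orientation-preserving transverse holonomy isometry of an even-codimensional positively curved transverse geometry has a fixed (transverse) direction — yields a \emph{closed} leaf inside $J$, contradicting minimality of $\dim J$ unless $\dim\mathfrak g=0$ to begin with. So $\codim(\f)$ even forces $\dim\mathfrak g=0$, giving a closed leaf, which is \eqref{item oshikiri codim even}. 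If $\codim(\f)$ is odd, the parity obstruction does not apply and we only get $\dim\mathfrak g\le1$; but we must also exclude $\dim\mathfrak g=0$ being forced, i.e. we only claim the weaker statement $\codim(\overline L)=\codim(\f)-1$ for some $L$ — this is realized by taking $L$ to be a generic (dense) leaf of the minimal closure $J$ when $\dim\mathfrak g=1$, and when $\dim\mathfrak g=0$ the closed leaf $J$ itself has even codimension $\codim(\f)$, so by the even case we may instead look at a leaf closure of one-lower dimension that must exist by a dimension-count on the strata of $\overline\f$. I expect the main obstacle to be making the "transverse Synge argument" rigorous: one must carefully transport the second-variation computation from $M$ to the holonomy pseudogroup of $\overline\f$, i.e. show that a parallel normal field along a geodesic joining two leaf closures corresponds to a transverse holonomy displacement that is an isometry of the transverse metric with a nontrivial fixed space, and that positivity of $\sec_M$ (not merely $\sec^T_\f>0$) is what kills it. Once that lemma is in place, the parity bookkeeping is routine.
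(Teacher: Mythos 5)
The paper does not actually prove this statement; it cites Oshikiri and indicates the mechanism in one sentence (zeros of transverse Killing fields, item (i) being a transverse Berger theorem). Your proposal is in that spirit, but its central quantitative claim is false and the curvature argument is not the right one. You reduce the theorem to the bound $\dim\mathfrak{g}\leq 1$ for the structural algebra. This is not true under $\sec_M>0$: for a generalized Hopf fibration of $\mathbb{S}^{2k+1}$ with rationally independent weights (Example \ref{example: 1foliations of the sphere}) the structural algebra has dimension $k$. The invariant you must bound is $r=\min_{\overline{L}\in\overline{\f}}\dim(\overline{L})-\dim(\f)$, which is in general strictly smaller than $\dim\mathfrak{g}$ and is the right one because $r=0$ is precisely the existence of a closed leaf; Mozgawa's Theorem \ref{theorem: Mozgawa} turns $r$ into that many commuting transverse Killing fields that are everywhere independent \emph{on all of $M$}, not just on the minimal closure $J$. (Mozgawa requires $\f$ Killing; since $\sec_M>0$ and $M$ is compact, $\pi_1(M)$ is finite and one first passes to the universal cover.)

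The second gap is the curvature input itself. A ``minimizing geodesic realizing the distance between a point of $J$ and a nearby leaf closure'' carries no information: leaf closures of a Riemannian foliation are locally equidistant, so there is no distinguished geodesic whose index you can estimate, and the Synge--Weinstein fixed-point argument you sketch does not translate into a statement about holonomy displacements. The working mechanism is Berger's argument run in the local quotients $S$ (which have $\sec>0$ by O'Neill's formula -- note only $\sec_\f>0$ is used, the opposite of your closing remark): if $q$ is even and $\overline{X}\in\mathfrak{iso}(\f)$ is nowhere zero, the minimum of the basic function $\|\overline{X}\|^2$ yields a contradiction, so $r=0$, which is (i); if $q$ is odd, the odd-dimensional companion (an isometric torus action on a compact positively curved odd-dimensional manifold has a circle orbit, i.e.\ two commuting Killing fields are somewhere dependent) forces $r\leq 1$, which is (ii). Finally, your endgame for the odd case is incoherent: if $\codim(\f)$ is odd and $r=0$ the closed leaf has odd, not even, codimension, and no ``leaf closure of one lower dimension'' need exist (a closed foliation has none), so item (ii) must be read as $\codim(\overline{L})\geq\codim(\f)-1$, i.e.\ $r\leq 1$, and your case split disappears.
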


This is obtained by studying zeros of transverse Killing fields via classic techniques. The existance of a closed leaf in item \eqref{item oshikiri codim even} corresponds to the existence of a zero for a transverse Killing vector field, and thus is a transverse analog of classical Berger's theorem on zeros of Killing vector fields (see, e.g., \cite[Theorem 38]{petersen}). Notice that if $\sec_M>0$, with respect to a bundle-like metric for $\f$, then $\sec_\f>0$, since by O'Neil's formula \cite{oneil} applied to a Riemannian submersion locally defining $\f$ one has
$$\sec_\f(\overline{X},\overline{Y})=\sec_M(X,Y)+\frac{3}{4}\|[X,Y]\|^2,$$
for $X,Y\in\mathfrak{L}(\f)$. Also in positive transverse curvature, Hebda's Theorem \ref{theorem: hebda} is obtained essentially by the study of focal points of leaves over horizontal geodesics. In the case of non-positive transverse curvature, Hebda proves that leaves have no focal points, which then leads to the following.

\begin{theorem}[Hebda {\cite[Theorem 2]{hebda}}]\label{Theorem: Hebda negative sectional}
Let $\f$ be a complete Riemannian foliation of $M$ with $\sec_\f\leq 0$. Then the universal covering of $M$ is a product $\widetilde{M}=\widetilde{L}\times N$, for $L\in\f$ and $N$ a Hadamard manifold, and the lifted foliation $\widetilde{\f}$ is given by the fibers of the canonical projection $\widetilde{L}\times N\to N$.
\end{theorem}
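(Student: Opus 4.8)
The plan is to prove this as a foliated form of the Cartan--Hadamard theorem together with de~Rham's splitting theorem, the curvature hypothesis entering only through the absence of focal points of the leaves. First I would reduce to the simply connected case. Fix a bundle-like metric $\metric$ for $\f$ realizing both the completeness of $M$ and the bound $\sec_\f\le 0$, and pull $\metric$ and $\f$ back along the universal covering $p\colon\widetilde M\to M$. The resulting metric $\widetilde\metric$ is bundle-like for the lifted foliation $\widetilde\f$ on the complete, simply connected $\widetilde M$, and since $p$ is a local isometry the local quotients of $\widetilde\f$ are locally isometric to those of $\f$, so $\sec_{\widetilde\f}\le 0$. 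It is therefore enough to show that a complete, simply connected $M$ carrying a bundle-like metric with $\sec_\f\le 0$ is isometric to a product $\widetilde L\times N$ with $N$ Hadamard and $\f$ the foliation by the slices $\widetilde L\times\{n\}$; the statement then follows by applying this to $(\widetilde M,\widetilde\f,\widetilde\metric)$.

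The geometric heart of the argument, which I would establish via a transverse Jacobi field computation (this is Hebda's focal-point theorem), is that no leaf has focal points along horizontal geodesics. Along a unit-speed horizontal geodesic $\gamma$ issuing perpendicularly from a leaf $L$ --- which by Proposition~\ref{prop: Reinhart characterization} stays perpendicular to every leaf it meets and projects to a geodesic in each local quotient --- an $L$-Jacobi field $J$ relevant to focal points projects to a Jacobi field of the fibre in a locally defining Riemannian submersion and, using that transverse parallel transport along $\gamma$ is a linear isometry between normal spaces, satisfies a transverse Jacobi equation $\frac{\nabla^B}{dt}\frac{\nabla^B}{dt}J+R^T(J,\dot\gamma)\dot\gamma=0$ whose initial data are governed by the transverse shape operator of $L$. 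Because $\sec_\f\le 0$, the associated index form is positive on nontrivial fields, so $t\mapsto\|J(t)\|^2$ is convex and stays positive for $t>0$; hence $\gamma$ meets no focal point of $L$.

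Next I would turn this into a metric splitting. Fixing a leaf $\widetilde L=L_{x_0}$, absence of focal points makes the normal exponential map $E_{\widetilde L}\colon\nu\widetilde L\to M$ an immersion, completeness makes it defined on all of $\nu\widetilde L$ and a local isometry for the pulled-back metric, and a Cartan--Hadamard-type argument (no radial conjugate points, $M$ complete, $\pi_1(M)=1$) then upgrades $E_{\widetilde L}$ to a diffeomorphism; in particular $\widetilde L$ is a closed, embedded, simply connected submanifold. Transporting the geometry through $E_{\widetilde L}$ one reads off that the orthogonal distribution $\mathcal H=\nu\f=T\f^\perp$ is integrable with totally geodesic leaves --- equivalently, that the O'Neill integrability and second-fundamental-form tensors of $\f$ vanish and the equidistant tubes around $\widetilde L$ are saturated. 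Then $\f$ and $\mathcal H$ are complementary, orthogonal, parallel foliations of the complete simply connected $M$, so de~Rham's decomposition theorem yields a Riemannian splitting $M\cong\widetilde L\times N$ carrying $\f$ to $\{\widetilde L\times\{n\}\}_n$ and $\mathcal H$ to $\{\{x\}\times N\}_x$. Since the transverse metric of $\f$ restricts on the slices $\{x\}\times N$ to the metric of $N$ we get $\sec_N=\sec_\f\le 0$, and $N$, being a factor of the complete simply connected $M$, is complete and simply connected; thus $N$ is a Hadamard manifold and $\widetilde\f=\f$ is the foliation by the fibres of $M\to N$, as claimed.

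I expect the main obstacle to be this third step: upgrading the pointwise ``no focal points'' property to the \emph{global} product structure, i.e. showing that $\mathcal H$ is genuinely integrable and totally geodesic --- so that $M$ is locally, and then by de~Rham globally, a metric product --- rather than merely flat along $\widetilde L$ to first order. That is where the curvature hypothesis must be used in full, through a careful analysis of the normal exponential map of $\widetilde L$ and of how the nearby leaves are arranged around it; once the local product picture is in place, the global conclusion follows formally from completeness, simple connectivity, and de~Rham's theorem.
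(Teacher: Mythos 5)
Your first two steps match what the paper indicates is the intended route (the survey does not reprove Hebda's theorem; it only records, just before the statement, that the key input is that leaves have no focal points): pass to the universal cover, and exclude focal points along horizontal geodesics by projecting $L$-Jacobi fields to Jacobi fields of the nonpositively curved local quotients via Proposition \ref{prop: Reinhart characterization}. (One small caveat there: a nontrivial $L$-Jacobi field may project to zero; such purely vertical, ``holonomy'' Jacobi fields never vanish for a Riemannian foliation, and this case has to be disposed of separately.)

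Your third step, however, is not a difficulty to be overcome but an assertion that is false. The horizontal distribution $T\f^\perp$ of a complete Riemannian foliation with $\sec_\f\le 0$ need not be integrable, and $\widetilde{M}$ need not split as a \emph{Riemannian} product. Take $M$ to be the Heisenberg group with a left-invariant metric and $\f$ the foliation by cosets of its center: this is a complete homogeneous Riemannian foliation whose local quotients are flat, so $\sec_\f=0$; $M$ is simply connected and diffeomorphic to $\mathbb{R}\times\mathbb{R}^2$ with $\f$ the fibers of the second projection, consistent with Theorem \ref{Theorem: Hebda negative sectional} --- yet $T\f^\perp$ is a contact distribution and $M$ is not flat, so the O'Neill tensor does not vanish and no de Rham splitting exists. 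The conclusion of Hebda's theorem is only a smooth product compatible with the foliation, with $N$ a Hadamard manifold serving as the transverse model, not an isometric splitting. The correct continuation after the focal-point step is therefore different: absence of focal points makes $\exp^\perp:\nu\widetilde{L}\to\widetilde{M}$ a local diffeomorphism; completeness of the radial horizontal geodesics upgrades it to a covering map for the pulled-back metric, hence to a diffeomorphism since $\widetilde{M}$ is simply connected; and the product structure $\widetilde{L}\times N$, with the leaves of $\widetilde{\f}$ as the fibers over $N$, is then assembled using Reinhart's property that horizontal geodesics stay horizontal (a Cartan--Hadamard-for-submanifolds and Ehresmann-connection argument), not de Rham's decomposition theorem.
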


\subsection{Transverse geometry via deformations}

An inherent difficulty often encountered in these aforementioned transverse generalizations of classical theorems from Riemannian geometry is that the leaf space of a Riemannian foliation has, in general, an ill-behaved topology which in many cases renders direct generalizations of ``local-to-global'' theorems impossible. For Killing foliations this difficulty can in some cases be circumvented by the deformation technique we presented in Section \ref{section: deformations}, since some aspects of transverse geometry are preserved by regular deformations. In this section we will see several applications of this approach, that appeared in \cite{caramello} and \cite{caramello2}. For instance,  by combining the deformation method with the Synge--Weinstein theorem for orbifolds \cite[Theorem 2.3.5]{yeroshkin} one can relax the hypothesis on Theorem \ref{oshikiri closed leaf}:

\begin{theorem}[{\cite[Theorem C]{caramello}}]\label{theorem: Berger for foliations}
Let $(\f,\mathrm{g}^T)$ be an even-codimensional complete Riemannian foliation of a manifold $M$ satisfying $|\pi_1(M)|<\infty$. If $\sec_{\f}\geq c>0$, then $\f$ possesses a closed leaf.
\end{theorem}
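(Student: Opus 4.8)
The plan is to reduce the theorem, via a finite covering and the deformation technique of Section~\ref{section: deformations}, to the orbifold Synge--Weinstein theorem \cite[Theorem 2.3.5]{yeroshkin}. First I would pass to the universal covering $\rho\colon\widetilde M\to M$, which is a \emph{finite} covering since $|\pi_1(M)|<\infty$. Then $\widetilde\f:=\rho^{*}\f$ is a complete Riemannian foliation of $\widetilde M$ (with the pulled-back bundle-like metric) whose transverse metric is the pullback of $\mathrm{g}^T$, so $\sec_{\widetilde\f}\geq c>0$ and $\codim\widetilde\f=\codim\f$; since $\widetilde M$ is simply connected, $\widetilde\f$ is a Killing foliation, with Abelian structural algebra $\mathfrak a$, and it is transversely orientable because $w_1(\nu\widetilde\f)\in H^1(\widetilde M;\mathbb Z/2)=0$. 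As a finite covering, $\rho$ is a closed map and a local diffeomorphism, so the $\rho$-image of a closed leaf of $\widetilde\f$ is an open and closed subset of a leaf of $\f$, hence a closed leaf of $\f$. It therefore suffices to treat the case in which $M$ is simply connected and $\f$ is a transversely orientable Killing foliation; write $d=\dim\mathfrak a$.

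By Hebda's transverse Bonnet--Myers theorem~\ref{theorem: hebda}, the hypothesis $\sec_\f\geq c>0$ forces $\f$ to be transversely compact, so it falls within the scope of Theorems~\ref{theorem: Haefliger-Salem 3.4} and~\ref{theorem: deformation}. I would then fix a regular deformation $\f_t$ of $\f$, with $\g:=\f_1$ \emph{closed} and chosen sufficiently close to $\f$, and pass to the quotient orbifold $M/\!/\g$. Since the deformation respects $\overline{\f}$, each leaf closure of $\f$ is a union of $\g$-leaves, and by item~\eqref{toric action on the quotient} of Theorem~\ref{theorem: deformation} there is an effective isometric action of $\mathbb T^d$ on $M/\!/\g$ (with respect to the metric induced by the deformed transverse metric $\iota\mathrm{g}^T$) with $(M/\g)/\mathbb T^d\cong M/\overline{\f}$. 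The orbifold $M/\!/\g$ is compact, because $M/\overline{\f}$ is (transverse compactness) and the $\mathbb T^d$-action is proper; it is connected; it has dimension $\codim\g=\codim\f$, which is even by hypothesis; it is orientable, because $\g$ is transversely orientable ($\nu\g\cong\nu\f$); and, $\g$ being close to $\f$, the metric on $M/\!/\g$ has sectional curvature $\geq c'>0$ for some $c'>0$, by the last assertion of Theorem~\ref{theorem: deformation}. The $\mathbb T^d$-action is moreover orientation preserving, since $\mathbb T^d$ is connected (cf.\ also Proposition~\ref{prop: induced transverse a action on G}).

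Now let $g\in\mathbb T^d$ be a topological generator. By \cite[Theorem 2.3.5]{yeroshkin}, the orientation-preserving isometry $g$ of the compact, connected, even-dimensional, orientable, positively curved orbifold $M/\!/\g$ has a fixed point $x_0$; since the isotropy group $(\mathbb T^d)_{x_0}$ is closed and contains $g$, it contains $\overline{\langle g\rangle}=\mathbb T^d$, so $x_0$ is fixed by the whole torus, i.e.\ $(M/\!/\g)^{\mathbb T^d}\neq\emptyset$. The point $x_0$ is a leaf $L'\in\g$ fixed by $\mathbb T^d$; as $(M/\g)/\mathbb T^d\cong M/\overline{\f}$, the $\mathbb T^d$-orbit of $L'$ in $M/\g$ consists of all $\g$-leaves contained in the leaf closure $\overline L\in\overline{\f}$ containing $L'$, so this orbit being a point means $L'$ is the only such $\g$-leaf; together with the previous paragraph this forces $\overline L=L'$, whence $\dim\overline L=\dim\g=\dim\f$. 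Finally, a leaf closure of $\f$ of dimension $\dim\f$ is a single, closed, leaf of $\f$: by Molino's structural theorem~\ref{Theo: molino structural} it is an embedded connected submanifold of dimension $\dim\f$ containing a dense leaf of the same dimension, which must then fill it up. This $\overline L$ is the desired closed leaf.

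I do not expect a single deep step, but the most delicate point is the reduction carried out in the second and third paragraphs: one must check carefully that $\sec_\f\geq c>0$ genuinely places $\f$ within the reach of Theorem~\ref{theorem: deformation} (through transverse compactness, and with compact leaf closures so that $M/\!/\g$ is a genuine orbifold), and that orientability and the parity of the codimension are preserved through the passage to the universal covering and through the deformation, so that \cite[Theorem 2.3.5]{yeroshkin} really applies to $M/\!/\g$ with its torus action. Everything else is an assembly of results already recorded above, the geometric heart being the cited orbifold Synge--Weinstein theorem.
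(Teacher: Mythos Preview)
Your proposal is correct and follows essentially the same route as the paper, which simply records that the result is obtained ``by combining the deformation method with the Synge--Weinstein theorem for orbifolds \cite[Theorem 2.3.5]{yeroshkin}.'' Your write-up supplies exactly this argument in detail: lift to the simply connected cover to obtain a Killing foliation, deform to a closed foliation $\g$ via Theorem~\ref{theorem: deformation}, and apply the orbifold Synge--Weinstein theorem to a topological generator of the $\mathbb{T}^d$-action on $M/\!/\g$ to locate a torus-fixed point, which corresponds to a closed leaf of $\f$.
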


There is also an application involving Bochner's theorem on Killing vector fields in the context of negative Ricci curvature \cite[Theorem 36]{petersen}. This result adapts directly to orbifolds \cite[Theorem 2.5]{caramello2} and, via deformations, implies the nonexistence of transverse Killing fields for a Ricci negatively curved Killing foliation, which is therefore closed (cf. Theorem \ref{Theorem: Hebda negative sectional}).

\begin{theorem}[{\cite[Theorem F]{caramello2}}]\label{theorem: bochner intro}
Let $(M,\f)$ be a complete Riemannian foliation with transverse Ricci curvature satisfying $\ric_\f\leq c <0$. If either
\begin{enumerate}[(i)]
\item $\f$ is a Killing foliation and $M$ is compact, or
\item $\f$ is transversely compact and $|\pi_1(M)|<\infty$,
\end{enumerate}
then $\f$ is closed.
\end{theorem}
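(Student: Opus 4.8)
The plan is to reduce both cases to the orbifold version of Bochner's theorem in negative Ricci curvature (\cite[Theorem 2.5]{caramello2}): a compact Riemannian orbifold with $\ric\leq c<0$ has finite isometry group, and in particular admits no non-trivial action by isometries of a connected Lie group. Observe that $\f$ is closed if and only if its structural algebra $\mathfrak{a}$ is trivial, since the orbits of the Molino sheaf $\mathscr{C}_\f$ (whose stalks are isomorphic to $\mathfrak{a}$) are exactly the leaf closures; so it suffices to prove $\dim\mathfrak{a}=0$.

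For case (i), where $\f$ is Killing and $M$ is compact, I would apply Theorem \ref{theorem: deformation} to produce a regular deformation of $\f$, arbitrarily $C^\infty$-close to $\f$, ending at a closed foliation $\g$, together with the injection $\iota\colon\mathcal{T}(\f)\to\mathcal{T}(\g)$ deforming $\mathrm{g}^T$ into a transverse metric for $\g$, and an effective isometric action of the torus $\mathbb{T}^d$, $d=\dim\mathfrak{a}$, on the quotient orbifold $M/\!/\g$ --- which is compact, since $\g$ is a closed Riemannian foliation of the compact manifold $M$ (cf.\ Proposition \ref{proposition: quotients of closed foliations are orbifolds}). The key point is that if $\g$ is taken close enough to $\f$, the negative upper bound on transverse Ricci curvature is preserved, so the pushforward of $\iota\mathrm{g}^T$ is a Riemannian orbifold metric on $M/\!/\g$ with $\ric\leq c'<0$ for some $c'<0$. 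Bochner's theorem then forces the $\mathbb{T}^d$-action to be trivial, and by effectiveness $d=0$; hence $\f$ is closed.

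For case (ii), where $\f$ is transversely compact and $|\pi_1(M)|<\infty$, I would first pass to the universal covering $p\colon\widehat{M}\to M$, which is finite; then $\widehat{M}$ is complete for the pulled-back bundle-like metric, so $\widehat{\f}:=p^*\f$ is a complete Riemannian foliation (Example \ref{exe: pullback foliation}) with the same transverse metric, whence $\ric_{\widehat{\f}}\leq c<0$, and since $\widehat{M}$ is simply connected $\widehat{\f}$ is Killing. Moreover $\widehat{\f}$ is transversely compact, because the deck group acts on $\widehat{M}/\overline{\widehat{\f}}$ with quotient the compact space $M/\overline{\f}$; and since $p$ is proper, $\f$ is closed as soon as $\widehat{\f}$ is. Thus it suffices to treat a transversely compact, complete, Killing foliation $\widehat{\f}$ on a simply connected manifold. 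If $\dim\mathfrak{a}\neq 0$, take a nonzero transverse Killing field $\overline{X}\in\mathscr{C}_{\widehat{\f}}(\widehat{M})\subset\mathfrak{iso}(\widehat{\f})$, identify it with a basic $1$-form via the transverse metric, and integrate the transverse Bochner--Weitzenböck identity over a total transversal (equivalently, over $\widehat{M}/\overline{\widehat{\f}}$, which is compact): since $\ric_{\widehat{\f}}\leq c<0$ this gives $0\geq-\int\ric_{\widehat{\f}}(\overline{X},\overline{X})>0$, a contradiction, so $\mathfrak{a}=0$ and $\widehat{\f}$ is closed. Alternatively one can try to run the deformation argument of case (i) here, using the corresponding extension of Theorems \ref{theorem: Haefliger-Salem 3.4} and \ref{theorem: deformation}.

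The hard part will be the transversely compact case: one must either extend the deformation machinery of Section \ref{section: deformations}, stated for compact ambient manifolds, to transversely compact complete Killing foliations (whose leaf closures need not be compact), or make the integrated transverse Bochner argument rigorous. The latter requires controlling the mean-curvature correction to the basic Laplacian so that basic functions integrate against an honest transverse volume; this is where $|\pi_1(M)|<\infty$ is genuinely used, as it permits a further harmless passage to a transversely orientable (and conveniently taut) finite cover, after which the twisted basic Bochner formula applies as in the manifold case. In case (i), by contrast, the only delicate point is checking that the $C^\infty$-closeness in Theorem \ref{theorem: deformation} really transfers the negative upper Ricci bound to the pushed-down orbifold metric on $M/\!/\g$, which follows from continuity of curvature in the metric.
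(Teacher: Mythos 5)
Your case (i) is correct and is exactly the paper's route: deform $\f$ to a nearby closed foliation $\g$ so that the negative upper bound on $\ric$ survives on the compact quotient orbifold $M/\!/\g$ (this preservation is part of the statement of Theorem \ref{theorem: deformation}), then invoke the orbifold Bochner theorem to rule out the effective isometric $\mathbb{T}^d$-action, forcing $d=\dim\mathfrak{a}=0$ and hence $\overline{\f}=\f$. The reduction of case (ii) to a transversely compact Killing foliation on the finite universal cover is also sound.

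The genuine gap is that case (ii) is never finished. After lifting, $\widehat{M}$ need not be compact, so Theorem \ref{theorem: deformation} does not apply as stated (and the Haefliger--Salem correspondence behind it, as formulated in Theorem \ref{theorem: Haefliger-Salem 3.4}, asks for compact leaf closures, which transverse compactness does not supply); you flag this but do not resolve it. Your fallback, the integrated transverse Bochner identity, rests on the claim that $|\pi_1(M)|<\infty$ permits passage to a ``conveniently taut'' finite cover. That is not available: a double cover can arrange transverse orientability, but tautness is a transverse property (it depends only on the holonomy pseudogroup, as noted after Rummler's criterion) and cannot be created by passing to covers, and negative transverse Ricci curvature gives no tautness --- it is the \emph{positive} case where tautness comes for free via Theorem \ref{theorem: hebda}. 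Without tautness the mean-curvature term obstructs $\int\Delta_B f=0$ and the integration collapses. The repair is to run Bochner pointwise instead of integrating: $|\overline{X}|^2_{\mathrm{g}^T}$ is a basic function, hence descends to the compact space $\widehat{M}/\overline{\widehat{\f}}$ and attains its maximum at some $x$; projecting $\overline{X}$ to a Killing field $X_S$ on a local quotient $S$ through $x$, the point $\bar{x}$ is a maximum of $|X_S|^2$ on $S$, and the local Bochner formula gives
$$0\geq \tfrac{1}{2}\Delta_S|X_S|^2(\bar{x})=|\nabla X_S|^2(\bar{x})-\ric_S(X_S,X_S)(\bar{x})\geq -c\,|X_S(\bar{x})|^2,$$
forcing $\overline{X}(x)=0$ and hence $\overline{X}\equiv 0$. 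This uses neither integration, nor orientability, nor tautness, and closes case (ii); as written, your proposal does not.
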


For the next result, recall the notion of $\pi_1(\f)$ from Section \ref{section: holonomy}. Recall also that the growth function $\#$ of a finitely generated group $\Gamma=\langle g_1,\dots,g_k \rangle$ is the function that associates to $j\in\mathbb{N}$ the number of distinct elements in $\Gamma$ which can be written as words with at most $j$ letters in the alphabet $\{g_1,\dots,g_k,g_1^{-1},\dots,g_k^{-1}\}$. Then $\Gamma$ is said to have \textit{exponential growth} if $\#(j)\geq \alpha^j$ for some $\alpha>1$ (this property is independent of the set of generators \cite[Lemma 1]{milnor}). Milnor's theorem establishes that the fundamental group of a negatively curved compact manifold has exponential growth \cite[Theorem 2]{milnor}. Milnor's proof of this result adapts to orbifolds \cite[Theorem 2.6]{caramello2}, and then the deformation method can be used to show the following.

\begin{theorem}[{\cite[Theorem G]{caramello3}}]\label{theo: Milnor trasnverso intro}
Let $\f$ be a Killing foliation on a compact manifold $M$ such that $\sec_\f<0$. Then $\f$ is closed and $\pi_1(\f)$ grows exponentially. In particular, $\pi_1(M)$ grows exponentially.
\end{theorem}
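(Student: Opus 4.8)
The plan is to combine Theorem~\ref{theorem: bochner intro} (to conclude that $\f$ is closed) with Proposition~\ref{proposition: quotients of closed foliations are orbifolds} (to realize $M/\f$ as a compact, negatively curved orbifold) and then appeal to the orbifold version of Milnor's theorem.

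\emph{Step 1: $\f$ is closed.} Since $M$ is compact and $q:=\codim\f\ge2$, the transverse sectional curvature $\sec_\f$ is a continuous function on the compact Grassmann bundle $\mathrm{Gr}_2(\nu\f)\to M$; being everywhere negative, it satisfies $\sec_\f\le-\delta$ for some $\delta>0$, so $\ric_\f\le-(q-1)\delta=:c<0$. As every bundle-like metric on the compact $M$ is complete, $\f$ is complete, and Theorem~\ref{theorem: bochner intro}(i) applies, giving that $\f$ is closed; this is the first assertion.

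\emph{Step 2: the leaf space.} Now $\f$ is a closed Riemannian foliation of the compact $M$, so its leaves are compact with finite holonomy (otherwise the closure $\overline{\mathscr{H}_\f}$ would strictly enlarge some leaf, contradicting closedness; cf.\ Proposition~\ref{prop: closure of pseudogroup}). By Proposition~\ref{proposition: quotients of closed foliations are orbifolds}, $M/\f$ then carries its canonical structure $M/\!/\f$ of a compact $q$-dimensional orbifold and $\pi_1(\f)\cong\pi_1^{\mathrm{orb}}(M/\!/\f)$. Since $\mathrm{g}^T$ descends to the Riemannian metric of $M/\!/\f$ and the local quotients computing $\sec_\f$ are precisely the Riemannian charts of this orbifold, the sectional curvature of $M/\!/\f$ equals $\sec_\f<0$; thus $M/\!/\f$ is a compact orbifold with negative curvature.

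\emph{Step 3: conclusion.} By the orbifold analogue of Milnor's theorem \cite[Theorem~2.6]{caramello2}, the orbifold universal cover of $M/\!/\f$ is a Hadamard manifold $N$ on which $\pi_1^{\mathrm{orb}}(M/\!/\f)$ acts isometrically, properly discontinuously and cocompactly; a Bishop--Günther volume comparison against a model space of curvature $\le c<0$ then forces $\pi_1^{\mathrm{orb}}(M/\!/\f)$, hence $\pi_1(\f)$, to have exponential growth. Finally, there is a surjective homomorphism $\pi_1(M)\to\pi_1(\mathscr{H}_\f)=\pi_1(\f)$ (Section~\ref{section: holonomy}); since the growth function of a group dominates that of any of its quotients (the image of a word of length $\le j$ is again a word of length $\le j$), $\pi_1(M)$ grows exponentially as well.

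\emph{Main obstacle.} The delicate part is Step~3: adapting Milnor's volume argument to $M/\!/\f$ requires knowing that a compact orbifold with $\sec<0$ is developable (Cartan--Hadamard for orbifolds) and running the volume estimate for a group that acts cocompactly but with finite isotropy rather than freely, which is handled by tiling $N$ with a Dirichlet fundamental domain. The other point needing care is verifying that negative transverse curvature transfers to negative orbifold curvature through the locally defining Riemannian submersions; the remaining steps are bookkeeping with results already available in the text.
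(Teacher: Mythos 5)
Your proof is correct and follows essentially the same route the paper indicates: negative transverse curvature forces closedness via the Bochner-type Theorem~\ref{theorem: bochner intro} (itself a product of the deformation method), after which $M/\!/\f$ is a compact negatively curved orbifold to which the orbifold Milnor theorem applies, and the surjection $\pi_1(M)\to\pi_1(\f)$ finishes the argument. The only (shared, implicit) caveat is that the statement is meaningful only for $\codim\f\geq 2$, which you correctly flag when passing from $\sec_\f\leq-\delta$ to $\ric_\f\leq-(q-1)\delta$.
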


One should compare Theorem \ref{theo: Milnor trasnverso intro} with \cite[Theorem 3]{hebda}, which implies that a compact manifold whose fundamental group is nilpotent does not admit a Riemannian foliation with $\sec_\f<0$, recalling Gromov's theorem that states that a finitely generated group has polynomial growth if and only if it has a nilpotent subgroup with finite index \cite[Main Theorem]{gromov3}. Another result by Gromov establishes an upper bound for the sum of Betti numbers of negatively curved manifolds, in terms of their dimension and volume \cite[p. 12]{gromov3}. An analogous bound holds for orbifolds, as shown by I.~Samet in \cite[Theorem 1.1]{samet}. Combining it with with the fact that a negatively curved Killing foliation is closed, by Theorem \ref{theo: Milnor trasnverso intro}, we get:

\begin{corollary}
There exists a constant $C=C(q)$ such that, for any Killing foliation $\f$ on a compact manifold $M$ with $\sec_\f<0$, say $-k^2\leq \sec_\f <0$, one has
$$\sum_{i=1}^q b_i(\f)\leq Ck^q\vol(M/\!/\f).$$
\end{corollary}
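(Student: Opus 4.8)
The plan is to push the statement down to the quotient orbifold and then quote Samet's orbifold Betti number bound; the only real work is checking that this reduction is legitimate, which hinges on the foliation being closed.

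First I would observe that $\f$ is automatically \emph{closed}: by Theorem \ref{theo: Milnor trasnverso intro}, any Killing foliation on a compact manifold with $\sec_\f<0$ has all of its leaves closed. Consequently every leaf of $\f$ is compact, and since $\f$ is a Riemannian foliation its leaves have finite holonomy, so Proposition \ref{proposition: quotients of closed foliations are orbifolds} applies: the leaf space $M/\!/\f$ is a closed $q$-dimensional orbifold (it is compact because $M$ is). By Proposition \ref{prop: basic cohomology of closed foliations} the projection $\pi\colon M\to M/\!/\f$ induces an isomorphism $H(\f)\cong H_{\mathrm{dR}}(M/\!/\f)$, hence $b_i(\f)=b_i(M/\!/\f)$ for each $i$ and in particular $\sum_{i=1}^{q}b_i(\f)\leq\sum_{i=0}^{q}b_i(M/\!/\f)$.

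Next I would transfer the curvature hypothesis to the orbifold. The transverse metric $\mathrm{g}^T$ projects, on each simple open set, to a Riemannian metric $\rho_*\mathrm{g}^T$ on the local quotient $S$, and these assemble into a Riemannian orbifold metric on $M/\!/\f$ whose sectional curvature is, by definition, the transverse sectional curvature $\sec_\f$ (equivalently one reads this off O'Neill's formula for a bundle-like metric). Therefore $M/\!/\f$ is a closed Riemannian $q$-orbifold with pinched negative curvature $-k^2\leq\sec_{M/\!/\f}<0$, and $\vol(M/\!/\f)$ is its orbifold volume.

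Finally I would apply Samet's theorem \cite[Theorem 1.1]{samet}: there is a constant $C=C(q)$, depending only on the dimension, such that any closed $q$-orbifold $\mathcal{O}$ with $-k^2\leq\sec_{\mathcal{O}}<0$ satisfies $\sum_i b_i(\mathcal{O})\leq Ck^q\vol(\mathcal{O})$. Taking $\mathcal{O}=M/\!/\f$ and combining with $b_i(\f)=b_i(M/\!/\f)$ yields $\sum_{i=1}^{q}b_i(\f)\leq Ck^q\vol(M/\!/\f)$. The main point requiring care — rather than a genuine obstacle — is the verification in the first two paragraphs that $M/\!/\f$ is an honest closed Riemannian orbifold carrying exactly the curvature bound $\sec_\f$; once that is in place, the corollary is an immediate transcription of Samet's bound.
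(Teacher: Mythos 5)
Your argument is correct and is essentially the paper's own proof: the paper likewise combines Theorem \ref{theo: Milnor trasnverso intro} (a negatively curved Killing foliation on a compact manifold is closed) with Samet's orbifold Betti number bound applied to $M/\!/\f$. You merely make explicit the intermediate identifications ($M/\!/\f$ a closed Riemannian $q$-orbifold, $b_i(\f)=b_i(M/\!/\f)$ via Proposition \ref{prop: basic cohomology of closed foliations}) that the paper leaves implicit.
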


The classical Singe's theorem also has an orbifold version, proved by D.~Yeroshkin in \cite[Corollary 2.3.6]{yeroshkin}. By the deformation technique, it yields the following transverse generalization:

\begin{theorem}[{\cite[Theorem H]{caramello2}}]\label{theorem: synge intro}
Let $\f$ be a Killing foliation of a compact manifold $M$, with $\sec_\f >0$. Then
\begin{enumerate}[(i)]
\item if $\codim\f$ is even and $\f$ is transversely orientable, then $M/\overline{\f}$ is simply connected, and
\item if $\codim\f$ is odd and, for each $L\in \f$, the germinal holonomy of $L$ preserves transverse orientation, then $\f$ is transversely orientable.
\end{enumerate}
\end{theorem}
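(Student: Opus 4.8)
The plan is to reduce the statement to the orbifold version of Synge's theorem via the deformation technique of Theorem \ref{theorem: deformation}. Let $(\f,\metric^T)$ be a Killing foliation of the compact manifold $M$ with $\sec_\f>0$. First I would apply Theorem \ref{theorem: deformation} to obtain a regular deformation $\f_t$ of $\f=\f_0$ into a closed foliation $\g=\f_1$, chosen sufficiently close to $\f$ that the transverse sectional curvature bound is maintained: the injected transverse metric $\iota\metric^T$ on $\g$ still satisfies $\sec_\g>0$, and the deformation respects $\overline{\f}$, so $M/\overline{\f}\cong M/\overline{\g}=M/\g$. Since $\g$ is closed (and, in the case $\codim\f$ odd, one first needs to know $\g$ is transversely orientable — see below), Proposition \ref{proposition: quotients of closed foliations are orbifolds} applies and $M/\!/\g$ is a $q$-dimensional Riemannian orbifold with $\sec_{M/\!/\g}>0$, where the metric is the one induced from $\iota\metric^T$ by $\pi_*$ as in Theorem \ref{theorem: deformation}(iii). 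Here I use that $\codim\g=\codim\f=q$ and that transverse sectional curvature of $\g$ equals the sectional curvature of $M/\!/\g$ under $\pi_*$.

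For item (i), suppose $q=\codim\f$ is even and $\f$ is transversely orientable. Then $\g$ is transversely orientable as well, because transverse orientability is detected by the holonomy pseudogroup and $\mathscr{H}_{\f_t}\cong\mathscr{H}_{\f_{H(t)}}$ along the deformation (equivalently, one can transport a positively-oriented bundle-like structure along the compact family). Hence $M/\!/\g$ is an orientable, even-dimensional Riemannian orbifold of positive sectional curvature, so by the orbifold Synge theorem \cite[Corollary 2.3.6]{yeroshkin} it is simply connected as an orbifold, i.e.\ $\pi_1^{\mathrm{orb}}(M/\!/\g)=0$. By Proposition \ref{proposition: quotients of closed foliations are orbifolds} (and the identification $\pi_1(\g)\cong\pi_1^{\mathrm{orb}}(M/\!/\g)$ recorded after it), the topological leaf space $M/\g=|M/\!/\g|$ is then simply connected; since $M/\overline{\f}\cong M/\g$, we conclude $M/\overline{\f}$ is simply connected.

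For item (ii), suppose $q$ is odd and the germinal holonomy of each leaf $L\in\f$ preserves transverse orientation. I would first show $\g$ inherits this property: the deformation respects $\overline{\f}$ and preserves the holonomy pseudogroup up to equivalence, and since the closed foliation $\g$ has compact leaves with finite holonomy (Theorem \ref{teorem: reeb}), this orientation-preserving condition on germinal holonomy translates into saying that the local groups of the orbifold $M/\!/\g$ act orientation-preservingly, so that $M/\!/\g$ is an orientable Riemannian orbifold of odd dimension with $\sec>0$. The orbifold Synge theorem in odd dimensions then forces $M/\!/\g$ to be such that... more precisely, applying \cite[Corollary 2.3.6]{yeroshkin} in the odd-dimensional orientable case yields a conclusion that, pulled back through $\pi^*:\Omega(M/\!/\g)\to\Omega(\g)$ and the correspondence between the normal bundle $\nu\g$ and the tangent data of $M/\!/\g$, says exactly that $\nu\f\cong\nu\g$ is orientable, i.e.\ $\f$ is transversely orientable. (The point is that transverse orientability of $\f$, being a property of $\mathscr{H}_\f\cong\mathscr{H}_\g$, is equivalent to transverse orientability of $\g$, hence to orientability of the orbifold tangent bundle of $M/\!/\g$.)

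The main obstacle I anticipate is the bookkeeping in step three: one must carefully track how the hypothesis ``germinal holonomy preserves transverse orientation'' behaves under the regular deformation and then match it precisely with the hypotheses of the orbifold Synge theorem for $M/\!/\g$, making sure the conclusion one extracts (an orientability statement for the orbifold, rather than a $\pi_1$ statement) transports back correctly to a transverse orientability statement for $\f$ via the pseudogroup equivalence $\mathscr{H}_\f\cong\mathscr{H}_\g$. In the even-codimensional case (i), the analogous but easier point is simply that transverse orientability is preserved along the deformation; the rest is a direct citation of \cite[Corollary 2.3.6]{yeroshkin} together with $\pi_1(\g)\cong\pi_1^{\mathrm{orb}}(M/\!/\g)$ and $M/\overline{\f}\cong M/\g$.
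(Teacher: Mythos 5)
Your overall strategy --- deform $\f$ into a nearby closed foliation $\g$ with $\sec_\g>0$ via Theorem \ref{theorem: deformation} and apply Yeroshkin's orbifold Synge theorem to $M/\!/\g$ --- is exactly the route the paper indicates (the survey records no more detail than that), but two of your steps are genuinely problematic. The identification $M/\overline{\f}\cong M/\overline{\g}=M/\g$ is false whenever $\f$ is not already closed: ``respecting $\overline{\f}$'' means each leaf of $\f_t$ stays \emph{inside} a leaf closure of $\f$, and the (closed) leaves of $\g$ are strictly smaller than those closures when $d=\dim\mathfrak{a}>0$. The correct relation is item (ii) of Theorem \ref{theorem: deformation}: $M/\overline{\f}\cong(M/\g)/\mathbb{T}^d$. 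Your conclusion in (i) survives, but only after the extra observation that the orbit space of a connected compact group acting on a simply connected space is again simply connected, so that $\pi_1^{\mathrm{orb}}(M/\!/\g)=0$ still forces $\pi_1\bigl((M/\g)/\mathbb{T}^d\bigr)=0$; as written, (i) rests on a wrong identification.

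In (ii), the step you defer as ``bookkeeping'' is the actual mathematical content. The local group of $M/\!/\g$ at a leaf $L'\in\g$ is $\mathrm{Hol}_{\g}(L')$, which is in general much larger than the germinal holonomy of the $\f$-leaves contained in $\overline{L'}$ --- the proof outline of Theorem \ref{theorem: closed leaf + transverse symmetry implies charM vanishes} exploits precisely the fact that $|\mathrm{Hol}_{\g_i}(L)|\to\infty$ as $\g_i\to\f$ when $\f$ is non-closed. Hence the hypothesis that the germinal holonomy of every $L\in\f$ preserves transverse orientation does not pass to the local groups of $M/\!/\g$ by inspection, and you give no argument for this transfer. (The usable bridge is that transverse orientability is constant in $t$, since the subbundles $T\f_t\subset TM$ are homotopic and so the quotients $\nu\f_t$ are isomorphic; one must still show that the hypothesis on $\f$ forces the local groups of $M/\!/\g$ to act orientation-preservingly, e.g.\ via the transverse orientation double cover, before the odd-dimensional orbifold Synge theorem can be invoked.) As it stands, part (ii) is a statement of strategy rather than a proof.
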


Recall that the symmetry rank $\symrank(M)$ of a Riemannian manifold $M$ is the rank of its isometry group, that is, the dimension of a maximal torus in $\mathrm{Iso}(M)$. It was proven by K.~Grove and C.~Searle in \cite{grove} that, for a positively curved compact Riemannian manifold $M$, one has
$$\symrank(M)\leq\left\lfloor\frac{\dim(M)+1}{2}\right\rfloor,$$
with equality holding if and only if $M$ is diffeomorphic to either a sphere, a real or complex projective space or a lens space. A generalization of this result for orbifolds was obtained recently in \cite[Corollary E]{harvey}. Now consider a Killing foliation $\f$ with structural algebra $\mathfrak{a}$. By what we saw in Section \ref{section: killing foliations}, we have
$$\dim(\overline{\f})-\dim(\f)=\dim(\mathfrak{a})\leq\symrank(\f):=\max_{\mathfrak{h}}\Big\{\dim(\mathfrak{h})\Big\},$$
where $\mathfrak{h}$ runs over all the Abelian subalgebras of $\mathfrak{iso}(\f)$. Combining  the deformation technique with \cite[Corollary E]{harvey} one then obtains the following.

\begin{theorem}[{\cite[Theorem A]{caramello}}]\label{theorem: harvey-searle para folheações}
Let $\f$ be a $q$-codimensional, transversely orientable Killing foliation of a compact manifold $M$. If $\sec_{\f}>0$, then
$$\dim(\overline{\f})-\dim(\f)\leq\left\lfloor\frac{\codim(\f)+1}{2}\right\rfloor$$
and if equality holds, there is a closed Riemannian foliation $\g$ of $M$ arbitrarily close to $\f$ with $M/\g$ homeomorphic to either
\begin{enumerate}[(i)]
\item $\mathbb{S}^q/\Lambda$, where $\Lambda$ is a finite subgroup of the centralizer of the maximal torus in $\mathrm{O}(q+1)$, or
\item $|\mathbb{CP}^{q/2}[\lambda]|/\Lambda$, where $\Lambda$ is a finite subgroup of the torus acting linearly on $\mathbb{CP}^{q/2}[\lambda]$.
\end{enumerate}
\end{theorem}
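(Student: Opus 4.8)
The plan is to push the whole question onto a genuine positively curved Riemannian orbifold, using the regular deformation of Theorem~\ref{theorem: deformation}, and then apply the orbifold version of the Grove--Searle maximal symmetry rank theorem, \cite[Corollary E]{harvey}.

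First I would invoke Theorem~\ref{theorem: deformation} for $(\f,\mathrm{g}^T)$: it furnishes a closed Riemannian foliation $\g$ of $M$, which can be taken arbitrarily close to $\f$, together with the injection $\iota\colon\mathcal{T}(\f)\to\mathcal{T}(\g)$. Since $\g$ is a closed Riemannian foliation of a compact manifold, its leaves are compact with finite holonomy, so $M/\!/\g$ is a compact $q$-dimensional Riemannian orbifold by Proposition~\ref{proposition: quotients of closed foliations are orbifolds}, with metric the pushforward by $\pi\colon M\to M/\!/\g$ of $\iota\mathrm{g}^T$; here $q=\codim(\f)=\codim(\g)$. Because $\sec_{\f}\geq c>0$ and transverse curvature bounds are preserved by the deformation once $\g$ is chosen sufficiently close to $\f$, the resulting orbifold metric satisfies $\sec>0$. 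Thus $M/\!/\g$ is a compact, positively curved Riemannian orbifold of dimension $q$.

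Next I would exploit item~\eqref{toric action on the quotient} of Theorem~\ref{theorem: deformation}: $M/\!/\g$ carries an effective isometric $\mathbb{T}^d$-action, where $d=\dim\mathfrak{a}=\dim(\overline{\f})-\dim(\f)$ by the discussion preceding the statement. Hence $\symrank(M/\!/\g)\geq d$, and feeding $M/\!/\g$ into \cite[Corollary E]{harvey} gives the chain
\[
\dim(\overline{\f})-\dim(\f)=d\ \leq\ \symrank(M/\!/\g)\ \leq\ \left\lfloor\frac{\dim(M/\!/\g)+1}{2}\right\rfloor=\left\lfloor\frac{\codim(\f)+1}{2}\right\rfloor,
\]
which is the asserted inequality. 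If equality holds, the chain forces $\symrank(M/\!/\g)=\lfloor(\dim(M/\!/\g)+1)/2\rfloor$, i.e.\ $M/\!/\g$ attains maximal symmetry rank, so the classification part of \cite[Corollary E]{harvey} applies: the underlying space $|M/\!/\g|$ is homeomorphic to $\mathbb{S}^q/\Lambda$ for some finite $\Lambda$ in the centralizer of the maximal torus in $\mathrm{O}(q+1)$, or, when $q$ is even, to $|\mathbb{CP}^{q/2}[\lambda]|/\Lambda$ for some finite $\Lambda$ in the torus acting linearly on $\mathbb{CP}^{q/2}[\lambda]$. Since $M/\g\cong|M/\!/\g|$ (Proposition~\ref{proposition: quotients of closed foliations are orbifolds}) and $\g$ is arbitrarily close to $\f$, this yields the stated description of $M/\g$ and completes the argument.

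I expect the main obstacle to be the careful bookkeeping needed to certify that the hypotheses of \cite[Corollary E]{harvey} are actually in force for $M/\!/\g$: one must confirm that $\iota\mathrm{g}^T$ descends to an honest Riemannian orbifold metric on $M/\!/\g$ with $\sec>0$ (this is where the ``$\g$ sufficiently close'' clause of Theorem~\ref{theorem: deformation} is essential, and where the transverse orientability hypothesis is used), that the inherited $\mathbb{T}^d$-action is effective, isometric, and of rank exactly $d$, and, finally, that the list of maximal-symmetry-rank orbifolds in \cite[Corollary E]{harvey} matches --- in both parities of $q$ and with the correct admissible finite groups $\Lambda$ --- the two alternatives in the statement.
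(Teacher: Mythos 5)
Your proposal is correct and follows essentially the same route the paper takes: deform $\f$ to a nearby closed foliation $\g$ via Theorem~\ref{theorem: deformation}, identify $M/\!/\g$ as a compact positively curved $q$-dimensional Riemannian orbifold carrying an effective isometric $\mathbb{T}^d$-action with $d=\dim\mathfrak{a}=\dim(\overline{\f})-\dim(\f)$, and apply \cite[Corollary E]{harvey} for both the rank bound and the equality-case classification. The paper itself only sketches this (via the inequality $\dim(\overline{\f})-\dim(\f)=\dim(\mathfrak{a})\leq\symrank(\f)$ and the citation), so your write-up is, if anything, slightly more explicit.
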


The symmetry rank $\symrank(M)$ also plays an important role in partial solutions to Hopf's conjecture that every even-dimensinal positively curved Riemannian manifold has positive Euler characteristic. It was proved by T.~Püttmann and C.~Searle in \cite[Theorem 2]{puttmann}, for instance, that Hopf's conjecture holds for manifolds satisfying $\symrank(M)\geq \dim(M)/4-1$. This linear bound was subsequently weakened by X.~Rong and X.~Su in \cite[Theorem A]{rong}, and further improved by L.~Kennard to the logarithmic bound $\symrank(M)\geq \log_2(n-2)$ in the case $\dim(M)=0\mod 4$, \cite[Theorem A]{kennard}. In the transverse setting, Theorem \ref{theorem: basic euler char is preserved by deformations} guarantees that one can study the basic Euler characteristic by the deformation method. A generalization of the Püttmann--Searle theorem for orbifolds was proven in \cite[Theorem 8.9]{caramello}, from which a transverse version for Killing foliations follows by deformation:

\begin{theorem}[{\cite[Theorem E]{caramello}}]\label{theorem: puttmann for foliations}
Let $\f$ be a $q$-codimensional transversely orientable Killing foliation of a compact manifold $M$. If $q$ is even, $\sec_\f>0$ and $\symrank(\f)\geq q/4-1$, then $\chi(\f)>0$.
\end{theorem}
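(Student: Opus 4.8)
The plan is to reduce the transverse statement to the corresponding statement for orbifolds via the regular deformation machinery of Theorem \ref{theorem: deformation}, and then invoke the orbifold version of the Püttmann--Searle theorem established in \cite[Theorem 8.9]{caramello}. First I would apply Theorem \ref{theorem: deformation} to the Killing foliation $(\f,\mathrm{g}^T)$ on the compact manifold $M$ to obtain a regular deformation $\f_t$ into a closed foliation $\g=\f_1$ that is arbitrarily close to $\f$; by the last sentence of Theorem \ref{theorem: deformation}, choosing $\g$ sufficiently close to $\f$ we preserve the lower curvature bound, so $\sec_{\g}>0$ as well (this uses that positivity of transverse sectional curvature is an open condition among transverse metrics and that the deformed transverse metric $\iota\mathrm{g}^T$ converges to $\mathrm{g}^T$). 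Since $\g$ is closed with finite holonomy, Proposition \ref{proposition: quotients of closed foliations are orbifolds} endows $M/\!/\g$ with a $q$-dimensional orbifold structure, and by Proposition \ref{prop: basic cohomology of closed foliations} we have $\chi(\g)=\chi(M/\!/\g)$; moreover by Proposition \ref{prop: Reinhart characterization} and the O'Neill-type formula the quotient metric on $M/\!/\g$ has positive sectional curvature.

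Next I would transfer the symmetry-rank hypothesis. By item \eqref{toric action on the quotient} of Theorem \ref{theorem: deformation} combined with Proposition \ref{prop: induced transverse a action on G}, the orbifold $M/\!/\g$ carries an effective isometric $\mathbb{T}^d$-action, where $d=\dim\mathfrak{a}$ and $\mathfrak{a}$ is the structural algebra of $\f$. Thus $\symrank(M/\!/\g)\geq d=\dim(\overline{\f})-\dim(\f)$. The remaining input is that $\symrank(\f)$ controls $d$; but by the discussion preceding Theorem \ref{theorem: harvey-searle para folheações}, one has $\dim(\overline{\f})-\dim(\f)=\dim(\mathfrak{a})\leq\symrank(\f)$. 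Here, however, one must be slightly more careful, because the hypothesis $\symrank(\f)\geq q/4-1$ is about the maximal Abelian subalgebra of $\mathfrak{iso}(\f)$, not merely about $\mathfrak{a}$; so the argument must instead pass the full transverse symmetry rank to the orbifold. Concretely, a maximal Abelian subalgebra $\mathfrak{h}\subset\mathfrak{iso}(\f)$ containing $\mathfrak{a}$ descends, under $\pi_*\circ\iota$ (item \eqref{symmetries and basic tensors} of Theorem \ref{theorem: deformation}), to a space of Killing fields on $M/\!/\g$, so $\symrank(M/\!/\g)\geq\symrank(\f)\geq q/4-1=\dim(M/\!/\g)/4-1$. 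This is the step I expect to require the most care: verifying that transverse Killing fields deform to genuine Killing fields of the quotient orbifold metric and that their $\mathbb{R}$-span embeds into $\mathfrak{iso}(M/\!/\g)$ with no drop in rank, which is exactly what item \eqref{symmetries and basic tensors} of Theorem \ref{theorem: deformation} provides together with the equivariance in Proposition \ref{prop: induced transverse a action on G}.

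With $M/\!/\g$ a $q$-dimensional ($q$ even) positively curved orbifold of symmetry rank at least $q/4-1$, the orbifold Püttmann--Searle theorem \cite[Theorem 8.9]{caramello} yields $\chi(M/\!/\g)>0$. Finally, I would close the loop using Theorem \ref{theorem: basic euler char is preserved by deformations}: $\chi(\f)=\chi(\f_t)$ for all $t$, so in particular $\chi(\f)=\chi(\g)=\chi(M/\!/\g)>0$. (Alternatively one may invoke Theorem \ref{theorem: basic euler char localizes to closed leaves} directly, since $\chi(\f)=\chi(\Sigma^{\dim\f}/\f)$ and the closed-leaf stratum of $\f$ corresponds under the deformation to the $\mathbb{T}^d$-fixed set of $M/\!/\g$, but the route through Theorem \ref{theorem: basic euler char is preserved by deformations} is cleaner.) The only real obstacles are the two openness/continuity points flagged above—preservation of positive transverse curvature and of the transverse symmetry rank under the regular deformation—both of which are handled by the ``arbitrarily close'' clause of Theorem \ref{theorem: deformation} together with items \eqref{injection of tensor algebra} and \eqref{symmetries and basic tensors} therein; everything else is a direct citation.
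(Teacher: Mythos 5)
Your proposal is correct and follows essentially the same route the paper takes: deform $\f$ to a nearby closed foliation $\g$ preserving positive transverse curvature, transfer the symmetry rank to the quotient orbifold $M/\!/\g$, apply the orbifold Püttmann--Searle theorem of \cite[Theorem 8.9]{caramello}, and conclude via the invariance of $\chi$ under regular deformations (Theorem \ref{theorem: basic euler char is preserved by deformations}). The point you flag about passing the full transverse symmetry rank (not just $\dim\mathfrak{a}$) to Killing fields on the orbifold is indeed the one step requiring care, and it is handled exactly as you indicate by item \eqref{symmetries and basic tensors} of Theorem \ref{theorem: deformation}.
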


Finally, Theorem \ref{theo: Betti numbers are invariant} shows that the basic Betti numbers of Killing foliations can also be studied via deformations, provided the transverse action of the structural algebra is equivariantly formal. For instance, a theorem by Gromov establishes the existence of a constant $C=C(n)$ that bounds the total sum of Betti numbers of any positively curved Riemannian manifold of dimension $n$ \cite[\S 0.2A]{gromov2}. An analogous result holds for orbifolds, as it follows by \cite[Theorem 1]{koh}. Thus, by deformations, one obtains the following transverse generalization:

\begin{theorem}[{\cite[Theorem E]{caramello2}}]\label{theorem: Gromov intro}
There exists a constant $C=C(q)$ such that every $q$-codimensional Killing foliation $\f$ of a compact manifold $M$ with $\sec_\f>0$ and whose transverse action of the structural algebra $\mathfrak{a}$ is equivariantly formal satisfies
$$\sum_{i=0}^q b_i(\f) \leq C.$$
\end{theorem}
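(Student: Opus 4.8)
The plan is to assemble the statement from the regular deformation technique of Section~\ref{section: deformations} together with the orbifold analogue of Gromov's Betti number bound. First I would apply Theorem~\ref{theorem: deformation} to the given $q$-codimensional Killing foliation $(\f,\mathrm{g}^{T})$ of the compact manifold $M$: it produces a regular deformation $\f_t$ of $\f$ into a closed foliation $\g=\f_1$ that may be chosen arbitrarily $C^{\infty}$-close to $\f$, together with the injection $\iota$ carrying $\mathrm{g}^{T}$ to a transverse metric $\iota\mathrm{g}^{T}$ for $\g$. Because upper and lower bounds on the transverse sectional curvature are preserved when $\g$ is taken close enough to $\f$, we may arrange $\sec_{\g}>0$. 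By Theorem~\ref{theorem: deformation} the leaf space $M/\!/\g$ is then a compact $q$-dimensional Riemannian orbifold, and, since the transverse metric is computed through the local Riemannian submersions defining $\g$ (Proposition~\ref{prop: Reinhart characterization} and the discussion of the basic Levi-Civita connection), $\iota\mathrm{g}^{T}$ descends to a Riemannian metric on $M/\!/\g$ whose sectional curvature is $\sec_{\g}>0$.

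Next I would invoke the orbifold version of Gromov's theorem — which follows from \cite[Theorem 1]{koh} — to obtain a constant $C=C(q)$, depending only on $q$, with $\sum_{i=0}^{q} b_i(M/\!/\g)\le C(q)$. By Proposition~\ref{prop: basic cohomology of closed foliations}, the projection $M\to M/\!/\g$ induces $H(\g)\cong H_{\mathrm{dR}}(M/\!/\g)$, so $b_i(\g)=b_i(M/\!/\g)$ and hence $\sum_{i=0}^{q} b_i(\g)\le C(q)$. Finally, the hypothesis that the transverse action of the structural algebra $\mathfrak{a}$ on $\f$ is equivariantly formal lets me apply Theorem~\ref{theo: Betti numbers are invariant}: along the regular deformation $\f_t$ each basic Betti number is constant in $t$, so $b_i(\f)=b_i(\g)$ for every $i$. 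Combining these, $\sum_{i=0}^{q} b_i(\f)=\sum_{i=0}^{q} b_i(\g)\le C(q)$, and the constant makes no reference to $M$ or $\f$ beyond $q=\codim(\f)$, as required. (All the $b_i(\f)$ are finite by Theorem~\ref{theorem: dim basic cohomology is finite}.)

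The proof is thus essentially an assembly, and the only genuinely new external ingredient is the orbifold Gromov bound from \cite{koh}. The main point one must be careful about is that ``$\g$ close to $\f$'' is asked to do two jobs simultaneously: to guarantee $\sec_{\g}>0$, so that the orbifold bound applies to $M/\!/\g$, and to make $\f_t$ a genuine regular deformation along which equivariant formality persists, so that Theorem~\ref{theo: Betti numbers are invariant} is available. Both are supplied by the cited results — the curvature persistence by the final assertion of Theorem~\ref{theorem: deformation}, and the persistence of equivariant formality along regular deformations by \cite[Proposition 6.2]{caramello2} — so, granting these, no further obstacle remains.
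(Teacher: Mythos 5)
Your proposal is correct and follows essentially the same route as the paper: deform $\f$ to a nearby closed foliation $\g$ via Theorem~\ref{theorem: deformation} so that $\sec_{\g}>0$ persists, apply the orbifold Gromov bound (from \cite[Theorem 1]{koh}) to the positively curved compact $q$-orbifold $M/\!/\g$, identify $b_i(\g)=b_i(M/\!/\g)$, and transfer back to $\f$ using equivariant formality and Theorem~\ref{theo: Betti numbers are invariant}. The paper only sketches this assembly in the paragraph preceding the statement, and your write-up fills in the same steps with the correct citations.
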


\section{Singular Riemannian foliations}\label{section: singular riemannian foliations}

In this section we will briefly present singular Riemannian foliations and survey some classical and recent results about them. The notion of singular foliation generalizes that of regular foliations by allowing the dimensions of the leaves to vary. More precisely, given an $n$-dimensional connected manifold $M$, a \textit{singular foliation} of $M$ is a partition $\f$ of $M$ into connected, immersed submanifolds, called \textit{leaves}, such that the module $\mathfrak{X}(\f)$ of smooth vector fields that are tangent to the leaves is transitive on each leaf. This means, as in the regular case, that for each $L\in\f$ and each $x\in L$ one can find smooth vector fields $X_i$ whose values at $x$ form a basis for $T_xL$. We maintain most of the notation from regular foliations, e.g. we denote the distribution of varying rank defined by the tangent spaces of the leaves by $T\f$ and the leaf containing $x$ by $L_x$. The algebra of foliate vector fields can also be defined similarly, as $\mathfrak{L}(\f)=\{X\in\mathfrak{X}(M)\ |\ [X,\mathfrak{X}(\f)]\subset\mathfrak{X}(\f)\}$, and consists of those fields whose flows take leaves to leaves. The \textit{transverse vector fields} are the elements of $\mathfrak{l}(\f):=\mathfrak{L}(\f)/\mathfrak{X}(\f)$. The \textit{dimension of $\f$} is defined as
$$\dim(\f)=\max_{L\in\f}\dim(L).$$

\begin{example}[Homogeneous singular foliations]\label{ex: homogeneous singular}
Consider a manifold $M$ with an action of a Lie group $H$. Then we have an induced infinitesimal action $\mu$ of the Lie algebra $\mathfrak{h}$ of $H$ (see Example \ref{exe: induced infinitesimal action}). One easily verifies that $T_xHx=\mu(\mathfrak{h})|_x$, that is, the space generated by the fundamental vector fields of the $H$-action at $x$ is the tangent space of the orbit $Hx$ at $x$. This shows that the partition $\f_H$ of $M$ into the connected components of the orbits of $H$ is a singular foliation. In analogy with the regular case, such a foliation is an \textit{homogeneous singular foliation}. One also verifies that $\f_H=\f_{H^e}$, where $H^e<H$ is the connected component of the identity, so supposing that $H$ is connected usually does not affect the study of $\f_H$ and has the advantage that in this case the leaves (which are connected by definition) coincide with the orbits.
\end{example}

Singular Riemannian foliations are defined by generalizing Reinhart's characterization of bundle-like metrics (Proposition \ref{prop: Reinhart characterization}): if $M$ can be endowed with a Riemannian metric $\mathrm{g}$ such that every geodesic which is perpendicular to a leaf of $\f$ remains perpendicular to all leaves it intersects, then we say that $\f$ is a \textit{singular Riemannian foliation} and that $\mathrm{g}$ is \textit{adapted} to $\f$. Any partition of $M$ into submanifolds (not necessarily a smooth singular foliation) having this property is called a \textit{transnormal system} on $(M,\mathrm{g})$, following the terminology of \cite{bolton}. We can say, hence, that a singular Riemannian foliation $\f$ of $M$ is a singular foliation of $M$ which is also a transnormal system with respect to some Riemannian metric.

For a leaf $L\in\f$ we denote the normal space at $x\in L$ by $\nu_xL=(T_xL)^\perp$. It is clear from Proposition \ref{prop: Reinhart characterization} that every regular Riemannian foliation $\f$ is a singular Riemannian foliation. \textit{Homogeneous singular Riemannian foliations} form another very significant class:

\begin{proposition}[{\cite[Section 6.1]{molino}}]\label{prop: homogeneous srf}
Let $(M,\mathrm{g})$ be a Riemannian manifold on which a Lie group $H$ acts by isometries. Then $\mathrm{g}$ is an adapted metric for $\f_H$, which is thus a singular Riemannian foliation.
\end{proposition}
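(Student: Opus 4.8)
The plan is to check the single defining property of a singular Riemannian foliation: that $\metric$ is transnormal for $\f_H$, i.e. that every geodesic which meets an orbit orthogonally stays orthogonal to every orbit it crosses. That $\f_H$ is a singular foliation in the first place is already contained in Example~\ref{ex: homogeneous singular}, where it is also observed that $T_p(Hp)=\mu(\mathfrak{h})|_p$; that is, the tangent space of the orbit through $p$ is spanned by the values at $p$ of the fundamental vector fields $X^\#$, $X\in\mathfrak{h}$. Since $H$ acts by isometries, each fundamental field $X^\#$ is a Killing vector field of $(M,\metric)$.

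The key step is the elementary observation that, for a Killing vector field $K$ on $(M,\metric)$ and any geodesic $\gamma$, the function $t\mapsto\metric(\gamma'(t),K_{\gamma(t)})$ is constant. I would prove this by differentiating: using $\nabla_{\gamma'}\gamma'=0$ one gets $\frac{\dif}{\dif t}\metric(\gamma',K)=\metric(\gamma',\nabla_{\gamma'}K)$, and this vanishes because the Killing equation is precisely the statement that $Y\mapsto\nabla_Y K$ is skew-symmetric, so in particular $\metric(\nabla_{\gamma'}K,\gamma')=0$.

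With this in hand, let $\gamma$ be a geodesic with $\gamma'(0)\perp T_{\gamma(0)}(H\gamma(0))$, equivalently $\metric(\gamma'(0),X^\#_{\gamma(0)})=0$ for every $X\in\mathfrak{h}$. Applying the observation to each Killing field $X^\#$ gives $\metric(\gamma'(t),X^\#_{\gamma(t)})=0$ for all $t$ and all $X\in\mathfrak{h}$, hence $\gamma'(t)\perp\mu(\mathfrak{h})|_{\gamma(t)}=T_{\gamma(t)}(H\gamma(t))$. Because the leaf of $\f_H$ through $\gamma(t)$ is a connected component of the orbit $H\gamma(t)$, it has the same tangent space there as the orbit, so $\gamma'(t)$ is perpendicular to that leaf as well. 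This shows that $\metric$ is adapted to $\f_H$, so $\f_H$ is a singular Riemannian foliation.

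I do not expect any serious obstacle here; the argument is self-contained modulo the standard fact that the generators of an isometric action are Killing fields. The only points deserving a word of care are the passage from orthogonality against each individual fundamental field to orthogonality against the whole orbit tangent space — immediate, since $\mu(\mathfrak{h})|_p$ spans $T_p(Hp)$ and $\mathfrak{h}$ is finite-dimensional — and the identification of the tangent space of a leaf with that of the orbit containing it.
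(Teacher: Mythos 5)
Your argument is correct and is the standard one (the one given in Molino's Section 6.1, which the paper cites without reproducing): fundamental vector fields of an isometric action are Killing, and the conservation law $\frac{\dif}{\dif t}\metric(\gamma',K)=\metric(\gamma',\nabla_{\gamma'}K)=0$ for Killing $K$ along a geodesic $\gamma$ immediately yields transnormality of the orbit decomposition. The two points you flag for care — passing from orthogonality to each $X^\#$ to orthogonality to the full orbit tangent space, and identifying the tangent space of a leaf (a connected component of an orbit) with that of the orbit — are handled correctly.
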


It follows from Molino's structural theorem (Theorem \ref{Theo: molino structural}) that the closure $\overline{\f}$ of a complete regular Riemannian foliation is a singular foliation. One has, in fact, the following.

\begin{proposition}[{\cite[Proposition 6.2]{molino}}]
Let $(M,\f)$ be a complete (regular) Riemannian foliation and $\mathrm{g}$ be a bundle-like metric. Then $\overline{\f}$ is a singular Riemannian foliation to which $\mathrm{g}$ is adapted.
\end{proposition}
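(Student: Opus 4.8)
The plan is to split the statement into two claims: (a) $\overline{\f}$ is a smooth singular foliation of $M$, and (b) $\mathrm{g}$ is adapted to it. Both rest on Molino's structural theorem (Theorem~\ref{Theo: molino structural}): since $\f$ is complete, the leaf closures $\overline{L}$ are embedded (hence immersed, connected) submanifolds, $\overline{\f}$ is a partition of $M$ by them, and there is the Molino sheaf $\mathscr{C}_\f$ whose orbits are exactly the leaf closures. Thus over a small open set $U$ one can fix representatives $X_1,\dots,X_d\in\mathfrak{L}(\f|_U)$ of a basis of $\mathscr{C}_\f(U)$ with
$$T\overline{L}|_U=TL|_U\oplus\spannn\{X_1,\dots,X_d\}\qquad\text{for every }L\in\f,$$
and, since the sections of $\mathscr{C}_\f$ are germs of transverse Killing fields, these $X_i$ may be chosen foliate Killing, i.e.\ $\mathcal{L}_{X_i}\mathrm{g}^T=0$.

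For (a), I would first note that $\mathfrak{X}(\f)\subset\mathfrak{X}(\overline{\f})$: a leaf closure is $\f$-saturated (it is an orbit of $\mathscr{C}_\f$, hence a union of leaves), so for $Y\in\mathfrak{X}(\f)$ and $y\in\overline{L}$ we have $Y_y\in T_yL_y\subset T_y\overline{L}$. Next, because tangency to a leaf closure is a pointwise condition preserved under multiplication by functions, if $\varphi$ is a bump function supported in $U$ with $\varphi\equiv1$ near a point $x$, then each $\varphi X_i$ is a globally defined smooth field lying in $\mathfrak{X}(\overline{\f})$ (over $U$ it is tangent to every leaf closure by the displayed decomposition, and it vanishes outside $\mathrm{supp}\,\varphi$). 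Choosing finitely many $Y_j\in\mathfrak{X}(\f)$ spanning $T_xL_x$ and cutting them off by $\varphi$ as well, the fields $\varphi Y_j,\varphi X_i\in\mathfrak{X}(\overline{\f})$ span $T_x\overline{L}_x$. Hence $\mathfrak{X}(\overline{\f})$ is transitive on each leaf closure, and together with the embeddedness furnished by Molino this shows $\overline{\f}$ is a singular foliation.

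For (b), I would start from a geodesic $\gamma$ with $\gamma'(0)\perp T_{\gamma(0)}\overline{L}_{\gamma(0)}$. Since $T_{\gamma(0)}L_{\gamma(0)}\subset T_{\gamma(0)}\overline{L}_{\gamma(0)}$, in particular $\gamma$ is perpendicular at $\gamma(0)$ to the leaf through that point, so by Reinhart's characterization of bundle-like metrics (Proposition~\ref{prop: Reinhart characterization}) $\gamma$ stays perpendicular to \emph{all} leaves, and in each simple open set $U$ with defining Riemannian submersion $\pi\colon U\to(S,\mathrm{g}^S)$, $\mathrm{g}^S=\pi_*\mathrm{g}^T$, the curve $\gamma$ is horizontal and $\sigma:=\pi\circ\gamma$ is a geodesic of $S$. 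Each foliate Killing field $X_i$ is $\pi$-related to a Killing field $\overline{X}_i^S$ of $(S,\mathrm{g}^S)$, and for a horizontal vector $v$ one has $\mathrm{g}(v,X_i)=\mathrm{g}(v,X_i^\perp)=\mathrm{g}^T(v,X_i)=\mathrm{g}^S(\dif\pi\,v,\overline{X}_i^S)$. Taking $v=\gamma'(t)$ yields $\mathrm{g}(\gamma'(t),X_i(\gamma(t)))=\mathrm{g}^S(\sigma'(t),\overline{X}_i^S(\sigma(t)))$, whose right-hand side is constant in $t$ by the classical fact that $\mathrm{g}^S(\sigma',K\circ\sigma)$ is constant along any geodesic $\sigma$ for any Killing field $K$ (differentiate and use $\nabla^S_{\sigma'}\sigma'=0$ together with the skew-symmetry of $\nabla^S K$). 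It vanishes at $t=0$ because $X_i(\gamma(0))\in T_{\gamma(0)}\overline{L}$, hence vanishes for all $t$; combined with $\gamma'(t)\perp T_{\gamma(t)}L_{\gamma(t)}$ and the decomposition $T_{\gamma(t)}\overline{L}_{\gamma(t)}=T_{\gamma(t)}L_{\gamma(t)}\oplus\spannn\{X_i(\gamma(t))\}$, this gives $\gamma'(t)\perp T_{\gamma(t)}\overline{L}_{\gamma(t)}$; as the conclusion is pointwise, one patches over the $U$'s. Therefore $\mathrm{g}$ is adapted to $\overline{\f}$.

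I expect the main obstacle to be part (b): one has to carefully translate $\mathrm{g}$-orthogonality to a leaf \emph{closure} into a statement on the local quotient $S$, which is exactly where horizontality of leaf-perpendicular geodesics (Reinhart) and the fact that transverse Killing fields descend to Killing fields of $S$ both enter; once this translation is in place, the conservation of $\mathrm{g}^S(\sigma',K\circ\sigma)$ along geodesics closes the argument. (An alternative for (b) is to work entirely on a total transversal: by Proposition~\ref{prop: closure of pseudogroup} the leaf closures correspond to orbit closures of $\overline{\mathscr{H}_\f}$, which by the structural theorem for pseudogroups are orbits of the Killing-field sheaf $\mathfrak{iso}_{\overline{\mathscr{H}_\f}}$, and the same conserved-quantity lemma shows $\mathrm{g}^S$ is adapted to those, whence the claim by pulling back along $\pi$.) Part (a) is essentially routine once Molino's theorem is granted, the only point requiring attention being the use of cut-off functions to promote the locally defined $X_i$ to global elements of $\mathfrak{X}(\overline{\f})$.
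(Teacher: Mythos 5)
Your argument is correct and follows exactly the route the paper intends: the survey does not reproduce Molino's proof but derives the statement from the structural theorem (leaf closures embedded, $T\overline{L}|_U=TL|_U\oplus\spannn\{X_1,\dots,X_d\}$ with the $X_i$ representatives of local sections of the Molino sheaf, hence foliate Killing), which is precisely what you use both for the transitivity of $\mathfrak{X}(\overline{\f})$ and for transnormality via Reinhart's characterization together with the conserved quantity $\mathrm{g}^S(\sigma',K\circ\sigma)$ along projected geodesics. The only step worth stating explicitly is the open-and-closed argument in $t$ that you compress into ``one patches over the $U$'s''; with that spelled out there are no gaps.
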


In Section \ref{section: molino conjecture and its proof} we will see that a similar result holds for a complete singular Riemannian foliation $\f$: the partition $\overline{\f}$ of $M$ into the closures of leaves of $\f$ is again a singular Riemannian foliation.

One defines basic cohomology in complete analogy with the regular case: for a singular Riemannian foliation $\f$, a differential form $\omega\in\Omega^i(M)$ is \textit{basic} if $\iota_X\omega=0$ and $\mathcal{L}_X\omega=0$ for all $X\in\mathfrak{X}(\f)$. The $d$-subcomplex of $\f$-basic forms will be denoted by $\Omega(\f)$. It is a $\mathbb{Z}$-graded differential algebra with respect to the usual exterior derivative and wedge product. The basic cohomology of $\f$ is the cohomology $H(\f)$ of $(\Omega(\f),d)$.

\begin{theorem}[{\cite[Theorem 1]{wolak}}]
If $\f$ is a singular Riemannian foliation of a compact manifold $M$, then $\dim H(\f)<\infty$.
\end{theorem}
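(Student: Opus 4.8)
The plan is to argue by induction on the \emph{depth} $r$ of the stratification of $\f$ (recall that the principal stratum has depth $0$ and that the minimal strata, those made up of the leaves of least dimension, have maximal depth), peeling off a saturated neighbourhood of the minimal strata at each stage by means of a Mayer--Vietoris sequence in basic cohomology. The base case $r=0$ is exactly Theorem \ref{theorem: dim basic cohomology is finite}: there $\f$ is a regular Riemannian foliation of the compact manifold $M$, so $\dim H^i(\f)<\infty$. To make the induction go through, however, one should enlarge the statement so that it also covers an SRF defined on the interior of a \emph{compact manifold with saturated boundary} over which the foliation extends; the base case of this enlarged statement follows from Theorem \ref{theorem: dim basic cohomology is finite} by doubling along the boundary and one further Mayer--Vietoris argument, using that the boundary is itself a closed manifold carrying a regular Riemannian foliation.

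For the inductive step, let $\Sigma$ be the (closed, saturated, finite) union of the minimal strata. By the local structure theory for singular Riemannian foliations \cite{molino}, $\Sigma$ admits a saturated tubular neighbourhood $T$ on which $\f$ is foliate-diffeomorphic to the linearized foliation on the normal disk bundle $\nu\Sigma$, so that the fibrewise homotheties $\rho_t$ ($0\le t\le 1$) preserve $\f|_T$. Fix a slightly smaller saturated tube $T'$ with $\overline{T'}\subset T$, and set $A=T$, $B=M\setminus\overline{T'}$, so that $A\cup B=M$ and $A\cap B=T\setminus\overline{T'}$. Because the function $x\mapsto d(x,\Sigma)$ is $\f$-basic on a neighbourhood of $\Sigma$ (the leaves of an SRF are equidistant from the saturated set $\Sigma$), composing it with suitable bump functions produces an $\f$-basic partition of unity subordinate to $\{A,B\}$, and hence an exact sequence
\[ 0\longrightarrow\Omega(\f)\longrightarrow\Omega(\f|_A)\oplus\Omega(\f|_B)\longrightarrow\Omega(\f|_{A\cap B})\longrightarrow 0 \]
together with its long exact Mayer--Vietoris sequence in basic cohomology.

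It now suffices to bound $\dim H(\f|_A)$, $\dim H(\f|_B)$ and $\dim H(\f|_{A\cap B})$. For $A=T$: the homothety $(x,t)\mapsto\rho_t(x)$ is a foliate homotopy from $\id_T$ to the composition of the bundle projection $T\to\Sigma$ with the zero section, so the standard homotopy operator on basic forms gives $H(\f|_T)\cong H(\f|_\Sigma)$; and $\f|_\Sigma$ is a regular Riemannian foliation of the compact manifold $\Sigma$, so this is finite-dimensional by the base case. For $B$ and $A\cap B$: both are saturated and disjoint from $\Sigma$, hence carry singular Riemannian foliations of depth $\le r-1$, and their closures $\overline{B}=M\setminus T'$ and $\overline{A\cap B}=\overline T\setminus T'$ are compact manifolds with saturated boundary to which $\f$ restricts; the enlarged inductive hypothesis therefore applies and yields finiteness of their basic cohomologies. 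Feeding the three bounds into the Mayer--Vietoris sequence completes the induction.

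The main obstacle is the non-compactness of the complement $B$ (and of the overlap $A\cap B$), which is what forces the manifold-with-boundary version to be carried through the induction; verifying that the linearized local model and the stratification interact well with the boundary (e.g.\ arranging the tube around the minimal strata to meet $\partial M$ cylindrically), and checking that basic Mayer--Vietoris is genuinely available for saturated covers, are the technical points that need care. An alternative, potentially shorter route would be to pull basic forms back along a desingularization $\widehat M\to M$ of $(M,\f)$ by a compact manifold carrying a regular Riemannian foliation $\widehat\f$, and to show that the induced map $H(\f)\to H(\widehat\f)$ is injective; finiteness would then follow at once from Theorem \ref{theorem: dim basic cohomology is finite} applied to $\widehat\f$, the difficulty being transferred entirely to the proof of this injectivity.
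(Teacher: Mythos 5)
The survey does not actually prove this statement: it is quoted from \cite{wolak} without proof, so there is no in-paper argument to measure yours against. What you have written is, in substance, a reconstruction of Wolak's original strategy, and it is sound. The load-bearing steps all check out: the minimal stratum is closed (hence compact and saturated); Lemma \ref{lemma: homothetic} supplies exactly the foliate homotopy needed for $H(\f|_T)\cong H(\f|_\Sigma)$, since each $h_t$ and the limit map $h_0=\pi_\Sigma$ send plaques to plaques, so the standard homotopy operator preserves basic forms; $\f|_\Sigma$ is a regular Riemannian foliation of a compact manifold, so Theorem \ref{theorem: dim basic cohomology is finite} applies; and the basic Mayer--Vietoris sequence is legitimate for your particular cover precisely because $d(\cdot,\Sigma)$ is constant on leaves (transnormality) and smooth off $\Sigma$, so bump functions of it give a basic partition of unity --- for a general saturated cover no such partition exists, so it is important that you tied the cover to the distance function. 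The two technical debts you flag are the right ones: the manifold-with-saturated-boundary version carried through the induction (a foliate collar obtained from the gradient of the basic function $d(\cdot,\partial M)$ handles both the doubling in the base case and the cylindrical matching of tubes with the boundary), and the fact that after excising a tube around the minimal stratum the depth strictly drops.

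Two smaller corrections. First, your assertion that $\f|_T$ is foliate-diffeomorphic to the linearized foliation on $\nu\Sigma$ is false in general: by Proposition \ref{linearized infinitesimal is homogeneous} the linearization is only the maximal infinitesimally homogeneous subfoliation of $\f|_T$. Fortunately you never use this identification --- homothety-invariance of $\f|_T$ from Lemma \ref{lemma: homothetic} is all the retraction argument needs, so delete the claim rather than repair it. Second, treat the desingularization alternative with more suspicion than you do: injectivity of $H(\f)\to H(\f^{\mathrm{B}})$ would normally come from a degree-preserving fibre-integration left inverse to $\rho^*$, and the fibres of the blow-down collapse to positive-dimensional projective bundles over the singular strata, so no such pushforward is available; that route is not a shortcut.
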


\subsection{Slice foliation, homothetic lemma and canonical stratification}\label{section: homothetic lemma}

In this section we review some basic technical notions that will be useful. Let $L\in\f$ be a leaf of a complete singular Riemannian foliation of $M$, and consider a tubular neighborhood $U:=\tub_\varepsilon(P)$ of radius $\varepsilon>0$ of a connected, relatively compact, open subset $P\subset L$. That is, $U$ is the image of $B_\varepsilon^P:=\{V\in\nu P\ |\ \|V\|<\varepsilon\}$ by the normal exponential map $\exp^\perp:\nu L\to M$, where $\varepsilon$ is taken small enough so that $\exp^\perp|_{B_\varepsilon^P}$ is a diffeomorphism onto $U$. There is an orthogonal projection $\pi_P:U\to P$. By decreasing $\varepsilon$ and shrinking $P$ if necessary, we can further assume that $U$ is a \textit{distinguished tubular neighborhood}, i.e., that it also satisfies the following:
\begin{enumerate}[(i)]
\item $L_y$ is transverse to the \textit{slice} $S_x:=\pi_P^{-1}(x)=\exp^\perp_x(B_\varepsilon(0))$, for each $y\in U$, $x=\pi_P(y)$, and
\item \label{item: condition plaques} $P$ is a leaf of a (regular) simple subfoliation of $\f|_U$ given by the fibers of a submersion $\rho:U\to \pi_P^{-1}(x)$.
\end{enumerate}
The connected component $P_y$ of $L_y\cap\tub_\varepsilon(P)$ containing $y$ is a \textit{plaque} through $y$. Condition \eqref{item: condition plaques} is a natural generalization of the definition of regular foliations, and it is possible to check that the existence of distinguished tubular neighborhoods is in fact equivalent to the transitivity of $\mathfrak{X}(\f)$ in each leaf, in definition of singular foliations. 

\begin{definition}[Slice foliation]
With the notation above, we define the \textit{slice foliation at $x$} as the foliation $\f|_{S_x}$ of $S_x$ given by the intersections $P_y\cap S_x$, for $y\in\tub_\varepsilon(P)$.
\end{definition}

Given a distinguished tubular neighborhood $\tub_\varepsilon(P)$, if $\varepsilon_1,\varepsilon_2=\lambda \varepsilon_1\in(0,\varepsilon)$ one can define the \textit{homothetic transformation}
$$h_\lambda:\tub_{\varepsilon_1}(P)\ni\exp^\perp(V)\longmapsto \exp^\perp(\lambda V)\in\tub_{\varepsilon_2}(P).$$

\begin{figure}
\centering{
\begingroup%
  \makeatletter%
  \providecommand\color[2][]{%
    \errmessage{(Inkscape) Color is used for the text in Inkscape, but the package 'color.sty' is not loaded}%
    \renewcommand\color[2][]{}%
  }%
  \providecommand\transparent[1]{%
    \errmessage{(Inkscape) Transparency is used (non-zero) for the text in Inkscape, but the package 'transparent.sty' is not loaded}%
    \renewcommand\transparent[1]{}%
  }%
  \providecommand\rotatebox[2]{#2}%
  \newcommand*\fsize{\dimexpr\f@size pt\relax}%
  \newcommand*\lineheight[1]{\fontsize{\fsize}{#1\fsize}\selectfont}%
  \ifx\svgwidth\undefined%
    \setlength{\unitlength}{174.96313621bp}%
    \ifx\svgscale\undefined%
      \relax%
    \else%
      \setlength{\unitlength}{\unitlength * \real{\svgscale}}%
    \fi%
  \else%
    \setlength{\unitlength}{\svgwidth}%
  \fi%
  \global\let\svgwidth\undefined%
  \global\let\svgscale\undefined%
  \makeatother%
  \begin{picture}(1,0.47196091)%
    \lineheight{1}%
    \setlength\tabcolsep{0pt}%
    \put(0,0){\includegraphics[width=\unitlength,page=1]{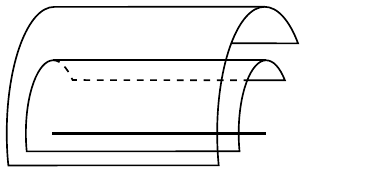}}%
    \put(0.74202859,0.09562447){\color[rgb]{0,0,0}\makebox(0,0)[lt]{\lineheight{0}\smash{\begin{tabular}[t]{l}$P$\end{tabular}}}}%
    \put(0.889954,0.26657984){\color[rgb]{0,0,0}\makebox(0,0)[lt]{\lineheight{0}\smash{\begin{tabular}[t]{l}$h_\lambda$\end{tabular}}}}%
    \put(0,0){\includegraphics[width=\unitlength,page=2]{homothetictransformation.pdf}}%
  \end{picture}%
\endgroup%
}
\caption{The homothetic transformation sends plaque to plaque}
\label{homothetictransformation}
\end{figure}

\begin{lemma}[Homothetic transformation lemma {\cite[Lemma 6.2]{molino}}]\label{lemma: homothetic}
The map $h_\lambda$ sends plaque to plaque (see Figure \ref{homothetictransformation}).
\end{lemma}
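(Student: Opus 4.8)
The plan is to reduce the statement to a local, linear-algebraic computation by working in a distinguished tubular neighborhood and exploiting the fact that $\metric$ is adapted to $\f$. First I would recall the setup: $U=\tub_\varepsilon(P)$ is a distinguished tubular neighborhood of a relatively compact open $P\subset L$, with normal exponential map $\exp^\perp\colon B_\varepsilon^P\to U$ a diffeomorphism, and the plaques of $\f|_U$ are the connected components of the intersections $L_y\cap U$. The key geometric input is that, since $\metric$ is adapted, a unit-speed geodesic $\gamma$ issuing orthogonally from a leaf stays orthogonal to every leaf it meets (Proposition \ref{prop: Reinhart characterization}); equivalently, the leaves of $\f$ are equidistant along such normal geodesics. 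So the first step is to observe that for fixed $V\in\nu_xP$ with $\|V\|<\varepsilon$, the curve $t\mapsto\exp^\perp(tV)$ is a geodesic meeting every leaf it crosses orthogonally, and the homothety $h_\lambda$ simply reparametrizes these normal rays by the factor $\lambda$.

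The heart of the argument is to show that $h_\lambda$ maps a plaque $P_y$ into a single leaf. Fix $y=\exp^\perp(V)\in\tub_{\varepsilon_1}(P)$ with $x=\pi_P(y)$, $V\in\nu_xP$, and let $z\in P_y$ be another point of the same plaque; write $z=\exp^\perp(W)$ with $x'=\pi_P(z)$, $W\in\nu_{x'}P$. I want to show $h_\lambda(y)$ and $h_\lambda(z)$ lie in the same leaf. The standard way to do this (following Molino) is via the \emph{slice foliation}: since $U$ is distinguished, $P_y\cap S_x$ generates the plaque, and the homothety $h_\lambda$ restricts, slice by slice, to the radial scaling of $S_x\cong B_\varepsilon(0)\subset\nu_xL$. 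So it suffices to prove that the radial scaling $v\mapsto\lambda v$ of the slice $S_x$ maps leaves of the slice foliation $\f|_{S_x}$ to leaves of $\f|_{\lambda S_x}$. Here one uses that $\f|_{S_x}$ is itself (the germ of) a singular Riemannian foliation for the induced metric, and, crucially, that along a normal geodesic of a leaf the foliation is invariant under the geodesic flow because the distance function to any leaf is affine in arc length; this is exactly what ``equidistant leaves'' gives. Concretely: if $c\colon[0,1]\to S_x$ is a path inside a plaque of the slice foliation joining $v$ to $v'$, tangent to $\f$, then scaling produces $\lambda c$, and one checks $\dot{(\lambda c)}(t)$ remains tangent to $\f$ at $\lambda c(t)$ using that the horizontal/vertical splitting of $TM$ along the normal geodesic through $c(t)$ is transported by the geodesic flow (a consequence of Proposition \ref{prop: Reinhart characterization} applied pointwise). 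Patching the slices back together via $\rho\colon U\to S_x$ then yields $h_\lambda(P_y)\subset L_{h_\lambda(y)}$, and applying the same argument to $h_{\lambda^{-1}}$ gives the reverse inclusion, so $h_\lambda(P_y)=P_{h_\lambda(y)}$.

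The main obstacle I expect is making rigorous the claim that the slice foliation $\f|_{S_x}$ is scaling-invariant — i.e., that the plaque structure on $S_x$ at radius $\varepsilon_1$ is carried by $v\mapsto\lambda v$ onto the plaque structure at radius $\varepsilon_2=\lambda\varepsilon_1$. Infinitesimally this amounts to showing that the tangent distribution $T\f$, restricted to the slice and pushed forward by radial scaling, again lies in $T\f$; this is where one genuinely uses that $\metric$ is \emph{adapted} and not merely bundle-like in a naive sense, because the relevant geodesics are the radial ones emanating from $P$ and the equidistance property must hold for \emph{all} leaves (including singular ones) they meet. Once this infinitesimal invariance is in place, the global statement follows by integrating and by the connectedness of plaques. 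I would also remark that an alternative, cleaner route is to invoke that the restriction of $\f$ to the slice, together with its cone/scaling structure, is an \emph{infinitesimal} foliation in the sense of the local theory, for which homogeneity under dilations is built in; but the self-contained argument via Proposition \ref{prop: Reinhart characterization} is the one I would present here.
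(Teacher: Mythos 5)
The paper gives no proof of this lemma (it is quoted verbatim from Molino), so your proposal stands or falls on its own, and as written it has a genuine gap at exactly the point you yourself flag as ``the main obstacle'': the claim that $\dif h_\lambda$ carries $T\f$ into $T\f$ along a curve $c$ inside a plaque. That claim is equivalent to the lemma, and the justification you offer --- that ``the horizontal/vertical splitting of $TM$ along the normal geodesic is transported by the geodesic flow, a consequence of Proposition~\ref{prop: Reinhart characterization}'' --- does not follow. Transnormality controls only the \emph{radial} direction: it says $\dot\gamma(t)$ stays in $T\f^{\perp}$, and gives no information about how $T\f$ itself evolves along $\gamma$ or under $\dif h_\lambda$ (which is governed by Jacobi fields, not by metric rescaling). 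Worse, before the lemma is proved one does not know that the strata are radially invariant, so $T_{\lambda c(t)}\f$ may a priori have \emph{smaller} dimension than $T_{c(t)}\f$, and no continuity or transport argument can place $\dif h_\lambda(\dot c(t))$ inside it. The ``cleaner alternative'' you mention is circular: the infinitesimal foliation $\f_x$ is \emph{defined} by extending the slice foliation via homotheties, i.e.\ it presupposes this lemma. The slice reduction also quietly uses that $h_\lambda$ commutes with the submersion $\rho$ from condition (ii) of distinguished neighborhoods, which is again an instance of the statement being proved, this time for the subfoliation by fibers of $\rho$.

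The missing idea is that the argument should be metric rather than infinitesimal. From transnormality and the first variation formula one gets local equidistance: for any plaque $Q$, the gradient of $d(\cdot,Q)$ at $z$ is the unit tangent of the minimal geodesic from $Q$ to $z$, which is orthogonal to $Q$ at its foot and hence, by transnormality, orthogonal to $L_z$; so $d(\cdot,Q)$ is constant along plaques. In particular plaques lie in the tubes $\partial\tub_r(P)$, and $d(\cdot,P_{h_\lambda(y)})$ is constant on $P_y$. For $\varepsilon$ small, $h_\lambda(q)$ is characterized as the unique point of $\partial\tub_{\lambda r}(P)$ at distance $|1-\lambda|r$ from $q\in\partial\tub_r(P)$, namely the foot of the radial geodesic. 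Combining these two facts, the nearest point of $P_{h_\lambda(y)}$ to any $z\in P_y$ is forced to be $h_\lambda(z)$, giving $h_\lambda(P_y)\subset P_{h_\lambda(y)}$, and applying $h_{1/\lambda}$ gives the reverse inclusion. This is essentially Molino's argument for his Lemma~6.2: your observations about radial geodesics and equidistance are the right raw material, but they must be fed into a distance-comparison argument rather than into a claim that $\dif h_\lambda$ preserves the tangent distribution.
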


Lemma \ref{lemma: homothetic} actually holds more generally when $L$ is, instead of a leaf, a submanifold $N$ which is saturated by leaves of $\f$, all of them of the same dimension, and the definition on $U$ is adapted accordingly. This result is fundamental for the theory of singular Riemannian foliations. It is used to prove, for instance, that the union $\Sigma_r$ of all leaves of $\f$ of dimension $r$, called a \textit{stratum}, is an embedded submanifold \cite[Proposition 6.3]{molino}. This provides a stratification
$$M=\bigsqcup_r \Sigma_r$$
of $M$ such that the restriction $\f_r:=\f|_{\Sigma_r}$ is a regular foliation, for each $r$. The stratum of the leaves of maximal dimension is the \textit{regular stratum} of $\f$, which we also denote by $\Sigma_{\mathrm{reg}}=\Sigma_{\dim(\f)}$, and all other strata are called \textit{singular}. The union $\Sigma_{\mathrm{sing}}$ of all singular strata is the \textit{singular locus} of $\f$. We will also often denote the most singular stratum by $\Sigma_{\mathrm{min}}$, called the \textit{minimal stratum}. Using Lemma \ref{lemma: homothetic} one proves moreover that:
\begin{enumerate}[(i)]
\item Each $\Sigma_{r}$ is transversely totally geodesic, meaning that a geodesic which is perpendicular to a leaf $L\in\Sigma_r$ and tangent to $\Sigma_r$ remains within $\Sigma_r$ and is, in particular, a geodesic of $\Sigma_r$ with respect to the restriction of the metric $\mathrm{g}$.
\item \label{item: restriction to stratum is RRF} Thus $\mathrm{g}_r:=\mathrm{g}|_{\Sigma_r}$ is a bundle-like metric for $\f_r$, which is hence a (regular) Riemannian foliation. The transverse metric it induces will be denoted by $\mathrm{g}_r^T$.
\item If $L\subset \Sigma_r$ then $\overline{L}\subset\Sigma_r$ \cite[Lemma 6.4]{molino}.
\end{enumerate}
Furthermore, each $\Sigma_r$ is obviously saturated, so Lemma \ref{lemma: homothetic} can also be applied for $N=\Sigma_r$, from what one concludes:
\begin{enumerate}[(i)]\addtocounter{enumi}{3}
\item All singular strata have codimension at least $2$, so $\Sigma_{\mathrm{reg}}$ is an open, dense submanifold of $M$.
\end{enumerate}

\begin{definition}[Transverse Killing vector fields]
We say that a transverse field $\overline{X}\in\mathfrak{l}(\f)$ is a \textit{transverse Killing vector field} of $\f$ if its restriction to each stratum $\Sigma_r$ is a transverse vector field for $(\f_r,\mathrm{g}_r^T)$ (see Item \eqref{item: restriction to stratum is RRF} above). The algebra of $\f$-transverse Killing vector fields will be denoted by $\mathfrak{iso}(\f)$.
\end{definition}

\subsection{Orbit-like, infinitesimally closed and linearized foliations}

As we previously saw, for each $x\in M$ we have a slice foliation $\f|_{S_x}$ of a slice $S_x$. Its pullback by the exponential map is a singular Riemannian foliation of $B_\varepsilon(0)\subset T_xS_x$ with respect to $\metric_x$ and thus, by Lemma \ref{lemma: homothetic}, can be extended via homotheties to a singular Riemannian foliation $\f_x$ on the whole of $T_xS_x$, called the \textit{infinitesimal foliation at $x$}. Notice that if $\f$ is regular, then $\f_x$ is the trivial foliation of $T_xS_x$ by points.

\begin{definition}[Infinitesimally closed/homogeneous and orbit-like foliations]
A singular Riemannian foliation $\f$ is said to be:
\begin{itemize}
\item \textit{infinitesimally closed foliation} if the infinitesimal foliations $\f_x$ are closed for all $x$,
\item \textit{infinitesimally homogenous} if the infinitesimal foliations $\f_x$ are homogenous for all $x$, and
\item \textit{orbit-like} if $\f$ is both infinitesimally closed and infinitesimally homogenous. 
\end{itemize}
\end{definition}

The property of being infinitesimally homogeneous is invariant by foliate diffeomorphisms, in the sense that if $\Phi:(M,\f)\to(N,\g)$ is a foliate diffeomorphism, then $\f$ is infinitesimally homogeneous if and only $\g$ is infinitesimally homogeneous \cite[Proposition 2.9]{alex4}. Hence the same is true for the property of being orbit-like.

\begin{example}[Closures of regular Riemannian foliations]
The closure $\overline{\f}$ of regular Riemannian foliation $\f$ is orbit-like.
\end{example}

The next example turns out to be very relevant in geometry (see \cite{Berndt-Console-Olmos} and \cite{Wilking}).

\begin{example}[Holonomy foliations]
Examples of orbit-like foliations can be constructed as follows. Suppose $L$ is a Riemannian manifold, and $E$ is an Euclidean vector bundle over $L$, with inner product $\langle\ ,\,\rangle_x$ on each fiber $E_x$, and suppose $\nabla^E$ is a metric connection on $E$, that is, it satisfies $X\langle \xi, \eta \rangle=\langle\nabla^E_X\xi, \eta\rangle+\langle \xi, \nabla^E_X\eta\rangle$. Then $\nabla^E$ induces a Riemannian metric $\metric^E$ on $E$, the \textit{connection (Sasakian) metric}, and a parallel transport on $E$ given as follows: for $X\in E_x$ and a curve $\gamma:[0,1]\to L$ with $\gamma(0)=x$, there exists a unique lift $X(t)$, $t\in [0,1]$ with $X(0)=X$ such that $\nabla^{E}_{\gamma'(t)}X(t)=0$ for every $t\in [0,1]$. We define the \textit{holonomy foliation} $\f^E$ on $E$ by declaring two vectors $X, Y\in E$ to be in the same leaf if they can be connected to one another via a composition of parallel transports with respect to $\nabla^{E}$ (see Figure \ref{holonomyfoliation}). This defines a singular Riemannian foliation on $E$ for which $\metric^E$ is adapted. For a point $x$ along the zero section $L$, the infinitesimal foliation $\f_x$ coincides with the homogeneous foliation given by the orbits of the holonomy group $\hol_x$ of the connection $\nabla^E$ acting by isometries on the fiber $E_x$. Similarly, at a point $X\in E_x$ the infinitesimal foliation is given by the orbits in $\nu_X L_X$ of the stabilizer $H_X\subset H_x$ of $X$. Therefore $\f^E$ is infinitesimally homogeneous. In addition if the connected component of $\hol_x$ is compact, then $\f^{E}$ is orbit-like. This happens for example if $E=TL$ or if $E=\nu(L)$ when $L$ is an embedded submanifold of an Euclidean space (see \cite{Berndt-Console-Olmos}).
\end{example}

\begin{figure}
\centering{
\begingroup%
  \makeatletter%
  \providecommand\color[2][]{%
    \errmessage{(Inkscape) Color is used for the text in Inkscape, but the package 'color.sty' is not loaded}%
    \renewcommand\color[2][]{}%
  }%
  \providecommand\transparent[1]{%
    \errmessage{(Inkscape) Transparency is used (non-zero) for the text in Inkscape, but the package 'transparent.sty' is not loaded}%
    \renewcommand\transparent[1]{}%
  }%
  \providecommand\rotatebox[2]{#2}%
  \newcommand*\fsize{\dimexpr\f@size pt\relax}%
  \newcommand*\lineheight[1]{\fontsize{\fsize}{#1\fsize}\selectfont}%
  \ifx\svgwidth\undefined%
    \setlength{\unitlength}{227.1515253bp}%
    \ifx\svgscale\undefined%
      \relax%
    \else%
      \setlength{\unitlength}{\unitlength * \real{\svgscale}}%
    \fi%
  \else%
    \setlength{\unitlength}{\svgwidth}%
  \fi%
  \global\let\svgwidth\undefined%
  \global\let\svgscale\undefined%
  \makeatother%
  \begin{picture}(1,0.40052717)%
    \lineheight{1}%
    \setlength\tabcolsep{0pt}%
    \put(0,0){\includegraphics[width=\unitlength,page=1]{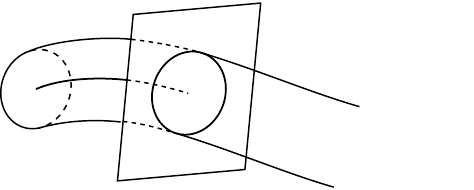}}%
    \put(0.5582973,0.35309875){\color[rgb]{0,0,0}\makebox(0,0)[lt]{\lineheight{0}\smash{\begin{tabular}[t]{l}$E_x$\end{tabular}}}}%
    \put(0,0){\includegraphics[width=\unitlength,page=2]{holonomyfolation.pdf}}%
    \put(0.53662835,0.10416089){\color[rgb]{0,0,0}\makebox(0,0)[lt]{\lineheight{0}\smash{\begin{tabular}[t]{l}$\gamma$\end{tabular}}}}%
    \put(0,0){\includegraphics[width=\unitlength,page=3]{holonomyfolation.pdf}}%
    \put(0.89154063,0.05290431){\color[rgb]{0,0,0}\makebox(0,0)[lt]{\lineheight{0}\smash{\begin{tabular}[t]{l}$L$\end{tabular}}}}%
    \put(0.58156641,0.17613319){\color[rgb]{0.40392157,0.40392157,0.40392157}\makebox(0,0)[lt]{\smash{\begin{tabular}[t]{l}$X(t)$\end{tabular}}}}%
  \end{picture}%
\endgroup%
}
\caption{The leaves of an holonomy foliation}
\label{holonomyfoliation}
\end{figure}

We end this section with a technical construction which will be needed later. Let $U=\tub_\varepsilon(P)$ be a distinguished tubular neighborhood, for $P$ contained in a saturated submanifold $N\subset\Sigma_r$. If $X\in\mathfrak{X}(\f|_{U})$ is given, we can produce another vector field $X^\ell$ on $U$ given by
$$X^\ell=\lim_{\lambda\to 0} \dif h_\lambda^{-1}(X\circ h_\lambda),$$
which is called the \textit{linearization of $X$ with respect to $P$}. It is smooth, invariant under homothetic transformations and coincides with $X$ along $P$ \cite[Proposition 13]{mendes}. The module $\mathfrak{X}(\f|_{U})^\ell$ of linearized vector fields spans a foliation $\f^\ell$ of $U$, in the sense that the leaves of $\f^\ell$ are the orbits of the pseudogroup of local diffeomorphisms $\mathscr{H}^\ell(U,\f)$ generated by the flows of the vector fields in $\mathfrak{X}(\f|_{U})^\ell$.

\begin{definition}[Linearized foliation]\label{definition: linearized foliation}
The foliation $\f^\ell$ of $U$ is called the \textit{linearization of $\f$ with respect to $P$}, or just the \textit{linearized foliation} when $\f$ and $P$ are clear from the context.
\end{definition}

The metric $\metric$ in general is not adapted to the linearization $\f^\ell$, but it is possible to construct a new metric $\metric^\ell$ on $U$ turning $\f^\ell$ into a singular Riemannian foliation (see \cite[Section 7]{alex4} for details. Now let $S_x=\pi_P^{-1}(x)$ be an $\f$-slice and, in analogy with the definition of the slice foliation $\f|_{S_x}$, consider the partition $\f^\ell|_{S_x}$ given by the connected components of $L\cap S_x$ as $L$ ranges between the leaves of $\f^\ell$. By construction, the pullback $\exp_x^*(\f^\ell|_{S_x})$ is invariant by rescalings, so we can also extend it to a foliation $(\f^\ell)_x$ of the whole $T_xS_x$. One proves that $(\f^\ell)_x$ coincides with the linearization of $(\f_x,\metric_x)$ with respect to the origin in $T_xS_x$ \cite[\S 6.4]{molino}, that is $(\f^\ell)_x=(\f_x)^\ell$, so it is safe to denote both of them by $\f_x^\ell$. The next result is the main reason why we are interested in this object.

\begin{proposition}[{\cite[Proposition 2.10]{alex4}}]\label{linearized infinitesimal is homogeneous}
The foliation $\f^{\ell}$ is the maximal infinitesimally homogenous subfoliation of $\f|_{U}$, for a tubular neighborhood $U$ of a leaf closure $J=\overline{L}$. Moreover, for each $x\in U$ the foliation $\f_x^\ell$ is given by the connected components of the orbits of the Lie group $\mathrm{O}(\f_x)$ of linear isometries of $(T_xS_x,\metric_x)$ sending each leaf of $\f_x$ to itself.
\end{proposition}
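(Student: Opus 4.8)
I would split the statement into three assertions and prove them in order: (1) $\f^\ell$ refines $\f|_U$ and, in the infinitesimal model, is spanned by linear vector fields vanishing at the origin; (2) the ``moreover'' clause, which identifies $\f_x^\ell$ with the foliation by connected components of $\mathrm{O}(\f_x)$-orbits and, as a byproduct, shows that $\f^\ell$ is infinitesimally homogeneous; (3) maximality.

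For (1) I would show that each linearized field $X^\ell=\lim_{\lambda\to0}\dif h_\lambda^{-1}(X\circ h_\lambda)$ is tangent to $\f$: for fixed $\lambda$, the value of $\dif h_\lambda^{-1}(X\circ h_\lambda)$ at $q$ is $\dif h_\lambda|_q^{-1}\big(X(h_\lambda(q))\big)$, and since $X(h_\lambda(q))\in T_{h_\lambda(q)}L_{h_\lambda(q)}$ while $h_\lambda^{-1}$ carries the plaque of $\f$ through $h_\lambda(q)$ onto the one through $q$ by Lemma~\ref{lemma: homothetic}, this vector lies in the fixed subspace $T_qL_q$; hence so does the limit $X^\ell(q)$. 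Thus $\mathscr{H}^\ell(U,\f)$ preserves $\f$ and $\f^\ell$ refines $\f|_U$. Running the same computation on $T_xS_x$, where $h_\lambda$ is the dilation $v\mapsto\lambda v$, shows that $\f_x^\ell$ refines $\f_x$ and that every $A\in\mathfrak{X}(\f_x)^\ell$ satisfies $A(\lambda v)=\lambda A(v)$, hence is linear, with $A(0)=0$ because $\{0\}$ is a leaf of the conical foliation $\f_x$ and every $\f_x$-tangent field must vanish there.

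For (2) I would set $V:=T_xS_x$, $\mathfrak{g}_x:=\mathfrak{X}(\f_x)^\ell\subset\mathfrak{gl}(V)$, and identify $A\in\mathfrak{g}_x$ with the endomorphism $DX(0)$ attached to $X\in\mathfrak{X}(\f_x)$. Because $D[X,Y](0)=[DX(0),DY(0)]$ when $X(0)=Y(0)=0$, the set $\mathfrak{g}_x$ is a Lie subalgebra, and $\f_x^\ell$ is the orbit foliation of the connected subgroup $G_x<\mathrm{GL}(V)$ it integrates. The key step is $\mathfrak{g}_x\subset\mathfrak{o}(\f_x)$: tangency of $A$ to $\f_x$ is Part (1), so the flow $e^{tA}$ fixes every leaf of $\f_x$; and $A$ is $\metric_x$-skew because the leaves of the infinitesimal foliation lie on distance spheres about $0$ (equidistance of the leaf $\{0\}$ from any leaf $L$ forces $|\cdot|$ to be constant on $L$), so the local flow of any $\f_x$-tangent field $X$ preserves these spheres; hence $\langle X(v),v\rangle=0$ near $0$, and the degree $2$ part of this identity at $0$ reads $\langle Av,v\rangle\equiv0$, i.e.\ $A+A^\top=0$. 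Thus $G_x<\mathrm{O}(\f_x)$. Conversely, any $B$ in the Lie algebra of $\mathrm{O}(\f_x)$ is skew and its flow fixes every leaf of $\f_x$, so $v\mapsto Bv$ is a linear $\f_x$-tangent field; being linear it equals its own linearization and therefore lies in $\mathfrak{g}_x$. Hence $\mathfrak{g}_x$ is precisely the Lie algebra of $\mathrm{O}(\f_x)$, and $\f_x^\ell$ is the foliation by connected components of $\mathrm{O}(\f_x)$-orbits. In particular $\f_x^\ell$ is homogeneous; and since $\f^\ell$ is its own linearization (its generators $X^\ell$ being $h_\lambda$-invariant), applying this description to $\f^\ell$ yields that each infinitesimal foliation $(\f^\ell)_y$ is homogeneous, so $\f^\ell$ is infinitesimally homogeneous.

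For (3) I would take an infinitesimally homogeneous subfoliation $\g$ of $\f|_U$ and show it refines $\f^\ell$ by comparing infinitesimal foliations: at each $y$, $\g_y$ refines $\f_y$, is homogeneous, and has $\{0\}$ as a leaf, so it is the orbit foliation of a group $K_y$ of linear isometries fixing $0$; since $\g_y$ refines the conical $\f_y$, its leaves lie on the $\metric_x$-spheres, forcing $K_y<\mathrm{O}(V,\metric_x)$, and since $K_y$ fixes each leaf of $\g_y$ while leaves of $\f_y$ are disjoint-or-equal, $K_y$ fixes each leaf of $\f_y$; hence $K_y<\mathrm{O}(\f_y)$ and $\g_y$ refines $(\f^\ell)_y$. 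As the local structure of a singular Riemannian foliation near a point is governed by its infinitesimal foliation, the inclusions $\g_y\preceq(\f^\ell)_y$ for all $y$ give $\g\preceq\f^\ell$, which is the asserted maximality. I expect the main obstacle to be the skew-symmetry assertion in Part (2): a priori the linearized fields are merely tangent linear fields, and it is the observation that infinitesimal foliations are cones over Riemannian foliations of round spheres that confines $G_x$ to $\mathrm{O}(V)$ rather than to $\mathrm{GL}(V)$; the passage in Part (3) from infinitesimal foliations back to the ambient foliations is routine but somewhat technical.
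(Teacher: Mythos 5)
The survey states this proposition only as a citation to \cite[Proposition 2.10]{alex4} and contains no proof of it, so your attempt can only be measured against the result itself. Your part (2) — tangency of linearized fields via the homothetic transformation lemma, skew-symmetry of the linearizations because the leaves of $\f_x$ lie on the round spheres about the origin (equidistance from the point-leaf $\{0\}$), and the converse inclusion of $\mathrm{Lie}(\mathrm{O}(\f_x))$ into the linearized fields — is essentially the correct and standard argument for the ``moreover'' clause, modulo the (citable) fact that $\mathrm{O}(\f_x)$ is a closed subgroup of $\mathrm{O}(T_xS_x,\metric_x)$ and hence a Lie group.

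The genuine gaps are in the two global steps. First, infinitesimal homogeneity of $\f^\ell$: your part (2) shows that $(\f_y)^\ell$ is homogeneous for every $y$, but what is needed is that the infinitesimal foliation of $\f^\ell$ itself is homogeneous at \emph{every} $y\in U$, including points off $J$. Your justification ``$\f^\ell$ is its own linearization'' refers to linearization with respect to $J$ (the homotheties $h_\lambda$ are centered on $P\subset J$), which does not imply that $(\f^\ell)_y$ coincides with its own linearization \emph{at $y$}; without that, part (2) cannot be invoked at $y$. Second, maximality: the passage from ``$\g_y$ is homogeneous with $\{0\}$ a leaf'' to ``$\g_y$ is the orbit foliation of a group of \emph{linear isometries}'' is not free — homogeneous in this paper only means ``orbits of some Lie group action,'' and a group preserving a conical foliation whose leaves lie on spheres need not act linearly; and the final step from $\g_y\preceq(\f^\ell)_y$ for all $y$ to $\g\preceq\f^\ell$ is exactly where the content lies, since leaves of these singular foliations are pseudogroup orbits, not integral manifolds of a distribution, and pointwise transverse comparisons do not integrate for free. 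The missing idea that repairs both points is the lemma that a subfoliation $\g$ of $\f|_U$ is infinitesimally homogeneous if and only if it equals its own linearization with respect to $J$; granting it, maximality is immediate ($\g=\g^\ell\preceq\f^\ell$ because $\mathfrak{X}(\g)\subset\mathfrak{X}(\f|_U)$ forces $\mathfrak{X}(\g)^\ell\subset\mathfrak{X}(\f|_U)^\ell$), and infinitesimal homogeneity of $\f^\ell$ follows from $(\f^\ell)^\ell=\f^\ell$. As written, your plan asserts rather than proves this equivalence.
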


\section{Molino's conjecture and its proof}\label{section: molino conjecture and its proof}

In this section we address the question of whether the closure $\overline{\f}$ of a singular Riemannian foliation is again a singular Riemannian foliation. Molino made this conjecture in the 1980's, and it remained open until 2017, when it was proved positive in \cite{alex4}. Before we present the results in \cite{alex4}, let us see what can be achieved by applying the structural theorem for regular foliations (Theorem \ref{Theo: molino structural}) to each stratum of a singular Riemannian foliation.

\subsection{Molino sheaf of a singular Riemannian foliation}\label{section: molino sheaf singular}
Consider a complete singular Riemannian foliation $(M,\f)$. As we saw in item \eqref{item: restriction to stratum is RRF} above, the restriction of a singular Riemannian foliation $\f$ to each stratum $\Sigma_r$ is a regular Riemannian foliation. Although $\f_r$ is not necessarily complete, one can still apply Theorem \ref{Theo: molino structural} to it because its holonomy pseudogroup is complete. It therefore follows that its leaf closures are submanifolds of $\Sigma_r$, and in particular the leaf closures in $\overline{\f}$ are submanifolds of $M$. Moreover, the distance between two leaf closures is locally constant, since this is true for the leaves themselves, hence the partition $\overline{\f}$ is a transnormal system on $M$.

Each $\f_r$ has a locally constant Molino sheaf $\mathscr{C}_r:=\mathscr{C}_{\f_r}$ of germs of local transverse Killing vector fields that describes the closure $\overline{\f_r}$. Consider, in particular, the Molino sheaf $\mathscr{C}_{\mathrm{reg}}$ of the restriction $\f_{\mathrm{reg}}$ to the regular stratum. The opposite Lie algebra of its stalk is called the \textit{structural Lie algebra} of $\f$, and denoted simply by $\mathfrak{g}$. The motivation for this is that $\mathscr{C}_{\mathrm{reg}}$ extends continuously to a locally constant sheaf $\mathscr{C}_{\f}$ on $M$, the \textit{Molino sheaf of $\f$}, with stalk $\mathfrak{g}^{-1}$ \cite[Lemma 6.5]{molino}. In fact, let us briefly present how this extension is obtained. Suppose $\Sigma_{r}$ is the stratum of the singular leaves of maximal dimension and let $P\subset\Sigma_{r}$ be an open, relatively compact, simply connected subset. If $\codim(\Sigma_{r})=2$, then by Lemma \ref{lemma: homothetic} one concludes that the restriction of $\f$ to the boundary of a tubular neighborhood $\tub_\varepsilon(P)$ is the pullback $\pi_P^*(\f_r)$. So, $\mathscr{C}_{\mathrm{reg}}$ coincides with $\pi_P^{-1}(\mathscr{C}_r)$ on $\tub_\varepsilon(P)$, hence the result. Now, if $\codim(\Sigma_{r})>2$, then $\tub_\varepsilon(P)\setminus P$ is a simply connected open subset of $\Sigma_{\mathrm{reg}}$, on which $\mathscr{C}_{\mathrm{reg}}$ is therefore constant and thus extends to $\tub_\varepsilon(P)$ by continuity. The extension of $\mathscr{C}_{\mathrm{reg}}$ to the other strata is then done similarly.

\begin{example}[Molino sheaf of a homogeneous singular Riemannian foliation]\label{exe: molino sheaf of homogeneous singular foliation}
Suppose $\f$ is given by the connected components of the orbits of a Lie group $H<\mathrm{Iso}(M)$ (see Example \ref{ex: homogeneous singular} and Proposition \ref{prop: homogeneous srf}). Then, in analogy to the regular case (see Example \ref{example: homogeneous foliations are killing}), $\mathscr{C}_{\f}$ is the sheaf of germs of the transverse Killing vector fields induced by the fundamental Killing vector fields of the action of the closure $\overline{H}<\mathrm{Iso}(M)$.
\end{example}

Furthermore, as the above example suggests, it is possible to prove that each sheaf $\mathscr{C}_r$ is a quotient of $\mathscr{C}_{\f}$ \cite[Proposition 6.8]{molino}. In particular, the structural algebra $\mathfrak{g}_r$ of $\f_r$ is a quotient of $\mathfrak{g}$, for each $r$.

\subsection{Blow ups and desingularization}\label{Sec-blowup}

Let us review a useful technical tool for singular Riemannian foliations that allows one to ``desingularize'' a singular Riemannian foliation $\f$ on a compact manifold $M$ by constructing from it another compact Riemannian manifold $(M^{\mathrm{B}},\metric^{\mathrm{B}})$, a regular foliation $\f^{\mathrm{B}}$, and a foliate smooth map $\rho:M^{\mathrm{B}}\to M$ with good geometric properties. For instance, $\rho$ restricts to a foliate diffeomorphism outside $\Sigma^{\mathrm{B}}:=\rho^{-1}(\Sigma_{\mathrm{sing}})$ and to an isometry outside a narrow open neighborhood of $\Sigma^{\mathrm{B}}$. This technique, inspired by the blow-up methods for resolution of singularities in algebraic geometry, appeared in \cite{molino3} for closures of regular Riemannian foliations and was generalized to arbitrary singular Riemannian foliations on compact manifolds in \cite{alex5}.

The foliation $(M^{\mathrm{B}},\f^{\mathrm{B}},\metric^{\mathrm{B}})$ is obtained by successively blowing it up along the most singular strata. It is instructive to review this process in more detail. Denote by $\Sigma:=\Sigma_{\mathrm{min}}$ its minimal stratum and by $U:=\tub_\varepsilon(\Sigma)$ a tubular neighborhood of $\Sigma$. One proves the following \cite[Theorem 1.2]{alex5}: 
\begin{enumerate}[(i)]
\item $\widehat{U}:=\{ (x,[X]) \in U\times \mathbb{P}(\nu\Sigma)\ |\ x=\exp^{\nu}(tX)\mbox{ for }|t|<r \}$ is a smooth manifold, called the \textit{blow-up of $U$ along $\Sigma$}, and the \textit{blow-up projection} $\hat{\rho}:\widehat{U}\to U$ defined by $\hat{\rho}(x,[X])=x$ is smooth.
\item $\widehat{\Sigma}:=\hat{\rho}^{-1}(\Sigma)=\{\hat{\pi}([X], [X])\in \widehat{U}\}=\mathbb{P}(\nu \Sigma)$, where $\hat{\pi}:\mathbb{P}(\nu\Sigma)\to \Sigma$ is the canonical projection. 
\item There exists a singular foliation $\widehat{\f}$ on $\widehat{U}$ whose leaves in the minimal stratum have dimension strictly greater then those on the minimal stratum of $\f$ and so that $\hat{\rho}:(\widehat{U}\setminus\widehat{\Sigma}, \widehat{\f}) \to (U\setminus\Sigma, \f)$ is a foliate diffeomorphism. In addition, if $\f$ is homogeneous then the leaves of $\widehat{\f}$ are also homogenous.
\item \label{metric of blow up} There exists a Riemannian metric $\hat{\metric}$ on $\widehat{U}$ adapted to $\widehat{\f}$.
\end{enumerate}

Let us briefly recall the construction of the metric in item \eqref{metric of blow up}. Consider the smooth distribution $\mathcal{S}$ on $U$ given by $\mathcal{S}_y=T_yS_{x}$, where $S_x$ is a slice of $L_x$ at $x$  with respect to the original metric and denote $y=\exp(X)$, for $X\in\nu_{x}\Sigma$. Recall that there exists a metric $\tilde{\metric}$ so that the normal space $\mathcal{P}$ to $\mathcal{S}_{y}$ is tangent to the leaf $L_{y}$ and so that $(U,\f,\metric)$ has the same transverse geometry as $(U,\f,\tilde{\metric})$, i.e., the distance between the plaques is the same regardless which metric we use. Then we have the decomposition
$$ T_{y} M = \mathcal{P}_{y}\oplus 
\mathcal{S}_{y}^{\mathrm{s}}\oplus \mathcal{S}_{y}^{\mathrm{r}} \oplus \mathcal{S}_{y}^{\mathrm{c}},$$
where
\begin{itemize}
\item $\mathcal{P}_{y}$ is orthogonal to $\mathcal{S}_{y}$, with respect to $\tilde{\metric}$,
\item $\mathcal{S}_{y}^{\mathrm{s}}\subset \mathcal{S}_{y}$ is tangent to the spheres $\exp_x(\nu \Sigma\cap B_{\|X\|}(0))$,
\item $\mathcal{S}_{y}^{\mathrm{r}}$ is the line generated by $\od{}{t}\exp_{x}(tX)|_{t=1}$, and
\item $\mathcal{S}_{y}^{\mathrm{c}}$ is the orthogonal complement of $\mathcal{S}_{y}^{\mathrm{s}}\oplus\mathcal{S}_{y}^{\mathrm{r}}$ in $\mathcal{S}_{y}$.
\end{itemize}
We now define a metric $\hat{\metric}$ on $U\setminus \Sigma$ which is adapted to $\f$. Let $f:(0,r)\to \mathbb{R}$ be a smooth function so that $f(t)=\frac{r^{2}}{t^2}$ for $0<t<\frac{r}{8}$ and $f(t)=1$ for $\frac{r}{4}<t<\frac{r}{2}$, and set
$$\hat{\metric}_{y}(Z,W)
=\tilde{\metric}(Z^\perp, W^\perp)
+f(\|X\|)\tilde{\metric}(Z^{\mathrm{s}},W^{\mathrm{s}})
+\tilde{\metric}(Z^{\mathrm{r}},W^{\mathrm{r}})
+\tilde{\metric}(Z^{\mathrm{c}}, W^{\mathrm{c}}).$$
It follows that $\hat{\metric}_{\exp(X)}=\tilde{\metric}_{\exp(X)}$ if $\frac{1}{4} \leq \|X\|\leq \frac{1}{2}$. Notice that $\hat{\metric}$ is adapted to $\f$, because $f(\|\exp^{-1}(x)\|)$ is constant along $L_{x}$ and hence $\hat{\metric}$ is basic on each stratum.

Since the distribution $\mathcal{S}$, i.e., the normal distribution to $\mathcal{P}$ with respect to $\tilde{\metric}$, can be deformed to the normal distribution to $\mathcal{P}$ with respect to $\metric$ without changing the transverse metric, we can extend the metric $\hat{\metric}$ on $\tub_{r/4}(\Sigma)$ to a new metric $\hat{\metric}$ on $U$ so that $\hat{\metric}_{\exp(rX/4)}=\tilde{\metric}_{\exp(rX/4)}$ and $\hat{\metric}_{\exp(tX)}=\metric_{\exp(tX)}$, for $r/2<t$ and $\|X\|=1$. The pullback $(\hat{\rho})^{*}\hat{\metric}$ defines a smooth metric on $\widehat{U}$ so that $\widehat{\f}$ turns into a singular Riemannian foliation on $\widehat{U}$. Finally we can extend the metric $\hat{\metric}$ on $\widehat{U}$ to a metric on the connected sum
$$\widehat{M}=(\widehat{U},\partial U)\#(M\setminus U, \partial U)$$
such that $\widehat{\f}$ is a singular Riemannian foliation on $\widehat{M}$.

We can now define by induction $M_{1}:=\widehat{M}$ and $M_k:=\widehat{M_{k-1}}$, with blow-up projection $\rho_{k}:M_{k}\to M_{k-1}$ and blow-up foliation $\f_k:=\widehat{\f_{k-1}}$. We further define $(M^{\mathrm{B}},\f^{\mathrm{B}},\metric^{\mathrm{B}})$ as the last blow-up space in this process (which eventually ends since $\dim(\f)$ is finite) and $\rho:M^{\mathrm{B}}\to M$ as $\rho=\rho_n\circ \cdots \circ \rho_{1}$.

Although in this survey we are interested in understanding singular Riemannian foliations with non-closed leaves, we recall here the following result that illustrates further geometric properties of blow-up's.  

\begin{theorem}[{\cite[Theorem 1.5]{alex5}}]
Let $\f$ be a closed singular Riemannian foliation on a compact Riemannian manifold $M$. Then for each small positive $\varepsilon>0$ there exists a regular Riemannian foliation $\f^{\mathrm{B}}$ with compact leaves on a compact Riemannian manifold $M^{\mathrm{B}}$ and a smooth surjective desingularization map $\rho:M^{\mathrm{B}}\to M$ that is induces an $\varepsilon$-isometry between the leaf spaces, that is, if $x$ and $y$ are points in $M^{\mathrm{B}}$ then
$$| d(L_{\rho(x)}, L_{\rho(y)}) - d(L_{x}^{\mathrm{B}}, L_{y}^{\mathrm{B}})   |<\varepsilon.$$
In particular the metric space $M/\f$ is a Gromov--Hausdorff limit of a sequence of Riemannian orbifolds.
\end{theorem}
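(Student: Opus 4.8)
The plan is to run the iterated blow-up of Section~\ref{Sec-blowup} and to control, stage by stage, the distortion that the induced maps produce on the leaf spaces, keeping the tubular neighbourhoods involved thin enough that the accumulated distortion stays below $\varepsilon$. First I would fix an $n$ such that after $n$ successive blow-ups the minimal stratum has maximal dimension, i.e.\ $\f_n$ is regular; such an $n$ exists because each blow-up strictly raises the dimension of the leaves in the minimal stratum and $\dim\f<\infty$. Then I would build the tower inductively: given $(M_{k-1},\f_{k-1},\metric_{k-1})$, blow up its minimal stratum $\Sigma_{\mathrm{min}}$ exactly as in the excerpt, but choosing the tube radius $r_k$ of $U_k=\tub_{r_k}(\Sigma_{\mathrm{min}})$ so small that the estimate of the third paragraph holds with constant $\varepsilon/n$. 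Put $(M^{\mathrm{B}},\f^{\mathrm{B}},\metric^{\mathrm{B}})=(M_n,\f_n,\metric_n)$ and $\rho=\rho_n\circ\cdots\circ\rho_1$. By construction $\rho$ is smooth, surjective and foliate, $\f^{\mathrm{B}}$ is a regular Riemannian foliation, and—because $\f$ is closed, each blow-up replaces the regular part by a diffeomorphic copy and the minimal stratum by a foliated projectivised normal bundle with compact fibres over a compact base with compact leaves—$\f^{\mathrm{B}}$ has compact leaves; one checks along the way that closedness is preserved at each step.

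The first ingredient is an exact statement off a thin collar. For a single blow-up along $\Sigma$ the metric $\hat{\metric}$ coincides with the ambient metric outside $\tub_{r/2}(\Sigma)$ and the blow-up projection is a foliate diffeomorphism over $U\setminus\Sigma$; hence $\rho_k$ restricts to a \emph{foliate isometry} from the complement of $\rho_k^{-1}\bigl(\tub_{r_k/2}(\Sigma_{\mathrm{min}})\bigr)$ onto the complement of $\tub_{r_k/2}(\Sigma_{\mathrm{min}})$, and the induced surjection $\bar\rho_k\colon M_k/\f_k\to M_{k-1}/\f_{k-1}$ of leaf spaces is an isometry away from the small image of that tube.

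The crux is the distortion estimate: one must bound $\bigl|\,d(L_{\rho_k x},L_{\rho_k y})-d(L_x,L_y)\,\bigr|$ by $\varepsilon/n$, the distances being measured in the respective leaf spaces. The inequality ``$\le$'' with the larger term on the left is the delicate one: a horizontal curve realising the leaf-space distance in $M_{k-1}$ may run through the singular stratum $\Sigma_{\mathrm{min}}$, which has no equidimensional preimage in $M_k$; one pushes such a curve to a nearby horizontal curve lying in the regular part at radius roughly $r_k/2$, where $\hat{\metric}=\tilde{\metric}$ has the same transverse geometry as $\metric$, paying only $O(r_k)$ in transverse length and in the size of the endpoint adjustments. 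The reverse inequality uses that the radial direction is left unstretched and that the stretching factor $f\ge 1$, so that transplanting a minimising horizontal curve from $M_k$ down to $M_{k-1}$ cannot shorten its transverse length below that of its part outside the thin band $\tub_{r_k/2}(\Sigma_{\mathrm{min}})\setminus\tub_{r_k/4}(\Sigma_{\mathrm{min}})$, again an $O(r_k)$ correction. Choosing $r_k$ accordingly makes each $\bar\rho_k$ a surjective $(\varepsilon/n)$-isometry, and telescoping along $\bar\rho=\bar\rho_1\circ\cdots\circ\bar\rho_n$ with the triangle inequality yields
$$\bigl|\,d\bigl(L_{\rho(x)},L_{\rho(y)}\bigr)-d\bigl(L^{\mathrm{B}}_x,L^{\mathrm{B}}_y\bigr)\,\bigr|<n\cdot\frac{\varepsilon}{n}=\varepsilon$$
for all $x,y\in M^{\mathrm{B}}$, which is the asserted $\varepsilon$-isometry (surjectivity of $\bar\rho$ being clear from that of each $\bar\rho_k$).

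For the last assertion, I would apply the theorem with $\varepsilon=1/m$: for each $m$ one gets a closed regular Riemannian foliation $\f^{\mathrm{B}}_m$ with compact leaves on a compact manifold $M^{\mathrm{B}}_m$ whose leaf space maps onto $M/\f$ by a surjective $(1/m)$-isometry. Since a closed Riemannian foliation with compact leaves on a compact manifold has finite holonomy \cite{molino}, Proposition~\ref{proposition: quotients of closed foliations are orbifolds} endows $M^{\mathrm{B}}_m/\f^{\mathrm{B}}_m$ with the structure of the underlying metric space of a Riemannian orbifold, and the $(1/m)$-isometry forces $d_{\mathrm{GH}}\bigl(M^{\mathrm{B}}_m/\f^{\mathrm{B}}_m,\,M/\f\bigr)\to 0$; thus $M/\f$ is a Gromov--Hausdorff limit of Riemannian orbifolds. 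The step I expect to be the main obstacle is the distortion estimate of the third paragraph: one has to see that although $\hat{\metric}$ magnifies the normal-sphere directions by the unbounded factor $r^2/t^2$ near the stratum, this can only lengthen curves, while the radial direction—along which the leaf space records the distance to the stratum—is untouched, so the whole distortion is governed by the width $r_k$ of the modified collar, which we are free to make as small as we wish.
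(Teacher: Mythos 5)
The survey does not prove this theorem --- it is quoted with a citation to \cite{alex5} --- and the only material it supplies is the blow-up construction of Section \ref{Sec-blowup}; your outline uses exactly that construction and follows the strategy of the original reference, so in that sense it is the "same approach." Your identification of the crux (the $O(r_k)$ distortion bound for a single blow-up, summed over the finitely many stages) is right, and the one point where your sketch needs to be tightened is the \emph{uniformity} of that bound: a minimizing horizontal curve in $M_{k-1}$ may travel a long way inside $\tub_{r_k/2}(\Sigma_{\mathrm{min}})$ roughly parallel to the stratum, so one must use that only the normal-sphere directions $\mathcal{S}^{\mathrm{s}}$ are stretched (while $\mathcal{P}$, $\mathcal{S}^{\mathrm{r}}$ and $\mathcal{S}^{\mathrm{c}}$ are untouched) and that the total $\mathcal{S}^{\mathrm{s}}$-displacement needed to reconnect entry and exit points of the tube is controlled by the diameter of the rescaled normal sphere, which is $O(r_k)$ uniformly by compactness of $M$. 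With that made explicit the telescoping argument closes, and your deduction of the Gromov--Hausdorff statement (via finiteness of holonomy of the compact leaves of the closed regular foliation $\f^{\mathrm{B}}$ and Proposition \ref{proposition: quotients of closed foliations are orbifolds}) is correct.
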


We will be specially interested in the following property.

\begin{proposition}\label{prop: lifting local transverse isometries to the blow up}
Let $\f$ be an infinitesimally closed singular Riemannian foliation on a compact Riemannian manifold $M$. Then every local $\f$-transverse Killing vector field $\overline{X}$ admits a lift to a local $\f^{\mathrm{B}}$-transverse Killing vector field $\overline{X^{\mathrm{B}}}$, in the sense that the flows of $X$ and $X^{\mathrm{B}}$ satisfy $\rho\circ\varphi^{\mathrm{B}}=\varphi\circ\rho$. 
\end{proposition}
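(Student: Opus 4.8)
The plan is to reduce the statement to a single blow-up step and then iterate, following the inductive construction of $(M^{\mathrm{B}},\f^{\mathrm{B}},\metric^{\mathrm{B}})$ in Section~\ref{Sec-blowup}. Recall that $\rho=\rho_n\circ\cdots\circ\rho_1$, where each $\rho_k\colon M_k\to M_{k-1}$ is a blow-up of $\f_{k-1}$ along its minimal stratum, with $M_0=M$ and $M_n=M^{\mathrm{B}}$. Since the blow-up of an infinitesimally closed foliation along its minimal stratum is again infinitesimally closed (on the exceptional divisor its infinitesimal foliations are built from those of $\f$ and remain closed, so the inductive hypothesis persists), it suffices to prove the following single-step claim: \emph{if $\f$ is infinitesimally closed on the compact manifold $M$, with minimal stratum $\Sigma$, and $\hat{\rho}\colon\widehat{M}\to M$ is the blow-up along $\Sigma$, then every local $\f$-transverse Killing field $\overline{X}$ admits a local $\widehat{\f}$-transverse Killing field $\widehat{X}$ that is $\hat{\rho}$-related to a foliate representative of $\overline{X}$}. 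Composing the lifts obtained at the $n$ stages then produces $\overline{X^{\mathrm{B}}}\in\mathfrak{iso}(\f^{\mathrm{B}})$ whose flow satisfies $\rho\circ\varphi^{\mathrm{B}}=\varphi\circ\rho$.

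For the single step, fix a foliate representative $X\in\mathfrak{L}(\f)$ of $\overline{X}$ on an open set $\Omega\subset M$. Its local flow $\varphi_t$ consists of foliate diffeomorphisms, and, $\overline{X}$ being transverse Killing, the induced transverse flow preserves the transverse metric $\metric_r^T$ on each stratum $\Sigma_r$. In particular $\varphi_t$ preserves the canonical stratification and each of its strata, hence the minimal stratum $\Sigma$; being transverse-isometric for an adapted metric it also preserves the basic function $d(\cdot,\Sigma)$, and, after shrinking $\Omega$, the distinguished tubular neighborhood $U=\tub_\varepsilon(\Sigma)$ together with the slice distribution $\mathcal{S}$ and its $\widetilde{\metric}$-orthogonal complement $\mathcal{P}$. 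Consequently $\dif\varphi_t$ restricts to a bundle automorphism of $\nu\Sigma$ covering $\varphi_t|_\Sigma$, so it induces a flow on $\mathbb{P}(\nu\Sigma)$ and, through the defining formula $\widehat{U}=\{(x,[X])\in U\times\mathbb{P}(\nu\Sigma)\mid x=\exp^{\nu}(tX),\ |t|<r\}$, a flow $\widehat{\varphi}_t$ on $\widehat{U}$ with $\hat{\rho}\circ\widehat{\varphi}_t=\varphi_t\circ\hat{\rho}$. Since $\hat{\rho}$ restricts to a foliate diffeomorphism over $U\setminus\Sigma$, the flow $\widehat{\varphi}_t$ agrees there with the pullback of $\varphi_t$ and extends smoothly across $\widehat{\Sigma}=\mathbb{P}(\nu\Sigma)$; gluing $\widehat{\varphi}_t$ on $\widehat{U}$ with $\varphi_t$ on $M\setminus U$ gives a flow on $\widehat{M}$ (defined over $\hat{\rho}^{-1}(\Omega)$) whose generator $\widehat{X}$ is by construction $\hat{\rho}$-related to $X$.

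It remains to check that $\widehat{X}\in\mathfrak{iso}(\widehat{\f})$. Over $\widehat{M}\setminus\widehat{\Sigma}$ this is immediate, since there $(\widehat{M},\widehat{\f},\metric^{\mathrm{B}})$ is identified via $\hat{\rho}$ with $(M\setminus\Sigma,\f,\metric)$ up to a modification of the metric that leaves the transverse geometry unchanged, and $\varphi_t$ is transverse Killing there. Near $\widehat{\Sigma}$, the leaves of $\widehat{\f}$ on the exceptional divisor are determined by the infinitesimal foliations $\f_x$, $x\in\Sigma$ (this is where \emph{infinitesimal closedness} is used, to make this structure and the maximal infinitesimally homogeneous subfoliation $\f^\ell$ of Proposition~\ref{linearized infinitesimal is homogeneous} available and well behaved), and a foliate diffeomorphism conjugates $\f_x$ to $\f_{\varphi_t(x)}$; hence $\widehat{\varphi}_t$ sends leaves of $\widehat{\f}$ to leaves of $\widehat{\f}$, i.e.\ is foliate, on all of $\widehat{M}$. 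Finally, $\metric^{\mathrm{B}}$ is assembled from $\widetilde{\metric}$ and the rescaling $f(\|X\|)$ along the decomposition $T_yM=\mathcal{P}_y\oplus\mathcal{S}_y^{\mathrm{s}}\oplus\mathcal{S}_y^{\mathrm{r}}\oplus\mathcal{S}_y^{\mathrm{c}}$; since $\varphi_t$ preserves each summand modulo the leafwise part and preserves $\|X\|=d(\cdot,\Sigma)$ as a transverse quantity, its transverse effect preserves $\metric^{\mathrm{B}}$ on each stratum of $\widehat{M}$, and therefore $\widehat{X}$ is $\widehat{\f}$-transverse Killing.

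The main obstacle is the last paragraph: verifying that a transverse Killing flow — which need not be an isometry of the ambient metric $\metric$, only of the transverse metric on each stratum — is genuinely compatible with every piece of the ad hoc blow-up metric near the exceptional divisor, where $\metric^{\mathrm{B}}$ is rescaled by $f(\|X\|)\sim r^2/\|X\|^2$. One must show that the flow preserves the slice distribution $\mathcal{S}$ and the $\widetilde{\metric}$-orthogonality defining the four summands, that $d(\cdot,\Sigma)$ is truly $\varphi_t$-invariant, and that the passage to $\mathbb{P}(\nu\Sigma)$ is smooth and foliate across $\widehat{\Sigma}$ — treating separately, as in the extension of the Molino sheaf in Section~\ref{section: molino sheaf singular}, the cases $\codim(\Sigma)=2$ and $\codim(\Sigma)>2$. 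Once this single-step compatibility is established, the induction over the finitely many blow-up stages is routine.
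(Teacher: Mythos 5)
The paper states this proposition without proof (no proof or proof outline follows it in the text), so there is no argument of the authors to compare against; your proposal has to stand on its own. Its architecture is reasonable and is the natural one: reduce to a single blow-up $\hat{\rho}:\widehat{U}\to U$ along the minimal stratum $\Sigma$, lift the flow $\varphi_t$ of a foliate representative by functoriality of the blow-up under diffeomorphisms preserving $\Sigma$ (the induced map on $\mathbb{P}(\nu\Sigma)$ does extend $\hat{\rho}^{-1}\circ\varphi_t\circ\hat{\rho}$ smoothly across the exceptional divisor), and then iterate over the finitely many stages. The reductions in your first two paragraphs are essentially correct.

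The gap is that the proof stops exactly where the difficulty begins. Your last paragraph is a list of obligations (``One must show that the flow preserves $\mathcal{S}$ and the $\widetilde{\metric}$-orthogonality\dots, that $d(\cdot,\Sigma)$ is truly $\varphi_t$-invariant, and that the passage to $\mathbb{P}(\nu\Sigma)$ is smooth and foliate across $\widehat{\Sigma}$''), not a discharge of them; as written, the central claim --- that $\widehat{X}$ is transverse Killing for the ad hoc metric $\metric^{\mathrm{B}}$, whose $\mathcal{S}^{\mathrm{s}}$-summand is rescaled by $f(\|X\|)\sim r^2/\|X\|^2$ near $\widehat{\Sigma}$ --- is asserted on the grounds that $\varphi_t$ ``preserves each summand modulo the leafwise part,'' which is itself unproved and is exactly the delicate point, since $\varphi_t$ is only a transverse isometry stratum by stratum and need not preserve $\widetilde{\metric}$-orthogonal complements on the nose. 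Relatedly, the role of the hypothesis is never actually pinned down: you say infinitesimal closedness makes the structure on $\widehat{\Sigma}$ ``available and well behaved,'' but the remark following the proposition in the paper indicates the real mechanism --- the lift exists because the local flow of a transverse Killing field lies in the closure of the \emph{linearized} holonomy (Definition \ref{definition: linearized foliation}, Proposition \ref{linearized infinitesimal is homogeneous}), a condition that is automatic when every $\f_x$ is closed and must be assumed otherwise. A complete proof needs to establish that implication, since $\widehat{\f}$ on the exceptional divisor is built from linearized data; without it, the claim that $\widehat{\varphi}_t$ is foliate on $\widehat{\Sigma}$ is not justified. So the strategy is sound, but the single-step compatibility you defer as ``routine once established'' is the theorem, and it remains open in your write-up.
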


\begin{remark}
In the general case where $\f$ is not necessarily infinitesimally compact one has a similar conclusion if in addition one supposes that the local flows of the transverse isometries are contained in the closure of linearized holonomies, a concept that we present below.
\end{remark}

\subsection{The proof of Molino's conjecture}\label{section: the proof of Molinos conjecture}

In \cite{alex9} the authors establish Molino's conjecture for orbit-like foliations using Theorem \ref{theo: smooth lift}. We already saw that $\overline{\f}$ is a transnormal system, so it remained to show that any vector $X_x\in\nu_xL_x\cap T_x\overline{L}_x$ can be extended to a smooth vector field $X\in\mathfrak{X}(\overline{\f})$. Notice that this is a local problem since we can find an extension $X$ in a neighborhood of $x$ and then use a partition of unity to extend it by $0$ to a vector field in $\mathfrak{X}(\overline{\f})$. The main ingredient in the proof of Molino's conjecture in \cite{alex4} is the following smooth lifting of (metric) isometries between leaf spaces.

\begin{theorem}{{\cite[Theorem 1.1]{alex9}}}\label{theo: smooth lift}
Let $M$ be a complete Riemannian manifold with a proper isometric action $G\times M\to M$ of a Lie group $G$, and suppose that $\varphi:\mathcal{D}\to M/G$ is a continuous local flow of isometries, where $\mathcal{D}$ is an open neighborhood of a point $(\bar{x},0)\in M/G\times\mathbb{R}$. Then $\varphi$ is the projection of a $G$-equivariant smooth flow on the preimage of $\mathcal{D}$ in $M$.
\end{theorem}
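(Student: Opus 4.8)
The plan is to reduce the statement to a purely local problem on $M/G$ and to construct the desired flow as the flow of a carefully chosen $G$-invariant vector field, so that both $G$-equivariance and the fact that the flow projects to $\varphi$ come for free. Write $\pi\colon M\to M/G$ for the quotient map and $\f_G$ for the homogeneous singular Riemannian foliation of $M$ by the connected components of the $G$-orbits (Proposition \ref{prop: homogeneous srf}). It suffices to produce, over the preimage of $\mathcal D$, a smooth $G$-invariant vector field $X$ which is $\pi$-related to an infinitesimal generator of $\varphi$: its flow $\Phi$ is then automatically $G$-equivariant, satisfies $\pi\circ\Phi_t=\varphi_t\circ\pi$, and --- using properness of the action together with completeness of $M$ to prevent $\Phi_t(z)$ from escaping the tube structure over the curve $s\mapsto\varphi_s(\pi(z))$ in finite time --- is defined wherever $\varphi$ is. Since such a field may be glued from local pieces by a $G$-invariant partition of unity, the problem becomes local on $M/G$, and the natural strategy is to argue by induction on $\dim M$.

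First I would treat the regular part. Isometries of $M/G$ preserve the decomposition into orbit-type strata, since these are characterized metrically (e.g.\ through the tangent cones, which are Euclidean exactly along the principal stratum), so $\varphi$ restricts to a continuous local flow of isometries of the Riemannian manifold $\Sigma_{\mathrm{reg}}/G$. By the Bochner--Montgomery theorem --- a continuous local action of $\mathbb R$ by isometries on a Riemannian manifold is smooth --- this restricted flow is smooth and generated by a Killing field $\overline X$ on an open subset of $\Sigma_{\mathrm{reg}}/G$. Over the regular stratum $\pi\colon \Sigma_{\mathrm{reg}}\to\Sigma_{\mathrm{reg}}/G$ is a Riemannian submersion whose horizontal distribution $\mathcal H=(T\f_G)^\perp$ is $G$-invariant; let $X$ be the horizontal lift of $\overline X$. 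Then $X$ is smooth, basic, $\pi$-related to $\overline X$, and $G$-invariant --- the $G$-invariance of $X$ being exactly the $G$-invariance of $\mathcal H$ --- which settles the base of the induction and the principal stratum.

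The core of the argument, and the step I expect to be the main obstacle, is extending $X$ across the singular strata. Fix a singular orbit $Gy$ with $\bar y=\pi(y)$ in the relevant region. By the slice theorem (see e.g.\ \cite{alex}) a tubular neighborhood of $Gy$ is $G$-equivariantly diffeomorphic to $G\times_{G_y}V$, with $G_y$ compact and $V=\nu_y(Gy)$ carrying the orthogonal slice representation; correspondingly a neighborhood of $\bar y$ in $M/G$ is identified with $V/G_y$. If $Gy$ is not a fixed point then $\dim V<\dim M$, so the inductive hypothesis applied to the isometric $G_y$-action on $V$ yields a $G_y$-equivariant smooth lift of $\varphi$ near $\bar y$, which transports to a $G$-invariant smooth field on the tube; if $Gy=\{y\}$ is a fixed point, then $V/G=T_yM/G$ is a metric cone over the sphere of directions $\mathbb S(V)/G$ and one reduces the dimension by lifting (again inductively) the induced flow of isometries of the link and re-extending it radially via the homothety structure of Section \ref{section: homothetic lemma}, smoothness at the apex being controlled through Lemma \ref{lemma: homothetic} and the linearization procedure there (alternatively, one may locally blow up $M$ along its minimal stratum as in Section \ref{Sec-blowup}, where the foliation is less singular, lift there, and push the lift back down). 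The two genuinely delicate points are: (i) \emph{smoothness} of the extended field at the most singular points --- to be handled by working with the linearized holonomy, exploiting rescaling-invariance of the linearization, and using that $G_y$-invariant smooth vector fields on $V$ form a finitely generated $C^\infty(V)^{G_y}$-module so that the relevant Taylor expansions close up; and (ii) \emph{consistency} of the local lifts on overlaps --- two $G$-invariant $\pi$-related fields projecting to the same $\overline X$ differ by a section of $T\f_G$, so one pins down the canonical horizontal representative on $\Sigma_{\mathrm{reg}}$ and checks that the local models built near the singular strata restrict to it after the inevitable gauge correction, after which a $G$-invariant partition of unity glues everything.

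Having assembled $X$ on the preimage of $\mathcal D$, one concludes that its flow $\Phi$ is the required $G$-equivariant smooth flow, with $\pi\circ\Phi=\varphi\circ(\pi\times\mathrm{id})$ by construction. I expect essentially all of the work --- and any hidden hypotheses that would need to be verified for the homogeneous, proper case at hand (orbit-likeness of the infinitesimal foliations of $\f_G$, so that the linearized-holonomy condition of the Remark following Proposition \ref{prop: lifting local transverse isometries to the blow up} is automatic) --- to be concentrated in points (i) and (ii) of the previous paragraph.
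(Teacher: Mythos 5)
First, a remark on scope: the survey does not prove this statement at all --- it is imported verbatim from \cite{alex9} and then used as a black box in the proof of Molino's conjecture --- so there is no in-paper argument to compare yours against, and I can only judge the proposal on its own terms. At that level, your skeleton begins correctly: isometries of $M/G$ preserve the metrically characterized regular part, the Myers--Steenrod/Montgomery--Zippin mechanism upgrades the continuous flow there to a smooth flow generated by a Killing field $\overline{X}$ of $\Sigma_{\mathrm{reg}}/G$, and the horizontal lift of $\overline{X}$ is a smooth, $G$-invariant field on $\pi^{-1}(\Sigma_{\mathrm{reg}})$ whose flow projects to $\varphi$ there; density plus continuity then shows that any smooth $G$-invariant extension to $M$ would have a flow projecting to $\varphi$ everywhere. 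Your ``delicate point (ii)'' is in fact not delicate: a convex combination (with $G$-invariant weights) of $G$-invariant fields $\pi$-related to the same generator is again such a field, so gluing by an invariant partition of unity needs no gauge correction.

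The genuine gap is that the entire content of the theorem --- the smooth extension of this field across $\Sigma_{\mathrm{sing}}$ --- is not proved but only announced as ``point (i)'' together with a list of tools you hope will suffice, and the inductive scheme you propose for it does not close. (1) The induction is on $\dim M$ via the slice theorem, but at a $G$-fixed point the slice is all of $T_yM$ and the dimension does not drop; you then pass to the link $\mathbb{S}(V)/G$, yet a flow of isometries of $M/G$ need not fix $\bar y$ (take $\mathrm{SO}(2)$ acting on $\mathbb{R}^2\times\mathbb{R}$ by rotating the first factor and let $\varphi$ translate the second factor: every point of the fixed-point stratum moves), so in general there is no induced flow on the link; and even when $\bar y$ is fixed, coning off a lift of the link flow only reproduces the \emph{linearization} of $\varphi$ at $\bar y$, not $\varphi$ itself on a neighborhood. (2) The same basepoint-motion problem afflicts every singular orbit: the local model $V/G_y$ is a cone whose apex is displaced by $\varphi$, so the inductive hypothesis for the $G_y$-action on $V$ does not apply directly, and the proposal does not say how to factor out the motion along the stratum first. (3) The assertion that ``the relevant Taylor expansions close up'' is precisely the analytic heart of \cite[Theorem 1.1]{alex9} and cannot be taken for granted: a smooth $G$-invariant field on the regular part that extends continuously to $M$ need not extend smoothly (already for $\mathbb{Z}_2$ acting on $\mathbb{R}$ by reflection, $x|x|^{1/2}\partial_x$ is invariant, continuous, and not $C^2$ at the origin), so one must exploit the specific structure of a field generating an isometric flow of the quotient, and that is where all the work lies. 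Until (1)--(3) are addressed, the proposal is a plausible programme rather than a proof.
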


To comment on the proof we will need a generalization of the notion of slice, which will also be useful later in the study of the dynamical behavior of a singular Riemannian foliation $\f$. Let $J$ be an $\f$-saturated manifold contained in some stratum $\Sigma_r$ and let $S\subset{U}$ a slice of the restriction $\f|_J$. Moreover, consider the denote the restriction $(\nu J)|_S$ of the normal bundle $\nu J$ to $S$.

\begin{definition}[Reduced space]
We define the \textit{reduced space of $\f$ along $S$} as the manifold $\mathsf{N}:=\exp(\nu^\varepsilon J)|_{S})$, where $(\nu^\varepsilon J)|_{T}=\{\xi\in (\nu J)|_S\ |\ \|\xi\|<\varepsilon\}$ and $\varepsilon>0$ is small enough so that $\exp$ is a diffeomorphism onto $\mathsf{N}$.
\end{definition}

With the footpoint projection, the reduced space is a fiber bundle $\mathsf{p}_T:\mathsf{N}\to T$ whose fibers contain the leaves of the foliation $\f_{\mathsf{N}}$ defined by the intersections of $\f$ with $\mathsf{N}$. Furthermore, one can endow $\mathsf{N}$ with a metric $\tilde{\metric}$ which is adapted to $\f_{\mathsf{N}}$ and preserves the transverse metric of $\f$ \cite[Proposition 2.20]{alex9}.

The point of this construction is that when $\f$ is bundle-like, the foliation $\f_{\mathsf{N}}$ is homogeneous, given by the orbits of a compact Lie group \cite[Corollary 2.25]{alex9}. Hence one can apply (a slight generalization of) Theorem \ref{theo: smooth lift} to continuous flows of isometries on $\mathsf{N}/\f_{\mathsf{N}}$. Having this, it is then a matter of finding such a flow that corresponds to a given $X_x\in\nu_xL_x\cap T_x\overline{L}_x$. The authors accomplish that by first generalizing the notion of (regular) holonomy pseudogroup by obtaining, for $\mathsf{N}$ a reduced space along a slice, a pseudogroup of local metric isometries $\mathscr{H}(\f_{\mathsf{N}})$ acting on $\mathsf{N}/\f_{\mathsf{N}}$ and capturing the recurrence of the leaves on $\mathsf{N}$ (we will introduce a similar construction in Section \ref{section: inf closed fol and strong molino conj}). Now consider the desingularization $(\mathsf{N}^{\mathrm{B}},\f^{\mathrm{B}}_{\mathsf{N}})$ of $\f_{\mathsf{N}}$. Since $\f^{\mathrm{B}}_{\mathsf{N}}$ is regular, Theorem \ref{theorem: salem} implies that $\overline{\mathscr{H}_{\f^{\mathrm{B}}_{\mathsf{N}}}}$ is a Lie pseudogroup, so lifting the local projection of $X_x\in\nu_xL_x\cap T_x\overline{L}_x$ to a vector in $\mathsf{N}^{\mathrm{B}}$, which is tangent to the leaf closures, one concludes that it extends to a local Killing vector field. But there is a bijection between local isometries in $\overline{\mathscr{H}_{\f^{\mathrm{B}}_{\mathsf{N}}}}$ and local isometries in the closure $\overline{\mathscr{H}(\f_{\mathsf{N}})}$ (in the compact-open topology) \cite[Lemma 4.2]{alex9}, so the authors obtain the desired continuous flow and hence a smooth vector field on $\mathsf{N}$ that can then be extended to the desired vector field on $M$ extending $X_x$.

Having Molino's conjecture for orbit-like foliations, the conclusion for an arbitrary singular Riemannian foliation is obtained by using the linearization $\f^\ell$. More precisely, on a local neighborhood a leaf $L\in\f$ the authors show in \cite{alex4} that there is an orbit-like foliation $\widehat{\f}^\ell$, obtained from $\f^\ell$ by taking the ``local closure'' of the leaves of $\f^\ell$ (see details in \cite[\S 6]{alex4}), such that:
\begin{enumerate}[(i)]
\item $\widehat{\f}^\ell$ coincides with $\f$ on $\overline{L}$, and
\item \label{item: property local closure}$\overline{\widehat{\f}^\ell}=\overline{\f^\ell}\subset \overline{\f}$.
\end{enumerate}
From this the proof of Molino's conjecture is clear: by the orbit-like case $X_x$ can be extended to a smooth vector field which is tangent to $\overline{\f^\ell}$ and hence tangent to $\overline{\f}$, by item \eqref{item: property local closure}.

\begin{remark}
It is necessary to construct $\widehat{\f}^\ell$ because $\f^\ell$ may not be orbit-like, since the foliations $\f_x^\ell$ need not to be closed, although they are always homogeneous (recall Proposition \ref{linearized infinitesimal is homogeneous}). Notice, hence, that the technical step of constructing $\widehat{\f}^\ell$ is not needed when $\f$ is infinitesimally closed.
\end{remark}

\subsection{Strong Molino conjecture}

Combining the existence of the sheaf $\mathscr{C}_{\f}$ with the fact that Molino's conjecture is true we can state the following structural theorem, which bears great resemblance with the regular case (recall Theorem \ref{Theo: molino structural}).

\begin{theorem}[Structural theorem for singular Riemannian foliations {\cite[Theorem 6.2]{molino},\cite[Theorem]{alex4}}]
Let $\f$ be a complete singular Riemannian foliation of $M$ and let $\mathrm{g}$ be an adapted metric. Then
\begin{enumerate}[(i)]
\item The closure $\overline{\f}$ is a singular Riemannian foliation with adapted metric $\mathrm{g}$,
\item There exists a locally constant sheaf of Lie algebras $\mathscr{C}_{\f}$ on $M$ which induces $\mathscr{C}_{\mathrm{reg}}$ on $\Sigma_{\mathrm{reg}}$ and whose restriction to a singular stratum $\Sigma_r$ admits $\mathscr{C}_r$ as a quotient sheaf.
\end{enumerate}
\end{theorem}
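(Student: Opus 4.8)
The statement bundles together two essentially independent assertions, and the plan is to prove them separately, assembling ingredients already developed above.

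Part (ii) is the construction of $\mathscr{C}_\f$, carried out as sketched in Section \ref{section: molino sheaf singular}. First I would apply Molino's structural theorem (Theorem \ref{Theo: molino structural}) to the restriction $\f_{\mathrm{reg}}$ to the regular stratum: even though $\f_{\mathrm{reg}}$ need not be complete, its holonomy pseudogroup is, so it carries a locally constant Molino sheaf $\mathscr{C}_{\mathrm{reg}}$ of germs of transverse Killing fields with stalk $\mathfrak{g}^{-1}$. Then I would extend $\mathscr{C}_{\mathrm{reg}}$ across the singular locus one stratum at a time, starting from the stratum $\Sigma_r$ of singular leaves of maximal dimension and proceeding to the more singular strata. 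Given a small, relatively compact, simply connected $P\subset\Sigma_r$ and a tube $\tub_\varepsilon(P)$, the homothetic transformation lemma (Lemma \ref{lemma: homothetic}) splits into two cases: if $\codim(\Sigma_r)=2$ then $\f$ restricted to $\partial\tub_\varepsilon(P)$ is the pullback $\pi_P^*(\f_r)$, so $\mathscr{C}_{\mathrm{reg}}$ coincides with $\pi_P^{-1}(\mathscr{C}_r)$ there and extends over $P$; if $\codim(\Sigma_r)>2$ then $\tub_\varepsilon(P)\setminus P\subset\Sigma_{\mathrm{reg}}$ is simply connected, so $\mathscr{C}_{\mathrm{reg}}$ is constant on it and extends by continuity. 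Iterating produces the locally constant sheaf $\mathscr{C}_\f$ on $M$ restricting to $\mathscr{C}_{\mathrm{reg}}$; that each $\mathscr{C}_r$ is a quotient of $\mathscr{C}_\f|_{\Sigma_r}$ then follows as in \cite[Proposition 6.8]{molino} by comparing the two sheaves on a regular stratum adjacent to $\Sigma_r$ and using local constancy.

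Part (i) has an easy half and a hard half. For the easy half, $\overline{\f}$ is already a transnormal system on $M$: leaf closures are submanifolds (apply Theorem \ref{Theo: molino structural} inside each stratum, using completeness of the holonomy pseudogroup of $\f_r$), and the distance between two leaf closures is locally constant because this holds for the leaves themselves; moreover $\mathrm{g}$ is adapted to $\overline{\f}$ because, as $T_xL_x\subset T_x\overline{L}_x$, a geodesic perpendicular to $\overline{L}_x$ at a point is already perpendicular to $L_x$ there, hence stays perpendicular to every leaf and so to every leaf closure. The hard half is that $\overline{\f}$ is a \emph{smooth} singular foliation, i.e.\ that every $X_x\in\nu_xL_x\cap T_x\overline{L}_x$ extends to a smooth vector field tangent to $\overline{\f}$; since this is local, a partition of unity reduces it to producing such an extension near $x$. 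The plan is to reduce to the orbit-like case and then lift. For orbit-like $\f$, I would pass to a reduced space $\mathsf{N}$ along an $\f$-slice: with a metric $\tilde{\metric}$ preserving the transverse metric of $\f$, the induced foliation $\f_{\mathsf{N}}$ is homogeneous, given by the orbits of a compact Lie group, so the smooth lifting theorem (Theorem \ref{theo: smooth lift}) applies to continuous isometric flows on $\mathsf{N}/\f_{\mathsf{N}}$. To produce the correct flow I would combine the holonomy metric pseudogroup $\mathscr{H}(\f_{\mathsf{N}})$ with the desingularization $(\mathsf{N}^{\mathrm{B}},\f^{\mathrm{B}}_{\mathsf{N}})$: since $\f^{\mathrm{B}}_{\mathsf{N}}$ is regular, Salem's theorem (Theorem \ref{theorem: salem}) makes $\overline{\mathscr{H}_{\f^{\mathrm{B}}_{\mathsf{N}}}}$ a Lie pseudogroup, so the projection of $X_x$ to a vector tangent to the leaf closures in $\mathsf{N}^{\mathrm{B}}$ extends to a local Killing field; transporting it through the bijection between local isometries of $\overline{\mathscr{H}_{\f^{\mathrm{B}}_{\mathsf{N}}}}$ and of $\overline{\mathscr{H}(\f_{\mathsf{N}})}$ and then lifting via Theorem \ref{theo: smooth lift} gives a smooth vector field on $\mathsf{N}$ tangent to $\overline{\f_{\mathsf{N}}}$ and extending $X_x$, which extends to the desired field on $M$. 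For general $\f$, I would replace $\f$ near $L_x$ by the linearization $\f^\ell$ (Proposition \ref{linearized infinitesimal is homogeneous}) and its ``local closure'' $\widehat{\f}^\ell$, which is orbit-like, agrees with $\f$ on $\overline{L}_x$, and satisfies $\overline{\widehat{\f}^\ell}=\overline{\f^\ell}\subset\overline{\f}$; applying the orbit-like case to $\widehat{\f}^\ell$ then extends $X_x$ to a field tangent to $\overline{\f^\ell}$, hence to $\overline{\f}$.

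The hard part will be the orbit-like case of part (i): manufacturing, from the purely topological datum $X_x\in\nu_xL_x\cap T_x\overline{L}_x$, an honest continuous isometric flow on an orbit space to which Theorem \ref{theo: smooth lift} can be applied. This is exactly where the reduced-space construction and the passage to the regular desingularization $\f^{\mathrm{B}}_{\mathsf{N}}$ (so that Salem's Lie-pseudogroup theorem becomes available) do the real work; by contrast, the reduction of the general case to the orbit-like one via $\widehat{\f}^\ell$ is comparatively formal once the properties of the local closure are in hand, and part (ii) is careful bookkeeping with the homothetic transformation lemma.
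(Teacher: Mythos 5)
Your proposal is correct and follows essentially the same route as the paper, which presents this theorem precisely as the combination of the stratum-wise construction and continuous extension of $\mathscr{C}_{\mathrm{reg}}$ (Section \ref{section: molino sheaf singular}) with the proof of Molino's conjecture via reduced spaces, the smooth lifting theorem, desingularization and Salem's theorem in the orbit-like case, and the reduction of the general case through the local closure $\widehat{\f}^\ell$ of the linearized foliation (Section \ref{section: the proof of Molinos conjecture}). The only quibble is that your one-line geodesic argument for adaptedness of $\mathrm{g}$ to $\overline{\f}$ (``perpendicular to every leaf, so to every leaf closure'') silently uses the local equidistance of leaf closures and a first-variation argument, but since you also state the equidistance explicitly this is harmless.
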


Unlike the regular case, however, it is not possible to conclude from what we have seen so far that $\mathscr{C}_{\f}$ is a sheaf of Lie algebras of germs of transverse Killing vector fields, since the extension of $\mathscr{C}_{\mathrm{reg}}$ to $\mathscr{C}_\f$ is continuous: we do not know whether the extensions of sections admit smooth representatives. In fact, this would imply Molino's conjecture, since in this case for a small neighborhood $U$ the (smooth) fields in $\mathscr{C}_{\f}(U)$ would be transitive on the closures of the leaves. For this reason, we state the following:

\begin{conjecture}[Strong Molino conjecture]
The sheaf $\mathscr{C}_{\f}$ is a sheaf of Lie algebras of germs of transverse Killing vector fields.
\end{conjecture}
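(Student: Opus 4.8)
The plan is to reduce the assertion to a purely local statement near a singular leaf and then to bootstrap it from the regular Molino theory, using the linearization $\f^{\ell}$ and the desingularization $\rho\colon M^{\mathrm{B}}\to M$. Since $\mathscr{C}_{\f}$ is locally constant and the extension of $\mathscr{C}_{\mathrm{reg}}$ is performed stratum by stratum, it suffices to fix a leaf $L$ contained in a singular stratum $\Sigma_{r}$, a distinguished tubular neighborhood $U=\tub_{\varepsilon}(P)$ with $P\subset L$ open and relatively compact, and a section $\overline{X}\in\mathscr{C}_{\f}(U)$ which on $U\cap\Sigma_{\mathrm{reg}}$ is a genuine section of $\mathscr{C}_{\mathrm{reg}}$, and to produce a smooth $\overline{Y}\in\mathfrak{iso}(\f|_{U})$ whose restriction to $U\cap\Sigma_{\mathrm{reg}}$ equals $\overline{X}$. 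The first step is to pass to the infinitesimally closed case: replacing $\f|_{U}$ by the orbit-like foliation $\widehat{\f}^{\ell}$ of Section \ref{section: the proof of Molinos conjecture}, which satisfies $\widehat{\f}^{\ell}=\f$ on $\overline{L}$ and $\overline{\widehat{\f}^{\ell}}=\overline{\f^{\ell}}\subset\overline{\f}$, a smooth representative tangent to $\overline{\widehat{\f}^{\ell}}$ is automatically tangent to $\overline{\f}$, and its Killing property on each stratum is inherited because $\widehat{\f}^{\ell}$ and $\f$ share the transverse metric near $L$. (If $\f$ is itself infinitesimally closed this step is vacuous.)

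Having reduced to an infinitesimally closed, indeed orbit-like, model, I would transplant $\overline{X}$ to the blow-up. Over $U\setminus P\subset\Sigma_{\mathrm{reg}}$ the blow-up projection is a foliate isometry, so $\overline{X}$ lifts to a section of $\mathscr{C}_{\f^{\mathrm{B}}}$ on $\rho^{-1}(U)\setminus\widehat{P}$; since $\f^{\mathrm{B}}$ is a complete \emph{regular} Riemannian foliation, Molino's structural theorem (Theorem \ref{Theo: molino structural}) applies and this section is the germ of an honest smooth transverse Killing field $\overline{X^{\mathrm{B}}}\in\mathfrak{iso}(\f^{\mathrm{B}})$ defined on all of $\rho^{-1}(U)$, in particular across the exceptional divisor. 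It then remains to push $\overline{X^{\mathrm{B}}}$ back down, which is where one invokes a converse to Proposition \ref{prop: lifting local transverse isometries to the blow up}: for infinitesimally closed foliations a transverse Killing field of $\f^{\mathrm{B}}$ whose flow is $\rho$-compatible descends to a smooth transverse Killing field of $\f$, and $\overline{X^{\mathrm{B}}}$, obtained by lifting $\overline{X}$, inherits this compatibility. The descended field represents the germ of $\overline{X}$, which is precisely the content of the conjecture.

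A complementary route, which I would develop in parallel as a consistency check and to handle points where the blow-up picture is awkward, is via the reduced space $\mathsf{N}=\exp((\nu^{\varepsilon}P)|_{S})$ along a slice, equipped with the adapted metric $\tilde{\metric}$ preserving the transverse metric. In the orbit-like (or bundle-like) situation $\f_{\mathsf{N}}$ is homogeneous, given by the orbits of a compact Lie group $K$ (as in the proof of Molino's conjecture), so $\mathscr{C}_{\f_{\mathsf{N}}}$ is the sheaf of germs of the fundamental Killing vector fields of $\overline{K}$ — manifestly a sheaf of germs of smooth transverse Killing fields. One then identifies the restriction of $\mathscr{C}_{\widehat{\f}^{\ell}}$ to $\mathsf{N}$ with a subsheaf of $\mathscr{C}_{\f_{\mathsf{N}}}$, using that the regular Molino sheaf already records the $\overline{K}$-symmetries on the regular stratum of $\mathsf{N}$, and that these symmetries are natural under the foliate isometries relating $\mathsf{N}$ and $U$.

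The main obstacle in both routes is exactly the passage from \emph{continuous} to \emph{smooth} at the singular strata: the construction of $\mathscr{C}_{\f}$ furnishes only a continuous matching of $\mathscr{C}_{\mathrm{reg}}$ across $P$, and everything hinges on realizing this matching by a smooth field. In the blow-up approach the crux is the projectability statement — proving that the smooth field produced on $M^{\mathrm{B}}$ is $\rho$-related to a field on $M$ — which requires a genuine converse to the lifting proposition, presumably proved by tracking the flow of $\overline{X^{\mathrm{B}}}$ through the successive blow-downs and using that it is tangent to the $\f^{\mathrm{B}}$-leaf closures. In the reduced-space approach one must instead show that the $\overline{K}$-symmetry extending $\overline{X}$ on the regular part is the symmetry that $\mathscr{C}_{\widehat{\f}^{\ell}}$ records; here one would exploit that transverse Killing fields satisfy a second-order ``transverse Killing equation'', so that a bounded continuous transverse field agreeing with a smooth $\overline{K}$-symmetry on the dense regular stratum is forced to coincide with it. Finally, the genuinely non-infinitesimally-closed case — where $\f_{x}^{\ell}$ is homogeneous but not closed — must be dispatched by checking that the local closure $\widehat{\f}^{\ell}$ does not enlarge the relevant germs, i.e. that $\mathscr{C}_{\f}$ near $L$ is truly a subsheaf of $\mathscr{C}_{\widehat{\f}^{\ell}}$; I expect this last piece of bookkeeping, together with the converse lifting statement, to be the hard part.
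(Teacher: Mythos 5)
The statement you are addressing is stated in the paper as a \emph{conjecture}: the paper does not claim a proof in general, and only establishes it for infinitesimally closed singular Riemannian foliations on compact manifolds (via the linearized holonomy pseudogroup of Section \ref{section: inf closed fol and strong molino conj}, Theorem \ref{theorem-sheafSRF} and its corollary). Restricted to the infinitesimally closed case, your outline is essentially the paper's strategy --- transfer the problem to the regular foliation $\f^{\mathrm{B}}$ via the blow-up, apply the regular structural theory (Salem's theorem, Theorem \ref{Theo: molino structural}) there, and descend --- and in that case the descent is legitimate because Lemma \ref{lemma-sheafs-of-blowup} identifies $\mathscr{C}_{\f_{k+1}}$ with $\rho_{k+1}^{-1}\mathscr{C}_{\f_k}$ at each stage. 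But as a proof of the full conjecture the proposal has two genuine gaps, and they are exactly the points you defer to the end as ``bookkeeping''.

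First, the converse lifting statement you invoke is not available in general: Proposition \ref{prop: lifting local transverse isometries to the blow up} is stated only for infinitesimally closed foliations, and the remark following it makes clear that without that hypothesis one must additionally assume the local flows lie in the closure of the linearized holonomies --- which is precisely what is unknown for an arbitrary section of $\mathscr{C}_\f$. Second, the reduction via $\widehat{\f}^{\ell}$ produces \emph{some} smooth field tangent to $\overline{\f}$ (which is the content of Molino's conjecture, already proved), but the strong conjecture requires a smooth field representing the \emph{given} continuous germ of $\mathscr{C}_\f$; when the infinitesimal foliations $\f_x^{\ell}$ are homogeneous but non-closed, there is no argument showing that $\mathscr{C}_\f$ near a singular leaf is a subsheaf of the sheaf controlled by $\widehat{\f}^{\ell}$, and this identification is the actual open problem rather than a routine verification. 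In short, the proposal succeeds only where the paper already succeeds, and the parts you flag as remaining are the substance of the conjecture.
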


Molino proposes this conjecture in \cite[p. 215]{molino}. It does not follow directly from the already mentioned proof of Molino's conjecture that appears in \cite{alex4}, but from the results in \cite{alex9} that we saw in Section \ref{section: the proof of Molinos conjecture} one can conclude that it is true for orbit-like foliations. In fact we will see in Section \ref{section: inf closed fol and strong molino conj} that the machinery developed in \cite{alex4} and \cite{alex9} can be used to conclude that the strong Molino conjecture holds for infinitesimally closed foliations.

\section{Singular Killing foliations}\label{section: singular Killing}

In this section we propose a definition for singular Killing foliations. On the one hand, as discussed in Section \ref{section: molino conjecture and its proof}, we do not know whether in general the Molino sheaf $\mathscr{C}_\f$ is a sheaf of germs of transverse Killing vector fields -- the strong Molino conjecture. On the other hand, the fact that $\mathscr{C}_\f$ is indeed a globally trivial sheaf of germs of transverse Killing fields when $\f$ is a regular Killing foliation is of fundamental relevance for this class of foliations. So we will assume this \textit{a priori} in our generalization of Killing foliations to the singular setting.

\begin{definition}[Singular Killing foliation]
A complete singular Riemannian foliation $(M,\f)$ is a \textit{singular Killing foliation} if it's Molino sheaf $\mathscr{C}_\f$ is a globally constant sheaf of Lie algebras of germs of transverse Killing vector fields.
\end{definition}

Notice that the structural algebra of a singular Killing foliation $\f$ is Abelian, since it is the structural algebra of the (regular) Killing foliation $\f_{\mathrm{reg}}$. We will therefore follow our notation of the regular case and denote it by $\mathfrak{a}$. We have an isomorphism $\mathfrak{a}=\mathscr{C}_\f(M)$. Also in analogy with the regular case, if $M$ is simply connected then $\mathscr{C}_\f$ is automatically globally constant. Thus a singular Riemannian foliation of a simply connected manifold is a singular Killing foliation, provided is satisfies the strong Molino conjecture. In Section \ref{section: inf closed fol and strong molino conj} we will prove that this is the case for the class of infinitesimally closed Riemannian foliations. We also have the following:

\begin{example}[Homogeneous singular Riemannian foliations are Killing]
As we saw in Example \ref{exe: molino sheaf of homogeneous singular foliation}, if $\f$ is homogeneous, given by the orbits of $H<\mathrm{Iso}(M)$, then $\mathscr{C}_{\f}$ is induced by the fundamental Killing vector fields of the action of $\overline{H}<\mathrm{Iso}(M)$, hence $\f$ is a singular Killing foliation.
\end{example}

It seems therefore relevant to study this class of foliations and investigate to what extent the results concerning regular Killing foliations that we saw in Sections \ref{section: transverse topology of killing} and \ref{section: transverse geometry of Killing} generalize to the singular setting. This is intended to a forthcoming paper. For now we point out, for instance, the following.

\begin{proposition}
Let $(M,\f)$ be a singular Killing foliation and $\mathfrak{a}$ its structural algebra. Then we have a transverse infinitesimal action of $\mathfrak{a}$ on $\f$ which turns $(\Omega(\f),d)$ into an $\mathfrak{a}^\star$-algebra.
\end{proposition}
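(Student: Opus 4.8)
The plan is to mimic, in the singular setting, the construction recalled in Section~\ref{section: equivariant basic cohomology} for regular Killing foliations, taking advantage of the fact that the definition of singular Killing foliation builds in precisely what is needed: the Molino sheaf $\mathscr{C}_\f$ is globally constant \emph{and} is a genuine sheaf of germs of transverse Killing vector fields. First I would produce the transverse infinitesimal action. Since $\f$ is a singular Killing foliation, we have an identification $\mathfrak{a}\cong\mathscr{C}_\f(M)$, and by definition each global section of $\mathscr{C}_\f$ has a smooth representative in $\mathfrak{iso}(\f)$ (recall that $\mathfrak{iso}(\f)$ for a singular Riemannian foliation was defined in Section~\ref{section: homothetic lemma} as the transverse fields restricting to transverse Killing fields on every stratum $\Sigma_r$). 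Choosing such representatives for a basis of $\mathfrak{a}$ — or better, checking that the assignment is independent of the choice of representative modulo $\mathfrak{X}(\f)$, exactly as in the regular case — yields a linear map $\mu:\mathfrak{a}\to\mathfrak{l}(\f)$. That $\mu$ is a Lie algebra homomorphism follows because $\mathscr{C}_\f$ is a sheaf of Lie algebras and the isomorphism $\mathfrak{a}\cong\mathscr{C}_\f(M)$ is one of Lie algebras; one must only verify that the bracket of transverse fields computed in $\mathfrak{l}(\f)$ agrees with the bracket of germs in the stalk, which is local and reduces to the regular statement on $\Sigma_{\mathrm{reg}}$, then extends by density and continuity of the fields.

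Next I would exhibit the $\mathfrak{a}^\star$-algebra structure on $(\Omega(\f),d)$. Here $\Omega(\f)$ is the complex of $\f$-basic forms as defined just before the statement of the finiteness theorem of \cite{wolak} in Section~\ref{section: singular riemannian foliations}: forms $\omega$ with $\iota_X\omega=0$ and $\mathcal{L}_X\omega=0$ for all $X\in\mathfrak{X}(\f)$. For each $X\in\mathfrak{a}$, pick a foliate representative $\widetilde{X}\in\mathfrak{L}(\f)$ of $\mu(X)$ and set $\iota_X:=\iota_{\widetilde{X}}$ and $\mathcal{L}_X:=\mathcal{L}_{\widetilde{X}}$ on $\Omega(\f)$, exactly as in \cite[Proposition~3.12]{goertsches}. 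One checks (i) these operators preserve $\Omega(\f)$: if $\omega$ is $\f$-basic and $Y\in\mathfrak{X}(\f)$ then $\iota_Y\iota_{\widetilde{X}}\omega=-\iota_{\widetilde{X}}\iota_Y\omega=0$ and $\mathcal{L}_Y\iota_{\widetilde{X}}\omega=\iota_{[Y,\widetilde{X}]}\omega+\iota_{\widetilde{X}}\mathcal{L}_Y\omega=0$ since $[Y,\widetilde{X}]\in\mathfrak{X}(\f)$ because $\widetilde{X}\in\mathfrak{L}(\f)$; and similarly for $\mathcal{L}_{\widetilde{X}}$; (ii) the operators are independent of the choice of representative $\widetilde{X}$, since two representatives differ by an element of $\mathfrak{X}(\f)$, whose interior product and Lie derivative annihilate $\Omega(\f)$ by definition; (iii) the Cartan relations $\iota_X^2=0$, $[\mathcal{L}_X,\mathcal{L}_Y]=\mathcal{L}_{[X,Y]}$, $[\mathcal{L}_X,\iota_Y]=\iota_{[X,Y]}$, $\mathcal{L}_X=d\iota_X+\iota_Xd$ — all of which are the usual Cartan calculus identities on $M$ restricted to $\Omega(\f)$, using that $[\widetilde{X},\widetilde{Y}]$ is a representative of $[X,Y]$ modulo $\mathfrak{X}(\f)$, which again by (ii) does not affect the operators.

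The main obstacle, as in \cite{goertsches}, is step (ii)–(iii): showing that the brackets behave correctly at the level of $\mathfrak{l}(\f)$ rather than $\mathfrak{L}(\f)$, i.e.\ that $[\widetilde{X},\widetilde{Y}]$ really does represent $\mu([X,Y])$. In the regular Killing case this is immediate because $\mathscr{C}_\f(M)$ is central in $\mathfrak{l}(\f)$ and the identification $\mathfrak{a}\cong\mathscr{C}_\f(M)$ is tautological; in the singular case one must invoke that $\mathscr{C}_\f$ restricted to $\Sigma_{\mathrm{reg}}$ is $\mathscr{C}_{\mathrm{reg}}$ (Section~\ref{section: molino sheaf singular}), so the identity holds on a dense open set, and then argue that the transverse fields involved, being smooth on all of $M$ (this is exactly where the hypothesis that $\mathscr{C}_\f$ is a sheaf of germs of transverse Killing fields is used), agree everywhere by continuity. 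Once this is in place, the verification that $(\Omega(\f),d)$ is a differential $\mathfrak{a}^\star$-algebra is the routine Cartan-calculus bookkeeping and I would not carry it out in detail. Note that since $\mathfrak{a}$ is Abelian — as observed immediately after the definition of singular Killing foliation — the relations simplify, and the Cartan complex $C_\mathfrak{a}(\Omega(\f))$ then makes sense, opening the way to the $\mathfrak{a}$-equivariant basic cohomology $H_\mathfrak{a}(\f)$ that the subsequent forthcoming work is meant to study.
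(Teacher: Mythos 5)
Your proposal is correct and follows essentially the same route as the paper: identify $\mathfrak{a}\cong\mathscr{C}_\f(M)<\mathfrak{l}(\f)$, define $\iota_X$ and $\mathcal{L}_X$ via foliate representatives $\widetilde{X}\in\mathfrak{L}(\f)$, check independence of the representative and that $\Omega(\f)$ is preserved using exactly the computations $\iota_Y\iota_{\widetilde{X}}\omega=0$ and $\mathcal{L}_Y\iota_{\widetilde{X}}\omega=\iota_{[Y,\widetilde{X}]}\omega+\iota_{\widetilde{X}}\mathcal{L}_Y\omega=0$, and note that the Cartan relations are inherited from $\Omega(M)$. Your extra discussion of bracket compatibility via density on $\Sigma_{\mathrm{reg}}$ is a reasonable elaboration of a point the paper leaves implicit, but it does not change the argument.
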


\begin{proof}
The proof is analogous to that of the regular case \cite[Proposition 3.12]{goertsches}. The transverse infinitesimal action of $\mathfrak{a}$ is given by the isomorphism $\mathfrak{a}\cong\mathscr{C}_\f(M)<\mathfrak{l}(\f)$, so we can identify $\mathfrak{a}\equiv\mathscr{C}_\f(M)$. For each $X\in\mathfrak{a}$ we define the derivations $\mathcal{L}_X:=\mathcal{L}_{\tilde{X}}$ and $\iota_X:=\iota_{\tilde{X}}$, where $\tilde{X}\in\mathfrak{L}(\f)$ represents $X$. Notice that these operators are well defined, since we are restricted to forms on $\Omega(\f)$, and inherit the needed $\mathfrak{a}^\star$-algebra relations from $\Omega(M)$.

It thus only remains to show that $\Omega(\f)$ is closed with respect to each $\iota_X$ and $\mathcal{L}_X$. In fact, if $Y\in\mathfrak{X}(\f)$, then $\iota_Y\iota_{\tilde{X}}\omega=-\iota_{\tilde{X}}\iota_Y\omega=0$, since $\iota_Y\omega=0$, and $\mathcal{L}_Y\iota_{\tilde{X}}\omega=\iota_{\tilde{X}}\mathcal{L}_Y\omega+\iota_{[Y,\tilde{X}]}\omega=0$, since $\mathcal{L}_Y\omega=0$ and $[Y,{\tilde{X}}]\in\mathfrak{X}(\f)$. Hence $\iota_X\omega\in\Omega(\f)$. Similarly, $\iota_Y\mathcal{L}_{\tilde{X}}\omega=\mathcal{L}_{\tilde{X}}\iota_Y\omega-\iota_{[\tilde{X},Y]}\omega=0$ and $\mathcal{L}_Y\mathcal{L}_{\tilde{X}}\omega=\mathcal{L}_{\tilde{X}}\mathcal{L}_Y\omega-\mathcal{L}_{[\tilde{X},Y]}\omega=0$, so we conclude that $\mathcal{L}_X\omega\in\Omega(\f)$.
\end{proof}

Therefore $\f$ possesses a natural equivariant basic cohomology, its \textit{$\mathfrak{a}$-equivariant basic cohomology}, defined as the equivariant cohomology of the associated Cartan complex $C_\mathfrak{a}(\Omega(\f))$ (see Section \ref{section: equivariant basic cohomology}):
$$H_\mathfrak{a}(\f):=H_\mathfrak{a}(\Omega(\f))=H(C_\mathfrak{a}(\Omega(\f),d_\mathfrak{a})).$$

\subsection{Linearized holonomy pseudogroups and Molino sheaf}\label{section: inf closed fol and strong molino conj}

In the previous sections we saw how the closure of the holonomy pseudogroup induces a locally constant sheaf whose local pullbacks to $M$ glue together to the Molino sheaf, which describes the leaf closures of a Riemannian foliation (and in particular of a Killing foliation). It is natural to ask whether we can follow a similar procedure for singular Riemannian foliations and for singular Killing foliations. In this section we start to approach this question. More precisely, we will associate to an infinitesimally closed singular Riemannian foliation $\f$ of a compact Riemannian manifold $M$ a pseudogroup of isometries $\mathscr{H}^{\ell}$ (called the linearized holonomy pseudogroup) acting on a disjoint union of metric spaces, generalizing  the usual concept of the pseudogroup of holonomy of a (regular) Riemannian foliation. This pseudogroup $\mathscr{H}^{\ell}(\f)$ will then allow us to define a locally constant sheaf associated to $\f$ and hence to revisit the definition of singular Killing foliations from this viewpoint. In addition we will stress that blow-ups of singular Killing foliations are (regular) Killing foliations.

Before we present the desired linearized holonomy pseudogroup, we are going to define a subpseudogroup that contains holonomy information near to the closure of a leaf. Consider the closure $J:=\overline{L}$ of a leaf $L\in \f$ and let $(U_i,\pi_i,\gamma_{ij})$ be a Haefliger cocycle for the regular foliation $\f|_J$. For each $U_i$, choose a point $x_i\in U_i\subset J$ and a local reduction $\mathsf{N}_i\subset M$ of $\f$ along a slice for $\f|_J$ containing $x_i$. Furthermore, choose a distinguished tubular neighborhood $\tub_i\subset M$ of $x_i$ so that $U_i=J\cap \tub_i$. As in the case of slices, we can suppose that there are submersions $\rho_i:\tub_i\to \mathsf{N}_i$ whose fibers are $\dim(\f|_J)$-dimensional and contained in the leaves of $\f$.

The compositions of the submersions $\rho_i$ with linearized flows with respect to $J$ induce local  diffeomorphisms with source and target contained in the disjoint union $N:=\bigsqcup_i \mathsf{N}_i$ and hence generate a pseudogroup of local diffeomorphisms $\mathscr{H}^{\ell}_J$ acting on $N$. 
Consider $\f_{\mathsf{N}_i}=\f\cap \mathsf{N}_i$  
The local diffeomorphisms of $\mathscr{H}^\ell_J$ project to local diffeomorphisms of the quotients $\mathsf{N}_i/\f_{\mathsf{N}_i}$ 
  and hence generate pseudogroups of local isometries on the disjoint unions of these metric spaces.

\begin{lemma}\label{lemma-propertiesH-B}
With the notation established above, the following holds.
\begin{enumerate}[(i)]
\item $(\mathscr{H}_J^{\ell},\bigsqcup_i \mathsf{N}_i/\f_{\mathsf{N}_i}, \metric)$ 
 is a complete metric pseudogroup with respect to the induced metric from $\metric$ 
\item $\mathscr{C}(\overline{\mathscr{H}^{\ell}_J},\bigsqcup_i \mathsf{N}_i/\f_{\mathsf{N}_i})$ 
 is a  well defined locally constant sheaves of the infinitesimal transformations. 
\end{enumerate}
\end{lemma}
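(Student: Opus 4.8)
The plan is to deduce both assertions from the structural theory of the \emph{regular} Riemannian foliation $\f|_J$ on the compact manifold $J=\overline{L}$, transporting the relevant facts through the reduced--space fibrations $\mathsf{N}_i\to S_i$ and, for part (ii), through a desingularization of the homogeneous foliations $\f_{\mathsf{N}_i}$. A few normalizations first: since $M$ is compact and $J$ is a closed, hence compact, submanifold contained in a single stratum, we may take the cover $\{U_i\}$ finite with each $U_i$ relatively compact. As $\f$ is infinitesimally closed, each $\f_{\mathsf{N}_i}$ is a closed homogeneous singular Riemannian foliation, given by the orbits of a compact Lie group $K_i$ acting by isometries on $(\mathsf{N}_i,\tilde{\metric})$, where $\tilde{\metric}$ is adapted to $\f_{\mathsf{N}_i}$ and preserves the transverse metric of $\f$ (see \cite[Proposition 2.20 and Corollary 2.25]{alex9}). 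Hence $\mathsf{N}_i/\f_{\mathsf{N}_i}=\mathsf{N}_i/K_i$ inherits from $\metric$ a complete length metric, and since the generators of $\mathscr{H}^\ell_J$ are $\f$--foliate and the $\rho_i$ preserve the transverse metric, the induced local transformations of $\bigsqcup_i\mathsf{N}_i/\f_{\mathsf{N}_i}$ are isometries; so $(\mathscr{H}^\ell_J,\bigsqcup_i\mathsf{N}_i/\f_{\mathsf{N}_i},\metric)$ is a metric pseudogroup.

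For part (i) I would use the footpoint projections $\mathsf{N}_i\to S_i$ onto the transversals of $\f|_J$. They descend to continuous surjections $\bar{\mathsf p}_i\colon\mathsf{N}_i/\f_{\mathsf{N}_i}\to S_i$ with compact fibres, and, because a linearized flow with respect to $J$ preserves $J$ and there restricts to an $\f|_J$--foliate flow while $\rho_i$ restricts over $J$ to a defining submersion of $\f|_J$, the family $\bigsqcup_i\bar{\mathsf p}_i$ intertwines $\mathscr{H}^\ell_J$ with the holonomy pseudogroup $\mathscr{H}_{\f|_J}$. Now $\f|_J$ is a complete (indeed compact) Riemannian foliation, so $\mathscr{H}_{\f|_J}$ is a complete pseudogroup of local isometries. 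Given $\bar x,\bar y$ and the germ at $\bar x$ of an element $h$ of $\mathscr{H}^\ell_J$ with target near $\bar y$, its projection is a germ of $\mathscr{H}_{\f|_J}$, which by completeness extends to an element of $\mathscr{H}_{\f|_J}$ defined on a full neighbourhood $W$ of $\bar{\mathsf p}_i(\bar x)$; using that a linearized vector field near $J$ is invariant under the homothetic transformations and hence, in the directions transverse to $J$ inside $\mathsf{N}_i$, is determined by its restriction to $J$ together with $K_i$--equivariance, one lifts this extension to an element of $\mathscr{H}^\ell_J$ defined over $\bar{\mathsf p}_i^{-1}(W)$, which contains a full neighbourhood of $\bar x$. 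This gives completeness. I expect this last lifting/extension step -- propagating the extension uniformly into the reduced--space directions -- to be the main technical difficulty, and the point at which the homothety--invariance of linearized fields (Proposition \ref{linearized infinitesimal is homogeneous} and \cite[Proposition 13]{mendes}) must be exploited carefully.

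For part (ii) the obstacle is that the structural theorem for complete pseudogroups of local isometries (Theorem \ref{theorem: salem} and its corollary) is stated for manifolds, while here the underlying spaces $\mathsf{N}_i/\f_{\mathsf{N}_i}$ are orbit spaces. I would bypass this by desingularization: by \cite{alex5}, each closed homogeneous foliation $\f_{\mathsf{N}_i}$ admits a compact manifold $\mathsf{N}_i^{\mathrm B}$, a regular Riemannian foliation $\f_{\mathsf{N}_i}^{\mathrm B}$ and an $\f$--foliate desingularization $\rho_i^{\mathrm B}\colon\mathsf{N}_i^{\mathrm B}\to\mathsf{N}_i$; the foliate generators of $\mathscr{H}^\ell_J$ lift through the $\rho_i^{\mathrm B}$ (in the spirit of Proposition \ref{prop: lifting local transverse isometries to the blow up}), producing, after projecting to transversals $T_i^{\mathrm B}$ of $\f_{\mathsf{N}_i}^{\mathrm B}$, a complete pseudogroup of local isometries of the manifold $\bigsqcup_i T_i^{\mathrm B}$. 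Theorem \ref{theorem: salem} and the ensuing structural corollary then give that its sheaf of infinitesimal transformations is a locally constant sheaf of Lie algebras of germs of local Killing vector fields on $\bigsqcup_i T_i^{\mathrm B}$. Finally, the bijection between local isometries of $\overline{\mathscr{H}_{\f^{\mathrm B}_{\mathsf{N}_i}}}$ and of $\overline{\mathscr{H}(\f_{\mathsf{N}_i})}$ from \cite[Lemma 4.2]{alex9}, being compatible with the one--parameter flows they generate, carries this sheaf down to a locally constant sheaf $\mathscr{C}(\overline{\mathscr{H}^\ell_J},\bigsqcup_i\mathsf{N}_i/\f_{\mathsf{N}_i})=\mathfrak{iso}_{\overline{\mathscr{H}^\ell_J}}$ of germs of transverse Killing fields; well-definedness (independence of the chosen Haefliger cocycle, base points $x_i$, slices and tubular neighbourhoods) then follows up to equivalence of pseudogroups exactly as in the regular case. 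The secondary obstacle here is checking that this correspondence genuinely respects the sheaf structure and local constancy, and not merely the identification of individual stalks.
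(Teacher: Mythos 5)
Your proposal is correct and follows essentially the same strategy as the paper's proof: in (i), isometry of the action is inherited from the (regular) holonomy pseudogroup on the dense regular part, and completeness comes from the fact that the linearized generators are determined by data along $J$ and hence extend over the whole of each $\mathsf{N}_i$; in (ii), one desingularizes, applies Salem's structural theorem (Theorem \ref{theorem: salem}) to the resulting \emph{regular} Riemannian foliation, and descends via the correspondence $\overline{\rho^*\mathscr{H}}=\rho^*\overline{\mathscr{H}}$ of \cite[Lemma 4.2]{alex9}. The one organizational difference is in (ii): you blow up each reduced space $(\mathsf{N}_i,\f_{\mathsf{N}_i})$ separately, whereas the paper uses the single global desingularization $\rho:(M^{\mathrm{B}},\f^{\mathrm{B}})\to(M,\f)$ together with Proposition \ref{prop: lifting local transverse isometries to the blow up} to lift all generators of $\mathscr{H}^{\ell}_J$ at once into the complete holonomy pseudogroup of $\f^{\mathrm{B}}$; the global route spares you the bookkeeping of matching the separate blow-ups $\mathsf{N}_i^{\mathrm{B}}$ along overlaps when verifying that the resulting sheaf is locally constant, and it also makes the injectivity of $\rho^*$ on closures (needed for well-definedness) immediate. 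One inaccuracy you should correct: the claim that each $\f_{\mathsf{N}_i}$ is given by the orbits of a compact Lie group $K_i$ (\cite[Corollary 2.25]{alex9}) requires $\f$ to be orbit-like, i.e.\ infinitesimally homogeneous in addition to infinitesimally closed; under the standing hypothesis here, $\f_{\mathsf{N}_i}$ is only a closed singular Riemannian foliation. This does not sink the argument --- its leaf space is still a metric space and the desingularization of \cite{alex5} applies to any closed singular Riemannian foliation --- but the ``$K_i$-equivariance'' you invoke in the completeness step should be replaced by the homothety-invariance of linearized fields alone, which is exactly what the paper's proof uses.
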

\begin{proofoutline}
(i) 
 Restricted to the regular part of the quotients $\mathsf{N}_i/\f_{\mathsf{N}_i}$, the pseudogroup $\mathscr{H}^{\ell}_J$ is a subpseudogroup of the usual holonomy pseudogroup\footnote{Note that there may exist elements of the usual holonomy pseudogroup that can not be realized as restrictions of linearized flows (e.g., the holonomy of a regular leaf of $\f_{\mathsf{N}_i}$ may not be contained in $\mathscr{H}^{\ell}_J$).}.
Therefore, since $\f$ restricted to the regular stratum is a regular Riemannian foliation, $\mathscr{H}^{\ell}_J$ acts isometrically on the regular part of $\mathsf{N}_i/\f_{\mathsf{N}_i}$. By density of the regular points, we infer that $\mathscr{H}^\ell_J$ acts isometrically everywhere. The completeness of the pseudogroup follows from the fact that the linearized flows that define the elements that generate $\mathscr{H}_J^{\ell}$ are defined on open sets of $J$ and hence their sources can be extended to the whole of $\mathsf{N}_i$.

(ii) Consider the desingularization $\rho:(M^{\mathrm{B}},\f^{\mathrm{B}})\to (M,\f)$. By Proposition \ref{prop: lifting local transverse isometries to the blow up}, local transverse isometries of $\f$ can be lifted to local transverse isometries of $\f^{\mathrm{B}}$. Since each $h\in \mathscr{H}^{\ell}_J$ is an extension of a holonomy of a leaf in $\f_{\mathrm{reg}}$, we see that the lifting $\rho^{*}h$ is an element of the complete holonomy pseudogroup of the regular foliation $\f^{\mathrm{B}}$. Hence $\rho^{*}\mathscr{H}^{\ell}_J$ is a complete pseudogroup and $\overline{\rho^*\mathscr{H}^{\ell}_J}$ is a closed complete pseudogroup. We also have that $\overline{\rho^*\mathscr{H}^{\ell}_J}=\rho^*\overline{\mathscr{H}^{\ell}_J}$ (see \cite[Lemma 4.2]{alex9}). Finally note that $\rho^{*}:\overline{\mathscr{H}^{\ell}_J}\to \overline{\rho^{*}\mathscr{H}^{\ell}_J}$ is injective, because the pseudogroups are isometric. These facts together imply that $\mathscr{C}(\overline{\mathscr{H}^{\ell}_J},\bigsqcup_i \mathsf{N}_i/\f_{\mathsf{N}_i})$ is a well defined locally constant sheaf. 

\end{proofoutline}
\begin{remark}
It is possible to check that the construction of the pseudogroup, and hence of the sheaf near $J$ does not depend on the choice of the reduced spaces $N_i$ nor on the neighborhoods $U_i$.
\end{remark}

\begin{remark}[Geometric interpretation]\label{remark-geometric-interpertation}
From \cite{alex4}, \cite{alexInagakiStruchiner} and the fact that $\f$ is infinitesimally closed we know that each element of the sheaf $\mathscr{C}(\overline{\mathscr{H}^{\ell}_J},\bigsqcup_i \mathsf{N}_i/\f_{\mathsf{N}_i})$ can be associated to a smooth transverse Killing vector field on a neighborhood of $J$ that is tangent to $\overline{\f^{\ell}}\subset \overline{\f}$. Moreover this sheaf describes the transverse orbits of $\overline{\f}$ on a neighborhood of $J$. 
\end{remark}

Once we have constructed a sheaf that describes the closure of $\f$ on a neighborhood of a fixed $J=\overline{L}$, we are ready to extend it to a sheaf that describes the closure of $\f$ on $M$. We can cover $M$ with a finite number of tubular neighborhoods of $J_\gamma=\overline{L_\gamma}$ and in each of these tubular neighborhoods we can consider the above construction. Therefore we have a natural disjoint union $\bigsqcup_{\gamma} \left(\bigsqcup_{i\in I_{\gamma}} \mathsf{N}_i\right)$ that we can just write as $\bigsqcup_j \mathsf{N}_j$. Setting $\f_{\mathsf{N}_j}$ as $\mathsf{N}_j\cap \f$ we consider the disjoint union of metric spaces $\bigsqcup_j \mathsf{N}_j/\f_{\mathsf{N}_j}$.

Now in order to define the pseudogroup $\mathscr{H}^{\ell}$ on $\bigsqcup_j \mathsf{N}_j/\f_{\mathsf{N}_j}$, consider two fixed leaf closures $J_i=\overline{L_i}$ and $J_{i+1}=\overline{L_{i+1}}$ and two reduced spaces $\mathsf{N}_i$ and $\mathsf{N}_{i+1}$ associated to $J_i$ and $J_{i+1}$, respectively. Also assume that there exists a linearized flow $\varphi_{t}^{i}$ (with respect to $J_i$) so that $\varphi_{i+1,i}:=\pi_{i+1}\circ \varphi^{i}_{t_i}$ is a map with source in $\mathsf{N}_i$ and target in $\mathsf{N}_{i+1}$ (here $\pi_{i+1}:\tub_{i+1}\to \mathsf{N}_{i+1}$ is the projection whose fibers are contained in the leaves of $\f$ and such that $\dim \pi^{-1}_{i+1}(y)=\dim \f|_{J_{i+1}}$. In contrast with the previous construction of elements of $\mathscr{H}^{\ell}_{J}$,
this map $\varphi_{i+1,i}$ does not need to be a bijective map. Nevertheless it can be projected and hence induces a local isometry $(\varphi_{i+1,i})^{*}$ from $\mathsf{N}_{i}/\f_{\mathsf{N}_{i}}$ to $\mathsf{N}_{i+1}/\f_{\mathsf{N}_{i+1}}$.

\begin{definition}[Linearized holonomy pseudogroup]
The pseudogroup $\mathscr{H}^{\ell}$ generated by the local isometries of $\bigsqcup_j \mathsf{N}_j/\f_{\mathsf{N}_j}$ described above is the \emph{linearized holonomy pseudogroup of $\f$}.
\end{definition}

\begin{lemma}[Linearized holonomy pseudogroup]\label{lemma-linearized-holohomy-pseudogroup}
With the notation established above, the following holds:
\begin{enumerate}[(i)]
\item The linearized holonomy pseudogroup $(\mathscr{H}^\ell,\bigsqcup_j \mathsf{N}_j/\f_{\mathsf{N}_j},\metric)$ is a  metric pseudogroup.
\item Each $h\in \mathscr{H}^\ell$ with source in a reduced space $\mathsf{N}$ associated to $J=\overline{L}$ admits an extension $h\in \mathscr{H}^\ell_J$.  In particular $(\mathscr{H}^\ell,\bigsqcup_j \mathsf{N}_j/\f_{\mathsf{N}_j},\metric)$ is complete.
\end{enumerate}
\end{lemma}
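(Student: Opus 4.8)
The plan is to verify the two pseudogroup properties by reducing them, as in the proof of Lemma~\ref{lemma-propertiesH-B}, to the regular stratum (for (i)) and to the global structure of the linearization $\f^\ell$ along the compact leaf closure $J=\overline{L}$ (for (ii)).

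For part (i), I would start from the fact that $\mathscr{H}^\ell$ is generated by the projections $(\varphi_{i+1,i})^*$ of the maps $\pi_{i+1}\circ\varphi^i_{t_i}$, where $\varphi^i_t$ is a linearized flow with respect to some $J_i=\overline{L_i}$ and $\pi_{i+1}\colon\tub_{i+1}\to\mathsf{N}_{i+1}$ is the submersion of a distinguished tubular neighborhood onto the reduced space $\mathsf{N}_{i+1}$. Restricted to $\Sigma_{\mathrm{reg}}$, the flow $\varphi^i_t$ is (the restriction of) a holonomy transformation of the regular Riemannian foliation $(\f_{\mathrm{reg}},\metric|_{\Sigma_{\mathrm{reg}}})$ -- this is the observation recorded in the proof of Lemma~\ref{lemma-propertiesH-B} -- hence it preserves the transverse metric; and since the adapted metric $\tilde{\metric}$ on $\mathsf{N}_{i+1}$ preserves the transverse metric of $\f$, the submersion $\pi_{i+1}$ descends to a local isometry between the corresponding leaf spaces. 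Composing, $(\varphi_{i+1,i})^*$ is an isometry on the open dense regular part of $\mathsf{N}_i/\f_{\mathsf{N}_i}$, and by continuity of the quotient metric it is an isometry everywhere; since a pseudogroup generated by local isometries automatically satisfies the metric-pseudogroup axioms, (i) follows. (Here one also uses that $\mathsf{N}\cap\Sigma_{\mathrm{reg}}$ is dense in each reduced space $\mathsf{N}$, which is immediate from density of $\Sigma_{\mathrm{reg}}$ in $M$ and the transversality built into the definition of $\mathsf{N}$.)

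For part (ii), the crucial point is that the linearization is taken with respect to the \emph{compact} leaf closure $J=\overline{L}$: the vector fields in $\mathfrak{X}(\f|_U)^\ell$, with $U=\tub_\varepsilon(J)$, are by construction defined on all of $U$ and invariant under the homothetic transformations $h_\lambda$ \cite[Proposition~13]{mendes}, so the linearized flows $\varphi_t$ are defined on the whole tubular neighborhood of $J$. Consequently each generating map $\rho_j\circ\varphi_t$ of $\mathscr{H}^\ell_J$ has source that enlarges to the entire reduced space associated to $J$, i.e. it is the germ of a map defined on all of $\mathsf{N}/\f_{\mathsf{N}}$. I would then argue that any $h\in\mathscr{H}^\ell$ with source in a reduced space $\mathsf{N}$ associated to $J$ agrees, near its source, with the germ of an element of $\mathscr{H}^\ell_J$ (note that $J$ is then one of the $J_\gamma$, so $\mathscr{H}^\ell_J\subset\mathscr{H}^\ell$ after identifying $\bigsqcup_{i\in I_J}\mathsf{N}_i/\f_{\mathsf{N}_i}$ as a subunion): composing generators, the linearized-flow factors that occur near $J$ can all be replaced, up to holonomy of $\f_{\mathsf{N}}$, by linearized flows with respect to $J$, because $\f^\ell$ is the maximal infinitesimally homogeneous subfoliation (Proposition~\ref{linearized infinitesimal is homogeneous}) and near $J$ one has $\overline{\f^\ell}=\overline{\f}$ (cf. Remark~\ref{remark-geometric-interpertation} and the proof of Molino's conjecture in \cite{alex4}). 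That germ then extends, by the previous observation, to an element of $\mathscr{H}^\ell_J\subset\mathscr{H}^\ell$ defined on all of $\mathsf{N}/\f_{\mathsf{N}}$. Completeness is now immediate: given $\bar x,\bar y\in\bigsqcup_j\mathsf{N}_j/\f_{\mathsf{N}_j}$, take $U$ to be the connected component $\mathsf{N}/\f_{\mathsf{N}}$ containing $\bar x$ and $V$ the one containing $\bar y$; every germ of an element of $\mathscr{H}^\ell$ with source in $U$ and target in $V$ is then the germ of an element of $\mathscr{H}^\ell$ defined on the whole of $U$.

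The hard part will be the compatibility statement used in part (ii): that an element of $\mathscr{H}^\ell$ with source near a fixed $J$ is \emph{locally} an element of $\mathscr{H}^\ell_J$, i.e. that the local model of the linearization near $J$ does not depend on which saturated submanifold of the stratification one linearizes along. I would dispose of it in the same way as in Lemma~\ref{lemma-propertiesH-B}(ii): pass to the desingularization $\rho\colon(M^{\mathrm B},\f^{\mathrm B})\to(M,\f)$, lift the relevant transverse isometries using Proposition~\ref{prop: lifting local transverse isometries to the blow up} and the identification $\overline{\rho^*\mathscr{H}^\ell_J}=\rho^*\overline{\mathscr{H}^\ell_J}$ of \cite[Lemma~4.2]{alex9}, so that the assertion reduces to the corresponding standard fact for the complete holonomy pseudogroup of the regular foliation $\f^{\mathrm B}$.
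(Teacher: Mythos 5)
Your part (i) and the overall architecture of part (ii) --- reduce $h$, near its source, to an element of $\mathscr{H}^\ell_J$ and then use that the generators of $\mathscr{H}^\ell_J$ are defined on the whole of the reduced space --- match the paper. The gap is in how you close the step you yourself flag as the hard part. Passing to the desingularization and invoking completeness of the holonomy pseudogroup of the regular foliation $\f^{\mathrm{B}}$ only extends the germ of $\rho^*h$ \emph{inside that larger pseudogroup}; it gives you no mechanism for recognizing the extension as an element of $\rho^*\mathscr{H}^\ell_J$, and the footnote in the proof of Lemma~\ref{lemma-propertiesH-B} warns precisely that the usual holonomy pseudogroup contains elements that cannot be realized by linearized flows. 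Likewise, the maximality of $\f^\ell$ (Proposition~\ref{linearized infinitesimal is homogeneous}) and the identity $\overline{\f^\ell}=\overline{\f}$ near $J$ are statements about leaves and their closures, not about the induced maps on $\mathsf{N}/\f_{\mathsf{N}}$, so they do not by themselves justify replacing an arbitrary generating factor by a $J$-linearized one.

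The paper's proof instead manipulates the generating factors of $h=(\pi_n\circ\varphi^{n-1}_{t_{n-1}})^*\circ\cdots\circ(\pi_2\circ\varphi^1_{t_1})^*$ directly, using two observations: (a) since $\tub(J)$ is $\f$-saturated the whole composition can be tracked inside $\tub(J)$, and any $\f$-flow whose projection stays inside a single fiber $N_x=\exp_x(\nu(J)\cap B_\epsilon(0))$ of $\mathsf{p}_T:\mathsf{N}\to T$ moves points within leaves of $\f_{\mathsf{N}}$ and therefore projects to the identity on $\mathsf{N}/\f_{\mathsf{N}}$ --- such factors are simply discarded; and (b) a factor transverse to the fibers induces the same quotient map as the flow obtained by linearizing it with respect to $J$ (the linearization agrees with the original field along $J$, by \cite[Proposition 13]{mendes}). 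This exhibits $h$ outright as a composition of elements of $\mathscr{H}^\ell_J$, which is what the statement requires; the extension over all of $\mathsf{N}/\f_{\mathsf{N}}$ and completeness then follow as you describe. Your write-up needs (a) made explicit and (b) argued at the level of the induced quotient maps rather than delegated to the blow-up.
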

\begin{proofoutline}
(i) The proof that the pseudogroup is a metric pseudogroup is similar to the one of Lemma \ref{lemma-propertiesH-B}.

(ii) Recall that $h=(\pi_{n}\circ\varphi^{n-1}_{t_n-1})^{*}\cdots(\pi_{3}\circ\varphi^{2}_{t_2})^{*}\circ(\pi_{2}\circ\varphi^{1}_{t_1})^{*}\in \mathscr{H}^{\ell}$, where $(\cdot )^{*}$ is the projection into the quotient. Since a tubular neighborhood $\tub(J)$ of $J$ is $\f$-saturated, $(\pi_{n}\circ\varphi^{n-1}_{t_n-1})\cdots (\pi_{3}\circ\varphi^{2}_{t_2})\circ (\pi_{2}\circ\varphi^{1}_{t_1})(x)\in\tub(J)$ for each $x\in J$. Now consider an $\f$-flow $\varphi_t$, i.e., a flow whose integral lines are contained in the leaves of $\f$ (and which is not necessarily linearized with respect to $J$). If $\varphi_t$ fulfills $\pi\circ\varphi_{t}(x) \subset N_x$ for $t\in[0,\delta)$ (where $N_x=\exp_{x}(\nu(J)\cap B_{\epsilon}(0))$), then the projection of $\varphi:=\pi\circ\varphi_{t_{0}}$ into the quotient is the identity, i.e., $(\varphi)^{*}=\mathrm{id}$. This allows us to ignore the terms $\pi_{i+1}\circ\varphi_{t_i}^{i}$ whose projections into the quotient may not be constructed using linearized flows with respect to $J$. Finally, note that if a flow $\varphi_{t}^{i}$ (again, not not necessarily linearized with respect to $J$) is transverse to the fibers $N_x$ then the quotient $\left(\pi_{i+1}\circ\varphi_{t}^{i}\right)^*$ can be replaced with $\left(\pi_{i+1}\circ\widehat{\varphi_{t}^{i}}\right)^*$, where $\widehat{\varphi}_{t}^{i}$ is linearized with respect to $J$. Therefore $h$ can be seen as the quotient of several elements of $\mathscr{H}^{\ell}_J.$
\end{proofoutline}

\begin{remark}
The proof of item (ii)  also allow us to check that the pseudogroup is well defined, that is it does not depend on the choice of $J_{\gamma}$ or the reduced spaces.
\end{remark}

Similar arguments as those in the proof of Lemma \ref{lemma-propertiesH-B}, Remark \ref{remark-geometric-interpertation} and item (ii) of Lemma \ref{lemma-linearized-holohomy-pseudogroup} aply to show the following. 

\begin{theorem}\label{theorem-sheafSRF}
Consider an infinitesimally closed singular Riemannian foliation $\f$ on a compact Riemannian manifold $M$. The pullbacks of the sheaves $\mathscr{C}(\overline{\mathscr{H}^{\ell}},\bigsqcup_j N_j/\f_{N_j})$ glue together to a locally constant sheaf $\mathscr{C}^\ell$ of Lie algebras of germs of transverse Killing vector fields.
\end{theorem}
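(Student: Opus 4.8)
The plan is to assemble Theorem~\ref{theorem-sheafSRF} from the two preceding lemmas exactly as the Molino sheaf $\mathscr{C}_\f$ was assembled in the regular case (Definition~\ref{def: molino sheaf}), namely by gluing local pullbacks of the infinitesimal sheaves attached to the closures of the linearized holonomy pseudogroups. First I would recall the data: by Lemma~\ref{lemma-linearized-holohomy-pseudogroup}(i)--(ii) the linearized holonomy pseudogroup $(\mathscr{H}^\ell,\bigsqcup_j \mathsf{N}_j/\f_{\mathsf{N}_j},\metric)$ is a complete metric pseudogroup, so by Proposition~\ref{prop: closure of pseudogroup} its closure $\overline{\mathscr{H}^\ell}$ is again complete, and by the structural theorem for complete pseudogroups (the Corollary following Theorem~\ref{theorem: salem}) it carries a locally constant sheaf $\mathscr{C}_{\mathscr{H}^\ell}=\mathfrak{iso}_{\overline{\mathscr{H}^\ell}}$ of germs of local Killing vector fields whose orbits describe the closures of the $\mathscr{H}^\ell$-orbits. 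Restricting to a single $J=\overline{L}$ this is exactly the sheaf $\mathscr{C}(\overline{\mathscr{H}^\ell_J},\bigsqcup_i \mathsf{N}_i/\f_{\mathsf{N}_i})$ produced in Lemma~\ref{lemma-propertiesH-B}(ii).

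Next I would carry out the pullback and gluing. For each reduced space $\mathsf{N}_j$ associated to a leaf closure $J$, the projection $\rho_j:\tub_j\to\mathsf{N}_j$ composed with the quotient map $\mathsf{N}_j\to\mathsf{N}_j/\f_{\mathsf{N}_j}$ gives, on the tubular neighborhood $\tub_j\subset M$, a way to pull back sections of $\mathscr{C}_{\mathscr{H}^\ell}$ to local transverse vector fields on $M$; here I invoke Proposition~\ref{prop: lifting local transverse isometries to the blow up} together with the infinitesimal-closedness of $\f$ (exactly as summarized in Remark~\ref{remark-geometric-interpertation} and Proposition~\ref{linearized infinitesimal is homogeneous}) to guarantee that each such pulled-back germ is represented by a \emph{smooth} transverse Killing vector field, tangent to $\overline{\f^\ell}\subset\overline{\f}$. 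On an overlap $\tub_{j}\cap\tub_{j'}$ the two local descriptions of the closures of leaves of $\f$ coincide with $\overline{\f}$ itself (Molino's conjecture, Theorem~\cite[Theorem]{alex4}), and the transverse parts of the tubular projections differ by an element of $\mathscr{H}^\ell$; since the sheaf $\mathscr{C}_{\mathscr{H}^\ell}$ is by construction invariant under the pseudogroup and independent of the chosen reduced spaces and neighborhoods (the Remarks after Lemmas~\ref{lemma-propertiesH-B} and~\ref{lemma-linearized-holohomy-pseudogroup}), the locally defined pullback sheaves agree on overlaps. Hence they patch to a single locally constant sheaf $\mathscr{C}^\ell$ on $M$ of germs of transverse Killing vector fields, exactly as in \cite[\S 3.4]{salem} and \cite[Remark at p. 711]{haefliger2} for the regular case. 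That its stalks are a fixed Lie algebra $\mathfrak{g}^{-1}$ follows from local constancy together with the identification, on the regular stratum, of $\mathscr{C}^\ell|_{\Sigma_{\mathrm{reg}}}$ with the usual Molino sheaf $\mathscr{C}_{\mathrm{reg}}$, since there $\mathscr{H}^\ell$ restricts to (a subpseudogroup generating the same closures as) the ordinary holonomy pseudogroup.

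**The main obstacle** I expect is precisely the overlap/independence step: one must verify that the orbits of $\mathscr{C}_{\mathscr{H}^\ell}$ genuinely describe $\overline{\f}$ near every $J$ — not just a sub-partition — and that the choice of reduced spaces, distinguished tubular neighborhoods, and linearized flows does not affect the germ of the resulting sheaf. The first half of this is Remark~\ref{remark-geometric-interpertation}, which in turn rests on the machinery of \cite{alex4} and \cite{alex9} (the smooth lifting Theorem~\ref{theo: smooth lift} and the bijection between local isometries of $\overline{\mathscr{H}(\f_{\mathsf{N}})}$ and of $\overline{\mathscr{H}_{\f^{\mathrm B}_{\mathsf{N}}}}$, \cite[Lemma 4.2]{alex9}); the second half follows from the two Remarks cited above, whose proof is the routine observation that two reduced spaces for the same $J$ are related by a local $\f$-foliate diffeomorphism commuting with the linearized flows, so the induced isometries of the quotients are conjugate and the infinitesimal sheaves correspond. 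Once these compatibilities are in place, the gluing is formal: the sheafification construction of Section~\ref{subsection: Molino Theory} (or equivalently the étalé-space viewpoint of the interlude on sheaves) produces $\mathscr{C}^\ell$ and its local constancy is inherited from that of each $\mathscr{C}_{\mathscr{H}^\ell}$.
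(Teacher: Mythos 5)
Your proposal is correct and follows essentially the same route as the paper, which itself only sketches the argument as a combination of Lemma~\ref{lemma-propertiesH-B}, Remark~\ref{remark-geometric-interpertation} and Lemma~\ref{lemma-linearized-holohomy-pseudogroup}(ii): localize near each leaf closure $J$, obtain the locally constant sheaf there via the desingularization and \cite[Lemma 4.2]{alex9}, use the extension property of $\mathscr{H}^\ell$ into $\mathscr{H}^\ell_J$ to compare the global pseudogroup with the local ones, and glue the pullbacks using the independence of the choices of reduced spaces. The only caution is that Proposition~\ref{prop: closure of pseudogroup} and the corollary of Theorem~\ref{theorem: salem} are stated for pseudogroups acting on manifolds, so they cannot be applied verbatim to $\bigsqcup_j \mathsf{N}_j/\f_{\mathsf{N}_j}$; as you in fact acknowledge later, the correct justification passes through the regular foliation $\f^{\mathrm{B}}_{\mathsf{N}}$ on the blow-up and the bijection between the closures of the lifted and original pseudogroups.
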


On $M_{\mathrm{reg}}$ the sheaf $\mathscr{C}^\ell$ clearly coincides with $\mathscr{C}_{\mathrm{reg}}$, hence it coincides with the Molino sheaf $\mathscr{C}_\f$ on the whole $M$.

\begin{corollary}
The strong Molino conjecture holds for infinitesimally closed singular Riemannian foliations on compact manifolds. In particular, such a foliation is a singular Killing foliation if the manifold is simply connected.
\end{corollary}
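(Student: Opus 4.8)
The plan is to read off the corollary from Theorem \ref{theorem-sheafSRF} together with the identification of the sheaf $\mathscr{C}^\ell$ with the Molino sheaf. First I would recall what Theorem \ref{theorem-sheafSRF} buys us: for an infinitesimally closed singular Riemannian foliation $\f$ on a compact manifold $M$, gluing the pullbacks of the sheaves $\mathscr{C}(\overline{\mathscr{H}^\ell},\bigsqcup_j N_j/\f_{N_j})$ produces a \emph{locally constant} sheaf $\mathscr{C}^\ell$ of Lie algebras of germs of transverse Killing vector fields. The decisive point — the one that fails in the general construction of $\mathscr{C}_\f$ in Section \ref{section: molino sheaf singular}, where the extension of $\mathscr{C}_{\mathrm{reg}}$ off the regular stratum was only known to be continuous — is that each local section of $\mathscr{C}^\ell$ near a leaf closure $J=\overline{L}$ has a genuinely \emph{smooth} representative: by Remark \ref{remark-geometric-interpertation}, which rests on Proposition \ref{prop: lifting local transverse isometries to the blow up} and Lemma \ref{lemma-propertiesH-B}, it is a transverse Killing vector field defined on a neighborhood of $J$ and tangent to $\overline{\f}$.

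Next I would use the observation already recorded after Theorem \ref{theorem-sheafSRF}, namely $\mathscr{C}^\ell|_{M_{\mathrm{reg}}}=\mathscr{C}_{\mathrm{reg}}$. Since $\Sigma_{\mathrm{reg}}$ is open and dense in $M$ and both $\mathscr{C}^\ell$ and $\mathscr{C}_\f$ are locally constant sheaves whose restriction to $\Sigma_{\mathrm{reg}}$ is $\mathscr{C}_{\mathrm{reg}}$ — with $\mathscr{C}_\f$ being, by its definition in Section \ref{section: molino sheaf singular}, the continuous extension of $\mathscr{C}_{\mathrm{reg}}$ to all of $M$ — uniqueness of this extension gives $\mathscr{C}^\ell=\mathscr{C}_\f$ on $M$. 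Hence $\mathscr{C}_\f$ is itself a sheaf of Lie algebras of germs of transverse Killing vector fields, which is exactly the assertion of the strong Molino conjecture for $\f$.

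For the second claim I would simply add the monodromy argument: if $M$ is simply connected, every locally constant sheaf on $M$ is globally constant, because the monodromy representation $\pi_1(M)\to\operatorname{Aut}(\mathfrak{g}^{-1})$ is trivial. Thus $\mathscr{C}_\f$ is a globally constant sheaf of Lie algebras of germs of transverse Killing vector fields, and $\f$ is complete since $M$ is compact; this is precisely the definition of a singular Killing foliation.

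I do not expect a genuine obstacle at this stage, since all the real work sits in Theorem \ref{theorem-sheafSRF} and the lemmas feeding it; the only point that deserves a careful sentence is the identification $\mathscr{C}^\ell=\mathscr{C}_\f$, i.e., that the two locally constant extensions of $\mathscr{C}_{\mathrm{reg}}$ genuinely coincide. This is handled either by noting that a locally constant sheaf on a connected manifold is determined by its stalk and monodromy — both of which agree with those of $\mathscr{C}_{\mathrm{reg}}$ on the connected dense open set $\Sigma_{\mathrm{reg}}$ — or, equivalently, by comparing étalé spaces over $\Sigma_{\mathrm{reg}}$ and invoking density. Everything else is an unwinding of definitions.
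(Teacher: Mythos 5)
Your proposal is correct and follows the paper's own route: the corollary is read off directly from Theorem \ref{theorem-sheafSRF} together with the observation that $\mathscr{C}^\ell$ restricts to $\mathscr{C}_{\mathrm{reg}}$ on the dense regular stratum and therefore coincides with $\mathscr{C}_\f$, with simple connectivity then forcing the locally constant sheaf to be globally constant. The only detail you spell out more explicitly than the paper is the uniqueness of the locally constant extension off $\Sigma_{\mathrm{reg}}$, which is a welcome clarification but not a different argument.
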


In the next lemma we investigate how $\mathscr{C}_\f$ behaves under blow ups.

\begin{lemma}\label{lemma-sheafs-of-blowup}
Let $\f$ be an infinitesimally closed singular Riemannian foliation of a compact manifold $M$, and consider the $k$-th blow up $\rho_{k+1}:(M_{k+1},\f_{k+1})\to (M_k,\f_k)$ of the sequence of blow ups of $(M, \f)$. Then $\mathscr{C}_{\f_{k+1}}$ is isomorphic to the inverse image $\rho_{k+1}^{-1}\mathscr{C}_{\f_k}$. \\

\end{lemma}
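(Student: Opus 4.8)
The plan is to exploit that the blow-up map $\rho_{k+1}$ restricts to a foliate diffeomorphism away from the blown-up stratum, together with the fact that both Molino sheaves are \emph{locally constant} and are determined by their restrictions to the respective regular strata. Write $\rho := \rho_{k+1}$, $(\widehat{M},\widehat{\f}) := (M_{k+1},\f_{k+1})$ and $(M',\f') := (M_k,\f_k)$ for brevity, and let $\Sigma' \subset M'$ denote the minimal stratum along which the blow-up is performed, with $\widehat{\Sigma} = \rho^{-1}(\Sigma')$. First I would recall, from the construction reviewed in Section \ref{Sec-blowup}, that $\rho : (\widehat{M}\setminus\widehat{\Sigma},\widehat{\f}) \to (M'\setminus\Sigma',\f')$ is a foliate diffeomorphism, and that (since all singular strata of $\f'$ have codimension at least $2$, and $\widehat{\Sigma}$ is a projectivized normal bundle) $\widehat{M}\setminus\widehat{\Sigma}$ meets the regular stratum $\widehat{M}_{\mathrm{reg}}$ in a set whose image under $\rho$ is dense in $M'_{\mathrm{reg}}$; in fact $\rho$ identifies an open dense saturated subset of $\widehat{M}_{\mathrm{reg}}$ with an open dense saturated subset of $M'_{\mathrm{reg}}$. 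On these identified open sets the holonomy pseudogroups of $\widehat{\f}$ and $\f'$ coincide, hence so do their Molino sheaves of germs of transverse Killing fields; that is, over this common regular open set one has a canonical isomorphism $\mathscr{C}_{\widehat{\f}} \cong \rho^{-1}\mathscr{C}_{\f'}$.

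Next I would upgrade this to an isomorphism over all of $\widehat{M}$. Both $\mathscr{C}_{\widehat{\f}}$ and $\rho^{-1}\mathscr{C}_{\f'}$ are locally constant sheaves of Lie algebras on $\widehat{M}$: the former by Theorem \ref{theorem-sheafSRF} applied to $\widehat{\f}$ (which is again an infinitesimally closed singular Riemannian foliation on a compact manifold, by the properties of blow-ups recalled in Section \ref{Sec-blowup}), and the latter because inverse image preserves local constancy and $\mathscr{C}_{\f'}$ is locally constant. Since $\widehat{M}\setminus\widehat{\Sigma}$ is open and dense, a locally constant sheaf on $\widehat{M}$ is determined up to isomorphism by its restriction there together with the continuity of the extension; the content of Section \ref{section: molino sheaf singular} (the extension of $\mathscr{C}_{\mathrm{reg}}$ across strata of codimension $2$ via Lemma \ref{lemma: homothetic}, and across strata of higher codimension by simple connectivity of the punctured tube) shows precisely that both sheaves are obtained as the \emph{unique} continuous locally constant extension of their common restriction to the regular part. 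Therefore the isomorphism over $\widehat{M}\setminus\widehat{\Sigma}$ extends uniquely to an isomorphism $\mathscr{C}_{\widehat{\f}} \cong \rho^{-1}\mathscr{C}_{\f'}$ on $\widehat{M}$.

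The main obstacle I anticipate is the local analysis \emph{along} $\widehat{\Sigma}$: one must check that the continuous extension of $\mathscr{C}_{\mathrm{reg}}$ performed for $\widehat{\f}$ near $\widehat{\Sigma}$ really matches $\rho^{-1}$ of the extension performed for $\f'$ near $\Sigma'$, rather than differing by a sheaf automorphism. Here I would argue stalkwise: for $\hat{x}\in\widehat{\Sigma}$ with $x = \rho(\hat{x})\in\Sigma'$, the blow-up does not change transverse distances outside a narrow neighborhood and the foliation $\widehat{\f}$ near $\widehat{\Sigma}$ is, by construction, built from the slice foliation data of $\f'$ at $x$; since the Molino stalk at a point is read off from the germ of the foliation on a transversal (equivalently, via the linearized holonomy pseudogroup $\mathscr{H}^\ell$ of Section \ref{section: inf closed fol and strong molino conj}), and $\rho$ induces an isometry of the relevant reduced spaces $\mathsf{N}_j/\f_{\mathsf{N}_j}$ up to the foliate diffeomorphism away from the exceptional divisor, Proposition \ref{prop: lifting local transverse isometries to the blow up} (lifting transverse Killing fields through $\rho$) gives the compatibility of the two closures $\overline{\mathscr{H}^\ell}$ and $\overline{\rho^*\mathscr{H}^\ell}$ — this is exactly the mechanism already used in the proof of Lemma \ref{lemma-propertiesH-B}(ii). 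Tracing through that identification on each reduced space and gluing yields the stalkwise isomorphism along $\widehat{\Sigma}$, completing the argument. A secondary, more bookkeeping-type point is to confirm that the identifications chosen on different pieces of the cover by tubular neighborhoods of leaf closures are mutually compatible, which follows from the independence of the construction of $\mathscr{C}^\ell$ from the choices of reduced spaces noted in the Remark after Lemma \ref{lemma-linearized-holohomy-pseudogroup}.
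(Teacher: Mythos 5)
Your overall architecture is reasonable and you correctly locate the difficulty along the exceptional divisor, but the step you use to bridge the regular part and the whole manifold has a genuine gap, and it is exactly the step where the paper's proof does all of its work. The uniqueness-of-extension argument of Section \ref{section: molino sheaf singular} extends $\mathscr{C}_{\mathrm{reg}}$ across strata of codimension at least $2$; here, however, the two sheaves $\mathscr{C}_{\f_{k+1}}$ and $\rho_{k+1}^{-1}\mathscr{C}_{\f_k}$ are only known to agree on $\rho_{k+1}^{-1}((M_k)_{\mathrm{reg}})$, whose complement inside the regular stratum of $\f_{k+1}$ contains part of the codimension-one exceptional divisor $\widehat{\Sigma}=\mathbb{P}(\nu\Sigma)$. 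Across a codimension-one set a locally constant sheaf is \emph{not} determined by its restriction to the complement: already for the blow-up of a point in $\mathbb{R}^3$ one has $\pi_1(\widehat{M}\setminus\widehat{\Sigma})=0$ while $\pi_1(\widehat{M})=\mathbb{Z}/2$, so the constant sheaf on the complement admits two distinct locally constant extensions. Hence "the isomorphism extends uniquely" cannot be invoked; one must first prove agreement on all of $(M_{k+1})_{\mathrm{reg}}$, including along $\widehat{\Sigma}$, and your paragraph on this point only asserts that "tracing through the identification and gluing" works.

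The paper's proof supplies precisely this missing content, working directly at the level of the linearized holonomy pseudogroups: given $\overline{X}\in\mathscr{C}^\ell_{k+1}(U)$ one shows that $\exp(s\overline{X})\in\overline{\mathscr{H}^\ell_{k+1}}$ lies in $\rho_{k+1}^*\mathscr{H}^\ell_k$ for a dense set of $s$. The two ingredients are (a) homotopy invariance of the linearized holonomy of $\f$-flows, and (b) the fact --- and this is where the hypothesis of infinitesimal closedness is actually \emph{used}, not merely inherited by $\f_{k+1}$ --- that an $\f$-flow whose integral line stays inside a fiber $L_{k+1}\cap\rho_{k+1}^{-1}(c)$ induces the identity in $\mathscr{H}^\ell_{k+1}$. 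Together these let one replace the flow generating $\exp(s\overline{X})$ by one transverse to the fibers of $\rho_{k+1}$, push it down to $M_k$, linearize it there, and lift back. Your proposal never explains why the holonomy contributions along the $\rho_{k+1}$-fibers can be discarded, so as written the compatibility along $\widehat{\Sigma}$ remains unproved; citing Proposition \ref{prop: lifting local transverse isometries to the blow up} gives you the lift \emph{from} $M_k$ \emph{to} $M_{k+1}$, but the hard direction here is the descent.
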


\begin{proofoutline}
It is sufficient to show that $\mathscr{C}^\ell_{k+1}$ is the inverse image $\rho_{k+1}^{-1}\mathscr{C}^\ell_k$. Let $U\subset M_{k+1}$ be a tubular neighborhood of a reduced space $\mathsf{N}$ where $\mathscr{C}^\ell_{k+1}$ coincides with the lift of $\mathscr{C}(\overline{\mathscr{H}^\ell_J})$, for some leaf closure $J_{k+1}$. Suppose that $U$ is small enough so that $\mathscr{C}^\ell_{k+1}$ is constant on $U$ and there exists a similar tubular neighborhood $V\subset M_k$ containing $\rho_{k+1}(U)$ where $\mathscr{C}^\ell_{k}$ is also constant and the lift of some $\mathscr{C}(\overline{\mathscr{H}^\ell_{J_k}})$. In this case we have $\rho_{k+1}^{-1}\mathscr{C}^\ell_k(U)=\mathscr{C}^\ell_{k}(V)$, so we are done if we establish that each $\overline{X}\in\mathscr{C}^\ell_{k+1}(U)$ is the lift of some section in $\mathscr{C}^\ell_{k}(V)$, and that this correspondence is an isomorphism.

For this  purpose   we have to  check that for  $s\to\exp(s\overline{X})\in   \overline{\mathscr{H}^{\ell}_{k+1}}$ we can find 
 a dense set of $\{s_i\}$ so that $\exp(s_{i}\overline{X})\in \rho^{*}_{k+1}\mathscr{H}^{\ell}_{k}.$
 First note that, if two  $\f$-flows  $\varphi^{1}$ and $\varphi^{2}$ on $M_{k+1}$(i.e., flows whose integral lines are contained in $\f_{k+1}$)
 have homotopic integral lines contained in a leaf $L_{k+1}\in \f_{k+1}$, then they induce the same holonomy $(\varphi^{1})^{*}=(\varphi^{2})^*$ in $\mathscr{H}^{\ell}_{k+1}$. The second fact is that, since $\f_{k+1}$ is infinitesimal closed, we can assume without loss of generality that the if an integral line of an $\f$-flow is contained in $L_{k+1}\cap \rho^{-1}_{k+1}(c)$, then the holonomy induced by this flow is the identity in $\mathscr{H}^{\ell}_{k+1}$.  
These two facts together allow us to conclude that each $h=\exp(s\overline{X})\in \mathscr{H}^{\ell}_{k+1}$ 
 can be induced by a $\f$-flow $\phi$, with an integral line $t\to \phi_{t}(\hat{x})$ transverse to the fibers of $\rho_{k+1}$, where $\hat{x}\in\rho^{-1}_{k+1}(c)$. Therefore we can define $\alpha(t):=\rho_{k+1}(\phi_{t}(\hat{x}))$. Consider a linearized flow $\varphi$ (with respect to $\Sigma$)
 with $\alpha$ as an integral line. Then its lift $\widetilde{(\varphi)}$ induces the holonomy $h$, in other words $h\in \rho^{*}_{k+1}\mathscr{H}^{\ell}_{k}.$
\end{proofoutline}

From Lemma \ref{lemma-sheafs-of-blowup} we infer the next result. 

\begin{proposition}
Let $\f$ be an infinitesimally closed singular Killing foliation on a compact manifold $M$. Then $\f^{\mathrm{B}}$ is a (regular) Killing foliation on $M^{\mathrm{B}}$.
\end{proposition}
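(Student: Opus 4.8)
The plan is to reduce the claim to the globally constant character of the Molino sheaf, which is the defining property of a Killing foliation, and then to transport that property from $M$ to $M^{\mathrm{B}}$ through the tower of blow-ups by means of Lemma~\ref{lemma-sheafs-of-blowup}. First I would record that $\f^{\mathrm{B}}$ is a complete regular Riemannian foliation: it is regular Riemannian by the desingularization construction of Section~\ref{Sec-blowup}, and it is complete because $M^{\mathrm{B}}$ is compact and $\metric^{\mathrm{B}}$ is bundle-like, so $(M^{\mathrm{B}},\metric^{\mathrm{B}})$ is a complete Riemannian manifold. Hence, by the definition of a Killing foliation, it suffices to prove that the Molino sheaf $\mathscr{C}_{\f^{\mathrm{B}}}$ is globally constant.

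Next I would iterate Lemma~\ref{lemma-sheafs-of-blowup} along the sequence of blow-ups $M^{\mathrm{B}}=M_n\xrightarrow{\rho_n}M_{n-1}\xrightarrow{\rho_{n-1}}\cdots\xrightarrow{\rho_1}M_0=M$. The lemma gives $\mathscr{C}_{\f_{k+1}}\cong\rho_{k+1}^{-1}\mathscr{C}_{\f_k}$ for each $k$; combining these isomorphisms with the functoriality of the inverse-image operation, $g^{-1}(f^{-1}\mathscr{S})\cong(f\circ g)^{-1}\mathscr{S}$ (recall the discussion of inverse images in the sheaf interlude), yields
$$\mathscr{C}_{\f^{\mathrm{B}}}=\mathscr{C}_{\f_n}\cong\rho_n^{-1}\mathscr{C}_{\f_{n-1}}\cong\cdots\cong(\rho_n\circ\cdots\circ\rho_1)^{-1}\mathscr{C}_\f=\rho^{-1}\mathscr{C}_\f,$$
where $\rho=\rho_n\circ\cdots\circ\rho_1$ is the desingularization map. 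Since $\f$ is a singular Killing foliation, $\mathscr{C}_\f$ is a globally constant sheaf with stalk $\mathfrak{a}$, that is, $\etale(\mathscr{C}_\f)\cong M\times\mathfrak{a}$ with $\mathfrak{a}$ carrying the discrete topology; then $\rho^{-1}\etale(\mathscr{C}_\f)\cong M^{\mathrm{B}}\times\mathfrak{a}$, so $\rho^{-1}\mathscr{C}_\f$ is again globally constant. Therefore $\mathscr{C}_{\f^{\mathrm{B}}}$ is a globally constant sheaf of Lie algebras of germs of transverse Killing vector fields, and $\f^{\mathrm{B}}$ is a Killing foliation; in fact the same computation shows that its structural algebra is isomorphic to $\mathfrak{a}$.

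The genuinely substantial ingredient here is Lemma~\ref{lemma-sheafs-of-blowup}, which rests on the analysis of the linearized holonomy pseudogroup and on Proposition~\ref{prop: lifting local transverse isometries to the blow up}; the remaining steps are formal manipulations with inverse-image sheaves together with the characterization of Killing foliations via their Molino sheaf, and I expect no difficulty there. The one point that deserves care, and which I would make explicit at the outset of the proof, is that each intermediate foliation $\f_k$ in the blow-up tower is again an infinitesimally closed singular Riemannian foliation on a compact manifold, so that Theorem~\ref{theorem-sheafSRF} and Lemma~\ref{lemma-sheafs-of-blowup} genuinely apply at every stage of the iteration; this is implicit in the constructions of Section~\ref{Sec-blowup}.
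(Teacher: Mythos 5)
Your proposal is correct and takes essentially the same route as the paper, which deduces the proposition directly from Lemma~\ref{lemma-sheafs-of-blowup}: you have simply made explicit the iteration of that lemma along the blow-up tower, the functoriality of inverse-image sheaves, and the fact that the inverse image of a globally constant sheaf is again globally constant. Your preliminary observations (completeness of $\f^{\mathrm{B}}$ and the applicability of the lemma at every intermediate stage) are sensible points of care that the paper leaves implicit.
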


Hence, desingularization provides another possible strategy for adapting results from the theory of regular Killing foliations to the singular setting.

\section*{Acknowledgements}
We thank Porf. Dirk Töben for the insightful discussions. The second author is grateful to the Department of Mathematics of the University of São Paulo (IME-USP) for the welcoming environment where part of this work was developed, and to the São Paulo Research Foundation (FAPESP) for the research funding.

\end{document}